\crefname{equation}{}{}
\crefname{algocf}{algorithm}{algorithms}
\definecolor{CeruleanRef}{RGB}{12,127,172}
\DeclareMathAccent{\wtilde}{\mathord}{largesymbols}{"65}
\pgfplotsset{compat=1.18}
\definecolor{color0}{rgb}{0.7843, 0.7843, 0.7843}
\definecolor{color1}{rgb}{0, 0.4470, 0.7410}
\definecolor{color2}{rgb}{0.8500, 0.3250, 0.0980}
\definecolor{color3}{rgb}{0.9290, 0.6940, 0.1250}
\definecolor{color4}{rgb}{0.7060, 0.3840, 0.7650}
\definecolor{color5}{rgb}{0.4660, 0.6740, 0.1880}
\definecolor{color6}{rgb}{0.3010, 0.7450, 0.9330}
\definecolor{color7}{rgb}{0.6350, 0.0780, 0.1840}
\definecolor{color8}{rgb}{0.0, 0.4078, 0.3412}
\DeclareMathOperator*{\argmin}{\vphantom{p}arg\,min}
\DeclareMathOperator*{\esssup}{\vphantom{p}ess\,sup}
\DeclareMathOperator*{\essinf}{\vphantom{p}ess\,inf}
\DeclareMathOperator{\fa}{~for~all~}
\DeclareMathOperator{\lnit}{lnit}
\DeclareMathOperator{\sigmoid}{expit} 
\DeclareMathOperator{\prox}{prox}
\DeclareMathOperator{\atanh}{arctanh}
\DeclareMathOperator{\interior}{int}
\DeclareMathOperator{\bd}{bd}
\DeclareMathOperator{\cl}{cl}
\DeclareMathOperator{\supp}{supp}
\DeclareMathOperator{\dom}{dom}
\DeclareMathOperator{\dd}{d\!}
\let\div\relax \DeclareMathOperator{\div}{div}
\let\inf\relax \DeclareMathOperator*\inf{\vphantom{p}inf}
\let\min\relax \DeclareMathOperator*\min{\vphantom{p}min}
\let\max\relax \DeclareMathOperator*\max{\vphantom{p}max}
\let\tilde\widetilde
\let\hat\widehat
\begin{document}

\title{
    Proximal Galerkin: A structure-preserving finite element method for pointwise bound constraints\thanks{Submitted to the editors \today.
    \funding{BK was supported in part by the U.S.\ Department of Energy Office of Science, Early Career Research Program under Award Number DE-SC0024335.
    BK was also supported in part by the LLNL-LDRD Program under Project Tracking No.\ 22-ERD-009.}
    }
}

\author{
    Brendan~Keith\thanks{\protect
        Division of Applied Mathematics,
        Brown University,
        Providence, RI 02912 USA
        (\email{brendan\_keith@brown.edu}).
    }
    \and Thomas~M.~Surowiec\thanks{\protect
        Simula Research Laboratory,
        Department of Numerical Analysis and Scientific Computing,
        Kristian Augusts gate 23,
        0164, Oslo, Norway, 
        (\email{thomasms@simula.no})
    }
}

\date{\today}

\maketitle

\begin{center}
    \small
    \medskip
  \emph{Dedicated with respect and admiration to Leszek Demkowicz on the occasion of his 70th birthday.}
  \medskip

    \end{center}

\begin{abstract}
The proximal Galerkin finite element method is a high-order, low iteration complexity, nonlinear numerical method that preserves the geometric and algebraic structure of pointwise bound constraints in infinite-dimensional function spaces.
This paper introduces the proximal Galerkin method and applies it to solve free boundary problems, enforce discrete maximum principles, and develop a scalable, mesh-independent algorithm for optimal design with pointwise bound constraints.
This paper also introduces the latent variable proximal point (LVPP) algorithm, from which the proximal Galerkin method derives.

When analyzing the classical obstacle problem, we discover that the underlying variational \emph{inequality} can be replaced by a sequence of second-order partial differential \emph{equations} (PDEs) that are readily discretized and solved with, \textit{e.g.}, the proximal Galerkin method.
Throughout this work, we arrive at several contributions that may be of independent interest.
These include (1) a semilinear PDE we refer to as the \emph{entropic Poisson equation}; (2) an algebraic/geometric connection between high-order positivity-preserving discretizations and certain infinite-dimensional Lie groups; and (3) a gradient-based, bound-preserving algorithm for two-field, density-based topology optimization.
The complete proximal Galerkin methodology combines ideas from nonlinear programming, functional analysis, tropical algebra, and differential geometry and can potentially lead to new synergies among these areas as well as within variational and numerical analysis.
Open-source implementations of our methods accompany this work to facilitate reproduction and broader adoption.

\end{abstract}

\begin{keywords}
Pointwise bound constraint, 
variational inequality, 
high-order finite element meth\-od,
proximal point method, 
entropy regularization,
bregman divergence,
obstacle problem,
discrete maximum principle,
topology optimization
\end{keywords}

\begin{AMS}
35J86, 35R35, 49J40, 47J20, 49M41, 65K05, 65K10, 65K15, 65N30, 90C06, 90C25\end{AMS}

\section{Introduction} \label{sec:introduction} 

Although the origins of variational analysis can be traced back at least to the seventeenth century \cite{oden2012variational}, its role in the modern study of partial differential equations (PDEs) only truly began to take shape around 1847 once William Thomson introduced what is now known as the Dirichlet principle.
In contemporary language, this energy principle states that for all functions $f\in L^2(\Omega)$ and $g\in H^1(\Omega)$, the (weak) solution of Poisson's equation over a Lipschitz domain $\Omega \subset \mathbb{R}^n$,
\begin{equation}
\label{eq:PoissonEquation}
	-\Delta u = f
	\quad \text{in~} \Omega,
	\qquad
	u = g \quad \text{on~} \partial\Omega,
\end{equation}
can be obtained as the $H^1(\Omega)$-minimizer of the Dirichlet energy,
\begin{equation}
\label{eq:DirichletEnergy}
	E(v)
	=
	\frac{1}{2}
	\int_\Omega |\nabla v|^2 \dd x
	-
	\int_\Omega v f \dd x
	\,,
\end{equation}
confined to the constraint set $H^1_g(\Omega) = g + H^1_0(\Omega) = \{ v \in H^1(\Omega) \mid v = g \text{~on~} \partial \Omega\}$.

Owing to the fact that $H^1_g(\Omega)$ is nonempty, closed, and convex, it is a consequence of the Lions--Stampacchia theorem \cite{stampacchia1963equations,lions1967variational} that the energy minimizer $u^\ast \in K = H^1_g(\Omega)$ is the unique solution to the variational inequality (VI)
\begin{equation}
\label{eq:DirichletVI}
			\int_\Omega  \nabla u^\ast \cdot (\nabla v - \nabla u^\ast ) \dd x \geq \int_\Omega (v-u^\ast) f \dd x
	~\fa v \in K.
\end{equation}
It happens that the boundary condition in~\cref{eq:PoissonEquation} is an equality constraint that induces an \emph{affine} structure on the feasible set.
Moreover, it is this particular algebraic structure that can be exploited to show that the minimizer $u^\ast \in H^1_g(\Omega)$ is also uniquely characterized by a variational \emph{equation}.
In the setting above, we have
\begin{equation}
\label{eq:DirichletVE}
	\int_\Omega \nabla u^\ast \cdot \nabla w \dd x
	=
	\int_\Omega f w \dd x
	~\fa w \in H^1_0(\Omega)
	\,.
\end{equation}
To arrive at this conclusion from~\cref{eq:DirichletVI}, the key idea is to notice that $H^1_g(\Omega) + H^1_0(\Omega) = H^1_g(\Omega)$ and, therefore, one may replace $v$ in~\cref{eq:DirichletVI} by $u^\ast\pm w$, for any $w \in H^1_0(\Omega)$.
For further details, see, e.g., \cite[Proposition~9.22]{brezis2011functional} and \cite[Theorem~1.2.2]{ciarlet2002finite}.

A related additive structure appears if we consider imposing the pointwise non-negativity constraint, $u^\ast \geq 0$, and setting $g \equiv 0$.
Hereafter, let $H^1_+(\Omega) = \{ v \in H^1(\Omega) \mid v \geq 0 \text{~a.e.}\}$.
In this setting, the feasible set
\begin{equation}
\label{eq:NonNegativeConstraintSet}
	K = \{ v \in H^1_0(\Omega) \mid v \geq 0 \text{~a.e.}\} = H^1_0(\Omega) \cap H^1_+(\Omega)
\end{equation}
forms a closed convex cone in $H^1(\Omega)$.
It is well-known that the \emph{conic} structure of $K$ allows us to write
\begin{equation}
	\int_\Omega \nabla u^\ast \cdot \nabla v \dd x \geq \int_\Omega f v \dd x
	~\fa v \in K
	\,,
\end{equation}
with equality holding for $v = u^\ast$; see, e.g., \cite[Theorem~1.1.2]{ciarlet2002finite}.
Thus, we encounter another simplification directly from the algebraic structure of the feasible set.
This work pursues a third type of algebraic structure.
\smallskip

Our aim is to provide a high-order, \emph{multiplicative} structure-preserving approach to solving bound-constrained optimization problems and variational inequalities in Sobolev spaces.
This will lead us to working in \emph{Banach algebras}, which are Banach spaces that are closed under a continuous multiplication operation \cite{conway2019course}.
Instead of performing Lagrangian relaxation or relying on penalty functions, the key component of our approach is an adaptive form of \emph{entropy regularization}.
Through entropy regularization, we will find, \textit{e.g.}, that minimizing the Dirichlet energy over functions in $H^1_g(\Omega) \cap H^1_+(\Omega)$ can be reduced to solving a sequence of second-order semilinear PDEs where each right-hand side is conditioned by the prior solution.
\smallskip

\Cref{alg:HomogeneousAlg} outlines the meta-algorithm for minimizing the Dirichlet energy under the pointwise non-negativity constraint $u^\ast \geq 0$ considered above when $f \in L^\infty(\Omega)$ and $g_{|\partial\Omega} \in C(\partial\Omega)$ satisfies $\min_{\partial\Omega} g > 0$.
Note that, unlike, \textit{e.g.}, descent methods \cite[Section~3]{wright2022optimization}, this algorithm converges for \emph{all} step size values $\alpha > 0$; cf.~\Cref{thm:ConvergenceContinuousLevel}.
A practical version of the algorithm appears via a \emph{saddle-point} reformulation of the semilinear subproblem~\cref{eq:EPEIntro}, leading to the finite element method that gives this paper its name.

\begin{algorithm2e}\DontPrintSemicolon
	\caption{\label{alg:HomogeneousAlg} Entropic proximal point algorithm for Dirichlet energy minimization with a non-negativity constraint.}
	\SetKwInOut{Input}{input}
		\BlankLine
	\Input{Step size parameter $\alpha>0$ and initial solution guess $w \in H^1_g(\Omega)\cap L^\infty(\Omega)$ satisfying $\essinf w > 0$.}
		\BlankLine
		\Repeat{a convergence test is satisfied}
	{
		Solve the entropic Poisson equation,
		\begin{equation}
		\label{eq:EPEIntro}
																											\left\{
				\begin{aligned}
					\,-\Delta u + \alpha^{-1}\ln u
					&=
					f + \alpha^{-1}\ln w
					~~ &&\text{in~} \Omega\,,
					\\
					u
					&=
					g ~~ &&\text{on~} \partial\Omega\,.
				\end{aligned}
			\right.
		\end{equation}
				\;
		\vspace*{-\baselineskip}
		Assign $w \leftarrow u$.\;
	}
		\BlankLine
\end{algorithm2e}

\begin{remark}[Latent variable proximal point vs.\ proximal Galerkin]
	Two general approximation techniques are introduced in this work: the latent variable proximal point (LVPP) algorithm and the proximal Galerkin finite element method.
	LVPP is an infinite-dimensional optimization algorithm, equivalent to a saddle-point reformulation of the (entropic) Bregman proximal point algorithm \cite{teboulle2018simplified}, which dates back to early work by Bregman, Nemirovskij, and Yudin \cite{bregman1967relaxation,nemirovskij1983problem}.
		The proximal Galerkin finite element method is a nonlinear mixed method derived by discretizing the LVPP subproblems with the Galerkin finite element method.
	Although other numerical methods, \textit{e.g.}, finite difference, finite volume, spectral, spline, or even neural network methods could also be used to discretize LVPP, we focus our attention on the aforementioned finite element approach.
	We also acknowledge the existence of significantly different ``proximal Galerkin'' method proposed in \cite{mackey2014compressive}, yet choose to use the same name for our method without any cause for confusion.
		\end{remark}

\begin{remark}[Why pursue high-order discretizations?]
	The solutions to variational inequality problems, as with other nonlinear problems, often have \emph{low regularity}, potentially leading to low-order accuracy.
	In the early days of the finite element method, this fact led much of the community to conclude that low-order methods ``are sufficient for all practical purposes'' \cite[Chapter~5]{ciarlet2002finite}.
		Much to the contrary, high-order discretizations are now widely-used to solve nonlinear problems \cite{wang2013high,cottrell2009isogeometric} owing, in part, to more the accurate computational geometries \cite{cottrell2009isogeometric} and higher efficiency on modern computing architectures; see, \textit{e.g.}, \cite{andrej2024high}.
	The benefits of high-order methods can be further improved when combined with adaptive mesh refinement, such as $hp$-refinement \cite{schwab1998,demkowicz2006computing}.
	This often (sometimes provably \cite{verfurth2013posteriori}) results in high-order accuracy, similar to if the underlying solution was smooth.
			These aspects all support the present investigation into a high-order methodology for discretizing variational inequalities.
\end{remark}

\subsection{Notation} \label{sub:notation}
Our notation is rather standard for the finite element literature. Norms are denoted by
$\| \cdot \|_{X}$, inner products by $(\cdot,\cdot)_{X}$, and duality pairings by 
$\langle \cdot,\cdot \rangle_{X',X}$ for Banach spaces $X$ and its paired topological 
dual $X'$. Whenever it is clear in context, we leave off or abbreviate the subscripts in a natural way.
For weak convergence, we use the standard notation,
$\stackrel{X}{\rightharpoonup}$ or $\rightharpoonup$.  For subsets $C$ of infinite 
dimensional spaces, we denote the closure by $\cl C$,
the boundary by $\bd C$, and the interior by $\interior C$.
For a mapping
$F$ between normed linear spaces $X$ and $Y$, the Fr\'echet derivative of $F$
at $x$ is indicated by $F'(x)$.

For an open bounded domain $\Omega \subset \mathbb R^n$, $n \in \{1,2,...\}$, $
L^p(\Omega)$, $p \in [1,\infty]$, denotes the usual Lebesgue space of
(equivalence classes of) $p$-integrable functions when $p \in [1,\infty)$, and essentially 
bounded functions when $p = \infty$, respectively.
Furthermore, we simplify the $L^2(\Omega)$-inner product notation to $(u,v) = (u,v)_{L^2(\Omega)}$ and define
\[
L^p_+(\Omega) := \left\{ u \in L^p(\Omega) \mid u \ge 0 \text{ a.e.~in } \Omega \right\}
\]
for any $p \in [1,\infty]$.

For domains $\Omega$ in $\mathbb R^n$,
we denote the boundary by $\partial \Omega$ and the closure by $\overline{\Omega}$.
The spaces $L^p(\partial \Omega)$ are defined in the usual way. When needed, we indicate
the surface measure for $\partial \Omega$ by $d \mathcal{H}_{n-1}$.
The space of continuous functions on $\overline{\Omega}$ is written $C(\overline{\Omega})$.
Similarly, $C^m(\overline{\Omega})$, $m \in \mathbb N \cup \{\infty\}$, is the 
space of all $m$-times continuously differentiable functions. The space of smooth 
compactly supported test functions on $\Omega$ is given by $C^{\infty}_{c}(\Omega)$. 
The Sobolev space of $L^2(\Omega)$ functions with $L^2(\Omega)$ integrable 
weak derivatives is denoted by $H^1(\Omega)$ and its closed subspace of functions $u$
with trace $\gamma u = 0$ is denoted by $H^1_0(\Omega)$. We use $H^{s}(\partial \Omega)$,
$s \in (0,1)$, for the usual Sobolev--Slobodeckij space on $\partial \Omega$.
We refer the reader to a 
standard text on function spaces for further details, e.g., \cite{adams2003sobolev,mclean2000strongly}.
Finally, we adopt the following notational conventions: $\mathbb{R}_+ = [0,\infty)$, $\mathbb{R}_{++} = (0,\infty)$, $0 \ln 0 := 0$, and $\|v\|_{H^{-1}(\Omega)} := \sup_{w \in H^1_0(\Omega)} \frac{(v,w)}{\|\nabla w\|_{L^2(\Omega)}}$, with $w=0$ ignored in this and similar expressions.

\subsection{Outline} \label{sub:intro_overview}

We have attempted to provide a scaffolded presentation of our findings.
To this end, \Cref{sec:structure} presents preliminary concepts and provides further motivation for this work.
Next, \Cref{sec:literature_review} reviews the literature and summarizes our main contributions.
\Crefrange{sec:the_obstacle_problem}{sec:new_algorithms_for_topology_optimization} present the essential features of proximal Galerkin methods for the obstacle problem, the advection-diffusion equation, and topology optimization, respectively.
Each of these sections contains an algorithm that is designed be implemented in a production-level finite element code.
The reader is encouraged to compare these algorithms with our publicly available implementations \cite{Keith2023ObstacleCode,Keith2023TopOptCode,ZenodoCode}.
The main paper closes with \Cref{sec:conclusion}, which contains a small number of concluding remarks, and then proceeds to two technical appendices.
\Cref{sec:preliminaries} contains the continuous-level mathematical analysis and \Cref{sec:the_entropic_finite_element_method} contains the discrete-level, finite element theory.
\Cref{sec:preliminaries,sec:the_entropic_finite_element_method} are the most specialized sections of the paper and may be skipped by a casual reader.

\section{Preserving multiplicative structure} \label{sec:structure}

The proximal Galerkin finite element method is a nonlinear numerical method that preserves the algebraic and geometric structure of bound constraints in infinite-dimensional function spaces.
It relies on reformulating subproblems like~\cref{eq:EPEIntro} into a saddle-point form and discretizing the resulting system of equations.
In this section, we outline the essential features of the method using Dirichlet energy minimization~\cref{eq:DirichletEnergy} as a motivating example.
First, however, we study the multiplicative structure of non-negative functions in order to illustrate how proximal Galerkin preserves this structure.

\subsection{Deconstructing the semiring of non-negative functions} \label{sub:the_cole_hopf_transform_and_the_semiring_of_non_negative_functions}

Here, we discuss the natural logarithmic transformation between non-negative functions and extended real-valued functions that may take the value $-\infty$. This also first introduces the latent variable $\psi$. We claim this provides a basis for the use of logarithmic transformations to analyze and solve PDEs, an idea that goes back at least to work by Schr\"odinger in 1926. Finally, we address the somewhat unnatural conditions this transformation imposes on the solution spaces and variational equations themselves. In turn, we show how a simple regularization of the transformed equations remedies these concerns. We use this discussion to motivate the natural function spaces for pointwise bound constraints in $H^1(\Omega)$ and construct a direct link to entropy regularization.
\smallskip

Let $\mathcal{X}$ be a set equipped with two binary operations: addition $\oplus \colon \mathcal{X} \times \mathcal{X} \to \mathcal{X}$ and multiplication $\odot \colon \mathcal{X} \times \mathcal{X} \to \mathcal{X}$.
\begin{definition}[Semiring]
\label{def:Semiring}
We say that $\mathcal{X}$ is a semiring if the following conditions are satisfied \cite{golan2013semirings,gondran2008graphs}:
\begin{itemize}
	\item
	Addition $\oplus$ and multiplication $\odot$ are associative;
	\item
	Addition $\oplus$ is commutative;
		\item
	Multiplication $\odot$ is distributive with respect to addition $\oplus$.
\end{itemize}
We say that $\mathcal{X}$ is a commutative semiring if the conditions above are satisfied and, moreover, multiplication $\odot$ is commutative.
\end{definition}

It is easy to check that the set of non-negative Lebesgue measurable functions,
\begin{equation}
	M_+(\Omega)
	=
	\big\{ v\colon\Omega\to \mathbb{R}_+ \mid \{v > c\} \text{~is~Lebesgue~measurable~} \forall c > 0 \big\}
	,
	\end{equation}
forms a commutative semiring under the standard binary operations of pointwise addition and multiplication.
In particular, note that for any $u,v \in M_+(\Omega)$, we have
\begin{equation}
	u + v
		\in M_+(\Omega)
	\,,
	\qquad
				uv \in M_+(\Omega)
	\,.
\end{equation}

There is an interesting identification between $M_+(\Omega)$ and the space of extended real-valued measurable functions
\begin{equation}
	M_{\max}(\Omega)
	=
	\big\{ \varphi\colon\Omega\to \mathbb{R}\cup\{-\infty\} \mid \{\varphi > c\} \text{~is~Lebesgue~measurable~} \forall c \in \mathbb{R} \big\}
	,
\end{equation}
induced by the (pointwise) logarithm and exponential operators.
Namely, for all $u \in M_+(\Omega)$, $\psi \in M_{\max}(\Omega)$, and $\alpha > 0$, we have that $\alpha^{-1}\ln u \in M_{\max}(\Omega)$ and $\exp(\alpha\psi) \in M_+(\Omega)$ under the convention that $\ln 0 = -\infty$ and, likewise, $\exp (-\infty) = 0$.
Such logarithmic transformations provide a family of semiring isomorphisms between $M_+(\Omega)$ and $M_{\max}(\Omega)$, where $M_{\max}(\Omega)$ is endowed with the following (generalized) addition and multiplication operations:
\begin{equation}
\label{eq:BinaryOperationsLog}
	\psi \oplus \varphi = \alpha^{-1}\ln (\exp (\alpha \psi) + \exp (\alpha \varphi))
	\,,
	\qquad
	\psi \odot \varphi = \psi + \varphi
	\,,
\end{equation}
respectively \cite{maslov1987new,maslov1987newprinciple}.
Moreover, in the limit $\alpha \to \infty$, the generalized addition operation~\cref{eq:BinaryOperationsLog} becomes the pointwise maximum operation \cite{litvinov2007maslov,maclagan2021introduction}; namely,
\begin{equation}
\label{eq:BinaryOperationsTropical}
	\psi \oplus \varphi \to \max\{\psi,\varphi\}
	\,.
\end{equation}

Logarithmic transformations of the above form have been used famously over the last century to analyze differential equations in quantum mechanics \cite{schrodinger1926quantisierung}, fluid flow \cite{hopf1950partial,cole1951quasi}, and electrical engineering \cite{scharfetter1969large,markowich1985stationary}, and, more recently, to study stochastic PDEs \cite{bertini1997stochastic,hairer2013solving}.
Given that they appear to capture certain key algebraic properties of the set of non-negative functions, it is tempting to use logarithmic transformations to enforce non-negativity constraints on function spaces.
Unfortunately, however, special care is required to apply a logarithmic transformation to a non-negative solution variable in a free-boundary problem.

For illustration, consider minimizing the Dirichlet energy~\cref{eq:DirichletEnergy} over the set of non-negative functions
\begin{equation}
\label{eq:NonNegativeConstraintSet_gNonZero}
	K = \{ v \in H^1_g(\Omega) \mid v \geq 0 \text{~a.e.}\} = H^1_g(\Omega) \cap H^1_+(\Omega) 
	.
\end{equation}
Assuming that $f \in L^2(\Omega)$ and $u^\ast \in H^2(\Omega)$, the well-known complementarity conditions for the solution are as follows \cite[p.~79]{kinderlehrer2000introduction}: 
\begin{equation}\label{eq:complementarity}
	u^\ast \geq 0
	,\quad
	-\Delta u^\ast-f\geq 0
	,\quad
	(\Delta u^\ast+f)\,u^\ast = 0
	~~\text{a.e. in~}\Omega
	\,.
\end{equation}
Another perspective uses a dual variable $\lambda^\ast$, also known as a Lagrange multiplier, to formulate \cref{eq:complementarity} as 
a mixed complementarity problem of the form:
\begin{equation}
\label{eq:IntroLagrangeMultiplier}
\aligned
-\Delta u^* - \lambda^* = f,
\quad
u^\ast \ge 0,
\quad
\lambda^\ast \ge 0,
\quad
\langle u^\ast,\lambda^\ast \rangle = 0\,.
\quad
\endaligned
\end{equation}
The Lagrange multiplier exhibits rather low regularity for general domains $\Omega$, so the term ``$\lambda^\ast \ge 0$'' is actually understood
to mean $\langle \lambda^\ast, w \rangle \ge 0$ for all $w \in H^1_0(\Omega)$ with $w \ge 0$ a.e.\ in $\Omega$, i.e., without further regularity assumptions $\lambda^\ast$ is merely a nonnegative Radon measure on $\Omega$. See \cite[Chap. II, Sec. 6]{kinderlehrer2000introduction} for details.

If we wish to study this problem under a logarithmic transformation, then a formal computation using the substitution $u^\ast = \exp\psi^\ast$ leads to the observation that
\begin{equation}
\label{eq:DegeneratePoisson}
	\psi^\ast = -\infty
	\quad
	\text{~or}
	\quad
	-\div (\exp\psi^\ast\nabla \psi^\ast) = f
		\,,
\end{equation}
at almost every point in $\Omega$.
Analyzing these equations presents challenges, in part, because it requires moving away from well-studied Sobolev spaces \cite{adams2003sobolev} and, instead, working in a latent space of extended real-valued functions \cite{kolokoltsov1997idempotent} endowed with the metric
\begin{equation}
\label{eq:LatentVariableNonnegativityMetric}
	d(\psi,\varphi)
	=
	\|\nabla \exp\psi - \nabla \exp\varphi\|_{L^2(\Omega)}
	\,.
\end{equation}
More specifically, it is difficult to imagine how to rigorously discretize a space of functions that are allowed to take the value $-\infty$ on sets of positive Lebesgue measure.

One conclusion of this work is that the above concerns are alleviated by a simple regularization of the degenerate PDE in~\cref{eq:DegeneratePoisson}.
In particular, we show that for all bounded $f \in L^\infty(\Omega)$, the latent solution variable $\psi^\ast = \ln u^\ast$ is recovered as the $\alpha\to \infty$ limit (with respect to the metric~\cref{eq:LatentVariableNonnegativityMetric}) of a family of regularized solutions $\psi \in H^1(\Omega)\cap L^\infty(\Omega)$ satisfying
\begin{equation}
\label{eq:DegenerateEntropicPoisson}
		-\div (\exp\psi\nabla \psi) + \alpha^{-1}\psi = f
		\,.
\end{equation}
Moreover, the latent variable iteration $\psi^0 \in H^1(\Omega)\cap L^\infty(\Omega)$,
\begin{equation}
\label{eq:LatentVariableIteration}
	-\div (\exp\psi^k\nabla \psi^k) + \alpha^{-1}\psi^k = f + \alpha^{-1}\psi^{k-1}
	\,,
	\quad
	k=1,2,\ldots,
\end{equation}
formerly written with primal variables in~\Cref{alg:HomogeneousAlg}, converges to $\psi^\ast$ for all finite $\alpha > 0$; cf.~\Cref{thm:ConvergenceContinuousLevel}.

The ambient function space for the regularized latent variable $\psi$ is interesting from an algebraic point of view because it is a \emph{Banach algebra}.
Indeed, the Sobolev subspace $H^1(\Omega)\cap L^\infty(\Omega)$, whose norm is
\begin{equation}
	\|v\|_{H^1(\Omega)\cap L^\infty(\Omega)}
	=
	\max\{\|v\|_{H^1(\Omega)},\|v\|_{L^\infty(\Omega)}\}
	\,,
\end{equation}
is closed under the standard operations of pointwise addition and multiplication \cite[Proposition~9.4]{brezis2011functional}.
Maintaining closure under multiplication is desirable, in part, because it often allows one to construct a smooth exponential map \cite{glockner2002algebras,glockner2003lie}.
Indeed, of particular interest to this work is the Nemytskii operator
\begin{equation}
\label{eq:Exponential_H1Linfty}
	\exp \colon H^1(\Omega)\cap L^\infty(\Omega) \to H^1(\Omega) \cap \interior L^\infty_+(\Omega),
\end{equation}
which is an isomorphism between $H^1(\Omega)\cap L^\infty(\Omega)$ and the \emph{Banach--Lie group}
\begin{equation}
	H^1(\Omega) \cap \interior L^\infty_+(\Omega)
			=
	\{ w \in H^1(\Omega) \cap L^\infty(\Omega) \mid \essinf w > 0\}
	\,;
\end{equation}
cf.~\Cref{prop:logexpChainRule}.
Since the range of this isomorphism is contained in the $H^1(\Omega) \cap L^\infty(\Omega)$-interior of the set of essentially bounded non-negative $H^1(\Omega)$ functions, we find that the primal iterates, $$u^k = \exp \psi^k \in H^1(\Omega) \cap \interior L^\infty_+(\Omega) \subset \interior( H^1(\Omega) \cap L^\infty(\Omega) )\,,$$ will always be \emph{interior points}.
In the next motivational subsection, we explain that an identical sequence of interior points $u^k \stackrel{H^1(\Omega)}{\longrightarrow} u^\ast$ can be found by regularizing the Dirichlet energy with an appropriate \emph{entropy} functional.

\subsection{Dirichlet free energy} \label{sub:dirichlet_free_energy}

Only special function spaces are endowed with a norm topology that permits a continuous multiplication operator.
Indeed, it is well-known that $H^1(\Omega)$ is only closed under multiplication when $n = 1$ \cite{adams2003sobolev}.
Moreover, it is easy to show that $\interior H^1_+(\Omega) = \emptyset$ for all $n \geq 2$, which makes it impossible to define an $H^1(\Omega)$-interior point in any of its subsets (cf.~\Cref{rem:H1InteriorNonnegativeFunctions}).
Because $H^1(\Omega) \cap L^\infty(\Omega)$ bypasses both of these topological issues, it is appealing to restrict the feasible set $K$ in~\cref{eq:NonNegativeConstraintSet_gNonZero} to essentially bounded functions when minimizing the Dirichlet energy.

Unfortunately, requiring the feasible set to be the intersection of $K$ and $L^{\infty}(\Omega)$ would cause the direct method of calculus of variations \cite{bartels2015numerical} to fail.
This is because the Dirichlet energy does not provide control over point-wise values of the solution and $K\cap L^{\infty}(\Omega)$ is not closed in the $H^1(\Omega)$ norm topology.
Therefore, one may conclude that maintaining some important mathematical structures is in conflict with the classical energy principle.

Fortunately, it turns out there is resolution to this conflict that exposes the missing algebraic structure; namely, minimizing the \emph{Dirichlet free energy},
\begin{equation}
\label{eq:DirichletFreeEnergy}
	A(u) = 
	E(u) + \theta S(u)
						.
\end{equation}
Here, $\theta = \alpha^{-1}>0$ is a non-dimensional ``temperature'' parameter and
\begin{equation}
\label{eq:EntropyFunctional}
	S(u) =
	\int_\Omega u\ln u- u \dd x
	\,,
\end{equation}
is the (negative) entropy functional.
As we show in the proof of \Cref{thm:PrimalProblem}, the unique minimizer of~\cref{eq:DirichletFreeEnergy} lies in $K\cap \interior L^\infty_+(\Omega)$.
In particular, for all $\theta > 0$, the function
\begin{equation}
\label{eq:DirichletFreeEnergyMinimization}
	u = \argmin_{v \in K}
	A(v)
	= \argmin_{v \in K\cap L^\infty(\Omega)}
	A(v)
	\,,
\end{equation}
is essentially bounded away from zero.
Moreover, \Cref{cor:EntropicPoissonConvergence} shows that $u = u(\theta)$ converges linearly with respect to $\sqrt{\theta}$ to the unique non-negative minimizer of~\cref{eq:DirichletEnergy}, $u^\ast = \argmin_{v \in K} E(v)$.
More specifically,
\begin{equation}
\label{eq:EPE_ConvergenceWRTTemperature}
	\|\nabla u^\ast - \nabla u\|_{L^2(\Omega)}^2 \leq 2\theta (S(u^\ast) + |\Omega|)
	\,,
\end{equation}
whenever $f\in L^\infty(\Omega)$.

Finally, as a consequence of $u$ belonging to the interior of $L^\infty_+(\Omega)$, the general VI that characterizes the solution of~\cref{eq:DirichletFreeEnergyMinimization}, i.e.,
\begin{equation}
\label{eq:DirichletVI_nonnegativity}
	\int_\Omega  \nabla u \cdot \nabla v \dd x
	+
	\theta
	\int_\Omega  v\ln u \dd x
		\geq \int_\Omega f v \dd x
	~\fa v \in K-u
							\,,
\end{equation}
can be replaced by a variational equality; namely, the weak form of a semilinear PDE we call the \emph{entropic Poisson equation}, $-\Delta u + \theta\ln u = f$.
Indeed, \Cref{thm:PrimalProblem} shows that
\begin{equation}
\label{eq:EntropicPoissonWeakForm}
	\int_\Omega \nabla u \cdot \nabla w \dd x
	+
	\theta
	\int_\Omega w \ln u \dd x
	=
	\int_\Omega f w \dd x
	~\fa w \in H^1_0(\Omega)
	\,.
				\end{equation}
The entropic Poisson equation is the primal form of~\cref{eq:DegenerateEntropicPoisson} and has numerous interesting properties that we exploit in this work.
For completeness, we also note that $\theta \ln (1/u)$ approximates the true Lagrange multiplier $\lambda^\ast$ introduced in~\cref{eq:IntroLagrangeMultiplier} above. 

The essential idea presented above is expanded on in \Cref{sec:maximum_principles,sec:new_algorithms_for_topology_optimization} to accommodate bound constraints for general VIs that do not appear as a result of energy principles, as well as those that appear in topology optimization with a view toward other bound-constrained optimization problems.
Crucially, and unlike traditional penalty or barrier methods \cite{nocedal1999numerical,Bertsekas99,Wriggers2006}, it is \emph{not necessary} to take $\theta \to 0$ in order to get an arbitrarily accurate approximation of $u^\ast$.
Indeed, the simple adaptive entropic regularization algorithm given in \Cref{alg:HomogeneousAlg} (see also~\cref{eq:LatentVariableIteration}), which comes from regularizing the Dirichlet energy with a \emph{relative entropy} functional, is far more appealing and is derived in \Cref{sec:the_obstacle_problem}.
\Cref{fig:Trinity} provides a diagrammatic reference for the main elements of the continuous-level algorithm in the case $\alpha = 1$.

\begin{figure}
\centering
	\includegraphics[width=0.95\linewidth]{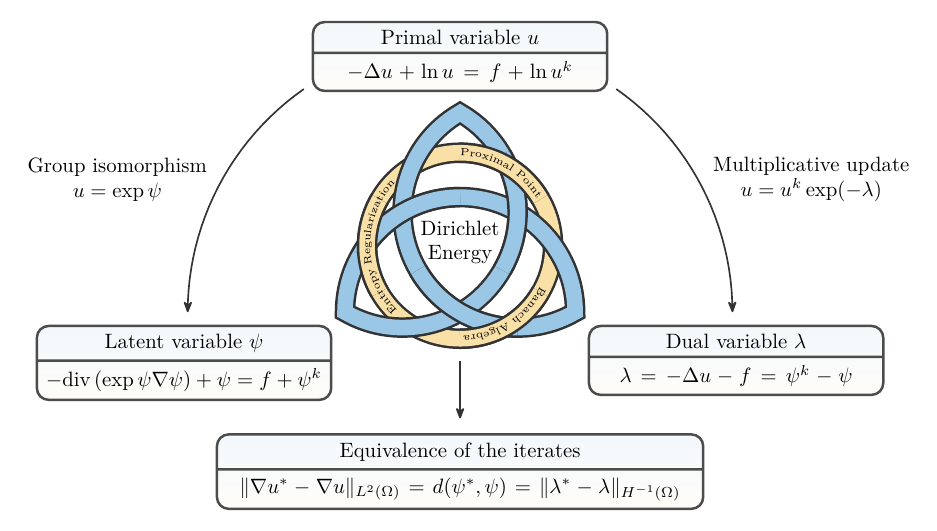}
	\caption{
	A trinity is formed by the three isomorphic representations of the iterates in the latent variable proximal point method.
	In this figure, equations for the three representations are given for the problem of minimizing the Dirichlet energy~\cref{eq:DirichletEnergy} over non-negative functions $u \in H^1_g(\Omega) \cap H^1_+(\Omega)$.
	Note that, for simplicity, the step size here is set to $\alpha = 1$.
		See \Cref{thm:PrimalProblem,thm:ConvergenceContinuousLevel} for further details and consequences for variable step sizes.
	\label{fig:Trinity}}
\end{figure}

\begin{remark}[Dirichlet free energy]
	We propose the name ``Dirichlet free energy'' for the functional in~\cref{eq:DirichletFreeEnergy} by analogy with the Helmholtz free energy from statistical mechanics \cite{pathria2016statistical}, $A = E - T S$, where $E$ denotes total system energy, $T$ denotes absolute temperature, and $S$ denotes thermodynamic entropy.
\end{remark}

\subsection{Pointwise-positivity for every polynomial degree} \label{sub:pointwise_positive_discretizations}

The majority of this paper is based on pursing the aforementioned observation that the solution of VIs for bound constraints, including~\cref{eq:DirichletVI_nonnegativity}, can be approximated arbitrarily accurately by variational \emph{equations} like~\cref{eq:EntropicPoissonWeakForm}.
Leveraging this observation for computational purposes leads to a new class of high-order, nonlinear finite element methods we refer to as proximal Galerkin methods.
In turn, taking advantage of the multiplicative structure of the solution space leads to non-standard approximation spaces that naturally preserve pointwise positivity at the discrete level.

As we shall also show in \Cref{sec:the_obstacle_problem}, a very convenient Galerkin discretization of the entropic Poisson equation~\cref{eq:EPEIntro} is found by introducing a pair of linear subspaces $V_h \subset H^1_0(\Omega)$ and $W_h \subset L^\infty(\Omega)$ --- for instance, spaces of high-degree piecewise polynomials --- and simultaneously approximating the solution $u$ in both the primal space and via the latent variable in a potentially different function space; namely,
\begin{equation}
	u \approx u_h
	\quad
	\text{and}
	\quad
	u \approx \exp\psi_h
		\,,
\end{equation}
where $u_h \in g_h + V_h$, $\psi_h \in W_h$, and $g_h \in H^1(\Omega)$ provides an approximation of the boundary values $g_{h|\partial\Omega} \approx g_{|\partial\Omega}$.
The basic method is outlined in \Cref{alg:EntropicGalerkinIntro}.

\begin{algorithm2e}\DontPrintSemicolon
	\caption{\label{alg:EntropicGalerkinIntro} 
	Proximal Galerkin method for Dirichlet energy minimization with a pointwise non-negativity constraint.
	}
	\SetKwInOut{Input}{Input}
	\SetKwInOut{Output}{Output}
	\BlankLine
	\Input{Step size parameter $\alpha > 0$, linear subspaces $V_h \subset H^1_0(\Omega)$ and $W_h \subset L^\infty(\Omega)$, and initial solution guess $\psi_h \in W_h$.}
	\Output{Two approximate solutions, $u_{h}$ and $\widetilde{u}_h = \exp\psi_h$, and an approximate Lagrange multiplier, $\lambda_h = (\varphi_h - \psi_h)/\alpha$.}
		\BlankLine
	\Repeat{a convergence test is satisfied}
	{
		Assign $\varphi_h \leftarrow \psi_{h}$.\;
		Solve the following (nonlinear) discrete saddle-point problem:
		\begin{gather*}
					\left\{
			\begin{aligned}
				\,&\text{Find}~
				u_{h}\in g_h + V_{h} ~\text{and}~\psi_{h} \in W_{h}
								~\text{such that~}
				\\
				&\begin{alignedat}{4}
					\int_\Omega \alpha \nabla u_h\cdot \nabla v \dd x + \int_\Omega \psi_h v \dd x &= \int_\Omega (\alpha f + \varphi_h)\, v \dd x
					&&~\fa v \in V_h
					\,,
					\\
					\int_\Omega u_h w \dd x - \int_\Omega \exp(\psi_h) w \dd x &= 0
					&&~\fa w \in W_h
																																			\,.
				\end{alignedat}
			\end{aligned}
			\right.
		\end{gather*}
		\;
		\vspace*{-\baselineskip}
	}
\end{algorithm2e}

A novelty of the approximate solution $\widetilde{u}_h = \exp\psi_h$ is that it is \emph{guaranteed} to deliver \emph{pointwise positivity}.
We exploit and extend the above property throughout this work to develop some of the first high-order \emph{bound-preserving} finite element methods for a variety of benchmark problems.
Another important property of this exponential discretization is that it preserves the \emph{multiplicative} group structure of the set
$\interior L^\infty_+(\Omega) = \{ w \in L^\infty(\Omega) \mid \essinf w > 0\}$.
More specifically,
\begin{equation}
	\exp \psi_h \exp \varphi_h = \exp(\psi_h + \varphi_h)
	\in
	\exp(W_h)
	\subset
	\interior L^\infty_+(\Omega)
	\,,
\end{equation}
for all $\psi_h$ and $\varphi_h\in W_h$.
Before expanding further on this and other topics, we present a comprehensive review of the literature and an itemized list of contributions.

\section{Contributions and related work} \label{sec:literature_review}

Numerical methods for pointwise bound constraints have been a topic of investigation for decades.
With this is mind, it is important to distinguish proximal Galerkin from the many other numerical methods for bound-constrained variational problems developed over this time frame.
Moreover, proximal Galerkin is as much a finite element method as it is an optimization algorithm.
In turn, we choose to survey the optimization literature as well as the numerical PDE literature in \Cref{sub:bound_constrained_optimization,sub:bound_constrained_finite_element_methods}, respectively.
The main contributions of this work are highlighted and itemized in~\Cref{sub:contributions}.
We encourage readers to move directly to the latter subsection if are they not interested in the historical context of the method.

\subsection{Optimization methods for pointwise bound constraints} \label{sub:bound_constrained_optimization}

Bound-\linebreak{}constrained variational problems arise in many subjects. These include, but are not limited to, contact mechanics  \cite{kikuchi1988contact,Wriggers2006}, financial mathematics \cite[Chap. 12]{tankov2015financial}, mathematical image processing \cite{LAmbrosio_VMTortorelli_1990}, and the geosciences, such as glaciology \cite{Zwinger2009}.
It is here that we are often confronted with the requirement that the solution be pointwise bounded from above or below by some critical threshold over at least a portion of the physical domain or its boundary. In PDE-constrained optimization and optimal control, bounds on the solution of the PDE, i.e., state constraints, naturally arise as a modeling requirement, see the well-known monographs \cite{Troltzsch_book2010,MHinze_RPinnau_MUlbrich_SUlbrich_2009}  and the references therein, especially \cite{ECasas_1986,ECasas_1993,ECasas_1997}. Consequently, a great deal of effort has been spent on treating bound constraints in infinite dimensions. 

We mainly restrict our overview to the numerical solution of the obstacle problem \cref{eq:DirichletVI}, with $K \subset \{v \in H^1(\Omega) \mid v \geq \phi\}$ for some $\phi \in H^1(\Omega)\cap L^\infty(\Omega)$, since the available solvers capture the main essences of the common techniques for other bound-constrained problems, however, we note that a number of the optimization algorithms listed below are applicable far beyond this setting.
Perhaps the most direct approach begins by prescribing a finite-dimensional subspace of $H^1(\Omega)$ for the discrete solution and then solving the associated variational problem by methods of nonlinear programming.
In this ``first-discretize-then-optimize'' class of approaches, the finite-dimensional reformulation typically amounts to a strongly convex quadratic program or a discrete strongly monotone variational inequality.
The fact that higher-order basis functions face numerous challenges when used to enforce pointwise bound constraints limits the benefits of these approaches; for further discussion, see~\Cref{sub:bound_constrained_finite_element_methods}.
However, a wealth of viable algorithms from nonlinear programming can be applied to lowest-order discretizations; see, e.g., \cite{Bertsekas99,nocedal1999numerical}.
Nevertheless, at least for active set-based approaches, such as in \cite{Hintermller2002}, one will almost certainly observe \emph{mesh-dependence}.

Mesh-dependence means that the number of nonlinear solver iterations required to reach a prescribed stopping tolerance (using the appropriate function space norm) will grow without bound on successively finer meshes.
Nevertheless, mesh-dependence can be computationally mitigated 
by appealing to multigrid methods, as was done in the celebrated papers 
\cite{Brandt1983,Hackbusch1983,RHWHoppe_1987,Hoppe1990,Hoppe_1994,Kornhuber_1994,Kornhuber_1996,Kornhuber_2001,BIWohlmuth_RHKrause_2003}; see \cite{CGraeser_RKornhuber_2009} for a comprehensive review. Despite the favorable behavior of these multigrid methods, there is no proof of mesh-independence in general. In particular, there is no guarantee that a given sequence of meshes will not miss low-dimensional portions (sets of positive capacity \cite{kinderlehrer2000introduction}) of the active set. To be fair, despite strong evidence of mesh independence for our method, which is bolstered by the convergence theory of the infinite-dimensional optimization algorithm underpinning the numerical method, a rigorous mesh-independence proof leveraging the ideas in \cite{weiser2005asymptotic} is postponed to future work.  

Mesh-dependence in active set methods arises from a lack of generalized differentiability of the (nonsmooth) residual in the function space setting, cf.~\cite{Hintermller2002,Ulbrich_2002}. This has motivated researchers to propose and analyze algorithms for bound-constrained problems in the continuous, i.e., infinite-dimensional setting. If an algorithm can be shown to converge in the continuous setting and the problem of interest exhibits sufficient stability properties around its solution, then this convergence will carry over to perturbed problems. At least for conforming discretizations, the associated finite-dimensional problem can be viewed as such a perturbation provided the discretization is sufficiently fine.
For further material on this topic, we refer the reader to the detailed discussions and references to applications in \cite{weiser2005asymptotic} and the pioneering works \cite{McCormick_1978,Allgower_1986,Allgower_1987}. 

Infinite-dimensional algorithms follow their finite-dimensional counterparts and can be roughly split into several categories: penalty methods, barrier methods, augmented Lagrangian methods, and first-order methods of convex optimization. For penalty (approximation) methods, we point the interested reader to the well-known monograph \cite{Glowinski_1984}, which claims these techniques go back to \cite{JLLions_1968,JLLions_1969}. However, we note that the numerical methods in \cite{Glowinski_1984}, e.g., coordinate descent, are not seen to be competitive with more recent developments in the subsequent decades after its publication.

Quadratic penalty methods are used widely in PDE-constrained optimization, see, e.g.,~\cite{hintermuller2006feasible,hintermuller2006path,Hinterm_ller_2009} and readily extendible to numerous applications; see, e.g., \cite{Kunisch_2012,Keuthen_2014,Adam_2018}. These are often referred to as ``Moreau--Yosida''-based approaches because the quadratic penalty can be viewed as the Moreau envelope of the indicator function for the bound constraints. The downside of these methods is the requirement to drive the penalty parameter to infinity to restore feasibility. Mirroring their finite-dimensional equivalents, interior point methods have also been investigated in detail for certain classes of PDE-constrained problems, see, e.g., \cite{Schiela2006,Ulbrich2007,Wollner2008,Hinze2009,Schiela2011}. Our method is closer to interior point methods, due to the entropy term \cite{sturmfels2022toric,pavlov2022gibbs}, and somewhat related to the first-order methods in \cite{Tran2015,Zosso2017}. However, in contrast to traditional interior point methods, the entropy functions employed in the text below do not exclude points from the feasible sets as they are still well-defined for feasible solutions that exhibit contact on sets of positive measure (or capacity).
Recently, entropy regularization has become a popular technique to promote exploration in reinforcement learning \cite{ahmed2019understanding,li2021quasi,landajuela2021improving}.
The same technique is also used in semidefinite programming \cite{lindsey2023fast} and optimal transport \cite{cuturi2013sinkhorn}.
An early comparison of infinite-dimensional interior point versus quadratic penalty approaches can be found in \cite{Bergounioux2000}. We also point to more recent work \cite{Tran2015,Zosso2017} on new penalty methods that appear to be mesh-dependent. Finally, though not expressly developed for bound-constrained problems in infinite-dimensional spaces, proximal point methods will play a central role in our method. This is discussed in detail in \Cref{sub:proximal_point} below.

Augmented Lagrangian approaches have also been developed for variational inequalities and PDE-constrained optimization; see, e.g., early work in \cite{Ito1990,Ito1990a,bergounioux1993augmented,Bergounioux1997} along with the monographs \cite{Glowinski_1984,KIto_KKunisch_2008,Wriggers2006,kikuchi1988contact} and the many references therein. Recent work has extended these methods to more general problems in abstract Banach spaces while simultaneously exploiting advances in matrix-free, inexact subproblem solvers in constrained optimization (such as \cite{heinkenschloss2014matrix,kouri2018inexact}), see \cite{birgin2014practical,kanzow2018augmented,Antil2023}.  In finite dimensions, augmented Lagrangian approaches are generally superior to penalty-based methods in the sense that the penalty parameter does not need to be driven to infinity to guarantee feasibility. Moreover, the penalty function in the subproblems is adaptively updated by the dual variables at each iteration. However, as observed in \cite[Sec.~5]{Antil2023}, the situation is more delicate in infinite dimensions, e.g., it may be necessary that some of the penalty parameters need to pass to infinity to guarantee the generation of a sequence of iterates with feasible accumulation points and the dual variables may not be bounded in those function spaces which are more easily treated numerically.

\subsection{Numerical methods for pointwise bound constraints} \label{sub:bound_constrained_finite_element_methods}

The development of bound-preserving numerical methods for PDEs began in the early days of scientific computing \cite{lax1954weak,godunov1959finite} and has remained an important pursuit ever since.
Although the present paper focuses on an entirely different category of PDE problems, hyperbolic conservation laws have provided a major source of motivation for research on the topic \cite{crandall1980monotone,harten1983upstream}, and have inspired many bound-preserving techniques now applied to other classes of PDEs.
In many situations, the challenge lies in the fact that standard high-order numerical methods do not preserve key invariant domain properties of the underlying physics \cite{guermond2016invariant}, such as pointwise positivity \cite{sun2018discontinuous}, yet, such properties are often required for numerical stability \cite{wu2021geometric}.

Some of the earliest attempts to ensure bound constraints involved using artificial viscosity to dampen oscillations that would lead to negativity and other spurious solution features \cite{vonneumann1950method,lax1960systems}.
Later on, more sophisticated ``high resolution'' flux- and slope-limiting strategies emerged \cite{boris1973flux,van1979towards,harten1983high,sweby1984high}; see also \cite{leveque1992numerical} for a classical overview and further references.

One of the most popular approaches to designing high-order bound-preserving methods is flux-corrected transport \cite{boris1973flux,zalesak1979fully,kuzmin2001positive,kuzmin2012flux}.
The general idea relies on forming a convex combination of a desired high-order solution and a bound-preserving low-order solution.
The method then selects the high-order solution wherever the constraint is satisfied and locally transitions to the low-order solution wherever it is necessary to avoid constraint violations.
A more recent popular approach \cite{zhang2010positivity,zhang2010maximum,zhang2011maximum}, which can be traced back to \cite{perthame1996positivity}, relies on developing high-order schemes with positive cell averages.
If such a high-order scheme can be found, the local solution need only be rescaled towards its (positive) mean wherever the constraints are violated.

The majority of high-order bound-preserving numerical methods for PDEs, including the two methods just described for hyperbolic conservation laws, do not constrain the solution to the continuous-level feasible set.
This is due, in part, to the fact that checking pointwise bound violations with an arbitrary polynomial is prohibitively expensive \cite{lasserre2007sum}.
Instead, almost all modern methods involve one of two common strategies: (1) enlarging the feasible set by only constraining the values of the solution at quadrature or nodal points \cite{zhang2010positivity,sun2018discontinuous,lin2023positivity,dzanic2022positivity,barrenechea2023nodally} or (2) diminishing the feasible set by constraining the solution's basis function coefficients \cite{abgrall2010example,anderson2017high,lohmann2017flux,abgrall2022reinterpretation}.
The former strategy results in a relaxation of the underlying problem that allows for solutions that are not truly positive \emph{pointwise}.
The latter strategy typically involves discretizing the solution with a positive basis that guarantees, e.g., that the solution is non-negative \emph{if} its coefficient are non-negative; see \cite{sukumar2004construction,ortiz2010maximum,ainsworth2011bernstein,cottrell2009isogeometric,allen2022bounds,dahlke2022wavelet} for the properties of various choices.
If a high-order discretization is used, both strategies lead to basis-dependent solutions, instead of solely approximation space-dependent solutions.

Since limiters tend to have a minimal number of hyperparameters, enforcing bound constraints using many of the techniques above may, at most, reduce to only solving a single-variable optimization problem at each element.
Recently, however, optimization-based methods have been explored to enlarge the solution space \cite{yee2020quadratic,bochev2020optimization}.
In these methods, a nonlinear program is solved at each element.
Likewise, global optimization approaches have also been explored, but, possibly owing to the cost, we are only aware of investigations with simple model problems \cite{liska2008enforcing,evans2009enforcement}.

Finally, logarithm-transformation methods, which date back at least to \cite{ilinca1996methodes}, have been known in the literature for some time \cite{ilinca1998unified,Carrillo2001Posit-6428,luo2003computation}.
Yet, they have taken on new interest in recent years \cite{metti2016energetically,liu2020exponential,fu2022high,vijaywargiya2023two,dzanic2023bounds}.
Other earlier work of related interest include \cite{degroen1979error,brezzi1989two,minor1993exponential,lazghab2002adaptive,fattal2004constitutive}.
The appeal is that discretizing a transformed variable may deliver an approximation that is intrinsically structure-preserving and basis-independent because it encodes geometry of the feasible set.
However, as we have already described in detail in \Cref{sub:the_cole_hopf_transform_and_the_semiring_of_non_negative_functions}, naively transforming a PDE variable leads to theoretical concerns when the solution is permitted to reach the boundary of the feasible set.
Therefore, implementing these methods in practice can be challenging, and may require ad-hoc assembly rules for the degenerate PDEs that arise, as noted in \cite{vijaywargiya2023two}.

\subsection{Contributions of the present work} \label{sub:contributions}

This paper focuses on establishing a mathematical foundation for the proximal Galerkin finite element method and exploring some of its applications.
The main technical results are developed specifically for the obstacle problem.
Yet, \Cref{sec:maximum_principles,sec:new_algorithms_for_topology_optimization} provide further applications with an eye towards future work.
In order to distinguish our work from previous and parallel efforts described in the literature above, we itemize our primary contributions:
\begin{itemize}[leftmargin=0cm,itemindent=1cm,itemsep=3pt,topsep=3pt]
	\item
		We introduce a new numerical method to treat infinite-dimensional bound-constrained variational inequality problems.
	The method hinges on an adaptive entropy regularization technique that was introduced by Nemirovskij and Yudin in \cite{nemirovskij1983problem} for general \emph{reflexive} Banach spaces, but has been primarily explored as an efficient optimization algorithm for finite-dimensional problems \cite{teboulle2018simplified}.
	Moreover, the nature of the functionals involved in our approach indicate that we need to work in a non-standard, \emph{non-reflexive} setting that is nevertheless natural for entropy regularization in infinite dimensions.
		\item
		The adaptive entropy regularization technique explored in this paper indicates the potential for a broad methodology in which the nonlinearity arising from the variational derivative of the entropy term can be replaced by a slack variable --- which we call the ``latent'' variable --- that is isomorphic to the regularized primal variable. This ultimately delivers a greater degree of flexibility in the choice of discretization scheme as the isomorphism naturally facilitates structure-preserving discretizations. We coin this framework the latent variable proximal point (LVPP) methodology.
	\item
	We apply the entropy regularization technique to the obstacle problem, and in doing so derive (distributional forms of) the \emph{entropic Poisson equation},
	\begin{subequations}
	\begin{equation}\label{eq:entrop-poisson}
		-\Delta u + \ln u = f
		\,,
			\end{equation}
	and the \emph{binary-entropic Poisson equation},
	\begin{equation}
				-\Delta u + \atanh u = f
				\,.
	\end{equation}
	\end{subequations}
	When understood as arising from the Euler--Lagrange equations for the regularized energy functionals, these appear to be novel semilinear elliptic PDEs. Though a similar equation to \cref{eq:entrop-poisson} has been investigated in \cite{MMontenegro_OSantanadeQueiroz_2009} and the nonlinearities are, at least when restricted to their \textit{domains}, smooth and monotone, the Nemytskii operators induced by $\ln$ and $\atanh$ require special care as they have \textit{restricted} domains when defined from the original real-valued functions; cf.~\cite{ambrosetti1995primer} and related literature for the analysis of Nemytskii, i.e., nonlinear superposition operators.
		\item
	Motivated by the analysis of the entropic Poisson equation, we establish a non-trivial geometric connection between non-negativity-constrained optimization and group theory.
	Further geometric connections are established via entropy functionals for other bound constraints.
	\item
	We present a novel finite element method to solve the entropic Poisson equation and perform preliminary \textit{a priori} error analysis on the resulting nonlinear mixed method.
	Our numerical experiments indicate that the method is mesh-independent when comparing the number of iterates required to reach a certain solution tolerance; see, e.g.,~\Cref{sub:obstacle_numerical_experiments}.
	\item
	We extend the contributions above to arrive at a novel approach to enforce discrete maximum principles on non-symmetric elliptic PDE, e.g., the advection-diffusion equation.
	\item
	We introduce \emph{two} different types of stable finite element pairs for proximal Galerkin discretizations of second-order elliptic VIs with pointwise bound constraints.
	The first type employs a \emph{discontinuous} latent variable $\psi_h$; cf.~\Cref{sub:finite_element_subspaces}.
	These finite elements lead to a primal solution $u_h$ with a bound-preserving cell average; cf.~\Cref{rem:PositiveCellAverage}.
	The second type uses a $C^0(\Omega)$-\emph{continuous} latent variable; cf.~\Cref{sub:StableElement_ContinuousLatentVariable}.
	In this case, a well-chosen quadrature rule induces a nodally bound-preserving primal solution; cf.~\Cref{rem:MassLumping}.
	Both types of proximal Galerkin discretizations lead to a secondary solution variable $\tilde{u}_h$ that is \emph{pointwise} bound-preserving throughout the domain.
	\item
	We present a new algorithm for topology optimization to showcase the breadth of applicability of the geometric optimization techniques developed in this work.
	The algorithm is efficient and relatively simple to implement.
	Our results indicate that it is also \emph{mesh-independent}.
	\item
	We release our code \cite{Keith2023ObstacleCode,Keith2023TopOptCode,ZenodoCode}, implemented in part using the finite element software FEniCSx in Python and, otherwise, with the MFEM library in C++ \cite{anderson2021mfem}, to facilitate broader adoption in the community.
\end{itemize}

\section{The obstacle problem} \label{sec:the_obstacle_problem}

In \Cref{sec:structure}, we surveyed several structural properties that entropy regularization brings to a specific form of the obstacle problem,
\begin{equation}
\label{eq:ObstacleProblem}
	\min_{u\in H^1_g(\Omega)}
	~
	\frac{1}{2}
	\int_\Omega |\nabla u|^2 \dd x
	-
	\int_\Omega f u \dd x
	~~\text{subject to~}
		u \geq \phi
	~\text{in~}\Omega
	\,,
\end{equation}
where $\phi = 0$.
In this section, we return to the same motivating example to review these properties in greater detail and extend our conclusions in order to analyze nonzero obstacle functions $\phi \neq 0$.
The main theoretical results in this section are the representation theorem, \Cref{lem:EntropyDifferentiability}, the characterization theorem, \Cref{thm:PrimalProblem}, and the convergence theorem, \Cref{thm:ConvergenceContinuousLevel}.
The section closes with a proximal Galerkin algorithm to solve the obstacle problem (\Cref{alg:ObstacleProblem}) and a report of our numerical experiments with it (\Cref{sub:obstacle_numerical_experiments}).

\subsection{The entropy gradient} \label{sub:entropy_regularity}

Before we can properly investigate entropy regularization and its role in treating the obstacle problem~\cref{eq:ObstacleProblem}, we must closely analyze the regularity of the entropy functional~\cref{eq:EntropyFunctional} in Lebesgue spaces.
Doing so will guide us toward the key geometric structure encoded in this functional.
As a pedagogical instrument, we proceed by building an analogy to the finite-dimensional setting.

Let $x\in \mathbb{R}^N$ denote the $N$-dimensional vector $(x_1,\ldots, x_N)$ and
denote the nonnegative orthant in $\mathbb R^N$ by
\begin{equation}
	\mathbb{R}^N_{+}
			= \{ (x_1,\ldots,x_N) \in \mathbb{R}^N \mid x_i \ge 0 \text{~for all~} i=1,\ldots N\}
	\,.
\end{equation}
Now, consider the corresponding finite-dimensional entropy function 
$s:\mathbb R^N_+ \to \mathbb R$ defined by $s(x) = \sum_{i=1}^N x_i\ln x_i - x_i$, wherein we remind the reader of our simplifying convention $0 \ln 0 := 0$.
It is easy to see that $s(x)$ is continuous and strictly convex on $\mathbb R^N_+$,
but only differentiable on its interior,
\begin{equation}
	\interior \mathbb{R}^N_{+} = \{ (x_1,\ldots,x_N) \in \mathbb{R}^N \mid x_i > 0 \text{~for all~} i=1,\ldots N\}
	\,,
\end{equation}
due to the logarithmic singularity in the gradient $\nabla s(x) = (\ln x_1, \ldots, \ln x_N)$.
A careful analysis is required to determine what the effect of the same type of logarithmic singularity will be at the function space level when analyzing the entropy functional $S$ in~\cref{eq:EntropyFunctional}.

As our first key structural result shows, $L^\infty_+(\Omega)$ and $\interior L^\infty_+(\Omega)$ reflect the roles played above in finite-dimensions by $\mathbb{R}^N_{+}$ and $\interior \mathbb{R}^N_{+}$, respectively. The proof is deferred to the outset of \Cref{sub:regularity_of_H}. 

\begin{theorem}[Gradient representation]
\label{lem:EntropyDifferentiability}
Let  $S: L^{p}(\Omega) \to  \mathbb R \cup \left\{+\infty\right\}$, $p \in [1,\infty]$, be the negative entropy functional defined by 
	 \[
	 S(u) = \left\{
	 		\begin{array}{cc}
	 		\int_\Omega u\ln u- u \dd x,& \text{ if $u \in L^p_+(\Omega)$,}\\
			 +\infty,& \text{otherwise.}
			 \end{array}\right.
	\]
	\begin{enumerate}[leftmargin=0.75cm,itemindent=0cm,itemsep=3pt]
	\item\label{item:EntropyDifferentiability_Part_1}
	If $p \in [1,\infty]$, then $S$ is strictly convex and lower semicontinuous. 
	\item\label{item:EntropyDifferentiability_Part_2}
	If $p \in (1,\infty]$, then $S$ is continuous on $L^p_+(\Omega)$.
	\item\label{item:EntropyDifferentiability_Part_3}
	If $p = \infty$, then $S$ is continuously
	Fr\'echet differentiable on $\interior L^p_+(\Omega)$
	with respect to the $L^p(\Omega)$-norm topology.
	In particular, the $L^\infty(\Omega)$-Fr\'echet derivative of $S$ can be uniquely characterized by the variational equation
	\begin{equation}
	\label{eq:EntropyDifferentiability_variations}
		\langle {S}^\prime(u), v\rangle = \int_\Omega v \ln u \dd x
						~~
		\text{for all~}
		u \in \interior L^\infty_+(\Omega)
		\text{~and~}
		v \in L^\infty(\Omega)
		.
	\end{equation}
	Moreover, $\|{S}^\prime(u)\|_{(L^\infty(\Omega))^\prime} = \|\nabla {S}(u)\|_{L^1(\Omega)}$, where
	\begin{equation}
	\label{eq:EntropyDifferentiability_gradient}
		\nabla {S}(u) = \ln u
		\in L^\infty(\Omega)
			\end{equation}
	is the unique primal representation (i.e., gradient) of 
	$S'(u)$ and is uniquely
		determined by the variational equation
	\begin{equation}
	\label{eq:EntropyDifferentiability_primal}
		( \nabla {S}(u), v) = \langle {S}^\prime(u), v\rangle
		~~
		\text{for all~}
		u\in \interior L^\infty_+(\Omega)
		\text{~and~} v \in L^1(\Omega)
		.
	\end{equation}
	\end{enumerate}
\end{theorem}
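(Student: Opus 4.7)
My plan is to dispatch the three parts by exploiting the properties of the integrand $h(t) := t \ln t - t$ on $[0,\infty)$: strict convexity, the uniform lower bound $h(t) \ge -1$ (attained at $t=1$), the growth estimate $|h(t)| \le C_q(1+t^q)$ for every $q > 1$, and the derivatives $h'(t) = \ln t$, $h''(t) = 1/t$ on $(0,\infty)$. Strict convexity of $S$ on $L^p(\Omega)$ is immediate from pointwise strict convexity of $h$ and convexity of $L^p_+(\Omega)$, together with the convention $S \equiv +\infty$ off this set. For lower semicontinuity, I would pass to an a.e.\ convergent subsequence of any $L^p$-convergent sequence and apply Fatou's lemma to the non-negative integrand $h(u_n) + 1$; this settles \cref{item:EntropyDifferentiability_Part_1}. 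For \cref{item:EntropyDifferentiability_Part_2}, given $u_n \to u$ in $L^p_+(\Omega)$ with $p \in (1,\infty]$, boundedness of $\{u_n\}$ in $L^p$ together with the growth estimate (choose any $q \in (1,p)$ when $p < \infty$, any $q > 1$ when $p = \infty$) shows $\{h(u_n)\}$ is equi-integrable via the de la Vall\'ee-Poussin criterion, whence Vitali's convergence theorem yields $S(u_n) \to S(u)$.

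Part \cref{item:EntropyDifferentiability_Part_3} is the central technical content. Fix $u \in \interior L^\infty_+(\Omega)$ and set $c := \essinf_{\Omega} u > 0$. For any $v \in L^\infty(\Omega)$ with $\|v\|_{L^\infty} \le c/2$, we have $u + s v \ge c/2$ a.e.\ for all $s \in [0,1]$, so a second-order Taylor expansion of $g(t) := t\ln t$ with integral remainder gives, pointwise a.e.,
\begin{equation*}
h(u+v) - h(u) - v \ln u \;=\; \int_0^1 (1-s)\,\frac{v^2}{u+sv}\,\dd s \;\le\; \frac{v^2}{c}.
\end{equation*}
Integrating over $\Omega$ yields $|S(u+v) - S(u) - \int_\Omega v \ln u\,\dd x| \le (|\Omega|/c)\|v\|_{L^\infty}^2$, which is $o(\|v\|_{L^\infty})$, so $S$ is Fr\'echet differentiable at $u$ with $\langle S'(u), v\rangle = \int_\Omega v \ln u\,\dd x$. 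Continuity of $u \mapsto S'(u)$ then follows because, for $u_n \to u$ in $L^\infty$ with a uniform positive lower bound, $\ln u_n \to \ln u$ in $L^\infty$ by local Lipschitzness of $\ln$ away from $0$.

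Finally, since $|\Omega| < \infty$ and $\ln u \in L^\infty(\Omega) \subset L^1(\Omega)$, the standard isometric embedding $L^1(\Omega) \hookrightarrow (L^\infty(\Omega))'$ identifies $S'(u)$ with the function $\nabla S(u) := \ln u \in L^\infty(\Omega)$ and yields $\|S'(u)\|_{(L^\infty)'} = \|\ln u\|_{L^1(\Omega)} = \|\nabla S(u)\|_{L^1(\Omega)}$; the variational equation \cref{eq:EntropyDifferentiability_primal} for the primal representation then reduces to the $L^1$--$L^\infty$ duality pairing. The step I expect to be the main obstacle is producing the Taylor remainder in a form that manifestly delivers the uniform $O(\|v\|_{L^\infty}^2)$ bound; the crucial input is that on the $L^\infty$-interior of the non-negative cone one has the uniform lower bound $u + sv \ge c/2 > 0$ throughout $\Omega$, which is precisely what fails on $L^\infty_+(\Omega) \setminus \interior L^\infty_+(\Omega)$ and which explains why the interior hypothesis is essential for \cref{item:EntropyDifferentiability_Part_3}.
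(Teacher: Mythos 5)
Your proposal is correct, and its core differs from the paper's argument in an interesting way. For part \ref{item:EntropyDifferentiability_Part_3}, the paper first establishes only G\^ateaux differentiability, using the factorization $v = uw$ with $w \in L^\infty(\Omega)$ (the ``critical step'' \cref{eq:EntropyDifferentiability_critical}), the identity $s(u+\tau v)-s(u)=\tau v\int_0^1\ln(u+\sigma\tau v)\,\dd\sigma$, the elementary bound $|\ln(1+\tau w)|\le 1$ for small $\tau$, and dominated convergence; it then proves continuity of the G\^ateaux derivative and invokes the Ambrosetti--Prodi criterion (continuous G\^ateaux derivative $\Rightarrow$ $C^1$ Fr\'echet) to conclude. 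You instead exploit the same essential hypothesis --- the uniform lower bound $u \ge c > 0$ available on $\interior L^\infty_+(\Omega)$ --- through a second-order Taylor expansion with integral remainder, obtaining the pointwise bound $0 \le h(u+v)-h(u)-v\ln u \le v^2/c$ and hence Fr\'echet differentiability \emph{directly}, with an explicit $O(\|v\|_{L^\infty}^2)$ error; this is more elementary (no G\^ateaux-to-Fr\'echet theorem, no dominated convergence in the differentiability step) and in fact quantitatively stronger, since it also shows $S'$ is locally Lipschitz on $\interior L^\infty_+(\Omega)$, whereas the paper's route only delivers continuity of $S'$. Similarly, for parts \ref{item:EntropyDifferentiability_Part_1}--\ref{item:EntropyDifferentiability_Part_2} the paper leans on the convex-analysis literature and on Krasnosel'skii's theorem for Nemytskii operators, while your Fatou and de la Vall\'ee-Poussin/Vitali arguments are self-contained; both are sound. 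Two small points to tidy up in a written version: in the lower-semicontinuity argument you should also dispose of limits $u \notin L^p_+(\Omega)$ (closedness of the cone forces all but finitely many $u_n$ outside it, so the inequality is trivial there), and your closing identification of $S'(u)$ with $\ln u$ via the isometric embedding $L^1(\Omega)\hookrightarrow (L^\infty(\Omega))'$ is exactly the paper's final step, which tests against sign-type functions to get $\|S'(u)\|_{(L^\infty(\Omega))'}=\|\ln u\|_{L^1(\Omega)}$; spelling out that sign-function computation would make the isometry claim explicit rather than cited.
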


At a first glance, it is tempting to define $S$ from $L^1(\Omega)$ into $\mathbb R \cup \left\{+\infty\right\}$. 
This is the perspective taken in much of the literature on infinite-dimensional convex analysis; see, in particular, \cite{borwein1994strong,bauschke2001essential}. 
In this setting, it is shown that we have strict convexity and lower semicontinuity. However, as noted in \cite[Remark 5.7]{bauschke2001essential}, there
are some issues with this viewpoint. For example, $S$ would be nowhere continuous, but it would admit subgradients of the form $\ln u$ whenever $u \in \interior L^{\infty}_+(\Omega)$. 

As claimed above, and proven in \Cref{sub:regularity_of_H}, we see that $S \colon L^p(\Omega) \to \mathbb R \cup \left\{+\infty\right\}$ is in fact continuous on $L^p_+(\Omega)$ provided $p > 1$ and even continuously Fr\'echet differentiable when we take $p = \infty$ and $u \in \interior L^{\infty}_+(\Omega)$. Moreover, the derivative $S'(u)$ admits a ``primal'' representation of the form $\ln u$, which connects back to the convex analysis literature.  Our proof techniques, however, are not based on duality arguments or the properties of subgradients.  

Since $S$ will be used to define a Bregman distance below, whose domain needs to fit together with the typical regularity spaces for partial differential operators, we can safely choose any $p \in [1,\infty]$ so that the regularity space is continuously embedded into $L^p(\Omega)$, even if this initially appears to rule out certain functions in the domain of $S$. For example, if we are working with $u \in H^1(\Omega)$, then we can select $p \in [1,2]$, regardless of the dimension of $\Omega$ or regularity of $\partial \Omega$. On the other hand, if the dimension of $\Omega$ is $n=2$ or higher, then $H^1(\Omega)$ does not continuously embed into $L^{\infty}(\Omega)$. 

Finally, the properties of $S$ given in \Cref{lem:EntropyDifferentiability} indicate that $S : L^{\infty}_+(\Omega)\to\mathbb R$ is part of an important class of \emph{essentially smooth} functions introduced by Rockafellar \cite[Section~26]{rockafellar1970convex} (in finite dimensions) known as \emph{Legendre functions}, which are extended to infinite dimensions in \cite{borwein1994strong,bauschke2001essential}. 
As discussed in, e.g., \cite[Section~2.3]{teboulle2018simplified}, Legendre functions play a crucial role in proximal algorithms for finite-dimensional convex optimization.

To prepare us for non-trivial obstacles $\phi \neq 0$, we have the following corollary to \Cref{lem:EntropyDifferentiability} pertaining to the shifted entropy functional $S_{\phi}(u) = S(u - \phi)$.
As with \Cref{lem:EntropyDifferentiability}, the proof of this result is deferred to~\Cref{sub:regularity_of_H}.
       
\begin{corollary}[Gradient of the shifted entropy functional]
\label{cor:shift-ent}
	Let $\phi \in L^\infty(\Omega)$.
	The shifted negative entropy functional $S_{\phi}(u)$
	is strictly convex on
	\begin{equation}\label{eq:closure_inhom}
		L^{\infty}_{\phi,+}(\Omega)
		=
		\{ w \in L^\infty(\Omega) \mid w \geq \phi \}
		\,.
	\end{equation}
	and Fr\'echet differentiable on
	\begin{equation}
		\interior L^{\infty}_{\phi,+}(\Omega)
		=
		\{w \in L^{\infty}_{\phi,+}(\Omega) \mid \essinf (w-\phi) > 0 \}
	\end{equation}
	with respect to the norm topology on $L^{\infty}(\Omega)$.
	The Fr\'echet derivative of $S_{\phi}$ can be uniquely characterized by the variational equation
	\begin{equation}
	\label{eq:EntropyDifferentiability_variations_inhom}
		\langle S^\prime_{\phi}(u), v\rangle = \int_\Omega v \ln (u-\phi) \dd x
		~~
		\text{for all~}
		u\in \interior L^\infty_{\phi,+}(\Omega)
		\text{~and~} v \in L^\infty(\Omega)
		.
	\end{equation}
	Moreover, $\|S^\prime_{\phi}(u)\|_{(L^\infty(\Omega))^\prime} = \|\nabla {S}_{\phi}(u)\|_{L^1(\Omega)}$, where
	\begin{equation}
	\label{eq:EntropyDifferentiability_gradient_inhom}
		\nabla {S}_{\phi}(u) = \ln(u-\phi)
		\in L^\infty(\Omega)
		\,,
	\end{equation}
	is the unique primal representation (i.e., gradient) of $S^\prime_{\phi}\colon \interior L^\infty_{\phi,+}(\Omega) \to (L^\infty(\Omega))^\prime$ in $L^\infty(\Omega)$, determined by the variational equation
	\begin{equation}
	\label{eq:EntropyDifferentiability_primal_inho}
		( \nabla {S}_{\phi}(u), v) = \langle {S}^\prime_{\phi}(u), v\rangle
		\quad
		\text{for all~}
		u\in \interior L^\infty_{\phi,+}(\Omega)
		\text{~and~} v \in L^1(\Omega)
		.
	\end{equation}
\end{corollary}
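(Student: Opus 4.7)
The plan is to reduce \Cref{cor:shift-ent} to \Cref{lem:EntropyDifferentiability} via the affine translation $T_\phi \colon L^\infty(\Omega) \to L^\infty(\Omega)$, $T_\phi(u) = u - \phi$. Since $\phi \in L^\infty(\Omega)$, this map is a well-defined homeomorphism (in fact, an isometry) of $L^\infty(\Omega)$ onto itself, and by definition of the sets in \cref{eq:closure_inhom} it restricts to a bijection from $L^\infty_{\phi,+}(\Omega)$ onto $L^\infty_+(\Omega)$ sending $\interior L^\infty_{\phi,+}(\Omega)$ onto $\interior L^\infty_+(\Omega)$ (as $\essinf(u-\phi) > 0$ is the defining condition for both interiors after translation). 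By construction, $S_\phi = S \circ T_\phi$.

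First I would deduce strict convexity on $L^\infty_{\phi,+}(\Omega)$. For any $u_0, u_1$ in this set and $t \in (0,1)$, linearity of $T_\phi$ in its argument gives $T_\phi((1-t)u_0 + tu_1) = (1-t)T_\phi(u_0) + t T_\phi(u_1)$, and both $T_\phi(u_i)$ lie in $L^\infty_+(\Omega)$, so strict convexity of $S$ on $L^\infty_+(\Omega)$ (part~\labelcref{item:EntropyDifferentiability_Part_1} of \Cref{lem:EntropyDifferentiability}) transfers verbatim. Continuity on $\interior L^\infty_{\phi,+}(\Omega)$ (or more generally on $L^\infty_{\phi,+}(\Omega)$) follows since $T_\phi$ is continuous into $L^\infty_+(\Omega)$ and part~\labelcref{item:EntropyDifferentiability_Part_2} applies to $S$ with $p=\infty$.

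Next I would handle Fréchet differentiability. Since $T_\phi$ is affine with bounded linear part equal to the identity $I$, the chain rule applied on $\interior L^\infty_{\phi,+}(\Omega)$ yields
\begin{equation*}
S_\phi^\prime(u) \;=\; S^\prime(T_\phi(u)) \circ T_\phi^\prime(u) \;=\; S^\prime(u-\phi) \circ I \;=\; S^\prime(u-\phi),
\end{equation*}
and continuity of $S_\phi^\prime \colon \interior L^\infty_{\phi,+}(\Omega) \to (L^\infty(\Omega))^\prime$ inherits from the continuity of $S^\prime$ on $\interior L^\infty_+(\Omega)$ combined with continuity of $T_\phi$. Substituting this identity into \cref{eq:EntropyDifferentiability_variations} with $u-\phi$ in place of $u$ produces the variational characterization \cref{eq:EntropyDifferentiability_variations_inhom}. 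Because $u - \phi \in \interior L^\infty_+(\Omega)$, the gradient formula \cref{eq:EntropyDifferentiability_gradient} of \Cref{lem:EntropyDifferentiability} gives $\nabla S(u-\phi) = \ln(u-\phi) \in L^\infty(\Omega)$, which after transfer yields \cref{eq:EntropyDifferentiability_gradient_inhom} and the defining variational equation \cref{eq:EntropyDifferentiability_primal_inho}; the norm identity $\|S_\phi^\prime(u)\|_{(L^\infty)^\prime} = \|\nabla S_\phi(u)\|_{L^1}$ is immediate from the corresponding statement in \Cref{lem:EntropyDifferentiability}.

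I do not anticipate any genuine obstacle: the entire proof is a clean application of the chain rule for affine substitutions, provided one verifies that translation by $\phi \in L^\infty(\Omega)$ is indeed bicontinuous on $L^\infty(\Omega)$ and preserves the relevant (interior) subsets. The only mildly delicate point is checking that the interior is mapped to the interior, which reduces to the elementary observation that $\essinf(u-\phi) > 0$ if and only if $u - \phi$ lies in the $L^\infty$-interior of $L^\infty_+(\Omega)$; no function-space subtleties arise beyond those already resolved in \Cref{lem:EntropyDifferentiability}.
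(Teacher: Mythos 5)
Your proposal is correct and follows essentially the same route as the paper: both write $S_\phi = S \circ T_\phi$ for the translation $T_\phi(u) = u-\phi$, transfer strict convexity through the affine map, identify $\interior L^\infty_{\phi,+}(\Omega) = \phi + \interior L^\infty_+(\Omega)$ via the $\essinf$ characterization of \Cref{prop:Equivalence}, and obtain the derivative and gradient formulas by the chain rule with $T_\phi^\prime = I$. No gaps; the one point you flag as delicate (interior maps to interior) is handled in the paper exactly as you suggest.
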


\begin{remark}[Empty interior in the $H^1(\Omega)$ topology]
\label{rem:H1InteriorNonnegativeFunctions}
We point out that if $K = \{ u \in H^1_g(\Omega) \mid u \geq 0 \}  = H^1_g(\Omega) \cap H^1_+(\Omega)$ and $\Omega \subset \mathbb R^n$ with $n > 1$, then $\interior K = \emptyset$.
This is a non-trivial consequence of the fact that $H^1(\Omega)$ contains unbounded functions (see, e.g., \cite[4.43~Example]{adams2003sobolev}), and so we can get arbitrarily close to any $u \in K$ in the $H^1$-norm with an unbounded function lying outside of $K$. For example, if $\Omega \subset \mathbb R^2$ is the open unit ball centered at zero, then  $\varphi(x) := \min(0,-\ln | \ln (\|x\|) |)$, $ c > 0$, is in $H^1(\Omega)$, but is unbounded at zero. Fix $u \in K$ and assume wlog that $u$ is bounded on a neighborhood of $0$. Then $u + \epsilon \varphi \not\in K$ can be made to be arbitrarily close to $u$ by modulating $\epsilon > 0$. 
\end{remark}

\begin{remark}[No Riesz representation theorem]
When inspecting \Cref{lem:EntropyDifferentiability} and \Cref{cor:shift-ent}, the reader should note that $L^p(\Omega)$ is a Banach algebra only in the case $p=\infty$ and we only prove that $u\mapsto S_{\phi}(u)$ is Fr\'echet differentiable with respect to variations in this set; see~\cref{eq:EntropyDifferentiability_variations_inhom}.
In fact, there is a key step in our proof of \Cref{lem:EntropyDifferentiability} that requires all functions $u$ where the functional $S(u)$ is differentiable to have a multiplicative inverse $1/u \in L^\infty(\Omega)$; see \cref{eq:EntropyDifferentiability_critical}.
Based in part on this requirement, we continue to work directly with $L^\infty(\Omega)$, which is a \emph{non-reflexive} Banach space \emph{without} a corresponding Riesz representation theorem \cite{adams2003sobolev}.
It is, therefore, not a trivial consequence of differentiability that the Fr\'echet derivative $S^\prime_{\phi}(u) \in (L^\infty(\Omega))^\prime$ has the unique function space representation $\nabla {S}_{\phi}(u) \in L^\infty(\Omega)$ given by~\cref{eq:EntropyDifferentiability_primal_inho}.
In fact, the derivative of general functionals on $L^\infty(\Omega)$ lie in $(L^\infty(\Omega))^\prime$, which is the space of absolutely continuous, finitely additive set functions of bounded total variation on $\Omega$; cf.~\cite[p.~118]{yosida2012functional}.
Throughout this work, we consciously choose to refer to $\nabla S_{\phi}\colon \interior L^\infty_{\phi,+}(\Omega) \to L^\infty(\Omega)$ as the \emph{gradient} of the (shifted) entropy functional, even though we are well aware that the term ``gradient'' is typically understood as a Hilbert space concept. 
\end{remark}

\subsection{The entropy gradient is an isomorphism} \label{sub:entropy_and_its_geometry}

Let us return to the finite-dimensional entropy function $s(x) = \sum_{i=1}^N x_i\ln x_i - x_i$ introduced at the beginning of the previous subsection and focus on its properties in the strictly positive orthant $\interior \mathbb{R}^N_+ \subset \mathbb{R}^N$.
In this case, the reader should note that $x \mapsto \nabla s(x) = (\ln x_1, \ldots, \ln x_N)$, is a bijection between the set of component-wise positive vectors $x \in \mathbb{R}^N_+$ and the entire vector space $\mathbb{R}^N$.

This correspondence has a special algebraic significance if we view $\interior \mathbb{R}^N_+$ as a Lie group under the operation of componentwise multiplication,
\begin{equation}
	x \otimes y = (x_1 y_1,\ldots,x_N y_N),
\end{equation}
and view $\mathbb{R}^N$ as its associated Lie algebra under addition; cf.~\cite[Example~7.4~(b)]{lee2012introduction}.
Indeed, the smooth map $\nabla s\colon \interior \mathbb{R}^N_+ \to \mathbb{R}^N$ given above is a Lie group isomorphism because
\begin{equation}
	\nabla s(x) + \nabla s(y) = (\ln x_1 + \ln y_1, \ldots, \ln x_N + \ln y_N)
	=
	\nabla s(x \otimes y)
	.
\end{equation}
It is trivial to see that the same structure is replicated at the infinite-dimensional level between the Banach--Lie algebra $L^\infty(\Omega)$ and its Banach--Lie group $\interior L^\infty_+(\Omega)$ since
\begin{equation}
	\nabla S(u) + \nabla S(v) = \ln u + \ln v = \nabla S(uv).
\end{equation}

A deeper geometric meaning to this correspondence is revealed if we draw upon the well-known result in differential geometry that all finite-dimensional Lie groups are associated to their Lie algebra by an exponential map \cite[Proposition~20.8]{lee2012introduction}.
In the case of the Lie group $\interior \mathbb{R}^N_+$, it may be checked that the inverse of $\nabla s$, defined $(\nabla s)^{-1}(x) = (\exp x_1, \ldots, \exp x_N)$, is precisely this map.
Conveniently, the finite-dimensional result extends to the Banach--Lie group $\interior L^\infty_+(\Omega)$ \cite{glockner2002algebras,glockner2003lie}, and we are left with a similar geometric interpretation (cf.~\cref{fig:ExponentialMap}) of the isomorphism induced by the gradient of the entropy functional $\nabla S\colon \interior L^\infty_+(\Omega) \to L^\infty(\Omega)$ and its inverse,
\begin{equation}
\label{eq:InverseOfTheEntropyGradient}
	(\nabla S)^{-1}(\varphi) = \exp \varphi
	.
\end{equation}
Moreover, it can be shown that restricting the exponential map~\cref{eq:InverseOfTheEntropyGradient} to the subalgebra $H^1(\Omega)\cap L^\infty(\Omega)$ induces an isomorphism with the subgroup $H^1(\Omega) \cap \interior L^\infty_+(\Omega)$.
For further details, see \Cref{prop:Equivalence,prop:logexpChainRule}.

\begin{figure}
\centering
	\includegraphics[width=0.6\linewidth]{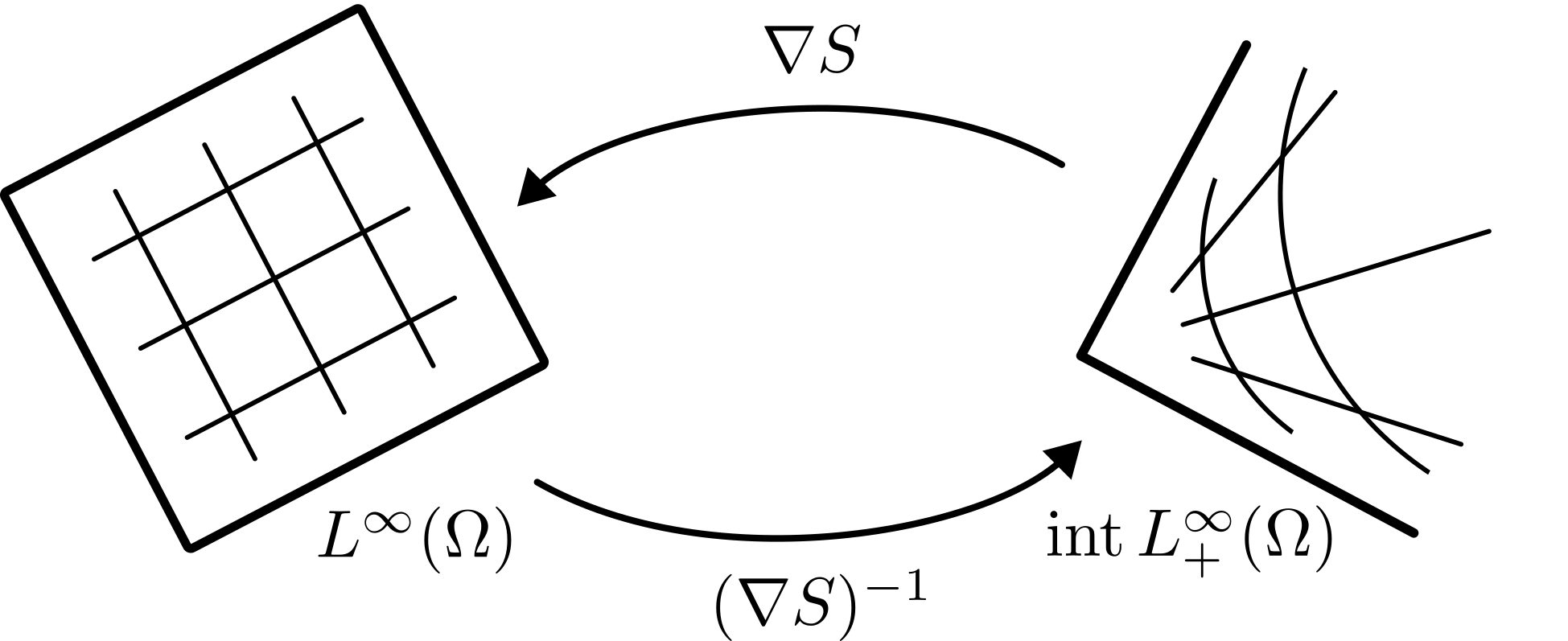}
	\caption{
	The exponential map $(\nabla S)^{-1}(\varphi) = \exp \varphi$ is an analytic isomorphism between the Banach algebra $L^\infty(\Omega)$ and the Banach--Lie group $\interior L^\infty_+(\Omega) = \{v \in L^\infty(\Omega) \mid \essinf v > 0 \}$; see~\Cref{prop:Equivalence}.
	Moreover, its restriction to the subalgebra $H^1(\Omega)\cap L^\infty(\Omega)$ forms an isomorphism with the subgroup $H^1(\Omega) \cap \interior L^\infty_+(\Omega)$; see~\Cref{prop:logexpChainRule}.
		\label{fig:ExponentialMap}}
\end{figure}

\begin{remark}[Exploiting the geometry of the feasible set]
From the optimization point-of-view, there is great value in the isomorphism $\nabla S$ residing in the fact that $L^\infty(\Omega)$ is a Banach space and Banach spaces are natural spaces in which to construct additive update formulas (they are complete, normed, and closed under addition).
Many competitive algorithms for \emph{unconstrained} optimization problems, such as gradient descent and Newton methods, are additive update formulas that leverage this linear structure in some way \cite{nocedal1999numerical}.
Likewise, when dealing with constrained optimization problems, most algorithms appeal to the linear structure of the ambient space containing the feasible set.
In \Cref{sub:proximal_point}, we will show how the isomorphism $\nabla S\colon \interior L^\infty_+(\Omega) \to L^\infty(\Omega)$ allows us to ignore the ambient space the original problem is posed in and work instead with the intrinsic geometry of the constraint set.
This, in turn, will allow us to treat constrained optimization problems in Sobolev spaces with methods originally designed only for the unconstrained setting.
\end{remark}

\subsection{Relative entropy} \label{sub:relative_entropy}

Entropy not only delivers an isomorphism between the Banach--Lie group $\interior L^\infty_+(\Omega)$ and its Banach algebra $L^\infty(\Omega)$.
It also induces a valuable distance function called the \emph{relative entropy} or \emph{(extended) Kullback--Leibler divergence}.

We assume below that $V$ is a Banach space.
For any smooth convex function $G\colon V \to \mathbb{R}$, its Bregman divergence is defined by the formula
\begin{equation}\label{eq:bregman-trad}
	D_G(u,v) = G(u) - G(v) - \langle G^\prime(v), u - v \rangle
	\,.
\end{equation}
Encoded in this definition is the important observation that, because $G$ is convex, the graph $\{(u,G(u)) \mid u \in V\}$ will always lie on or above its supporting hyperplanes, $\{ (u,G(v) + \langle G^\prime(v), u - v \rangle) \mid u \in V \}$, for every $v\in V$ at which $G^\prime(v)$ exists, see \cite{bregman1967relaxation} for this and related insights.
The Bregman divergence $D_G\colon \dom G \times\dom G^\prime \to \mathbb{R}$ measures the vertical distance between these two sets.

Loosely speaking, a Bregman divergence is a generalization of the squared distance between two points in a Hilbert space.
Indeed, it is a straightforward exercise to check, e.g., that if $G\colon H^1_0(\Omega) \to \mathbb{R}$, with $G(u) = \frac{1}{2}\|\nabla u\|_{L^2(\Omega)}^2$, the associated Bregman divergence is $D_G(u,v) = \frac{1}{2}\|\nabla u-\nabla v\|_{L^2(\Omega)}^2$.

The relative entropy $D \colon L^p_+(\Omega) \times \interior L^\infty_+(\Omega) \to \mathbb{R}$, for $p \in [1,\infty]$, is the Bregman divergence induced by the entropy functional $S$.
Given its importance to this work, we neglect to write the subscript-$S$ when working with this measure of distance.
In turn, we may select any $u \in L^p_+(\Omega)$ and $v \in \interior L^\infty_+(\Omega)$ to explicitly derive the relative entropy as follows,
\begin{equation}
\label{eq:RelativeEntropy}
	D(u,v) = S(u) - S(v) - ( \nabla S(v), u - v )
	=
	\int_\Omega u \ln\frac{u}{v} - u  + v \dd x
	\,.
\end{equation}
An illustration of the Bregman divergence of the finite-dimensional entropy function $s(x) = \sum_{i=1}^N x_i\ln x_i - x_i$ is given in \Cref{fig:BregmanPlot} for the case $N=1$. We initially use the right-hand side of \cref{eq:RelativeEntropy} in our study below without requiring its definition as a Bregman divergence. After a careful analysis shows that the relevant solutions are in $L^{\infty}_+(\Omega)$, we then employ the usual properties of Bregman divergences where required in several convergence proofs. This frees us from the rigid structures of convex analysis, e.g., that often fix the domain $V$ in the beginning and require us to work only in this space and its given topology.
\begin{figure}
\centering
	\includegraphics[width=0.5\linewidth]{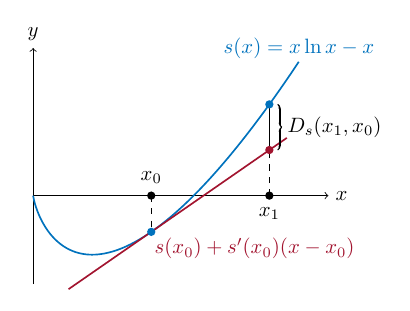}
	\caption{
	The convex function $s(x) = x \ln x - x$, its supporting hyperplane $\{s^\prime(x_0) + s^\prime(x_0)(x - x_0) \mid x \in \mathbb{R}\}$, and its Bregman divergence $D_s(x_1,x_0) = x_1 \ln ({x_1}/{x_0}) - x_1 + x_0$.
	\label{fig:BregmanPlot}}
\end{figure}

Along with other statistical distances, the relative entropy has a rich history of being used to encode geometric structure in analysis within statistics, probability theory, and information theory \cite{amari2000methods,amari2016information,nielsen2020elementary}.
Although a Bregman divergence is not symmetric, i.e., $D_G(u,v) \neq D_G(v,u)$ in general, it will satisfy the following important properties when $G$ is strictly convex \cite{bregman1967relaxation,chen1993convergence}:

\begin{proposition}[Properties of Bregman divergences]
\label{prop:BregmanDivergenceProperties}
Let $G\colon V \to \mathbb{R}$ be smooth and strictly convex.
Then the following properties hold:
\medskip

\noindent\textsl{Non-negativity.}
$D_G(u,v) \geq 0$ for all $u\in \dom G$ and $v \in \dom G^\prime$.
\medskip

\noindent\textsl{Positivity.}
$D_G(u,v) = 0$ if and only if $u=v$.
\medskip

\noindent\textsl{Convexity.}
$D_G(u,v)$ is strictly convex in its first argument.
Moreover, if $G$ is strongly convex, then so is $u \mapsto D_G(u,v)$.
\medskip

\noindent\textsl{Linearity.}
Let $F\colon V \to \mathbb{R}$ be smooth and strictly convex and $\lambda \geq 0$.
Then
\begin{equation}
	D_{G+\lambda F}(u,v) = D_{G}(u,v) + \lambda D_{F}(u,v)
	\,.
\end{equation}

\noindent\textsl{Three points identity.}
For all $u \in \dom G $ and $v,w \in \dom G^\prime$, it holds that
\begin{equation}
\label{eq:CosineIdentity}
	D_G(u,v) - D_G(u,w) + D_G(v,w)
	=
	\langle G^\prime(v) - G^\prime(w), v - u \rangle
	\,.
\end{equation}

\end{proposition}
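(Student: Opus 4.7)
The plan is to verify each of the five properties directly from the definition \cref{eq:bregman-trad} together with elementary facts about smooth (strictly or strongly) convex functionals. None of the five assertions require heavy machinery, so the proof is mostly a matter of careful bookkeeping; the one subtle point is isolating exactly how strict convexity enters the positivity claim.

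For non-negativity and positivity, I would invoke the standard first-order characterization: a smooth convex $G\colon V \to \mathbb{R}$ satisfies $G(u) \geq G(v) + \langle G^\prime(v), u-v\rangle$, with equality if and only if $u = v$ under strict convexity. Substituting into \cref{eq:bregman-trad} immediately gives $D_G(u,v) \geq 0$, and the equality case yields $u = v$. The converse ($u = v \Rightarrow D_G(u,v) = 0$) is immediate by substitution. For convexity in the first argument, I would observe that with $v$ fixed the map $u \mapsto -G(v) - \langle G^\prime(v), u-v\rangle$ is affine in $u$, so $u \mapsto D_G(u,v)$ differs from $u \mapsto G(u)$ by an affine term; hence $u \mapsto D_G(u,v)$ inherits strict convexity from $G$, and likewise inherits strong convexity (with the same modulus) since adding an affine functional preserves the quadratic lower bound $G(\lambda u + (1-\lambda)w) \leq \lambda G(u) + (1-\lambda) G(w) - \tfrac{\mu}{2}\lambda(1-\lambda)\|u-w\|^2$.

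Linearity follows from the linearity of the Fr\'echet derivative: $(G + \lambda F)^\prime(v) = G^\prime(v) + \lambda F^\prime(v)$, so plugging into \cref{eq:bregman-trad} and grouping terms gives $D_{G+\lambda F}(u,v) = D_G(u,v) + \lambda D_F(u,v)$ directly. Note that smoothness and strict convexity of $G + \lambda F$ for $\lambda \geq 0$ are automatic, so the Bregman divergence on the left-hand side is well defined.

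Finally, the three-points identity is a direct algebraic computation. I would expand each of the three Bregman divergences on the left-hand side using \cref{eq:bregman-trad}:
\begin{align*}
    D_G(u,v) - D_G(u,w) + D_G(v,w)
    &= \bigl[G(u) - G(v) - \langle G^\prime(v), u - v\rangle\bigr]
    \\
    &\quad - \bigl[G(u) - G(w) - \langle G^\prime(w), u - w\rangle\bigr]
    \\
    &\quad + \bigl[G(v) - G(w) - \langle G^\prime(w), v - w\rangle\bigr].
\end{align*}
The $G(u)$, $G(v)$, and $G(w)$ contributions cancel, leaving $-\langle G^\prime(v), u-v\rangle + \langle G^\prime(w), (u-w) - (v-w)\rangle = \langle G^\prime(v) - G^\prime(w), v - u\rangle$, which matches \cref{eq:CosineIdentity}. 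The hardest part is purely notational: keeping the asymmetry of the arguments straight (first versus second slot) and respecting that $v, w$ must lie in $\dom G^\prime$ so that $\langle G^\prime(v), \cdot\rangle$ and $\langle G^\prime(w), \cdot\rangle$ are well defined, whereas $u$ need only lie in $\dom G$. No duality or topology of $V$ enters, so the proof goes through verbatim in the Banach space setting of interest.
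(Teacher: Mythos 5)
Your proof is correct: each of the five properties is verified exactly as it should be, including the only mildly delicate points (the strict first-order inequality $G(u) > G(v) + \langle G'(v),u-v\rangle$ for $u\neq v$ behind positivity, and the cancellation in the three-points identity). The paper itself offers no proof of \Cref{prop:BregmanDivergenceProperties} — it simply cites Bregman (1967) and Chen--Teboulle (1993) — and your argument is precisely the standard elementary verification those references contain, so there is nothing further to compare.
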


\subsection{Proximal point} \label{sub:proximal_point}

Recall that in~\Cref{sub:dirichlet_free_energy} we proposed the regularized Dirichlet free energy functional
\begin{equation}
	A(u) = E(u) + \theta S(u)
	\,,
\end{equation}
and asserted that its minimizer will converge to the solution of the obstacle problem in the limit $\theta \to 0$; cf.~\cref{eq:EPE_ConvergenceWRTTemperature}.
Although this approach to solving the non-negative obstacle problem is viable, there is a much more numerically stable alternative.
Indeed, it turns out that we can just as readily generate a sequence of positive functions $u^k \to u^\ast$  by recursively regularizing the Dirichlet energy $E(u)$ with the Bregman divergence $D(u,u^{k})$.
The idea is relatively old in finite dimensions \cite{censor1992proximal,teboulle1992entropic,chen1993convergence,teboulle2018simplified}, and well-explored in reflexive Banach spaces \cite{darbon2021efficient,darbon2021accelerated}.
However, given that the algorithm is not well-known in the finite element community, we present a classical description that begins with a Hilbert space framework.

We now introduce the so-called \emph{proximal point algorithm} \cite{rockafellar2009variational,parikh2014proximal,teboulle2018simplified}, due to Marinet \cite{martinet1970regularisation}.
In turn, let $H$ be a Hilbert space and $\alpha>0$ be a positive step size parameter.
The \emph{proximal operator}, introduced in \cite{moreau1965proximite} by Moreau, is defined for every proper lower semi-continuous function $F\colon H \to \mathbb{R}\cup\{\infty\}$ as follows,
\begin{equation}
\label{eq:ProximalMap}
	{\prox}_{\alpha F}(v)
	=
	\argmin_{u \in H}
	\Big\{
		F(u) + \frac{1}{2\alpha}\|u - v\|_{H}^2
	\Big\}
	.
\end{equation}
The utility of this operator lies largely in the fact that the $\|\cdot\|_H^2$-regularization term in~\cref{eq:ProximalMap} transforms $F$ (which may not be differentiable) into a finite-valued function,
\begin{equation}
	F_\alpha(v) = \min_{u \in H} \Big\{ F(u) + \frac{1}{2\alpha}\|u - v\|_{H}^2 \Big\}
	,
\end{equation}
with an $\alpha^{-1}$-Lipschitz continuous gradient \cite{rockafellar2009variational}.
Moreover, when $F$ is convex, minimizing either $F$ or $F_\alpha$ is equivalent in the sense that
\begin{equation}
	\inf_{u\in H} F_\alpha(u) = \inf_{u\in H} F(u)
	\,.
	\quad
\end{equation}
In fact, the set of minimizers, $\argmin_{u\in H} F(u)$, coincides with the set of fixed points $u^\ast \in H$ that satisfy $u^\ast = {\prox}_{\alpha F}(u^\ast)$; see, e.g., \cite[Prop. 12.28]{HHBauschke_PLCombettes_2011}.

Choosing to iterate this fixed point equation with variable step sizes $\alpha_k > 0$ delivers the \emph{proximal point algorithm} \cite{martinet1970regularisation,rockafellar1976monotone}, written explicitly as
\begin{equation}
\label{eq:ProximalUpdate}
	u^0 \in H
	,
	\quad
	u^{k+1} = \prox_{\alpha_{k+1} F}(u^k)
	,
	\quad
	k=0,1,\ldots
	\end{equation}
It is well-known (see, e.g., \cite{guler1991convergence}), that $F(u^k)$ converges to $F(u^\ast)$ at a rate inversely proportional to the sum of the step sizes.
More explicitly, it holds that
\begin{equation}
\label{eq:ConvergenceRatePPA}
	F(u^k) - F(u^\ast) \leq \frac12 \frac{\|u^\ast - u^0\|^2_H}{\sum_{\ell=1}^k\alpha_\ell}
	\,.
\end{equation}
Thus, the function values of proximal iterates~\cref{eq:ProximalUpdate} can converge ``arbitrarily'' fast (by increasing $\alpha_\ell$), and the asymptotic complexity of the iteration~\cref{eq:ProximalUpdate} is determined by the complexity of the method used to solve each subproblem~\cref{eq:ProximalMap}.
Convergence of the function values carries over to convergence of the iterates provided an estimate of the type 
\[
\sigma(\| u - v\|) \le F(u) - F(v)
\]
holds, where $\sigma$ is monotone and invertible on $\mathbb R_+$, e.g., if $F$ is strongly convex. 

The potentially arbitrary order of convergence in~\cref{eq:ConvergenceRatePPA} makes the proximal point algorithm an attractive candidate to solve many optimization problems.
The drawback, however, is that each iteration of the algorithm requires the solution of a nonsmooth optimization problem that may be just as difficult to solve as the original problem; cf.~\cref{rem:ProximalMap}.
At the same time, the proximal operator~\cref{eq:ProximalMap} and fixed point iterations~\cref{eq:ProximalUpdate} are fundamental to a broad selection of modern optimization algorithms; see e.g., \cite{HHBauschke_PLCombettes_2011,ABeck_2017,teboulle2018simplified,GLan_2020} and the many references therein. They also play a deep role in augmented Lagrangian methods, as recognized at least as early as \cite{RTRockafellar_1976}, which have seen a resurgence in interest due, in part, to their applicability for infinite dimensional problems \cite{kanzow2018augmented,Antil2023}.

It turns out many of the most important properties of the proximal point algorithm also hold if $\frac{1}{2}\|u-v\|_H^2$ in~\cref{eq:ProximalMap} is replaced by a Bregman divergence $D_G(u,v)$ \cite{teboulle2018simplified}.
Indeed, if we assume that $G\colon V\to \mathbb{R}$ is a strictly convex functional on a Banach space $V$, we may define the \emph{Bregman proximal operator}
\begin{equation}
\label{eq:BregmanProximalMap}
	{\prox}_{\alpha F}^G(v)
	=
	\argmin_{u \in \dom F \cap\dom G }
	\big\{
		F(u) + \alpha^{-1}D_G(u,v)
	\big\}
	,
\end{equation}
and the corresponding \emph{Bregman proximal point algorithm}
\begin{equation}
\label{eq:BregmanProximalUpdate}
	u^0 \in \dom F \cap\dom G^\prime
	,
	\quad
	u^{k+1} = \prox_{\alpha_{k+1} F}^G(u^k)
	,
	\quad
	k=0,1,\ldots
	\end{equation}

\Cref{fig:BregmanPlot} illustrates the execution of this algorithm for the one-dimensional energy function $e(x) = \frac{1}{2}x^2 + x$ and the relative entropy $D_s(x,y) = x\ln(x/y) - x + y$.
Note that under the definitions above, one can show that~\cref{eq:ConvergenceRatePPA} generalizes as follows \cite{chen1993convergence},
\begin{equation}
\label{eq:ConvergenceRateBPPA}
	F(u^k) - F(u^\ast)
	\leq
	\frac{D_G(u^\ast,u^0)}{\sum_{\ell=1}^k \alpha_\ell}
	\,.
\end{equation}
See also \Cref{thm:ConvergenceContinuousLevel}.

\begin{figure}
\centering

						\includegraphics[height=5cm]{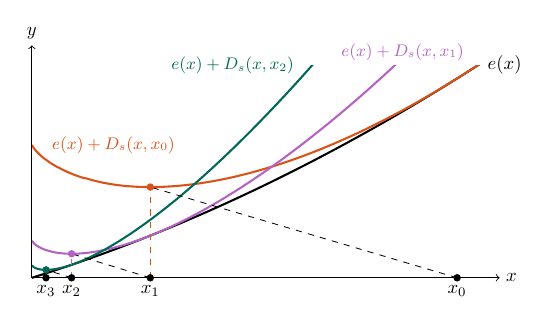}
	\caption{
	Illustration of convergence to the solution $x^\ast = 0$ for the constrained minimization problem $\min_{x\in[0,\infty)}\, e(x)$, where $e(x) = \frac{1}{2}x^2 + x$, by solving the sequence of minimization problems $x_{k+1} = \argmin_{x\in[0,\infty)}\, \{ e(x) + D_s(x,x_k) \}$ starting at $x_0 = 1$.
		\label{fig:proxPlot}}
\end{figure}

Our contribution is to show that the proximal operator~\cref{eq:BregmanProximalMap}, with an appropriately defined Bregman divergence, transforms the solution of an infinite-dimensional constrained optimization problem into a sequence of semi-linear PDEs whose solutions converge to the solution of the underlying VI.
In the case of the positive obstacle problem (i.e., $F = E$ and $G = S$), this conclusion hinges on the following result.
When combined with~\cref{eq:BregmanProximalUpdate}, \Cref{thm:PrimalProblem} leads us directly to \Cref{alg:HomogeneousAlg}, which forms the basis for the proximal Galerkin finite element method.
We note that the proof is technical and saved until \Cref{sub:Characterization}.

\begin{theorem}[Solution characterization]
\label{thm:PrimalProblem}
	Assume $\Omega \subset \mathbb{R}^n$ is an open, bounded Lipschitz domain, $n \ge 1$.
	Let $K = \{ v \in H^1_g(\Omega) \mid v \geq 0\} = H^1_g(\Omega) \cap H^1_+(\Omega)$, where $g \in H^1(\Omega) \cap C(\overline{\Omega})$ satisfies $\min_{\partial\Omega} g_{|_{\partial\Omega}} > 0$.
										Moreover, given $f \in L^\infty(\Omega)$, set
	\[
	E(v) = \frac{1}{2}\int_\Omega |\nabla v|^2 \dd x - \int_\Omega fv \dd x,
	\] 
	and for $w \in \interior L^\infty_{+}(\Omega)$
		set $D(v,w) = \int_\Omega v\ln(v/w) - v + w \dd x.$
	Then, for any
	step size $\alpha > 0$, the (relative) Dirichlet free energy minimization problem,
	\begin{equation}\label{eq:subprob_obs}
		\min_{v\in K}
		~
					 A_{\alpha}(v,w) :=
			E(v) + \alpha^{-1} D(v,w)
										,
	\end{equation}
	has a unique solution $u \in H^1_g(\Omega)\cap \interior L^\infty_{+}(\Omega)$ that is (uniquely) characterized by the weak form of the entropic Poisson equation; namely,
																						\begin{equation}
	\label{eq:PrimalProblemVE}
		(\alpha\nabla u, \nabla v)
				+
		(\ln u, v)
		=
		(\alpha f, v) + (\ln w, v)
				\quad
		\text{for all~}
		v \in H^1_0(\Omega)
		\,.
	\end{equation}
																														\end{theorem}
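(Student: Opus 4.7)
The plan is to (i) obtain a unique minimizer $u \in K$ by the direct method, (ii) upgrade the regularity of $u$ to $u \in \interior L^\infty_+(\Omega)$ via a convexity-plus-comparison-principle argument, and (iii) derive the variational equation~\cref{eq:PrimalProblemVE} by Fr\'echet-differentiating $A_\alpha$ at $u$ and extending by density. For step (i), the functional $A_\alpha = E + \alpha^{-1}D(\cdot,w)$ is strictly convex on the nonempty convex set $K$: strict convexity of $D(\cdot,w)$ comes from \Cref{lem:EntropyDifferentiability}\ref{item:EntropyDifferentiability_Part_1}, while strict convexity and coercivity of $E$ on the affine set $H^1_g(\Omega)$ follow from Poincar\'e's inequality applied to the slack $v - g \in H^1_0(\Omega)$. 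Weak lower semicontinuity of $E$ is standard, and that of $D(\cdot,w)$ follows from the l.s.c.\ of the entropy functional. Coercivity of $A_\alpha$ is then immediate since $D \geq 0$ by Gibbs' inequality (\Cref{prop:BregmanDivergenceProperties}). Nonemptiness of $K$ follows from $\min g_{|\partial\Omega} > 0$ via $\tilde g := \max(g,\epsilon) \in K$ for any $\epsilon \in (0, \min g_{|\partial\Omega})$. The direct method then yields a unique minimizer $u \in K$.

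For step (ii), I first show $u > 0$ a.e.\ by a convexity argument exploiting the logarithmic singularity of the entropy gradient. Suppose for contradiction that $|\{u = 0\}| > 0$ and pick $v_0 \in K$ with $\essinf v_0 > 0$. The convex combination $u_t := (1-t)u + tv_0 \in K$ satisfies $u_t = tv_0$ on $\{u=0\}$, and a direct pointwise computation gives the $D$-integrand difference $tv_0[\ln(tv_0/w) - 1]$ on that set. Integrating and combining with the smooth $O(t)$ contributions from $\{u > 0\}$ and from $E(u_t) - E(u)$ yields
\begin{equation*}
	A_\alpha(u_t) - A_\alpha(u) = \alpha^{-1} (t\ln t) \int_{\{u=0\}} v_0 \,\dd x + O(t) < 0
\end{equation*}
for $t>0$ sufficiently small, contradicting minimality. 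To upgrade $u > 0$ a.e.\ to $\essinf u > 0$, I take the one-sided directional derivative of $A_\alpha$ at $u$ along nonnegative $\zeta \in H^1_0(\Omega) \cap L^\infty(\Omega)$: convexity of $D$ in its first slot plus monotone convergence applied to the difference quotient give $D^\prime(u,w;\zeta) = \int_\Omega \zeta \ln(u/w)\,\dd x$ in the extended reals, producing the variational inequality
\begin{equation*}
	\alpha(\nabla u, \nabla \zeta) - \alpha(f,\zeta) + \int_\Omega \zeta \ln(u/w)\,\dd x \geq 0.
\end{equation*}
Testing this with $\zeta = (c - u)_+ \in H^1_0(\Omega)\cap L^\infty(\Omega)$, where $c := \min\{\min g_{|\partial \Omega}, \exp(\essinf(\alpha f + \ln w))\}$, each summand is nonpositive --- the first by $(\nabla u, \nabla(c-u)_+) = -\|\nabla(c-u)_+\|^2_{L^2}$, the second because $\ln u < \ln c \leq \alpha f + \ln w$ on $\{u<c\}$ --- forcing both to vanish and hence $|\{u<c\}| = 0$. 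A symmetric argument with $(u - C)_+$ and $C := \max\{\max g_{|\partial\Omega}, \exp(\esssup(\alpha f + \ln w))\}$ gives $u \leq C$ a.e., so $u \in H^1_g(\Omega) \cap \interior L^\infty_+(\Omega)$.

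For step (iii), since $u \in \interior L^\infty_+(\Omega)$, \Cref{cor:shift-ent} (with $\phi = 0$) yields Fr\'echet differentiability of $D(\cdot, w)$ at $u$ in the $L^\infty$ topology with gradient $\ln(u/w) \in L^\infty(\Omega)$. For any $v \in H^1_0(\Omega) \cap L^\infty(\Omega)$ and $|\epsilon|$ small enough that $u + \epsilon v \geq c/2 > 0$, the perturbation $u + \epsilon v$ lies in $K$, so setting $\frac{\dd}{\dd\epsilon}A_\alpha(u+\epsilon v)|_{\epsilon=0} = 0$ gives~\cref{eq:PrimalProblemVE} for all such $v$. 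Since $\ln u, \ln w \in L^\infty(\Omega)$, the left-hand side of~\cref{eq:PrimalProblemVE} defines a bounded linear functional on $H^1_0(\Omega)$, and the truncation density $v_k := \max(-k,\min(k,v)) \to v$ in $H^1_0(\Omega)$ extends the identity to all $v \in H^1_0(\Omega)$. Uniqueness of the characterization is inherited from the strict convexity of $A_\alpha$. The principal technical obstacle is the second half of step (ii): passing from qualitative a.e.\ positivity to the uniform bound $\essinf u \geq c > 0$ requires the variational inequality, whose derivation demands a careful treatment of the one-sided directional derivative of the shifted entropy at $u$ \emph{before} $u$ is known to belong to $\interior L^\infty_+(\Omega)$, in particular justifying $D^\prime(u,w;\zeta) = \int_\Omega \zeta \ln(u/w)\,\dd x$ in the extended-real sense and ensuring that this integral is finite when $\zeta = (c-u)_+$ so that the maximum-principle test is well-posed.
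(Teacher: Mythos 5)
Your proposal is correct and arrives at the same conclusions, but the decisive middle step---showing the minimizer is essentially bounded and bounded away from zero---follows a genuinely different route from the paper. The paper never differentiates the entropy before interiority is known: it compares the value of $A_\alpha$ at $u$ directly with its value at the truncated competitors $\min\{N,u\}$ and $\max\{\epsilon,u\}$ (most of its Steps 2--3 are spent verifying that these truncations remain in the feasible set, via trace arguments), and contradicts optimality once $N$, respectively $\epsilon$, passes an explicit threshold. You instead first prove $u>0$ a.e.\ by the $t\ln t$ perturbation toward an interior point of $K$, then extract a one-sided variational inequality by monotone (decreasing) convergence of the convex difference quotients in the extended reals, and run a Stampacchia-type maximum principle with the tests $(c-u)_+$ and $(u-C)_+$. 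Your route isolates the structural mechanism (the logarithmic singularity forces the low and high sets to be null), yields essentially the same explicit bounds as the paper's \Cref{rem:ExplicitBounds}, and the extended-real bookkeeping you flag is handled correctly: the quotients decrease, are majorized at a fixed $t_0$ by an integrable function because $D(u\pm t_0\zeta,w)<\infty$ (note the Bregman integrand is nonnegative, so downward perturbations cause no integrability problem), and the minimality inequality then forces $\int_\Omega\zeta\ln(u/w)\,\dd x$ to be finite. The paper's comparison argument buys freedom from any differentiation at a possibly non-interior point; yours buys a cleaner separation between ``why $u$ is interior'' and ``how large/small it can be.'' The final passage to \cref{eq:PrimalProblemVE} and the density extension are the same in substance (the paper uses $C^\infty_c(\Omega)\subset H^1_0(\Omega)\cap L^\infty(\Omega)$, you use truncations).

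Three points to tighten. First, for the upper bound you should record feasibility of the downward variation, $u-t(u-C)_+=(1-t)u+tC\ge 0$ for $t\in[0,1]$, and the zero trace of $(c-u)_+$ and $(u-C)_+$; the lattice property of traces you are invoking is exactly the fact the paper proves at length when showing $\min\{N,u\}\in K$, so cite it rather than assert it. Second, in the $u>0$ a.e.\ argument only an upper bound is available (and needed): by convexity the contribution from $\{u>0\}$ is at most $t\,[D(v_0,w)]$, so write ``$\le\ \alpha^{-1}(t\ln t)\int_{\{u=0\}}v_0\,\dd x + O(t)$'' rather than equality. Third, ``uniquely characterized'' has a converse direction (the paper's Step 5): strict convexity gives uniqueness of the minimizer, but you must also show that any solution of \cref{eq:PrimalProblemVE} in $H^1_g(\Omega)\cap\interior L^\infty_+(\Omega)$ is the minimizer---a two-line argument: the equation gives $A_\alpha'(u)(y-u)=0$ for all $y\in H^1_g(\Omega)\cap L^\infty(\Omega)$, convexity gives $A_\alpha(y)\ge A_\alpha(u)$, and density in $H^1_g(\Omega)$ together with continuity of $A_\alpha$ extends this to all of $H^1_g(\Omega)$.
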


\begin{remark}[Adaptive entropy regularization]
\label{rem:MultiplierRepresentation}
Similar to the free energy formulation~\cref{eq:DirichletFreeEnergy}, where the Lagrange multiplier $\lambda$ is approximated by $\theta \ln (1/u)$, we see that the subproblems \cref{eq:PrimalProblemVE} give rise to an approximation of the form $\alpha^{-1} \ln (w/u)$. Recalling that $\theta = \alpha^{-1}$ we see that there is fundamental difference in the two approximations given by the inclusion of the function $w$.
Chosen correctly, as with $u^k$ in~\cref{eq:BregmanProximalUpdate}, this function can act as an informative prior on the sequence of subproblems.
More specifically, $w = u^k$ allows us to view the Bregman divergence $v \mapsto D(v,u^k) = \int_\Omega v \ln(v/u^k) - v + u^k \dd x$ as a biased barrier function that is updated adaptively at each iteration $k$ so that $u^k \to u^\ast$ without sending the step size $\alpha \to \infty$.
\end{remark}

From now on, we mainly focus on inhomogeneous obstacle problems; i.e., $\phi \neq 0$.
Therefore, we close this subsection with an important corollary for this case.
Before we state the result, we note that 
\[
S_{\phi}(u) - S_{\phi}(v) - (\nabla S_{\phi}(v),u-v) = D(u - \phi,v - \phi)
\]
whenever $u \in L^{p}_{\phi,+}(\Omega)$ and $v \in \interior L^{\infty}_{\phi,+}(\Omega)$. 
Therefore, $(u,v) \mapsto D(u - \phi,v - \phi)$ is a Bregman divergence on $L^{p}_{\phi,+}(\Omega) \times \interior L^{\infty}_{\phi,+}(\Omega)$.
For technical reasons, we require the obstacles to be in a particular subset of $H^1(\Omega)$ defined by
\begin{equation}
\label{eq:ObstacleSet}
	\mathcal{O} := \{ \phi \in H^1(\Omega) \cap C(\overline{\Omega}) \mid \Delta \phi \in L^{\infty}(\Omega) \}.
\end{equation}
Moreover, like~\cref{thm:PrimalProblem}, the proof of \Cref{cor:PrimalProblem_inhom} is delayed until \Cref{sub:Characterization}.

\begin{corollary}[Solution characterization for inhomogeneous obstacles]
\label{cor:PrimalProblem_inhom}
	In addition to the assumptions of \Cref{thm:PrimalProblem}, let  $\phi \in \mathcal{O}$ such that
	 $\min_{\partial\Omega} (g - \phi)_{|_{\partial\Omega}} > 0$ on $\partial \Omega$ and define
	 \[
		K_{\phi} :=  \{ u \in H^1_g(\Omega) \mid u \geq \phi ~\text{~a.e.~in~} \Omega\}.
	\]
	Then for any step size $\alpha > 0$ and $w \in \interior L^{\infty}_{\phi,+}(\Omega)$ the optimization problem
	\begin{equation}\label{eq:subproblem_inhom}
		\min_{v\in K_{\phi}}
		~
								E(v) + \alpha^{-1} D(v-\phi,w-\phi)
										,
	\end{equation}
	has a unique solution $u \in H^1_g(\Omega)\cap \interior L^\infty_{\phi,+}(\Omega)$ that satisfies the weak form of the (generalized) entropic Poisson equation; namely,
	\begin{equation}
	\label{eq:PrimalProblemVE_inhom}
		(\alpha\nabla u, \nabla v)
				+
		(\ln (u-\phi), v)
		=
		(\alpha f, v) + (\ln (w-\phi), v)
				\quad
		\text{for all~}
		v \in H^1_0(\Omega)
		.
	\end{equation}
\end{corollary}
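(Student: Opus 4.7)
The plan is to reduce the inhomogeneous obstacle problem to the homogeneous case already addressed in \Cref{thm:PrimalProblem} via the affine change of variables $\tilde{u} = u - \phi$. Set $\tilde{g} := g - \phi$, $\tilde{w} := w - \phi$, and $\tilde{K} := H^1_{\tilde{g}}(\Omega) \cap H^1_+(\Omega)$. The hypothesis $\phi \in \mathcal{O}$ gives $\tilde{g} \in H^1(\Omega) \cap C(\overline{\Omega})$, and $\essinf \gamma(g - \phi) > 0$ supplies $\min \tilde{g}_{|\partial \Omega} > 0$. The assumption $w \in \interior L^\infty_{\phi,+}(\Omega)$ is equivalent to $\tilde{w} \in \interior L^\infty_+(\Omega)$, so under the substitution $v = \tilde{v} + \phi$ the Bregman divergence in~\cref{eq:subproblem_inhom} becomes $D(\tilde{v}, \tilde{w})$ and the feasible set $K_{\phi}$ maps bijectively onto $\tilde{K}$.

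The first technical step is to rewrite the Dirichlet energy. Expanding $\tfrac{1}{2}|\nabla(\tilde{v} + \phi)|^2$ produces a cross term $(\nabla \tilde{v}, \nabla \phi)$. Splitting $\tilde{v} = \tilde{g} + \tilde{v}_0$ with $\tilde{v}_0 \in H^1_0(\Omega)$ and integrating by parts on $\tilde{v}_0$ --- legitimate exactly because $\Delta \phi \in L^\infty(\Omega)$ by definition of $\mathcal{O}$ --- shows that, modulo additive constants independent of $\tilde{v}$ across the feasible set, $E(\tilde{v} + \phi)$ coincides with $\tfrac{1}{2}\|\nabla \tilde{v}\|_{L^2(\Omega)}^2 - (\tilde{f}, \tilde{v})$, where the effective load is $\tilde{f} := f + \Delta \phi \in L^\infty(\Omega)$. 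Problem~\cref{eq:subproblem_inhom} is therefore equivalent to
\begin{equation*}
	\min_{\tilde{v} \in \tilde{K}} \left\{ \tfrac{1}{2}\|\nabla \tilde{v}\|_{L^2(\Omega)}^2 - (\tilde{f}, \tilde{v}) + \alpha^{-1} D(\tilde{v}, \tilde{w}) \right\}.
\end{equation*}

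The hypotheses of \Cref{thm:PrimalProblem} are now met with the transformed data $(\tilde{g}, \tilde{f}, \tilde{w})$. It yields a unique $\tilde{u} \in H^1_{\tilde{g}}(\Omega) \cap \interior L^\infty_+(\Omega)$ characterized by
\begin{equation*}
	(\alpha \nabla \tilde{u}, \nabla v) + (\ln \tilde{u}, v) = (\alpha \tilde{f}, v) + (\ln \tilde{w}, v) \quad \text{for all } v \in H^1_0(\Omega).
\end{equation*}
Setting $u := \tilde{u} + \phi$ delivers $u \in H^1_g(\Omega) \cap \interior L^\infty_{\phi,+}(\Omega)$ together with uniqueness. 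To recover~\cref{eq:PrimalProblemVE_inhom}, I substitute $\tilde{u} = u - \phi$ and $\tilde{f} = f + \Delta \phi$ into the above, then use $-(\alpha \nabla \phi, \nabla v) = (\alpha \Delta \phi, v)$ for $v \in H^1_0(\Omega)$ to cancel the stray $\alpha \Delta \phi$ contributions on the two sides.

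The main obstacle is administrative rather than conceptual: tracking how each function space, trace condition, and interior-point statement transforms under the affine shift by $\phi$ without losing regularity, and verifying that the two applications of integration by parts --- once in reformulating the energy, once in extracting~\cref{eq:PrimalProblemVE_inhom} --- are rigorously justified. Both steps depend critically on the regularity $\Delta \phi \in L^\infty(\Omega)$ imposed by membership in $\mathcal{O}$, which is precisely why that auxiliary class is introduced in the hypotheses.
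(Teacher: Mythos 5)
Your proposal is correct and follows essentially the same route as the paper: both reduce to \Cref{thm:PrimalProblem} via the shift $\tilde{u} = u - \phi$, $\tilde{w} = w - \phi$, with effective data $\tilde{g} = g - \phi$ and $\tilde{f} = f + \Delta\phi$ (made rigorous by $\Delta\phi \in L^\infty(\Omega)$ from $\phi \in \mathcal{O}$), and then substitute back using $-(\alpha\nabla\phi,\nabla v) = (\alpha\Delta\phi, v)$ for $v \in H^1_0(\Omega)$. Your explicit verification of the energy rewrite and the cancellation of the $\alpha\Delta\phi$ terms simply fills in details the paper leaves as "readily argued."
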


\begin{remark}[Delicate analysis]
Semilinear mixed variational inequalities of obstacle type have been thoroughly studied, as detailed in the famous monograph by J.-F.~Rodrigues, \cite[Chap. 4.6]{rodrigues1987obstacle}. This includes regularity theory and a maximum principle that relates the solution of the VI to the obstacle, forcing term, and boundary values.  The techniques go back to the seminal work by Stampacchia \cite{stampacchia1965probleme}, Murty and Stampacchia \cite{Murthy1972} and can also be found in \cite{kinderlehrer2000introduction}. However, the VI associated with our problem is only valid if we can differentiate the ``extra'' nonlinearity in the entropy term. This in turn requires the solution $u$ of each subproblem to be essentially bounded and strictly above the obstacle, so we need to resort to a more delicate analysis solely based on the properties of the optimization problem. 
\end{remark}

\begin{remark}[Challenges of the Hilbert space setting]
\label{rem:ProximalMap}
		Let $\chi_K\colon H^1(\Omega) \to \mathbb{R}\cup\{\infty\}$ denote the indicator function $\chi_K(x) = 0$ if $x \in K$ and $\chi_K(x) = \infty$ otherwise.
	It is interesting to compare the operator
	\begin{equation}
	\label{eq:ProximalMapIdicator}
		\prox_{\alpha E + \chi_K}(v)
		=
		\argmin_{u \in {K}}
		\big\{
			E(u) + \frac{1}{2\alpha}\|u - v\|_{H^1}^2
		\big\}
		,
	\end{equation}
	to $\prox_{\alpha E}^S(v)$.
		Indeed, unlike~\cref{eq:PrimalProblemVE}, the subproblems that~\cref{eq:ProximalMapIdicator} induces each require the solution of their own VI,
	\begin{equation}
	\label{eq:L2ProxVI}
		\int_\Omega
		\nabla( (1+\alpha)u - v) \cdot \nabla w
		\dd x
		+
		\int_\Omega (u-v-\alpha f) w  \dd x
		\geq
		0
		~\fa w \in K-u,
													\end{equation}
	that is at least as difficult to solve as the original VI defining $u^\ast$; cf.~\cref{eq:DirichletVI}.
	Similar issues tend to appear whenever squared norm regularization terms are used to design proximal point algorithms for infinite-dimensional bound constraints. 
	
	Alternatively, one can use a penalty method to solve the original problem by considering instead a $C^{1,1}$-quadratic penalty term of the type 
	\[
	\frac{1}{2\alpha}\int_{\Omega} \max\{0, \phi - u\}^2 \, \dd x.
	\]
	This functional is in fact the prox-operator (in the $L^2(\Omega)$ topology) of the indicator function for the larger feasible set $\left\{u \in L^2(\Omega) \left|\; u \ge \phi \right.\right\}$. See \cite{hintermuller2006path,hintermuller2006feasible} for details including second-order algorithms and an analytical path-following scheme for $\alpha$. Note that the subproblems using a quadratic penalty would be semismooth semilinear elliptic PDEs. However, since the nonlinearity does not arise from a strictly monotone continuous function, we cannot derive a similar latent  variable formulation.
	\end{remark}

\begin{remark}[Comparison to the augmented Lagrangian method]
It is possible to view classical augmented Lagrangian methods as penalty methods that adaptively change the penalty function and associated penalty parameter via the behavior of the dual variable, i.e., Lagrange multiplier. Aside from identifying an efficient subproblem solver, the challenge is usually to find an appropriate combination of update strategies that allow for inexact subproblem solves and conservative parameter update rules that still 
exhibit rapid convergence behavior in practice. The method described in this work follows a similar strategy. Indeed, the role of the penalty function is played by a Bregman distance, which is adaptively updated via the primal variable, and the penalty parameter is given by $\alpha$. Bregman distances allow us to better exploit the geometry of the feasible set and the convergence theory of the proximal point method provides a clear connection to convergence rates that even allows for $\alpha$ to remain constant.
\end{remark}

\subsection{Latent variable proximal point} \label{sub:latent_variable_proximal_point}

An appealing feature of the entropic Poisson equation~\cref{eq:PrimalProblemVE_inhom} is that its solution permits \emph{two} additional representations; cf.~\Cref{fig:Trinity}.
In both cases, we take advantage of the entropy gradient $\nabla S(v) = \ln v$ being an isomorphism (cf.~\Cref{sub:entropy_and_its_geometry}).
First, we may introduce the latent variable representation,
\begin{equation}
	\psi = \ln (u-\phi)
	\quad
	\iff
	\quad
	u = \exp\psi + \phi
		\,,
\end{equation}
by simply applying the entropy gradient transformation to the primal solution $u$.
Second, as already noted in~\Cref{rem:MultiplierRepresentation}, we may construct a dual variable representation which, for the inhomogeneous obstacle problem, is written as follows:
\begin{equation}
\label{eq:ApproximateLagrangeMultiplierObstacleProblem}
	\lambda = \alpha^{-1}\ln\frac{w-\phi}{u - \phi}
	\quad
	\iff
	\quad
	u = (w - \phi)\exp(-\alpha\lambda) + \phi
	\,.
\end{equation}

The utility of these representations is witnessed if we consider how to solve either of the primal subproblems~\cref{eq:PrimalProblemVE} or \cref{eq:PrimalProblemVE_inhom}.
Indeed, due to the logarithmic terms, these semi-linear PDEs are only defined if $\essinf ( u - \phi) > 0$, which appears to rule out most efficient root-finding algorithms, such as Newton's method, and discretization choices, such as the Galerkin method.
Fortunately, the alternative solution representations above provide saddle-point relaxations of the entropic Poisson equation that do not suffer from these two drawbacks.

We are now ready to state the final main theoretical result, which also establishes explicit bounds on the optimization error for the latent variable proximal point (LVPP) algorithm, defined via~\cref{eq:ConvergenceContinuousLevel_VE} below.
The proof is given in~\Cref{sub:proximal_methods}.

\begin{theorem}[Convergence of LVPP]
\label{thm:ConvergenceContinuousLevel}
	Assume $\alpha_{k+1} > 0$, $k = 0,1,\ldots$, is a sequence of positive step size parameters.
	Furthermore, assume $\Omega \subset \mathbb{R}^n$ is an open, bounded Lipschitz domain, $n \ge 1$, $\phi \in \mathcal{O}$, and let $g \in H^1(\Omega) \cap C(\overline{\Omega})$ such that $\min_{\partial\Omega} (g - \phi) > 0$.
														Fix $\psi^0 \in H^1(\Omega) \cap L^\infty(\Omega)$ and consider the sequence of functions $u^k$, $\psi^k$ solving the following coupled system of variational equations:
			\begin{equation}
	\label{eq:ConvergenceContinuousLevel_VE}
		\left\{
		\begin{aligned}
			\,&\text{Find}~
			u^{k+1}\in  H^1_g(\Omega) ~\text{and}~\psi^{k+1} \in L^\infty(\Omega)
						~\text{such that~}
			\\
			&\begin{alignedat}{4}
				( \alpha_{k+1}\nabla u^{k+1}, \nabla v) + (\psi^{k+1}, v) &= (\alpha_{k+1} f + \psi^k, v)
				&&~\fa v \in H^1_0(\Omega)
				\,,
				\\
				(u^{k+1}, w) - (\exp\psi^{k+1}, w) &= (\phi, w)
				&&~\fa w \in L^2(\Omega)
				\,.
			\end{alignedat}
		\end{aligned}
		\right.
	\end{equation}
	Then the Dirichlet energy of the primal iterates is monotonically non-increasing, i.e.,
	\begin{equation}
	\label{eq:ConvergenceContinuousLevel_Monotonicity}
		E(u^{k+1}) \leq E(u^k)
		\,.
	\end{equation}
	Moreover, if $\sum_{j=1}^k \alpha_j \to \infty$ as $k \to \infty$, then the subproblem solutions $u^k$ converge in $H^1(\Omega)$ to
	\begin{equation}
		u^\ast
		=
		\argmin_{u\in H^1(\Omega)}
		~
		E(u)
		~~\text{subject to~}
		u \geq \phi
		~\text{in~}\Omega
		~\text{and~}
		u = g
		~\text{on~}\partial\Omega
		\,.
	\end{equation}
	Furthermore, the functions $\lambda^{k+1} = (\psi^k - \psi^{k+1})/\alpha_{k+1} $ converge strongly in $H^{-1}(\Omega)$ to the Lagrange multiplier $\lambda^\ast = - \Delta u^\ast - f$.
		In fact, the optimization error in both $u^k$ and $\lambda^k$ are equal and converge at the following arbitrary rate determined by the sequence of step-sizes $\alpha_k > 0$,
	\begin{equation}
	\label{eq:ConvergenceContinuousLevel_Rate}
		\frac12\|\lambda^\ast - \lambda^k \|^2_{H^{-1}(\Omega)}
		=
		\frac12\|\nabla u^\ast - \nabla u^k\|_{L^2(\Omega)}^2
		\leq
		\frac{D(u^\ast-\phi,u^0-\phi)}{\sum_{j=1}^k \alpha_{j}}
		.
	\end{equation}
																																					\end{theorem}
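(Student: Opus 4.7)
The strategy is to reduce the saddle-point system \cref{eq:ConvergenceContinuousLevel_VE} to the Bregman proximal point iteration for $E$ with respect to $D(\cdot-\phi,\cdot-\phi)$, and then adapt the classical convergence argument (cf.~\cref{eq:ConvergenceRateBPPA}) to the present infinite-dimensional, obstacle-constrained setting, obtaining both strong $H^1$-convergence of $u^k$ and strong $H^{-1}$-convergence of $\lambda^k$.

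\textbf{Step 1: Equivalence to the Bregman subproblem.} The second equation of \cref{eq:ConvergenceContinuousLevel_VE} forces $u^{k+1}-\phi=\exp\psi^{k+1}$ pointwise, whence $u^{k+1}\in\interior L^\infty_{\phi,+}(\Omega)$ and $\psi^{k+1}=\ln(u^{k+1}-\phi)$ on $\Omega$. Substituting this into the first equation recovers the weak entropic Poisson equation \cref{eq:PrimalProblemVE_inhom} with $w=u^k$ and step size $\alpha_{k+1}$. Hence, by \Cref{cor:PrimalProblem_inhom}, the iterate $u^{k+1}$ is the unique minimizer of $v\mapsto E(v)+\alpha_{k+1}^{-1}D(v-\phi,u^k-\phi)$ over $K_{\phi}$. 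In particular, each iterate exists, is unique, and lies in $K_\phi\cap\interior L^\infty_{\phi,+}(\Omega)$.

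\textbf{Step 2: Monotonicity.} Since $u^k\in K_\phi$ is admissible in the $(k+1)$-st subproblem and $D(u^k-\phi,u^k-\phi)=0$ by positivity of $D$, the minimizing property of $u^{k+1}$ gives
\begin{equation*}
E(u^{k+1})+\alpha_{k+1}^{-1}D(u^{k+1}-\phi,u^k-\phi)\le E(u^k),
\end{equation*}
and \cref{eq:ConvergenceContinuousLevel_Monotonicity} follows from non-negativity of $D$.

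\textbf{Step 3: The core rate estimate.} Apply the three points identity of \Cref{prop:BregmanDivergenceProperties} to $D_{S_\phi}$ at $(u^\ast,u^{k+1},u^k)$. Using \cref{eq:EntropyDifferentiability_gradient_inhom} to write $\nabla S_\phi(u^{k+1}-\phi)-\nabla S_\phi(u^k-\phi)=\psi^{k+1}-\psi^k$, this yields
\begin{equation*}
D(u^\ast-\phi,u^{k+1}-\phi)-D(u^\ast-\phi,u^k-\phi)+D(u^{k+1}-\phi,u^k-\phi)=\langle\psi^{k+1}-\psi^k,u^{k+1}-u^\ast\rangle.
\end{equation*}
Because $u^{k+1}-u^\ast\in H^1_0(\Omega)$, it is an admissible test function in the first equation of \cref{eq:ConvergenceContinuousLevel_VE}, which converts the right-hand side into $\alpha_{k+1}\big[(f,u^{k+1}-u^\ast)-(\nabla u^{k+1},\nabla(u^{k+1}-u^\ast))\big]$. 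Combining this with the identity
\begin{equation*}
\alpha_{k+1}\bigl[E(u^\ast)-E(u^{k+1})\bigr]=\tfrac{\alpha_{k+1}}{2}\|\nabla(u^\ast-u^{k+1})\|_{L^2}^2-\alpha_{k+1}\bigl[(\nabla u^{k+1},\nabla(u^{k+1}-u^\ast))-(f,u^{k+1}-u^\ast)\bigr]
\end{equation*}
(obtained by the standard polarization of the quadratic $\tfrac{1}{2}\|\nabla\cdot\|^2$) produces the key bound
\begin{equation*}
\alpha_{k+1}\bigl[E(u^{k+1})-E(u^\ast)\bigr]+\tfrac{\alpha_{k+1}}{2}\|\nabla(u^{k+1}-u^\ast)\|_{L^2}^2+D(u^{k+1}-\phi,u^k-\phi)=D(u^\ast-\phi,u^k-\phi)-D(u^\ast-\phi,u^{k+1}-\phi).
\end{equation*}
By the VI characterization of $u^\ast$ (Lions--Stampacchia), strong convexity of $E$ gives $E(u^k)-E(u^\ast)\ge\tfrac{1}{2}\|\nabla(u^k-u^\ast)\|_{L^2}^2\ge0$. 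Telescoping the above equality from $k=0$ to $K-1$ and dropping the non-negative terms $\tfrac{\alpha_{k+1}}{2}\|\nabla(u^{k+1}-u^\ast)\|^2$ and $D(u^{k+1}-\phi,u^k-\phi)$ gives
\begin{equation*}
\sum_{k=0}^{K-1}\alpha_{k+1}\bigl[E(u^{k+1})-E(u^\ast)\bigr]\le D(u^\ast-\phi,u^0-\phi).
\end{equation*}
Since $E(u^k)$ is monotonically non-increasing by Step~2, the left-hand side is bounded below by $\bigl(\sum_{j=1}^K\alpha_j\bigr)\bigl[E(u^K)-E(u^\ast)\bigr]$, and a second application of the strong convexity estimate yields the rate \cref{eq:ConvergenceContinuousLevel_Rate} for $\|\nabla(u^\ast-u^K)\|_{L^2}$. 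If $\sum\alpha_j\to\infty$, this in turn implies $u^k\to u^\ast$ strongly in $H^1(\Omega)$ via Poincaré (both belong to $H^1_g$).

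\textbf{Step 4: The Lagrange multiplier.} Interpreting the first equation of \cref{eq:ConvergenceContinuousLevel_VE} distributionally gives $-\Delta u^{k+1}=f+\alpha_{k+1}^{-1}(\psi^k-\psi^{k+1})=f+\lambda^{k+1}$ in $H^{-1}(\Omega)$, hence $\lambda^{k+1}=-\Delta u^{k+1}-f$. Comparing with $\lambda^\ast=-\Delta u^\ast-f$ from \cref{eq:IntroLagrangeMultiplier}, we find $\lambda^{k+1}-\lambda^\ast=-\Delta(u^{k+1}-u^\ast)$, and the convention $\|v\|_{H^{-1}}=\sup_{w\in H^1_0}(v,w)/\|\nabla w\|_{L^2}$ together with $u^{k+1}-u^\ast\in H^1_0$ yields $\|\lambda^{k+1}-\lambda^\ast\|_{H^{-1}}=\|\nabla(u^{k+1}-u^\ast)\|_{L^2}$. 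This establishes the equality in \cref{eq:ConvergenceContinuousLevel_Rate} and the claimed $H^{-1}$-convergence of $\lambda^k$.

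\textbf{Anticipated main obstacle.} The technically delicate point is Step~1: legitimizing the manipulations requires that $u^{k+1}-\phi$ be essentially bounded away from zero, so that the logarithm is well defined and the entropy is differentiable in the sense of \Cref{cor:shift-ent}. This is exactly what the algebraic coupling $u^{k+1}=\exp\psi^{k+1}+\phi$ furnishes, provided $\psi^{k+1}\in L^\infty(\Omega)$; securing that regularity (rather than merely $L^p$ for some finite $p$) is what ties the well-posedness of the saddle-point system to \Cref{cor:PrimalProblem_inhom} and allows the entire proximal machinery to be deployed.
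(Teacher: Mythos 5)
Your proposal is correct and follows essentially the same route as the paper's proof: reduce the saddle-point system to the Bregman proximal subproblem via \Cref{cor:PrimalProblem_inhom} (the paper does this through \Cref{lem:PrimalSaddlePointEquivalence} after reducing WLOG to $\phi=0$), get monotonicity from minimality of $u^{k+1}$, combine the three-points identity with the iteration's optimality condition and telescope, then use strong convexity plus the variational inequality at $u^\ast$ for the rate, and compare the two weak equations to identify $\|\lambda^\ast-\lambda^k\|_{H^{-1}}$ with $\|\nabla u^\ast-\nabla u^k\|_{L^2}$. The only cosmetic differences are that you keep the exact quadratic expansion of $E$ (later dropping the extra term) where the paper uses the convexity inequality, and that you carry the shift by $\phi$ throughout rather than changing variables.
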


\begin{remark}[Arbitrary orders of convergence]
\label{rem:ConvergenceOrders}
	\Cref{thm:ConvergenceContinuousLevel} shows that the iteration complexity of LVPP depends on the choice of the step sizes $\alpha_k$.
	The consequences of different step size choices is summarized in~\Cref{cor:ConvergenceRates}.
		For example, we find that constant step sizes lead to sublinear convergence and geometrically increasing step sizes lead to first-order convergence.
	Even faster growing step size sequences will achieve superlinear convergence.
	See also~\Cref{rem:StrictComplementarity}.
\end{remark}

\begin{remark}[Convergence in the $H^1(\Omega)$-norm]
At first glance, control over the full $H^1(\Omega)$ norm of $u^k$ appears problematic because \cref{eq:ConvergenceContinuousLevel_Rate} does not include the full norm on $H^1_g(\Omega)$. However, in light of the Poincar\'e inequality and $u^\ast - u^k \in H^1_0(\Omega)$, we also obtain 
\[
	\frac12\| u^\ast -  u^k\|_{L^2(\Omega)}^2
	\leq
	c \frac{D(u^\ast-\phi,u^0-\phi)}{\sum_{j=1}^k \alpha_{j}},
\]
where $c > 0$ is an embedding constant independent of $k$.
\end{remark}

\begin{remark}[Convergence of the latent variable]
	If we adopt the conventions $\ln 0 = - \infty$ and $\exp(-\infty) = 0$, we may define $\psi^\ast = \ln (u^\ast-\phi)$ as an extended real-valued function on $\Omega$; cf.~\Cref{sub:the_cole_hopf_transform_and_the_semiring_of_non_negative_functions}.
	Likewise, we may understand convergence of the latent variable $\psi \to \psi^\ast$ under the metric implied by this transformation.
	Indeed, consider the metric $ d(\psi,\varphi) = \|\nabla\exp\psi - \nabla\exp\varphi\|_{L^2(\Omega)}$, first introduced in~\cref{eq:LatentVariableNonnegativityMetric}.
	Clearly,
	\begin{align}
		d(\psi^\ast,\psi)
		=
						\|\nabla(\exp\psi^\ast+\phi) - \nabla(\exp\psi+\phi)\|_{L^2(\Omega)}
				&=
		\|\nabla u^\ast - \nabla u\|_{L^2(\Omega)}
		\,,
	\end{align}
	which converges to zero as $k \to \infty$ by~\cref{eq:ConvergenceContinuousLevel_Rate}.
\end{remark}

\begin{remark}[Dual variable mixed formulation]
	The formulation~\cref{eq:ConvergenceContinuousLevel_VE} is derived by setting $w = u^k$, $\alpha = \alpha_{k+1}$, and substituting the equation $\psi^{k+1} = \ln(u^k - \phi)$ into~\cref{eq:PrimalProblemVE_inhom}.
	If, instead, we considered the dual variable substitution $\lambda^{k+1} = \ln ((u^{k}-\phi)/(u^{k+1}-\phi))/\alpha_{k+1}$, we would arrive at the following alternative formulation:
	\begin{equation}
	\label{eq:ConvergenceContinuousLevel_VE_alternative}
		\left\{
		\begin{aligned}
			\,&\text{Find}~
			u^{k+1}\in  H^1_g(\Omega) ~\text{and}~\lambda^{k+1} \in L^\infty(\Omega)
			~\text{such that~}
			\\
			&\begin{alignedat}{4}
				( \nabla u^{k+1}, \nabla v) - (\lambda^{k+1}, v) &= ( f, v)
				&&~\fa v \in H^1_0(\Omega)
				\,,
				\\
				(u^{k+1}, w) - (u^k\exp(- \alpha_{k+1}\lambda^{k+1}), w) &= (\phi,w)
				&&~\fa w \in L^2(\Omega)
				\,.
			\end{alignedat}
		\end{aligned}
		\right.
	\end{equation}
	Although this is equivalent to~\cref{eq:ConvergenceContinuousLevel_VE} at the continuous level, it will induce a different Galerkin method; cf.~\Cref{sub:proximal_galerkin}.
	We leave the study of such dual variable proximal Galerkin methods for future research.

\end{remark}

\begin{remark}[Strict complementarity]
\label{rem:StrictComplementarity}
	Although~\Cref{thm:ConvergenceContinuousLevel} allows us to establish arbitrary orders of convergence (see \Cref{cor:ConvergenceRates}), it still represents the worst-case iteration complexity.
	In particular, our numerical experiments in \Cref{ssub:experiment_2_kkt_conditions}, suggest that an improved result may be possible if the solution $u^\ast$ exhibits strict complementarity.
\end{remark}

\subsection{Proximal Galerkin} \label{sub:proximal_galerkin}

Motivated by~\Cref{thm:ConvergenceContinuousLevel}, it is natural to use finite-dimensional subspaces $V_h \subset H^1_0(\Omega)$ and $W_h \subset L^\infty(\Omega)$ in order to form a Galerkin discretization of~\cref{eq:ConvergenceContinuousLevel_VE}.
Thus, we arrive at \Cref{alg:ObstacleProblem}, which may be seen as a natural extension of~\Cref{alg:EntropicGalerkinIntro} to the inhomogeneous obstacle problem.

\begin{algorithm2e}[ht]
\DontPrintSemicolon
	\caption{\label{alg:ObstacleProblem} 
	Proximal Galerkin method for the obstacle problem.
	}
	\SetKwInOut{Input}{Input}
	\SetKwInOut{Output}{Output}
	\BlankLine
	\Input{Linear subspaces $V_h \subset H^1_0(\Omega)$ and $W_h \subset L^\infty(\Omega)$, initial solution guess $\psi_h^0 \in W_h$, unsummable sequence of step sizes $\alpha_k>0$.}
	\Output{Two approximate solutions, $u_{h}$ and $\widetilde{u}_h = \phi + \exp\psi_h$, and an approximate Lagrange multiplier, $\lambda_h = (\psi_h^{k-1} - \psi_h)/\alpha_k$.}
	\BlankLine
	Initialize $k = 0$.\;
	\Repeat{a convergence test is satisfied}
	{
		Solve the following (nonlinear) discrete saddle-point problem:
		\begin{gather}
		\label{eq:ObstacleDiscreteNonlinearSaddlePoint}
			\left\{
			\begin{aligned}
				\,&\text{Find}~
				u_{h}\in g_h + V_{h} ~\text{and}~\psi_{h} \in W_{h}
								~\text{such that~}
				\\
				&\begin{alignedat}{4}
				    (\alpha_{k+1} \nabla u_h, \nabla v) + (\psi_h, v) &= ( \alpha_{k+1} f + \psi_h^{k}, v)
					&&~\fa v \in V_h
					\,,
					\\
					(u_h, w) - (\exp\psi_h, w) &= (\phi, w)
					&&~\fa w \in W_h
					\,.
				\end{alignedat}
			\end{aligned}
			\right.
		\end{gather}
		\;
		\vspace*{-\baselineskip}
		Assign $\psi^{k+1}_h \leftarrow \psi_{h}$ and $k \leftarrow k+1$.\;
	}
\end{algorithm2e}
Just like~\Cref{alg:EntropicGalerkinIntro}, we find that~\Cref{alg:ObstacleProblem} delivers \emph{two} distinct approximations of the exact solution; $u_{h} \in V_h$ and $\widetilde{u}_h \in \phi + \exp(W_h)$.
The second of these approximations is unusual because it is guaranteed to satisfy the inequality $\widetilde{u}_h > \phi$.
Moreover, like the continuous-level algorithm in~\Cref{thm:ConvergenceContinuousLevel}, it also produces an approximate Lagrange multiplier,
\begin{equation}
	\lambda_h = (\psi_h^{k-1} - \psi_h^{k})/\alpha_{k}
	\,,
\end{equation}
where $k$ denotes the final iterate where the abstract convergence test in~\Cref{alg:ObstacleProblem} is satisfied.

Finite element methods typically lead to piece-wise polynomial approximations of the exact solution.
Given that $\widetilde{u}_h = \phi + \exp\psi_h$ relies on a non-standard type of exponential function approximation, it is natural ask whether $\widetilde{u}_h$ can produce an accurate approximation of the continuous-level solution $u$.
The following result provides a partial positive answer to this question.
The proof is given in~\Cref{sub:nonlinear_approximability}.

\begin{proposition}[Approximability]
\label{lem:UtildeBound}
	Let $u \in \interior L^\infty_+(\Omega)$ and define $\psi = \ln u$.
	Moreover, let $\psi_h \in W_h$ and $\widetilde{u}_h = \exp \psi_h$.
	The following identity holds:
	\begin{equation}
		\|u - \widetilde{u}_h\|_{L^\infty(\Omega)}
		\leq
				\|u\|_{L^\infty(\Omega)}
		\big(
			\exp\|\psi - \psi_h\|_{L^\infty(\Omega)} - 1
		\big).
	\end{equation}
\end{proposition}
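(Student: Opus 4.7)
The plan is to work pointwise and exploit the multiplicative structure of the exponential. Since $u = \exp\psi$ and $\widetilde{u}_h = \exp\psi_h$, I would first factor
\[
\exp\psi(x) - \exp\psi_h(x) = \exp\psi(x)\,\bigl(1 - \exp(\psi_h(x) - \psi(x))\bigr)
\]
at a.e.\ $x \in \Omega$. Taking absolute values, this reduces the claim to the elementary scalar inequality
\[
|1 - \exp(t)| \leq \exp|t| - 1 \quad \text{for all } t \in \mathbb{R}.
\]

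Next, I would verify this scalar inequality by splitting into cases. For $t \geq 0$, both sides equal $\exp(t) - 1$. For $t \leq 0$, the inequality reads $1 - \exp(t) \leq \exp(-t) - 1$, which rearranges to $\exp(t) + \exp(-t) \geq 2$; this is immediate from AM--GM (or convexity of $\exp$ at $0$). Applying this with $t = \psi_h(x) - \psi(x)$ gives the pointwise bound
\[
|u(x) - \widetilde{u}_h(x)| \leq \exp\psi(x)\,\bigl(\exp|\psi(x) - \psi_h(x)| - 1\bigr).
\]

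Finally, I would pass to the essential supremum. Bounding $\exp\psi(x) \leq \|\exp\psi\|_{L^\infty(\Omega)} = \|u\|_{L^\infty(\Omega)}$ and $|\psi(x) - \psi_h(x)| \leq \|\psi - \psi_h\|_{L^\infty(\Omega)}$ (using monotonicity of $\exp$ on the second factor), the right-hand side is dominated a.e.\ by $\|u\|_{L^\infty(\Omega)}\,(\exp\|\psi - \psi_h\|_{L^\infty(\Omega)} - 1)$, yielding the result after taking the essential supremum on the left.

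There is no serious obstacle here; this is a short, direct calculation that hinges only on the factorization and the scalar inequality $|1 - e^t| \leq e^{|t|} - 1$. The one subtlety worth noting is that the argument only requires $\psi, \psi_h \in L^\infty(\Omega)$ (so that $u, \widetilde{u}_h \in \interior L^\infty_+(\Omega)$ and the pointwise bounds are meaningful), which is precisely the hypothesis in force.
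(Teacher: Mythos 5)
Your proof is correct and follows essentially the same route as the paper: both arguments factor out $\exp\psi = u$ and reduce the claim to the scalar bound $|e^{t}-1| \leq e^{|t|}-1$ applied with $t = \psi_h - \psi$, followed by taking essential suprema. The only difference is cosmetic: the paper derives that scalar bound from the integral representation $e^{t}-1 = t\int_0^1 e^{st}\,\mathrm{d}s$ together with the identity $\int_0^1 a^{s}\,\mathrm{d}s = (a-1)/\ln a$, whereas you verify it directly by case analysis and AM--GM.
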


The next ordinary concern would be the stability of the discretization~\cref{eq:ObstacleDiscreteNonlinearSaddlePoint}.
In the next subsection, we propose stable pairs of finite elements that can be used to construct $V_h$ and $W_h$.

\subsection{Stable pairs of finite elements I: Discontinuous latent variable} \label{sub:finite_element_subspaces}

Subspace pairings determine the stability of finite element methods for saddle-point problems~\cite{boffi2013mixed}.
Thus, it should come as no surprise that choosing compatible subspaces $V_h$ and $W_h$ is central to the proximal Galerkin method.
In this section, we choose to focus on stable pairs of finite elements with discontinuous latent variables $\psi_h$, as these appear to provide the most efficient approximations per degree of freedom.
The elements we propose are defined in~\cref{eq:SubspacePairs}, below.

\begin{remark}[Continuous latent variables]
In \Cref{sub:StableElement_ContinuousLatentVariable}, we propose a simple type of equal-order Lagrange element discretization employing a continuous latent variable $\psi_h$.
Although this compatible discretization has numerous interesting properties, including providing a feasible, $H^1(\Omega)$-conforming discrete solution $\tilde{u}_h$, we have been unable prove that these elements are stable under as general a set of assumptions as the elements proposed here.
The elements proposed in \Cref{sub:StableElement_ContinuousLatentVariable} are also less efficient with respect to degrees of freedom.
\end{remark}

\begin{remark}[Other stable pairs]
	Alternative finite element discretizations employing macroelement partitions (e.g., \cite{stenberg1984analysis}), various non-conforming approximation techniques (e.g., \cite{crouzeix1973conforming,cockburn2009unified}), or even spline-based approximation spaces (cf.~\cite{hughes2005isogeometric}) all provide possible alternatives for the design of proximal Galerkin methods.
	Due to space in this manuscript and the limitations of our present software, we leave these and other possible constructions to future studies.
\end{remark}

\subsubsection{Finite element notation} \label{ssub:finite_element_notation}

Here and throughout, $\mathcal{T}_h$ always denotes a shape-regular \cite[Chapter~11.1]{ern2021finite} partition of the domain $\Omega \subset \mathbb{R}^n$, $n = 2,3$, into finitely many open connected triangular/tetrahedral or quadrilateral/hexahedral mesh cells $T$ with Lipschitz boundaries $\partial T$ such that $\Omega$ is the union of the closure of all mesh cells $T$ in $\mathcal{T}_h$.
Following convention, $h > 0$ denotes the mesh size $h = \max_{T\in\mathcal{T}_h} h_T$, where $h_T = \operatorname{diam}(T)$.
Let $\mathbb{P}_{p}(T)$ denote the space of polynomials of total degree up to and including $p$ on a triangle/tetrahedron $T$.
Likewise, let $\mathbb{Q}_{p}(T)$ denote the space of tensor-product polynomials of partial degree up to and including $p$ on a quadrilateral/hexahedron $T$ \cite[Chapter~6.4]{ern2021finite}.
For any space $\mathbb{X}(T)$ of polynomials over an element $T \in \mathcal{T}_h$, we abuse notation to denote the corresponding space of ``broken'' polynomials $\mathbb{X}(\mathcal{T}_h) = \{\varphi \in L^\infty(\Omega) \mid \varphi_{|T} \in \mathbb{X}(T) \text{~for every~} T \in \mathcal{T}_h\}$.

We will require spaces of degree-$q$ polynomials whose traces on the cell boundary $\partial T$ have lower polynomial degree $p < q$.
To this end, define the sets of so-called bubble functions in $\mathbb{P}_{q}(T)$ and $\mathbb{Q}_{q}(T)$ to be $\mathring{\mathbb{P}}^{q}(T) = \{ \varphi \in \mathbb{P}_{q}(T) \mid \varphi_{|\partial T} =  0\}$ and $\mathring{\mathbb{Q}}^{q}(T) = \{ \varphi \in \mathbb{Q}_{q}(T) \mid \varphi_{|\partial T} =  0\}$, respectively.
Accordingly, define $\hat{\mathbb{P}}_{p}(T) = \mathbb{P}_{p}(T) \setminus \mathring{\mathbb{P}}^{p}(T)$ and $\hat{\mathbb{Q}}_{p}(T) = \mathbb{Q}_{p}(T) \setminus \mathring{\mathbb{Q}}^{p}(T)$.
Finally, let
\begin{equation}
	\mathbb{P}_p^q(T) = \hat{\mathbb{P}}_{p}(T) \oplus \mathring{\mathbb{P}}^{q}(T)
	\quad
	\text{and}
	\quad
	\mathbb{Q}_p^q(T) = \hat{\mathbb{Q}}_{p}(T) \oplus \mathring{\mathbb{Q}}^{q}(T)
	\,.
\end{equation}

\subsubsection{Element definitions and properties} \label{ssub:element_definitions}

\begin{figure}
\centering
	\subcaptionbox*{$\mathbb{P}_1\text{-bubble}$}{
		\includegraphics[width=0.2\linewidth]{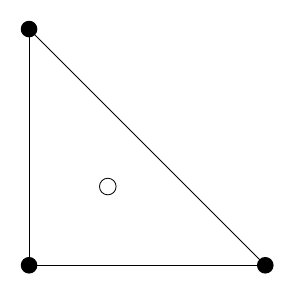}
	}
	\subcaptionbox*{$\mathbb{P}_0\text{-broken}$}{
		\includegraphics[width=0.2\linewidth]{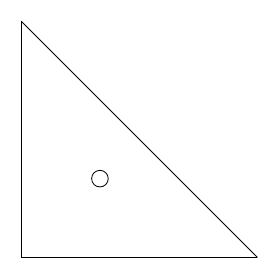}
	}
	\qquad
	\subcaptionbox*{$\mathbb{Q}_1\text{-bubble}$}{
		\includegraphics[width=0.2\linewidth]{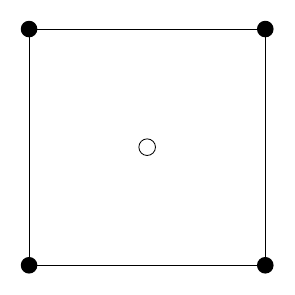}
	}
	\subcaptionbox*{$\mathbb{Q}_0\text{-broken}$}{
		\includegraphics[width=0.2\linewidth]{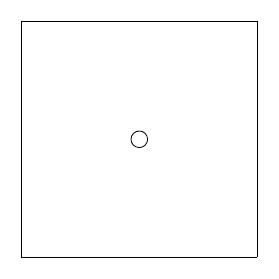}
	}
	\caption{
	Conventional representation of the $(\mathbb{P}_1\text{-bubble},\mathbb{P}_0\text{-broken})$ and $(\mathbb{Q}_1\text{-bubble},\mathbb{Q}_0\text{-broken})$ finite elements in two dimensions.
	The central degree of freedom in each element can be understood as the average over the individual mesh cell.
	\label{fig:P1BP0}}
\end{figure}

We are now ready to define the finite element spaces, which we chose based on \textit{a priori} analysis of a simple linearization of subproblem~\cref{eq:ObstacleDiscreteNonlinearSaddlePoint}.
For further details of the analysis, see~\Cref{sec:the_entropic_finite_element_method}.
\smallskip

For any integer $p \geq 1$, we define the following two pairs of spaces:
\begin{subequations}
\label{eq:SubspacePairs}
\smallskip

\noindent\textsl{Triangular/Tetrahedral elements.} We refer to the following as the $(\mathbb{P}_p\text{-bubble},\linebreak\mathbb{P}_{p-1}\text{-broken})$ pairing:
\begin{equation}
\label{eq:SubspacePair1}
	V_h = \mathbb{P}_{p}^{p+n}(\mathcal{T}_h)\cap H^1_0(\Omega)
	\,,\qquad
				W_h = \mathbb{P}_{p-1}(\mathcal{T}_h)
	\,.
\end{equation}

\noindent\textsl{Quadrilateral/Hexahedral elements.} We refer to the following as the $(\mathbb{Q}_p\text{-bubble},\linebreak\mathbb{Q}_{p-1}\text{-broken})$ pairing:
\begin{equation}
\label{eq:SubspacePair2}
	V_h = \mathbb{Q}_{p}^{p+1}(\mathcal{T}_h)\cap H^1_0(\Omega)
	\,,\qquad
						W_h = \mathbb{Q}_{p-1}(\mathcal{T}_h)
	\,.
\end{equation}
\end{subequations}
\Cref{fig:P1BP0} provides a visual representation of the lowest-order versions of these elements.

\begin{remark}[Positive cell average]
\label{rem:PositiveCellAverage}
	Assume $\phi = 0$.
	Although the piecewise polynomials $u_h$ that arise from solving the subproblems~\cref{eq:ObstacleDiscreteNonlinearSaddlePoint} can not be guaranteed to preserve pointwise positivity, they are guaranteed to have positive cell averages.
	Indeed, notice that the subspaces $W_h$ in~\cref{eq:SubspacePairs} always include piecewise constant functions.
	Therefore, we may consider the second equation in~\cref{eq:ObstacleDiscreteNonlinearSaddlePoint} with $w = 1$ in $T$ and $w = 0$ otherwise.
	Testing with this particular function implies that
	\begin{equation}
		\int_T u_h \dd x = \int_T \exp\psi_h > 0
		\,.
	\end{equation}
	If $\phi \neq 0$, then a similar argument implies that each cell average of $u_h$ lies above the corresponding cell average of $\phi$.
\end{remark}

\begin{remark}[Alternative subspaces]
\label{rem:AlternativeSubspaces1}
	Although variable-order spaces like $V_h$ in~\cref{eq:SubspacePairs} are supported in some software \cite{schoberl2014c++,fuentes2015orientation}, they may not available in the preferred software of many users.
	For this reason, we also propose the following less efficient alternative pairings:
	\begin{subequations}
	\label{eq:SubspacePairs_alt}
	\smallskip

	\noindent\textsl{Alternative triangular/tetrahedral elements.} We refer to the following as the $(\mathbb{P}_{p+n},\linebreak\mathbb{P}_{p-1}\text{-broken})$ subspaces:
	\begin{equation}
	\label{eq:SubspacePair3}
		V_h = \mathbb{P}_{p+n}(\mathcal{T}_h)\cap H^1_0(\Omega)
		\,,\qquad
		W_h = \mathbb{P}_{p-1}(\mathcal{T}_h)
		\,.
	\end{equation}

	\noindent\textsl{Alternative quadrilateral/hexahedral elements.} We refer to the following as the $(\mathbb{Q}_{p+1},\linebreak\mathbb{Q}_{p-1}\text{-broken})$ subspaces:
	\begin{equation}
	\label{eq:SubspacePair4}
		V_h = \mathbb{Q}_{p+1}(\mathcal{T}_h)\cap H^1_0(\Omega)
		\,,\qquad
		W_h = \mathbb{Q}_{p-1}(\mathcal{T}_h)
		\,.
	\end{equation}
	\end{subequations}
	Since~\cref{eq:SubspacePairs} are stable (cf.~\Cref{lem:Fortin}), it is a straightforward consequence of the inclusions $\mathbb{P}_{p}^{p+n}(T) \subset \mathbb{P}_{p+n}(T)$ and $\mathbb{Q}_{p}^{p+1}(T) \subset \mathbb{Q}_{p+1}(T)$ that~\cref{eq:SubspacePairs_alt} are also stable.
	For further details, see~\Cref{rem:AlternativeSubspaces2}.

\end{remark}

\subsection{Numerical experiments} \label{sub:obstacle_numerical_experiments}

We performed five sets of numerical experiments in order to validate the proximal Galerkin method.
The first experiment involves a smooth biactive manufactured solution that allows us to verify the (mesh-independent) iteration complexity predicted by \Cref{thm:ConvergenceContinuousLevel}, in addition to high-order convergence rates with respect to the polynomial order of the finite element subspaces.
In the second experiment, we check the discrete Karush--Kuhn--Tucker (KKT) conditions on a manufactured solution exhibiting strict complementarity.
In this case, we observe \emph{better} iteration complexity than predicted by \Cref{thm:ConvergenceContinuousLevel}.
We conjecture that this improved convergence order holds in general whenever a strict complementarity condition is satisfied; cf.~\Cref{rem:StrictComplementarity}.
The third experiment involves a non-smooth biactive solution and is included to further stress test the proximal Galerkin method.
In the fourth experiment, we consider a benchmark obstacle problem from the literature and demonstrate our ability solve this problem with the highest order finite elements currently supported in our MFEM implementation ($p=12$); cf.~\Cref{rem:Implementations}.
Finally, our fifth experiment demonstrates good performance with a \emph{discontinuous} obstacle on an adaptively-refined mesh.
Thus, the regularity restriction $\Delta \phi \in L^\infty(\Omega)$ required in the theory above is \emph{not} essential in practice.

Each of our experiments were conducted on standard sequences of uniformly refined nested meshes $\mathcal{T}_{h}, \mathcal{T}_{h/2}, \mathcal{T}_{h/4}, \ldots$ conforming to unit ball domains in $\mathbb{R}^2$.
The experiments with the triangular elements (FEniCSx) used an $\ell^\infty$-unit ball (i.e., square) domain,
\begin{subequations}
\begin{equation}
	\Omega_\infty
	=
	\{
		(x,y) \in \mathbb{R}^2 \mid \max\{|x|,|y|\} < 1
	\}
	\subset \mathbb{R}^2
	\,,
\end{equation}
with initial mesh size denoted $h = h_\infty$.
Meanwhile, the experiments with the quadrilateral elements (MFEM) used an $\ell^2$-unit ball (i.e., circular) domain,
\begin{equation}
	\Omega_2 
	=
	\{
		(x,y) \in \mathbb{R}^2 \mid x^2 + y^2 < 1
	\}
	\subset \mathbb{R}^2
	\,,
\end{equation}
\end{subequations}
with initial mesh size denoted $h = h_2$, which was uniformly refined using a standard transfinite interpolation rule to handle the curvilinear element mappings \cite{gordon1973construction}.
The initial meshes in these sequences are depicted in~\Cref{fig:ObstacleMesh}.

\begin{figure}
	\centering
	\begin{tikzpicture} [dot/.style={draw,rectangle,minimum size=4mm,inner sep=0pt,outer sep=0pt,thick}, step=2cm]
		\draw [thin] (0,0) grid (4,4);
		\draw [thin] (0,2) -- (2,4);
		\draw [thin] (0,0) -- (4,4);
		\draw [thin] (2,0) -- (4,2);
		\node [font=\normalsize] at (0.4,3.7) {$\Omega_\infty$};
																					\draw [thin,<->] (0,-0.3) -- (4,-0.3) node[draw=none,fill=none,font=\footnotesize,midway,below] {2};
		\draw [thin,<->] (-0.3,0) -- (-0.3,4) node[draw=none,fill=none,font=\footnotesize,midway,left] {2};
	\end{tikzpicture}
	\qquad\qquad
	\begin{tikzpicture}
		\draw[thin] (2,2) circle (2 cm);
		\draw[thin] (1.29289321882,1.29289321882) -- (1.29289321882,2.70710678118);
		\draw[thin] (1.29289321882,1.29289321882) -- (2.70710678118,1.29289321882);
		\draw[thin] (2.70710678118,2.70710678118) -- (2.70710678118,1.29289321882);
		\draw[thin] (2.70710678118,2.70710678118) -- (1.29289321882,2.70710678118);
		\draw[thin] (0.58578643763,0.58578643763) -- (1.29289321882,1.29289321882);
		\draw[thin] (3.41421356236,3.41421356236) -- (2.70710678118,2.70710678118);
		\draw[thin] (3.41421356236,0.58578643763) -- (2.70710678118,1.29289321882);
		\draw[thin] (0.58578643763,3.41421356236) -- (1.29289321882,2.70710678118);
				\draw [thin,<->] (0,-0.3) -- (4,-0.3) node[draw=none,fill=none,font=\footnotesize,midway,below] {2};
		\node [font=\normalsize] at (0.4,3.7) {$\Omega_2$};
    \end{tikzpicture}
	\caption{\label{fig:ObstacleMesh}Initial finite element meshes for computational domains $\Omega_\infty$ and $\Omega_2$. We denote their mesh sizes $h = h_\infty$ and $h = h_2$, respectively. Left: Initial triangular mesh for the $(\mathbb{P}_p\text{-bubble},\mathbb{P}_{p-1}\text{-broken})$ subspace pair on the domain $\Omega_\infty$ (FEniCSx experiments). Right: Initial five-element curvilinear quadrilateral mesh for the $(\mathbb{Q}_{p+1},\mathbb{Q}_{p-1}\text{-broken})$ subspace pair on the domain $\Omega_2$ (MFEM experiments). Across our experiments, we consider various polynomial orders $p \geq 1$ for both of these subspace pairs.}
\end{figure}
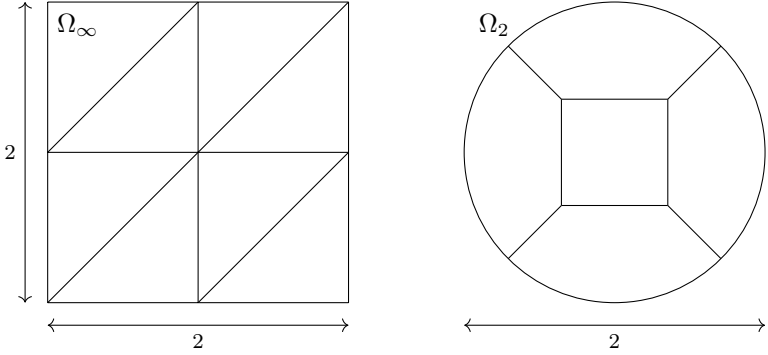

\begin{remark}[Software implementations and reproducibility]
\label{rem:Implementations}
	We conducted numerical experiments across two separate code bases, FEniCSx \cite{scroggs2022construction} and MFEM \cite{anderson2021mfem}, and have released our implementations to the public \cite{Keith2023ObstacleCode,ZenodoCode}.
	The FEniCSx implementation \cite{ZenodoCode} is Python-based and uses the $(\mathbb{P}_p\text{-bubble},\mathbb{P}_{p-1}\text{-broken})$ triangular elements proposed in~\Cref{sub:finite_element_subspaces}.
	The MFEM implementation is written in C++.
	Because MFEM does not currently support bubble function enrichment, the MFEM implementation \cite{Keith2023ObstacleCode} uses $(\mathbb{Q}_{p+1},\mathbb{Q}_{p-1}\text{-broken})$ quadrilateral elements (see~\Cref{rem:AlternativeSubspaces1}) instead of the $(\mathbb{Q}_p\text{-bubble}, \mathbb{Q}_{p-1}\text{-broken})$ elements also proposed in~\Cref{sub:finite_element_subspaces}.

	We have taken care to ensure that the numerical experiments in this section are easily reproducible.
	Indeed, all results from Experiments 1, 2, 3, and 5 can be reproduced by running the FEniCSx script \texttt{obstacle.py} or compiling and running the MFEM codes \texttt{obstacle.cpp} or \texttt{obstacle\_discontinuous.cpp}, all of which are available at \cite{ZenodoCode}.
	Experiment 4 now exists as an official part of the MFEM codebase called MFEM Example~36 \cite{Keith2023ObstacleCode}.
\end{remark}

\subsubsection{Experiment 1: Smooth biactive solution} \label{ssub:experiment_1_biactivity}

In this experiment, we set $\phi = 0$ and $g=u$, where $u(x,y)$ is the smooth manufactured solution
\begin{equation}
\label{eq:Biactivity_SmoothManufacturedSolution}
	u(x,y)
	=
	\begin{cases}
		0 & \text{if~} x < 0\,,\\
		x^4 & \text{otherwise,}\\
	\end{cases}
	\quad
		\text{implied by}
		\quad
	f(x,y)
	=
	\begin{cases}
		0 & \text{if~} x < 0\,,\\
		-12x^2 & \text{otherwise.}\\
	\end{cases}
\end{equation}
See \Cref{fig:BiactiveIterationComplexity,fig:BiactivePolynomialOrderConvergenceRates} for depictions of the exact solution on $\Omega_\infty$ and $\Omega_2$, respectively.

This problem is specifically chosen to exhibit \emph{biactivity}; i.e., both the inequality constraint $u \geq 0$ and the associated Lagrange multiplier are simultaneously equal to zero on a set of positive measure; i.e., on the set $\{ (x,y) \mid x < 0 \}$.
Such problems are notoriously difficult to solve for certain classes of algorithms, such as active set methods.
Biactivity, also know as weak activity or lack of strict complementarity, is a notion from nonlinear optimization that indicates a kind of degenerate nonsmoothness of the primal-dual system of equations used to calculate the solution. It is often associated with a lack of stability with respect to perturbations of the data, as well. We refer the interested reader to any standard text of numerical optimization; see, e.g., \cite[Definition~12.5]{nocedal1999numerical}.

Our first aim is to use this challenging example to illustrate \emph{mesh-independence} of the proximal Galerkin method.
To this end, we use \Cref{tab:BiactivityMeshIndependence} to record the values of the increments $\|u_h^{k} - u_h^{k-1}\|_{H^1(\Omega_\infty)}$ taken from a sequence of refined meshes with polynomial orders $p=1,2$ from our FEniCSx implementation \cite{ZenodoCode}.
The specific step size rule used to generate this data is chosen based on \Cref{cor:ConvergenceRates} to deliver \emph{superlinear} convergence (in iterations), and is given as follows:
\begin{subequations}
\begin{equation}
\label{eq:ObstacleStepSizes_superexponential}
	\alpha_1 = 1\,,
	\quad
	\alpha_k
	=
	\min\bigl\{ \max\bigl\{\alpha_1,r^{q^{k-1}} - \alpha_{k-1}\bigr\} , 10^{10} \bigr\}
	\,,
	\quad
	k = 2,3,\ldots,
\end{equation}
where $r = q = 1.5$.
Note that, for each iteration $k$, the increments in \Cref{tab:BiactivityMeshIndependence} converge to fixed values as the mesh is refined or the polynomial order is raised.
Moreover, the total number of linear equation solves remains bounded.
Both of these characteristics are emblematic of a \emph{mesh-independent} numerical method.

\begin{table}
\centering
\footnotesize
\renewcommand{\arraystretch}{1.3}
\begin{tabular}{ |c|c|c|c|c|c|c| }
 \hhline{|>{\arrayrulecolor{white}}-->{\arrayrulecolor{black}}|-----|}
 \multicolumn{2}{c|}{} & \multicolumn{5}{c|}{\cellcolor{lightgray!15} \small\raisebox{5pt}{\vphantom{f}} Progress of the iterates $\|u_h^{k} - u_h^{k-1}\|_{H^1(\Omega_\infty)}$ for various $h$ and $p$}\\[3pt]
 \hhline{|>{\arrayrulecolor{white}}-->{\arrayrulecolor{black}}|-----|}
 \multicolumn{2}{c|}{}& \multicolumn{3}{c|}{\cellcolor{lightgray!05} Polynomial order $p = 1$} & \multicolumn{2}{c|}{\cellcolor{lightgray!10} Polynomial order $p = 2$}\\
 \hline
 \rowcolor{lightgray!15}
 $k$ & $\alpha_{k}$ & $h_\infty/16$ & $h_\infty/32$ & $h_\infty/64$ & $h_\infty/16$ & $h_\infty/32$ \\
 \hline
\cellcolor{lightgray!05}   1 & \cellcolor{lightgray!01}   $1.0$              & $2.10\cdot 10^{0}$   & $2.10\cdot 10^{0}$   & $2.10\cdot 10^{0}$   & $2.10\cdot 10^{0}$   & $2.10\cdot 10^{0}$   \\
\cellcolor{lightgray!05}   2 & \cellcolor{lightgray!01}   $1.0$              & $6.45\cdot 10^{-1}$  & $6.45\cdot 10^{-1}$  & $6.45\cdot 10^{-1}$  & $6.45\cdot 10^{-1}$  & $6.45\cdot 10^{-1}$  \\
\cellcolor{lightgray!05}   3 & \cellcolor{lightgray!01}   $1.49$             & $1.73\cdot 10^{-1}$  & $1.73\cdot 10^{-1}$  & $1.73\cdot 10^{-1}$  & $1.73\cdot 10^{-1}$  & $1.73\cdot 10^{-1}$  \\
\cellcolor{lightgray!05}   4 & \cellcolor{lightgray!01}   $2.43$             & $1.10\cdot 10^{-1}$  & $1.10\cdot 10^{-1}$  & $1.10\cdot 10^{-1}$  & $1.10\cdot 10^{-1}$  & $1.10\cdot 10^{-1}$  \\
\cellcolor{lightgray!05}   5 & \cellcolor{lightgray!01}   $5.35$             & $7.76\cdot 10^{-2}$  & $7.77\cdot 10^{-2}$  & $7.77\cdot 10^{-2}$  & $7.77\cdot 10^{-2}$  & $7.77\cdot 10^{-2}$  \\
\cellcolor{lightgray!05}   6 & \cellcolor{lightgray!01}   $1.64\cdot 10^{1}$ & $4.76\cdot 10^{-2}$  & $4.77\cdot 10^{-2}$  & $4.77\cdot 10^{-2}$  & $4.77\cdot 10^{-2}$  & $4.77\cdot 10^{-2}$  \\
\cellcolor{lightgray!05}   7 & \cellcolor{lightgray!01}   $8.50\cdot 10^{1}$ & $2.24\cdot 10^{-2}$  & $2.25\cdot 10^{-2}$  & $2.25\cdot 10^{-2}$  & $2.25\cdot 10^{-2}$  & $2.25\cdot 10^{-2}$  \\
\cellcolor{lightgray!05}   8 & \cellcolor{lightgray!01}   $9.35\cdot 10^{2}$ & $5.82\cdot 10^{-3}$  & $5.84\cdot 10^{-3}$  & $5.85\cdot 10^{-3}$  & $5.85\cdot 10^{-3}$  & $5.85\cdot 10^{-3}$  \\
\cellcolor{lightgray!05}   9 & \cellcolor{lightgray!01}   $3.17\cdot 10^{4}$ & $6.04\cdot 10^{-4}$  & $6.07\cdot 10^{-4}$  & $6.07\cdot 10^{-4}$  & $6.07\cdot 10^{-4}$  & $6.07\cdot 10^{-4}$  \\
\cellcolor{lightgray!05}  10 & \cellcolor{lightgray!01}   $5.85\cdot 10^{6}$ & $1.80\cdot 10^{-5}$  & $1.81\cdot 10^{-5}$  & $1.81\cdot 10^{-5}$  & $1.81\cdot 10^{-5}$  & $1.81\cdot 10^{-5}$  \\
\cellcolor{lightgray!05}  11 & \cellcolor{lightgray!01}   $1\cdot 10^{10}$   & $9.41\cdot 10^{-8}$  & $9.47\cdot 10^{-8}$  & $9.49\cdot 10^{-8}$  & $9.50\cdot 10^{-8}$  & $9.50\cdot 10^{-8}$  \\
\cellcolor{lightgray!05}  12 & \cellcolor{lightgray!01}   $1\cdot 10^{10}$   & $2.10\cdot 10^{-12}$ & $2.00\cdot 10^{-12}$ & $1.96\cdot 10^{-12}$ & $1.92\cdot 10^{-12}$ & $1.95\cdot 10^{-12}$ \\
 \hline
 \rowcolor{lightgray!05}
 \multicolumn{2}{|c|}{\cellcolor{lightgray!15} Tot. linear solves} & $21$ & $20$ & $19$ & $19$ & $19$ \\
 \hline
\end{tabular}
\caption{\label{tab:BiactivityMeshIndependence}
\textbf{Experiment 1: Smooth biactive solution}.
Table of increments $\|u_h^{k} - u_h^{k-1}\|_{H^1(\Omega_\infty)}$ for various mesh sizes $h$ and polynomial orders $p$ using the triangular element $(\mathbb{P}_p\text{-bubble},\mathbb{P}_{p-1}\text{-broken})$ discretization.
The initial degrees of freedom for $u_h$ and $\psi_h$ were set to zero at the beginning of each run.
Between eight and ten Newton iterations performed by the PETSc Newton solver used for each initial subproblem solve and then only one Newton solve was used for each of the following subproblems.
The convergence of the increments for each fixed $k$ and the boundedness of the number of linear solves indicates \emph{mesh-independence}.
}
\end{table}

Our next aim is to verify the convergence orders predicted by \Cref{thm:ConvergenceContinuousLevel} and \Cref{cor:ConvergenceRates}.
In doing so, we consider the double-exponential step size rule~\cref{eq:ObstacleStepSizes_superexponential} alongside the geometric rule
\begin{equation}
\label{eq:ObstacleStepSizes_geometric}
	\alpha_k = r^{k-1}
	\,,
	\quad
	k = 1,2,\ldots,
\end{equation}
\end{subequations}
with $r = 2$, and the fixed step size rule $\alpha_k = 1$, for all $k = 1,2,\ldots$
The results in \Cref{fig:BiactiveIterationComplexity} agree precisely with the predictions made later on in \Cref{cor:ConvergenceRates}.
In particular, notice that the fixed step rule $\alpha_k = 1$ leads to \emph{sublinear} convergence, the geometric rule \cref{eq:ObstacleStepSizes_geometric} induces \emph{linear} convergence, and the double-exponential step size rule~\cref{eq:ObstacleStepSizes_superexponential} delivers \emph{superlinear} convergence.

\begin{figure}
	\centering
	\begin{tikzpicture}
		\node at (-3.35,0.5) {\includegraphics[height=1.5in]{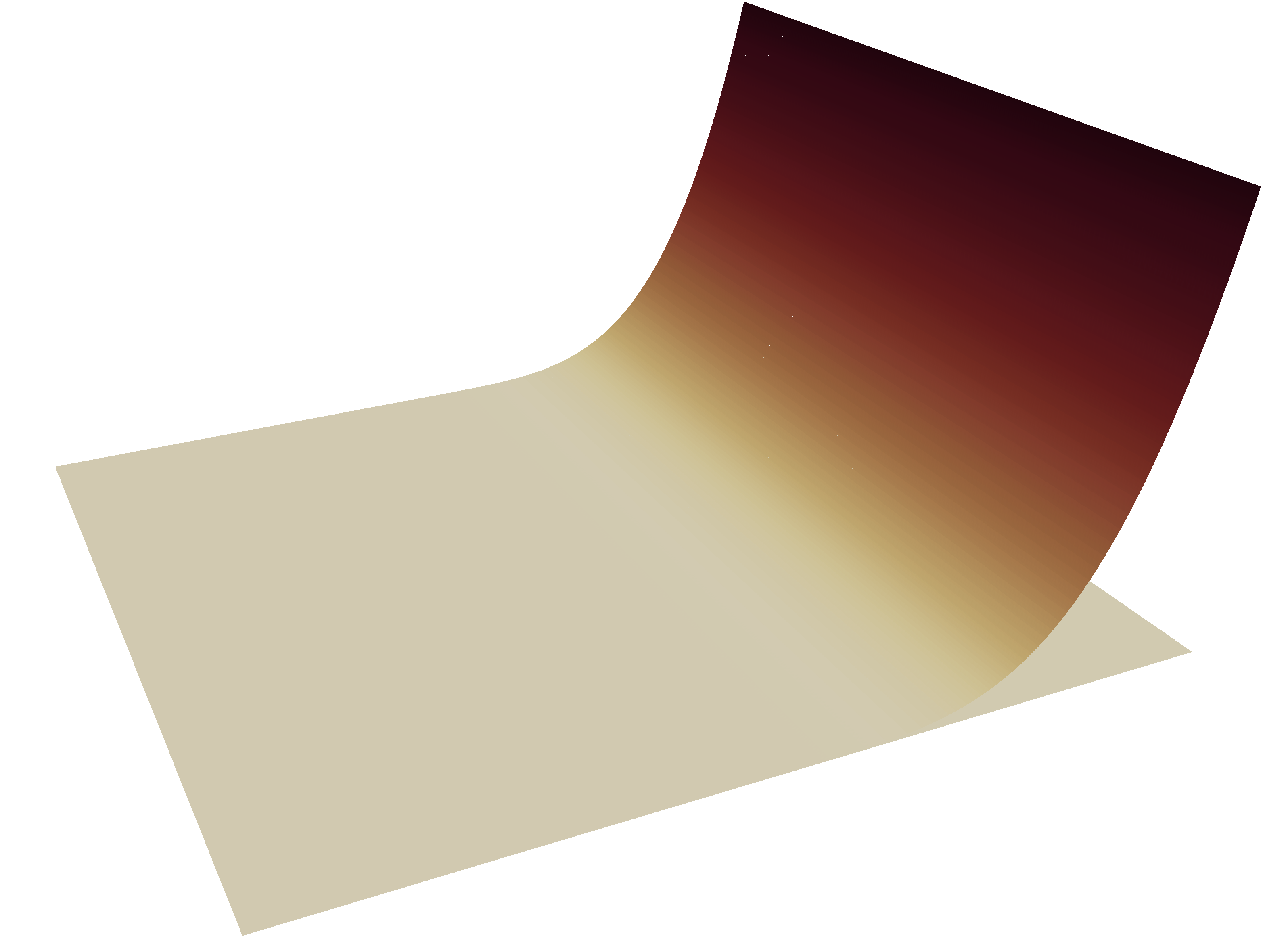}};
		\node at (3.25,0.4) {
			   \includegraphics[clip=true, trim= 0 0 0.35cm 0, height=2.1in]{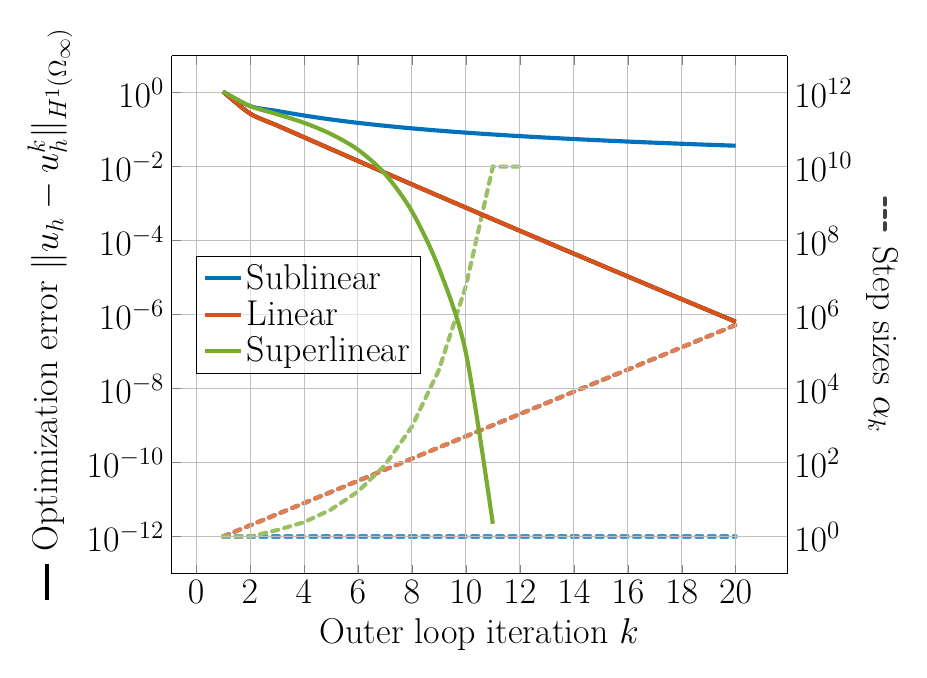}
		};
	    \node at (-0.5,0.4) {\includegraphics[height=2cm]{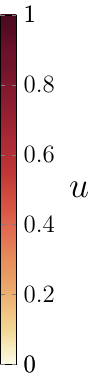}};
	    \node at (-2.2,-1.35) {\scriptsize $
	    	u(x,y)
	    	=
	    	\begin{cases}
	    		0 & \text{if~} x < 0\\
	    		x^4 & \text{otherwise}\\
	    	\end{cases}
	    $};
    \end{tikzpicture}
	\caption{
	\label{fig:BiactiveIterationComplexity}
	\textbf{Experiment 1: Smooth biactive solution}.
	Verifying the convergence orders predicted by \Cref{cor:ConvergenceRates} with the $(\mathbb{P}_1\text{-bubble},\mathbb{P}_{0}\text{-broken})$ discretization (FEniCSx).
	Left: The exact solution.
	Right: Plots of the optimization error $\|u_h - u_h^k\|_{H^1(\Omega_\infty)}$ and corresponding step size $\alpha_k$ when $h = h_\infty / 16$.
	The blue curve tracks the \emph{sublinear} convergence induced by the fixed step size rule $\alpha_k = 1$.
	Meanwhile, the step size rules \cref{eq:ObstacleStepSizes_geometric} and \cref{eq:ObstacleStepSizes_superexponential} induce \emph{linear} (red) and \emph{superlinear} (green) convergence, respectively.
	The results are similar on finer meshes due to mesh-independence; cf.~\Cref{tab:BiactivityMeshIndependence}.
	}
\end{figure}

The final aim of this experiment is to demonstrate that high-order convergence rates (with respect to the mesh size $h$) can be achieved using polynomial orders $p > 1$.
To this end, we use our high-order MFEM implementation to solve for the biactive solution~\cref{eq:Biactivity_SmoothManufacturedSolution} on the circular domain $\Omega = \Omega_2$.
In \Cref{fig:BiactivePolynomialOrderConvergenceRates}, we plot the approximation errors of the discrete solutions $u_h$, $\tilde{u}_h$, and $\lambda_h$.
From these results, we witness that high-order convergence rates can, indeed, be achieved with the proximal Galerkin method.

\begin{figure}
	\centering
	\begin{minipage}[c]{0.49\textwidth}
	\small
		\centering
				\begin{tikzpicture}
			\node at (-1.35,0.5) {\includegraphics[clip=true, trim= 14cm 0 20cm 0, height=1.7in]{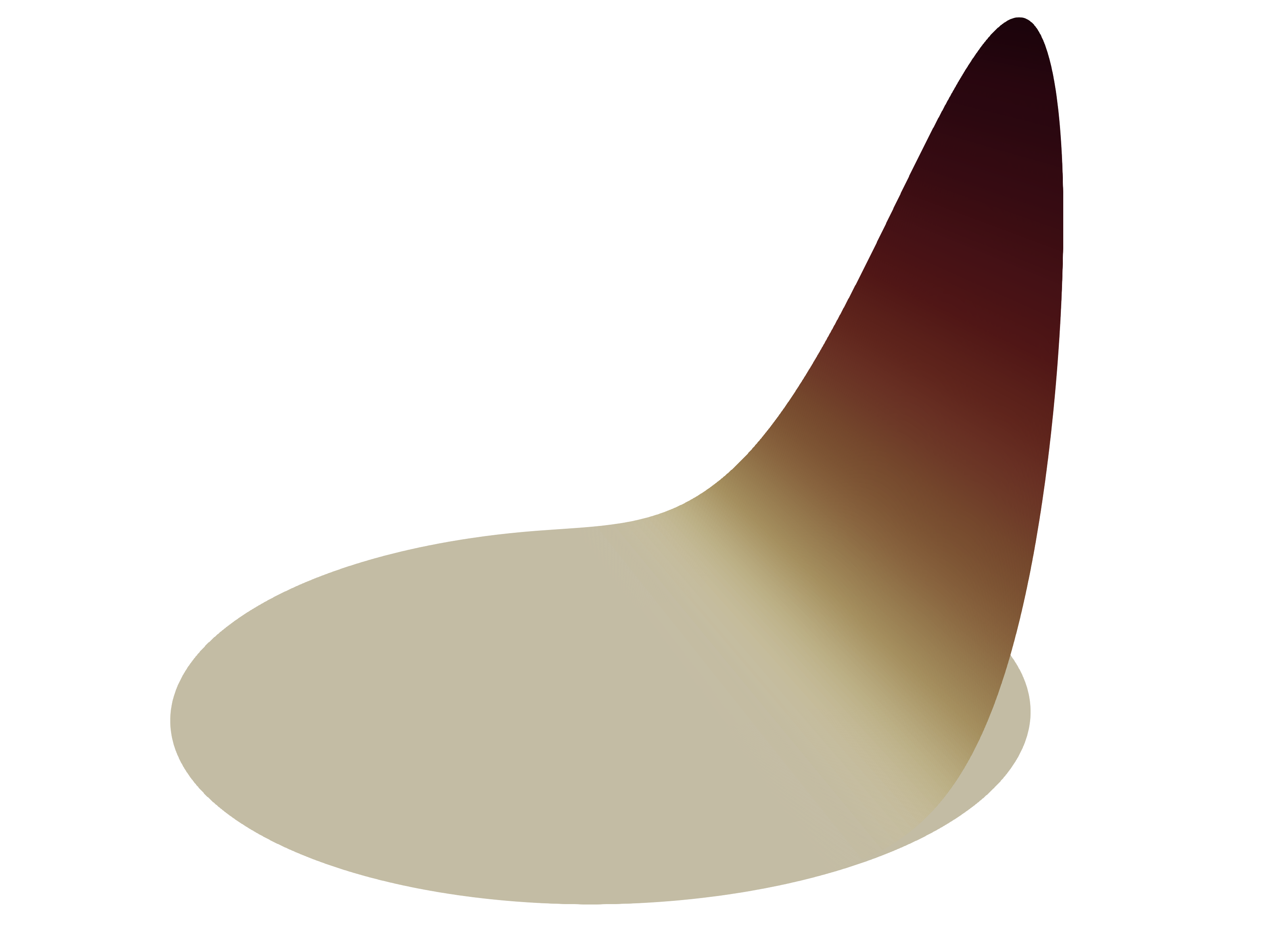}};
		    \node [left] at (1.65,0.4) {\includegraphics[height=2.2cm]{colorbar0.pdf}};
		    \node [right] at (-4.25,1.5) {\scriptsize $
		    	u(x,y)
		    	=
		    	\begin{cases}
		    		0 & \text{if~} x < 0\\
		    		x^4 & \text{otherwise}\\
		    	\end{cases}
		    $};
		    \node [right] at (-4.25,0.7) {\scriptsize $
		    	\lambda(x,y) =0
		    $};
	    \end{tikzpicture}
	\end{minipage}
	\begin{minipage}[c]{0.49\textwidth}
	\small
		\centering
		\includegraphics[width=\textwidth]{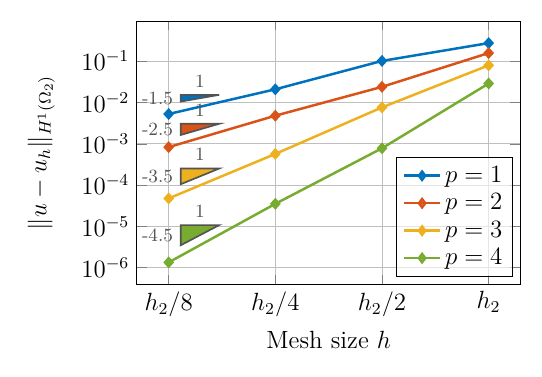}
	\end{minipage}
	\\
	\begin{minipage}[c]{0.49\textwidth}
	\small
		\centering
		\includegraphics[width=\textwidth]{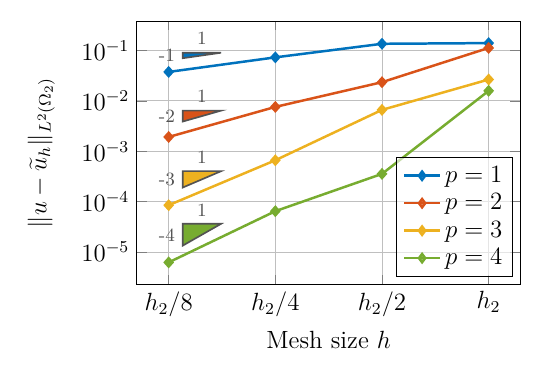}
	\end{minipage}
	\begin{minipage}[c]{0.49\textwidth}
	\small
		\centering
		\includegraphics[width=\textwidth]{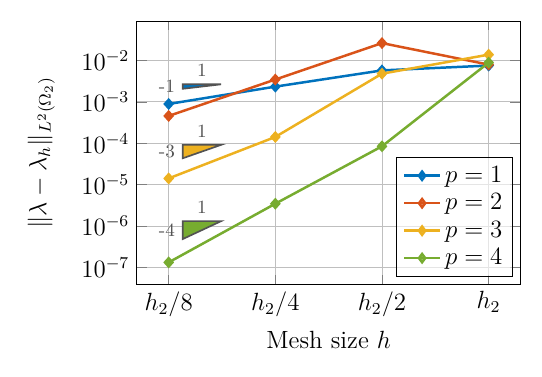}
	\end{minipage}
	\caption{
	\textbf{Experiment 1: Smooth biactive solution}.
	Verifying high-order convergence of the approximation error with various polynomial order $(\mathbb{Q}_{p+1},\mathbb{Q}_{p-1}\text{-broken})$-discretizations on $\Omega = \Omega_2$.
	Top left: The exact solution.
	Top right: The approximation error of the discrete solution $u_h$ in the $H^1$-norm.
	Here, the extra $1/2$-order of convergence is made possible from the $(p+1)$-degree approximation space.
	Bottom left: The approximation error of the discrete solution $\tilde{u}_h = \exp\psi_h$ in the $L^2$-norm.
	Bottom right: The approximation error of the Lagrange multiplier $\lambda_h$ in the $L^2$-norm.
	Notice that each of the convergence rates for this smooth solution grow with the polynomial order.
	\label{fig:BiactivePolynomialOrderConvergenceRates}}
\end{figure}

\subsubsection{Experiment 2: Strict complementarity} \label{ssub:experiment_2_kkt_conditions}

In this experiment, we set $\phi = g = 0$ and define 
\begin{equation}
\label{eq:StrictComplementarity_RHS}
	f(x,y)
	=
	2 \pi^2 \sin(\pi x)\sin(\pi y)
	\,.
\end{equation}
See~\Cref{fig:StrictComplementarityConvergenceOrder} for a fine mesh ($h = h_\infty/128$) solution $u_h$ as well as the associated Lagrange multiplier $\lambda_h$.

When viewed from the perspective of continuum mechanics, the multiplier $\lambda$ is a resolvent force. It is therefore rare that we would see biactivity of the type in the previous experiments on such large domains as it would correspond to contact without any opposing force resulting from the obstacle. 

The first aim of this experiment is to revisit the convergence orders predicted by \Cref{thm:ConvergenceContinuousLevel} and \Cref{cor:ConvergenceRates} and demonstrate that they are overly pessimistic for this more typical type of problem.
Indeed, as demonstrated in \Cref{fig:StrictComplementarityConvergenceOrder}, we see that linear convergence is achieved using only a fixed step size.
In turn, superlinear convergence can be achieved using any unbounded step size rule.
For illustration, we have added results using the geometric rule~\cref{eq:ObstacleStepSizes_geometric}
with various growth parameters $r \in \{1.05,1.1,2\}$.

\begin{figure}
	\centering
	\begin{tikzpicture}
		\node at (-3.35,1) {\includegraphics[clip=true, trim= 4cm 0 0 0, height=2.3in]{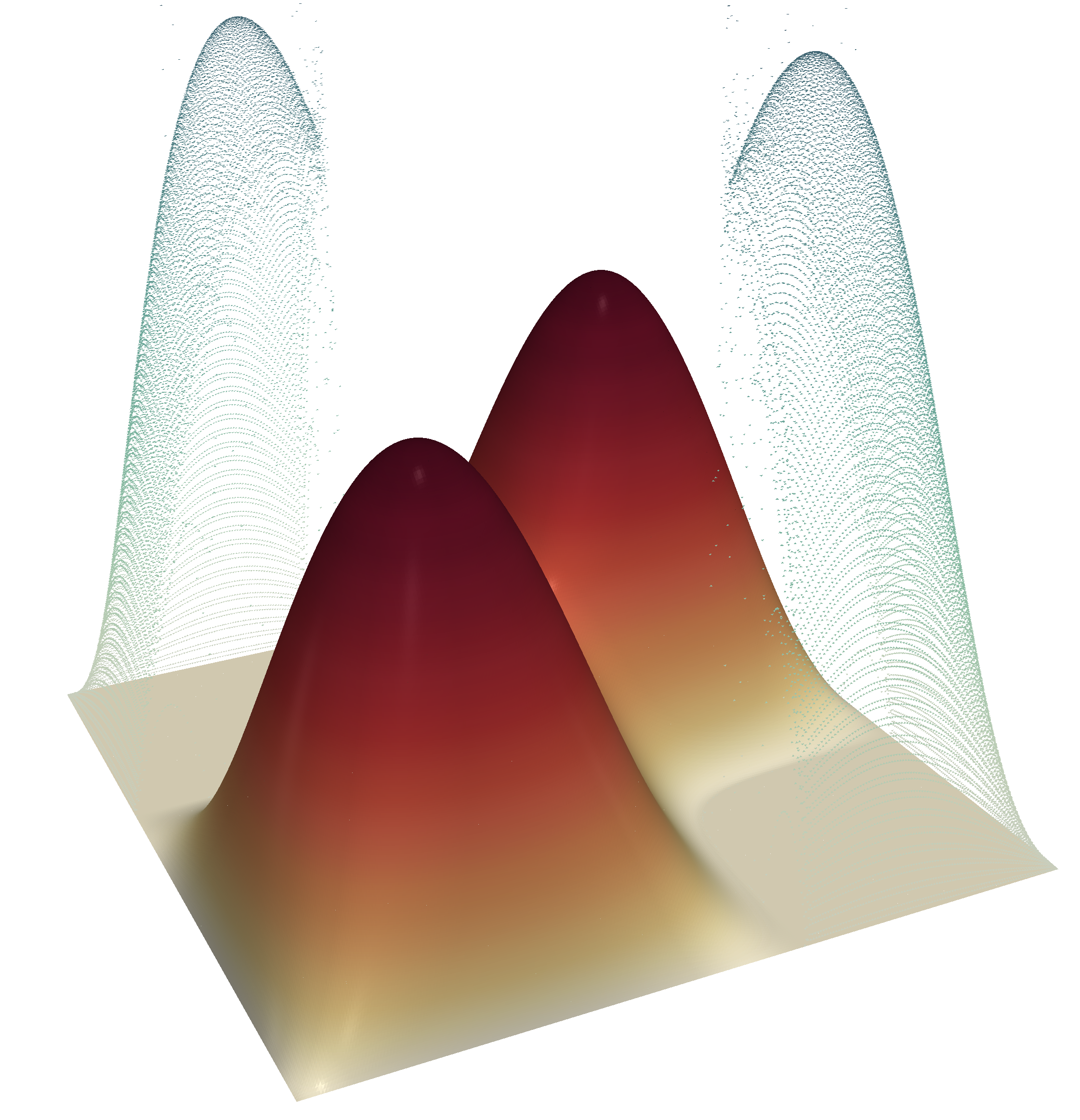}};
				\node at (3,0.4) {
		   \includegraphics[clip=true, trim= 0 0 0.35cm 0, height=2.1in]{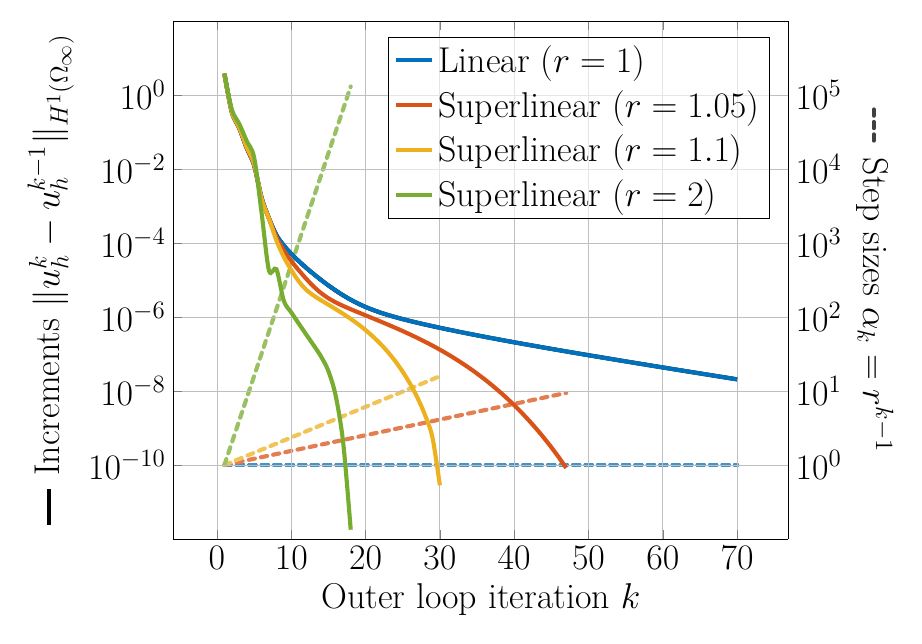}
		};
	    	    \node at (-5.8,-1.2) {\includegraphics[height=2cm]{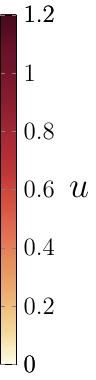}};
	    \node at (-0.95,2.2) {\includegraphics[height=2cm]{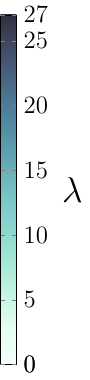}};
    \end{tikzpicture}
	\caption{
	\textbf{Experiment 2: Strict complementarity}.
	Surpassing the convergence orders predicted by \Cref{cor:ConvergenceRates} with the $(\mathbb{P}_1\text{-bubble},\mathbb{P}_{0}\text{-broken})$ discretization (FEniCSx).
	Left: A high-resolution image of the exact solution $u$ and associated Lagrange multiplier $\lambda$.
		Right: Plots of the primal variable increments $\|u_h^{k} - u_h^{k-1}\|_{H^1(\Omega_\infty)}$ and corresponding geometric step sizes $\alpha_k = r^{k-1}$ for mesh size $h = h_\infty / 16$.
			The results are similar on all finer meshes due to mesh-independence; cf.~\Cref{tab:BiactivityMeshIndependence}.
	Analogous results can also be obtained for higher-order and quadrilateral element discretizations (not shown).
	\label{fig:StrictComplementarityConvergenceOrder}}
\end{figure}

The second aim of this experiment is to check convergence of the discrete solution via the KKT conditions.
This is useful to assess \textit{a posteriori} the optimality of the discrete solution when the true solution is unknown.
To this end, we consider the complementarity condition $\big|\int_{\Omega} \lambda u \dd x\big| = 0$, the primal feasibility condition $\int_{\Omega} \max\{-u,0\} \dd x = 0$, and the dual feasibility condition $\int_{\Omega} \max\{-\lambda,0\} \dd x = 0$.
We note that the discrete solution $\tilde{u}_h = \exp\psi_h$ always satisfies the primal feasibility condition by construction; i.e., $\int_{\Omega} \max\{-\tilde{u}_h,0\} \dd x = 0$.
Therefore, in order to glean more interesting information about the proximal Galerkin solution, we focus on discrete versions of the KKT conditions formulated in terms of $u_h$.
The discrete KKT conditions that we checked are recorded in \Cref{tab:KKT}.
From the results in this table, we see that discrete primal feasibility, $\int_{\Omega} \max\{-u_h,0\} \dd x = 0$, is achieved only in the limit $h \to 0$.
However, discrete complementarity, $\big|\int_{\Omega} \lambda_h u_h \dd x\big| = 0$, and discrete dual feasibility, $\int_{\Omega} \max\{-\lambda_h,0\} \dd x = 0$, appear to hold for all mesh sizes.

\begin{table}
\centering
\small
\renewcommand{\arraystretch}{1.3}
\begin{tabular}{ |c|c|c|c| }
 \hline
    \rowcolor{lightgray!10}
  &&&
  \\
 \rowcolor{lightgray!10}
  \multirow{-2}{*}{$h$} & \multirow{-2}{3cm}{\centering {\small Complementarity} $\big|\int_{\Omega_\infty} \lambda_h u_h \dd x\big|$} & \multirow{-2}{3cm}{\centering {\small Primal feasibility} $\int_{\Omega_\infty} \max\{-u_h,0\} \dd x$} & \multirow{-2}{3cm}{\centering {\small Dual feasibility} $\int_{\Omega_\infty} \max\{-\lambda_h,0\} \dd x$}\\
   \hline
       $h_\infty$     & \multirow{6}{*}{($\text{all less than~} 10^{-14}$)} & $6.97 \cdot 10^{-3}$ & \multirow{6}{*}{($\text{all less than~} 10^{-12}$)} \\
 $h_\infty/2$   &  & $9.09 \cdot 10^{-3}$ & \\
 $h_\infty/4$   &  & $1.16 \cdot 10^{-3}$ & \\
 $h_\infty/8$   &  & $1.69 \cdot 10^{-4}$ & \\
 $h_\infty/16$  &  & $4.08 \cdot 10^{-5}$ & \\
 $h_\infty/32$  &  & $4.53 \cdot 10^{-6}$ & \\
 \hline
\end{tabular}
\caption{\label{tab:KKT}
	\textbf{Experiment 2: Strict complementarity}.
	Checking the discrete KKT conditions for the proximal Galerkin solution owing to~\cref{eq:StrictComplementarity_RHS}.
	Here, we see that primal feasibility is achieved in the limit $h\to 0$.
	Meanwhile, complementary and dual feasibility holds on all meshes.
}
\end{table}

\subsubsection{Experiment 3: Biactive solution, nonsmooth multiplier} \label{ssub:experiment_3_biactivity_revisited}

In this experiment, we set $\phi = 0$ and $g=u$, where $u(x,y)$ is the smooth manufactured solution
\begin{subequations}
\begin{equation}
	u(x,y)
	=
	\begin{cases}
		(1 - 4x^2 - 4y^2)^4 & \text{if~} x^2 + y^2 < 1/4\,,\\
		0 & \text{otherwise,}\\
	\end{cases}
\end{equation}
implied by the forcing function
\begin{equation}
	f(x,y)
	=
	-\Delta u(x,y)
	-
	\begin{cases}
		1 & \text{if~} x^2 + y^2 > 3/4\,,\\
		0 & \text{otherwise.}\\
	\end{cases}
\end{equation}
Clearly, this is another solution exhibiting biactivity.
In this case, however, the multiplier,
\begin{equation}
	\lambda(x,y) =
	\begin{cases}
		1 & \text{if~} x^2 + y^2 > 3/4\,,\\
		0 & \text{otherwise,}\\
	\end{cases}
\end{equation}
\end{subequations}
is discontinuous.
See \Cref{fig:Biactivity2} for a depiction of the exact solution $u$ as well as the associated Lagrange multiplier $\lambda$ on the domain $\Omega = \Omega_\infty$.

We use this experiment to inspect the approximation error of the $(\mathbb{P}_1\text{-bubble},\linebreak \mathbb{P}_{0}\text{-broken})$ discretization.
See \Cref{fig:Biactivity2} for our results.
As expected, unlike for the biactive solution studied in \Cref{ssub:experiment_1_biactivity}, the $L^2$-error of the Lagrange multiplier does not decay to zero linearly.
We observed no other adverse effects from this non-smooth manufactured solution.

\begin{figure}
	\centering
	\begin{tikzpicture}
	\footnotesize
		\node at (-3.5,1) {\includegraphics[clip=true, trim= 1cm 0cm 1cm 0cm, height=1.75in]{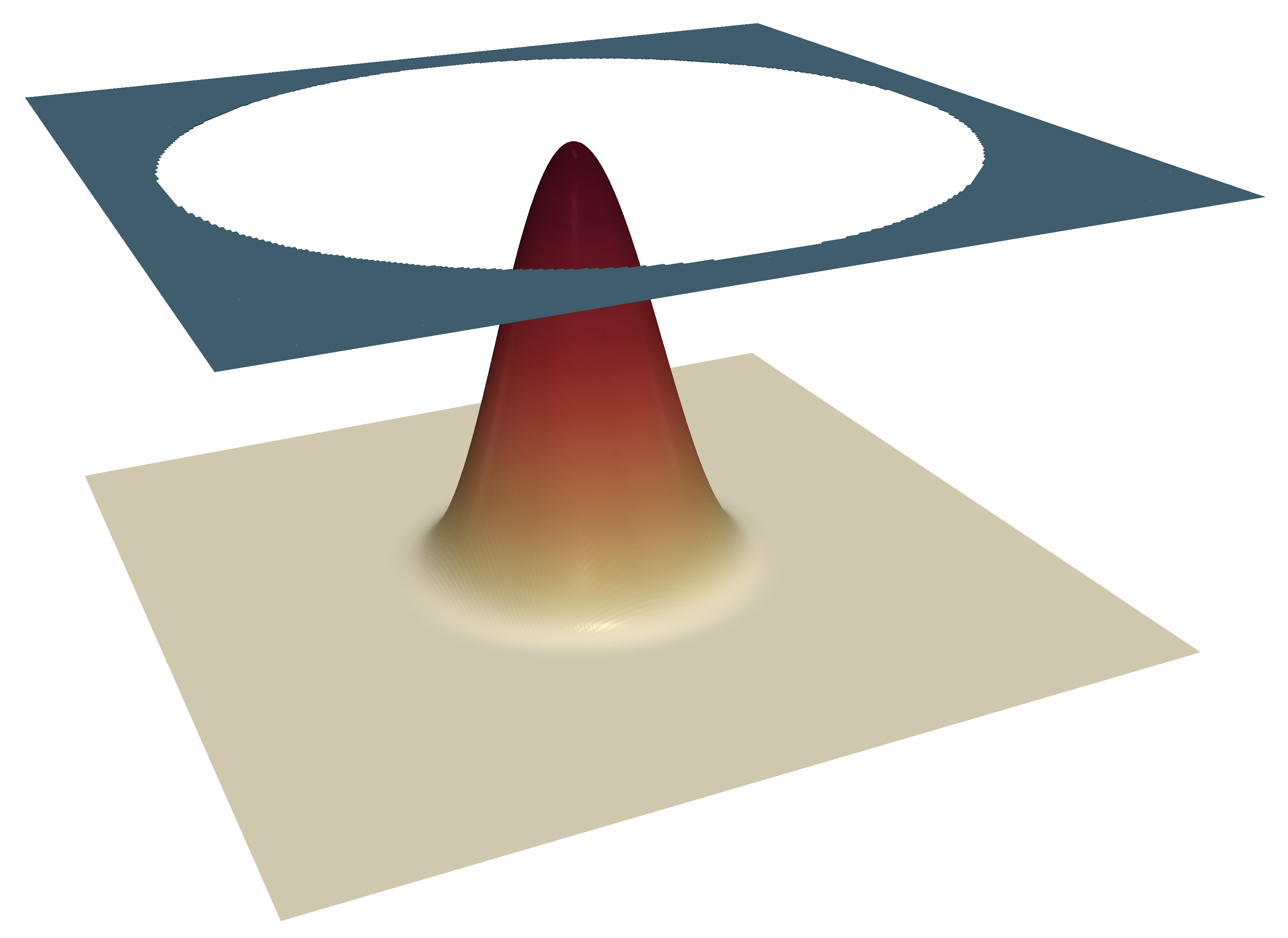}};
						\node at (3,1.1) {
			\renewcommand{\arraystretch}{1.3}
			\setlength{\tabcolsep}{4pt}
			\begin{tabular}{ |c|c|c| }
				\hhline{|>{\arrayrulecolor{white}}->{\arrayrulecolor{black}}|--|}
				\multicolumn{1}{c|}{} & \multicolumn{2}{c|}{\cellcolor{lightgray!10} \small\raisebox{5pt}{\vphantom{f}} Discretization errors}\\[3pt]
				\hline
				\rowcolor{lightgray!10}
				\vphantom{$\Big|$} $h$ & $\|u - u_h\|_{H^1(\Omega_\infty)}$ & $\|u - \tilde{u}_h\|_{L^2(\Omega_\infty)}$ \\
				\hline
				$h_\infty/16$  & $3.102\cdot 10^{-1}$ & $2.864\cdot 10^{-2}$\\
				$h_\infty/32$  & $1.566\cdot 10^{-1}$ & $1.405\cdot 10^{-2}$\\
				$h_\infty/64$  & $7.849\cdot 10^{-2}$ & $6.991\cdot 10^{-3}$\\
				$h_\infty/128$ & $3.927\cdot 10^{-2}$ & $3.491\cdot 10^{-3}$\\
				\hline
				\cellcolor{lightgray!10} Rate & $1$ & $1$ \\
				\hline
				\hline
				\rowcolor{lightgray!10}
				\vphantom{$\Big|$} $h$ & $\|u - u_h\|_{L^2(\Omega_\infty)}$ & $\|\lambda - \lambda_h\|_{L^2(\Omega_\infty)}$ \\
				\hline
				$h_\infty/16$  & $7.326\cdot 10^{-3}$ & $2.827\cdot 10^{-1}$\\
				$h_\infty/32$  & $1.875\cdot 10^{-3}$ & $1.823\cdot 10^{-1}$\\
				$h_\infty/64$  & $4.717\cdot 10^{-4}$ & $1.121\cdot 10^{-1}$\\
				$h_\infty/128$ & $1.118\cdot 10^{-4}$ & $8.646\cdot 10^{-2}$\\
				\hline
				\cellcolor{lightgray!10} Rate & $2$ & $< 1$ \\
				\hline
			\end{tabular}
		};
	    \node at (-5.6,-0.2) {\includegraphics[height=2cm]{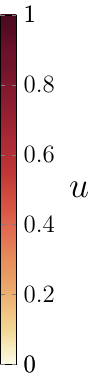}};
	    \node at (-0.7,2.5) {\includegraphics[height=2cm]{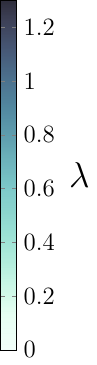}};
	    \node at (-2.4,-1.1) {\scriptsize $
	    	\lambda(x,y)
	    	=
	    	\begin{cases}
	    		1 & \text{if~} x^2+y^2 > 3/4\\
	    		0 & \text{otherwise}\\
	    	\end{cases}
	    $};
    \end{tikzpicture}
	\caption{
	\label{fig:Biactivity2}
	\textbf{Experiment 3: Biactive solution, nonmsooth multiplier}.
	Checking the discretization errors in various standard norms.
	Notice that the error in the Lagrange multiplier variable does not decay linearly with respect to the $L^2$-norm.
		}
\end{figure}

\subsubsection{Experiment 4: Spherical obstacle} \label{ssub:experiment_4_spherical_obstacle}

Our next experiment is motivated by an exact solution in \cite{gustafsson2017finite}.
Here, we set both $f = 0$ and $g = 0$ and define the obstacle to be the upper surface of a sphere of radius $1/2$, namely
\begin{equation}
\label{eq:SphericalObstacle}
	\phi(x,y) = \sqrt{ 1/4 - x^2 - y^2 }
	\,,
				\end{equation}
if $\sqrt{x^2 + y^2} \leq 1/2$, and assume that $\phi$ is sufficiently negative when $\sqrt{x^2 + y^2} > 1/2$ so that no contact happens on that subdomain.
Exploiting radial symmetry, the exact solution on the circular domain $\Omega = \Omega_2$ is found to be
\begin{equation}
	u(x,y) =
	\begin{cases}
		A \ln\sqrt{x^2 + y^2} & \text{if~} \sqrt{x^2 + y^2} > a,\\
		\phi(x,y) & \text{otherwise},\\
	\end{cases}
\end{equation}
where $a = \exp\big(W_{-1}\big(\frac{-1}{2e^2}\big)/2 + 1\big) \approx 0.34898$, $A = {\sqrt{1/4 - a^2}}/{\ln a} \approx -0.34012$, and $W_{j}(\cdot)$ is the $j$-th branch of the Lambert W‐function.

\Cref{fig:ObstacleSphere} presents the very high polynomial degree ($p=12$) proximal Galerkin solutions $u_h$ and $\tilde{u}_h$ to this problem on the coarsest mesh, $h = h_2$, which has only five elements.
We note that the solution to this problem is in $H^{5/2- \epsilon}(\Omega)$, $\epsilon > 0$, which is the highest regularity guaranteed by a smooth obstacle $\phi$ and load $f$ \cite{brezis1971nouveaux}.
Thus, we do not expect to achieve high-order accuracy with such a high degree discretization unless the mesh is adaptively refined \cite{banz2015biorthogonal}.
Developing an \textit{a posteriori} error estimator for proximal Galerkin in order to enact such adaptive mesh refinement strategies will be the subject of future work.

\Cref{tab:ObstableErrorsp1} compares the subproblem error, $\|u-u^k_h\|_{H^1(\Omega_2)}$, on a sequence of uniformly-refined meshes.
In order to include the more $h$-refinements, we consider only the lowest-degree discretization ($p=1$).
However, we find similar results with higher $p$; cf.~\Cref{tab:BiactivityMeshIndependence}.
To demonstrate that proximal Galerkin is practical without large step sizes, we fix $\alpha_k = 1$ for all iterations.
In this case, we note that the method is now expected to converge with linear complexity due to the strict complementarity of the solution; cf.~\Cref{ssub:experiment_2_kkt_conditions}.
From \Cref{tab:ObstableErrorsp1}, we see that if the number of outer iterations $k$ is fixed, then the error converges to a single value as the mesh is refined.
This is an important hallmark of mesh-independence.

\begin{figure}
	\centering
	\begin{minipage}[c]{0.475\textwidth}
	\small
		\centering
		\includegraphics[clip=true, trim= 0 1cm 0 0, height=1.25in]{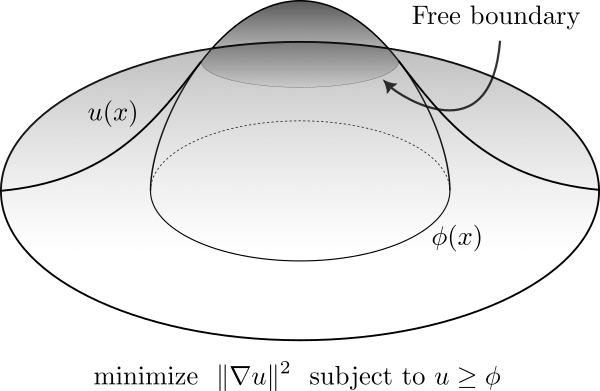}
		\\[0.4em]
		Diagram of problem
	\end{minipage}											\begin{minipage}[c]{0.475\textwidth}
	\small
		\centering
		\begin{minipage}[c]{0.85\textwidth}
		\centering
			\includegraphics[clip=true, trim = 0cm 2cm 0cm 1cm, width=0.85\linewidth]{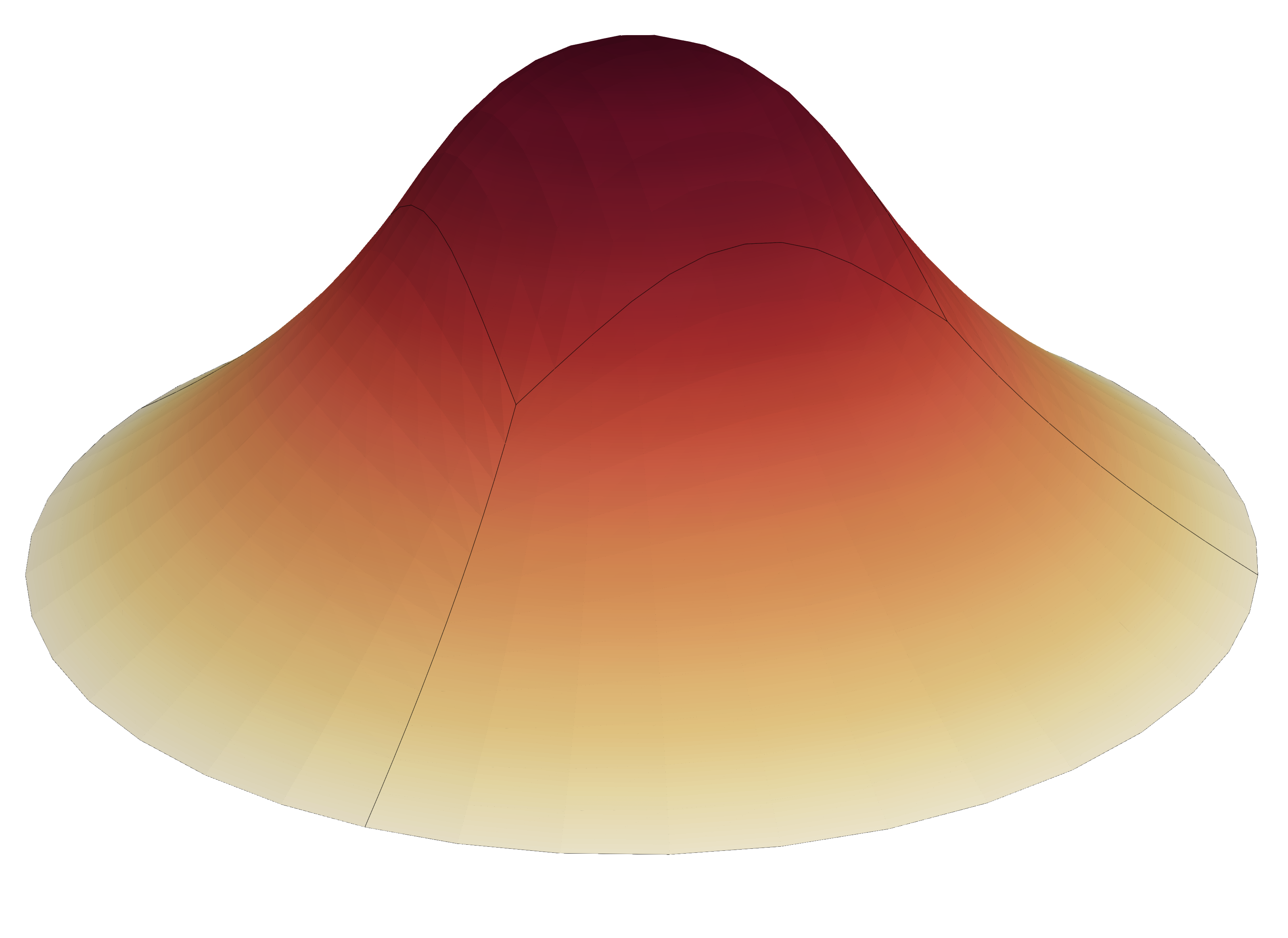}
			\includegraphics[clip=true, trim = 0cm 1cm 0cm 2cm, width=0.85\linewidth]{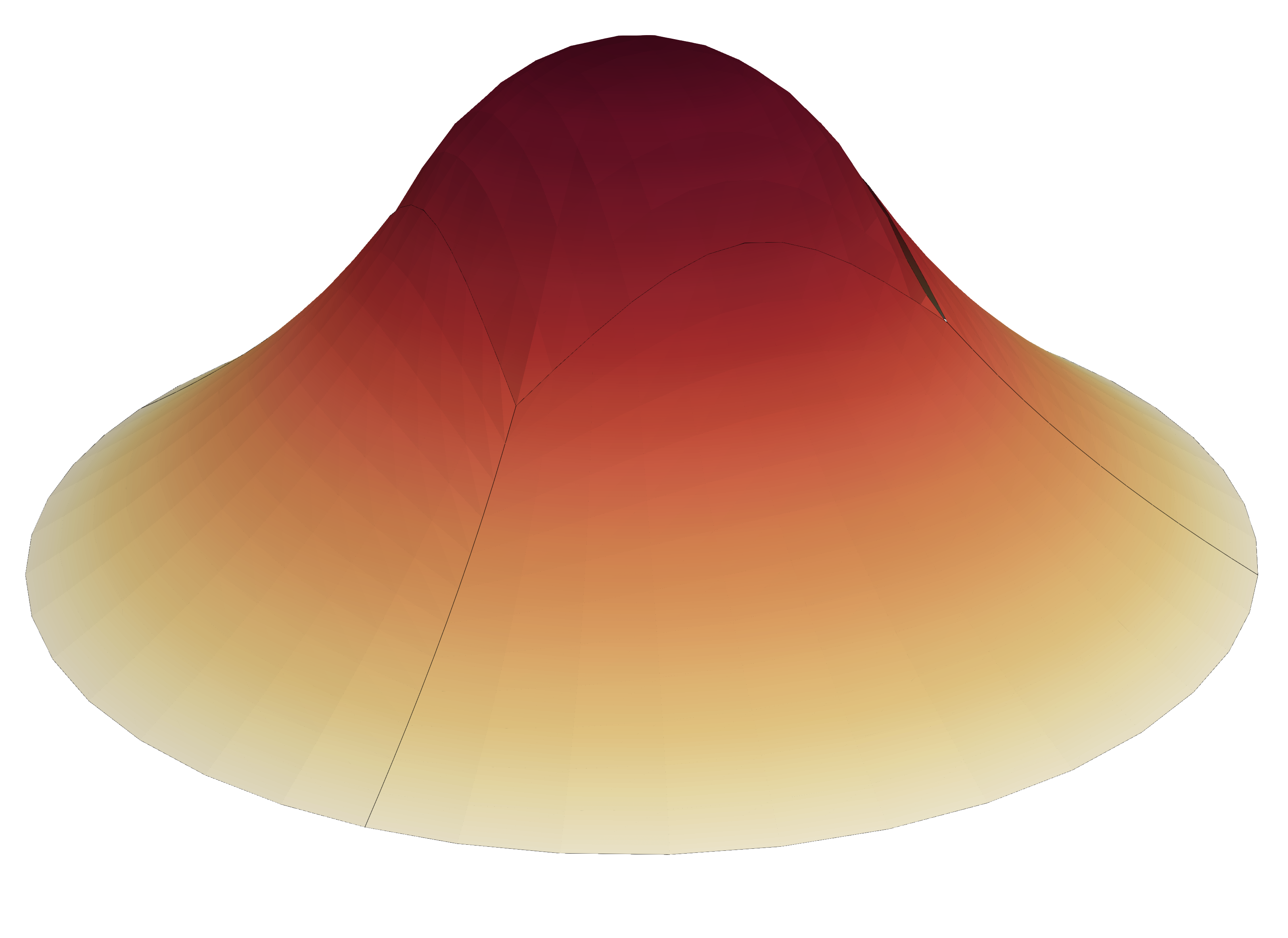}
								\end{minipage}		\begin{minipage}[c]{0.075\textwidth}
			\centering
			\includegraphics[width=\linewidth]{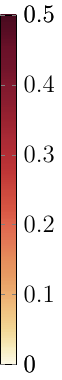}
								\end{minipage}		\\
		Very high order ($p=12$) proximal Galerkin solutions $u_h$ (top) and $\tilde{u}_h$ (bottom)
	\end{minipage}
																															\caption{
	\textbf{Experiment 4: Spherical obstacle}.
	Benchmark obstacle problem from \cite{gustafsson2017finite}.
	Left: Diagram of the problem set-up.
		Right: Five-element proximal Galerkin solutions $u_h$ and $\tilde{u}_h$.
				\label{fig:ObstacleSphere}}
\end{figure}

\begin{table}
\centering
\footnotesize
\renewcommand{\arraystretch}{1.3}
\begin{tabular}{ |c|c|c|c|c|c|c| }
  \hhline{|>{\arrayrulecolor{white}}-->{\arrayrulecolor{black}}|-----|}
 \multicolumn{2}{c|}{} & \multicolumn{5}{c|}{\cellcolor{lightgray!15} \small\raisebox{5pt}{\phantom{f}} Primal errors $\|u - u_h^{k}\|_{H^1(\Omega_2)}$ for $p=1$ \raisebox{5pt}{\phantom{f}}}\\[3pt]
 \hline
 \rowcolor{lightgray!15}
 $k$ & Linear solves & $h_2/8$ & $h_2/16$ & $h_2/32$ & $h_2/64$ & $h_2/128$ \\
 \hline
 \cellcolor{lightgray!05} 1 & \cellcolor{lightgray!01} 3 & $2.72\cdot 10^{-1}$ & $2.70\cdot 10^{-1}$ & $2.70\cdot 10^{-1}$ & $2.70\cdot 10^{-1}$ & $2.70\cdot 10^{-1}$ \\
 \cellcolor{lightgray!05} 2 & \cellcolor{lightgray!01} 1 & $1.37\cdot 10^{-1}$ & $1.38\cdot 10^{-1}$ & $1.38\cdot 10^{-1}$ & $1.38\cdot 10^{-1}$ & $1.38\cdot 10^{-1}$ \\
 \cellcolor{lightgray!05} 3 & \cellcolor{lightgray!01} 1 & $3.62\cdot 10^{-2}$ & $3.33\cdot 10^{-2}$ & $3.31\cdot 10^{-2}$ & $3.31\cdot 10^{-2}$ & $3.31\cdot 10^{-2}$ \\
  \vdots & \vdots & \vdots & \vdots & \vdots & \vdots & \vdots \\[4pt]
 \hline
 \rowcolor{lightgray!05}
 \multicolumn{2}{|c|}{\cellcolor{lightgray!15} Total iterations $k$} & $11$ & $11$ & $11$ & $11$ & $11$ \\
 \hline
 \rowcolor{lightgray!05}
 \multicolumn{2}{|c|}{\cellcolor{lightgray!15} Total linear solves} & $13$ & $13$ & $13$ & $13$ & $13$ \\
 \hline
 \rowcolor{lightgray!05}
 \multicolumn{2}{|c|}{\cellcolor{lightgray!15} Final error} & $1.98\cdot 10^{-2}$ & $8.73\cdot 10^{-3}$ & $3.49\cdot 10^{-3}$ & $1.18\cdot 10^{-3}$ & $3.85\cdot 10^{-4}$ \\
 \hline
\end{tabular}
\caption{\label{tab:ObstableErrorsp1}
	\textbf{Experiment 4: Spherical obstacle}.
	Checking the subproblem error, $\|u-u^k_h\|_{H^1(\Omega_2)}$, for various mesh sizes using the $(\mathbb{Q}_2,\mathbb{Q}_{0}\text{-broken})$ discretization.
	We used $\alpha_k = 1$ for all $k=1,\ldots$ and stopped the algorithm when $\|u_h^{k} - u_h^{k-1}\|_{L^2(\Omega_2)} < 10^{-6}$.
}
\end{table}

\subsubsection{Experiment 5: Discontinuous obstacle} \label{ssub:experiment_5_discontinuous_obstacle}

Our final experiment serves to test an important limitation of the mathematical theory presented in this section.
In particular, we explore the performance of proximal Galerkin when the regularity condition $\Delta\phi \in L^\infty(\Omega)$ (cf.~\cref{eq:ObstacleSet}) is {not} satisfied.
For simplicity, we set $p=1$.

In this experiment, we consider the \emph{discontinuous} obstacle,
\begin{equation}
\label{eq:DiscontinuousObstacle}
	\phi(x,y) =
	\begin{cases}
		1 &\text{if } x^2 + y^2 \leq 1/4\,,\\
		0 &\text{otherwise,}\\
	\end{cases}
\end{equation}
on the circular domain $\Omega = \Omega_2$, with no body force ($f = 0$), and homogeneous Dirichlet boundary conditions $u = 0$ on $\partial \Omega_2$.
In this case, one can show that the exact solution is 
\begin{equation}
\label{eq:DiscontinuousObstacle_Solution}
	u(x,y) =
	\begin{cases}
		1 &\text{if } x^2 + y^2 \leq 1/4\,,\\
		\ln(x^2 + y^2)/\ln(1/4)  &\text{otherwise.}\\
	\end{cases}
\end{equation}
To obtain an accurate discrete solution, we pre-process the mesh with adaptive refinements until the data oscillation error \cite{morin2000data} in the obstacle,
\[
	\operatorname{osc}(\phi,\mathcal{T}_h)
	:=
	\bigg(
		\sum_{T \in \mathcal{T}_h}
		\| h_T (I - \mathcal{I}) \phi \|_{L^2(T)}^2
	\bigg)^{1/2}
	\leq \mathtt{TOL}
	\,,
\]
is below $\mathtt{TOL} = 10^{-4}$.
The corresponding mesh $\mathcal{T}_h$, obstacle $\phi$, and solution $u_h$ are depicted in~\Cref{fig:DiscontinuousObstacle}.
Starting with the same initial guess as in Experiment 4, we witnessed convergence to the depicted solution in only $3$ outer iterations of~\Cref{alg:ObstacleProblem} with $4$ total linear solves (inner Newton iterations).
The corresponding $L^2$-error is $\|u - u_h\|_{L^2(\Omega_2)} = 2.16\cdot 10^{-3}$.

\begin{figure}
	\centering
	\begin{minipage}[c]{0.29\textwidth}
	\small
		\centering
		\includegraphics[height=3.1cm]{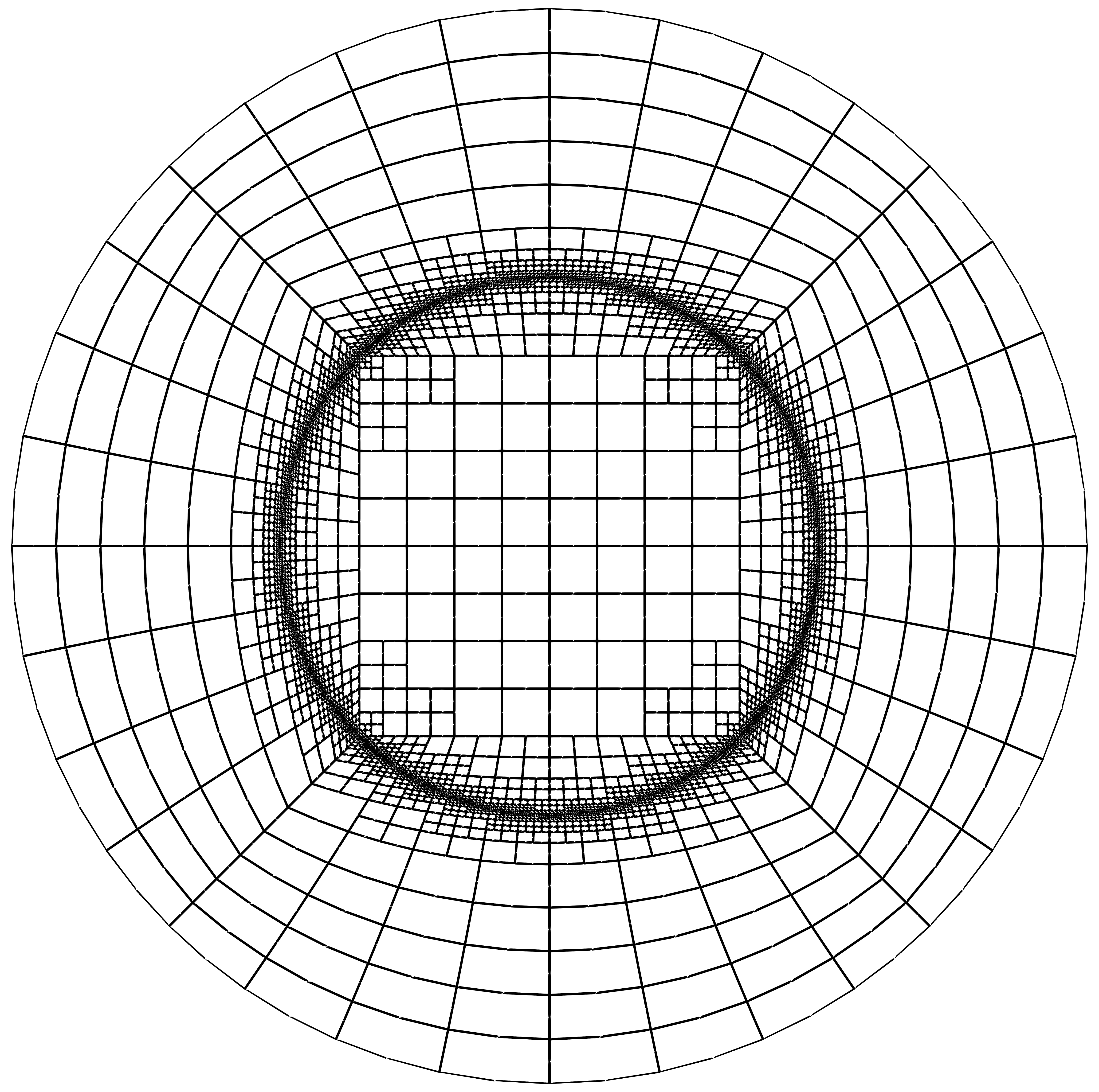}
					\end{minipage}	\begin{minipage}[c]{0.29\textwidth}
	\small
		\centering
		\includegraphics[height=3cm]{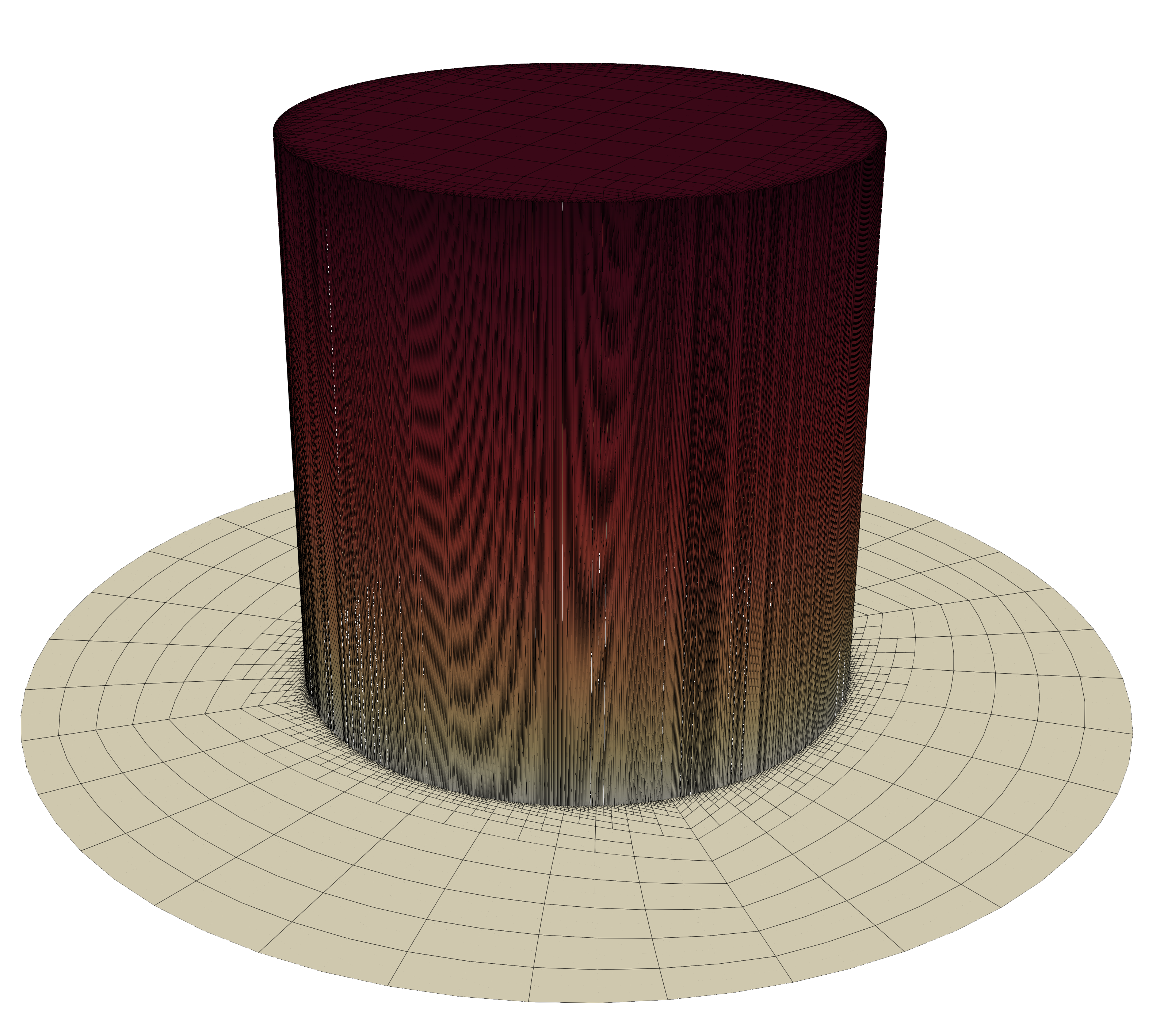}
					\end{minipage}	\begin{minipage}[c]{0.38\textwidth}
	\small
		\centering
		\begin{minipage}[c]{0.7\textwidth}
		\centering
			\includegraphics[height=3cm]{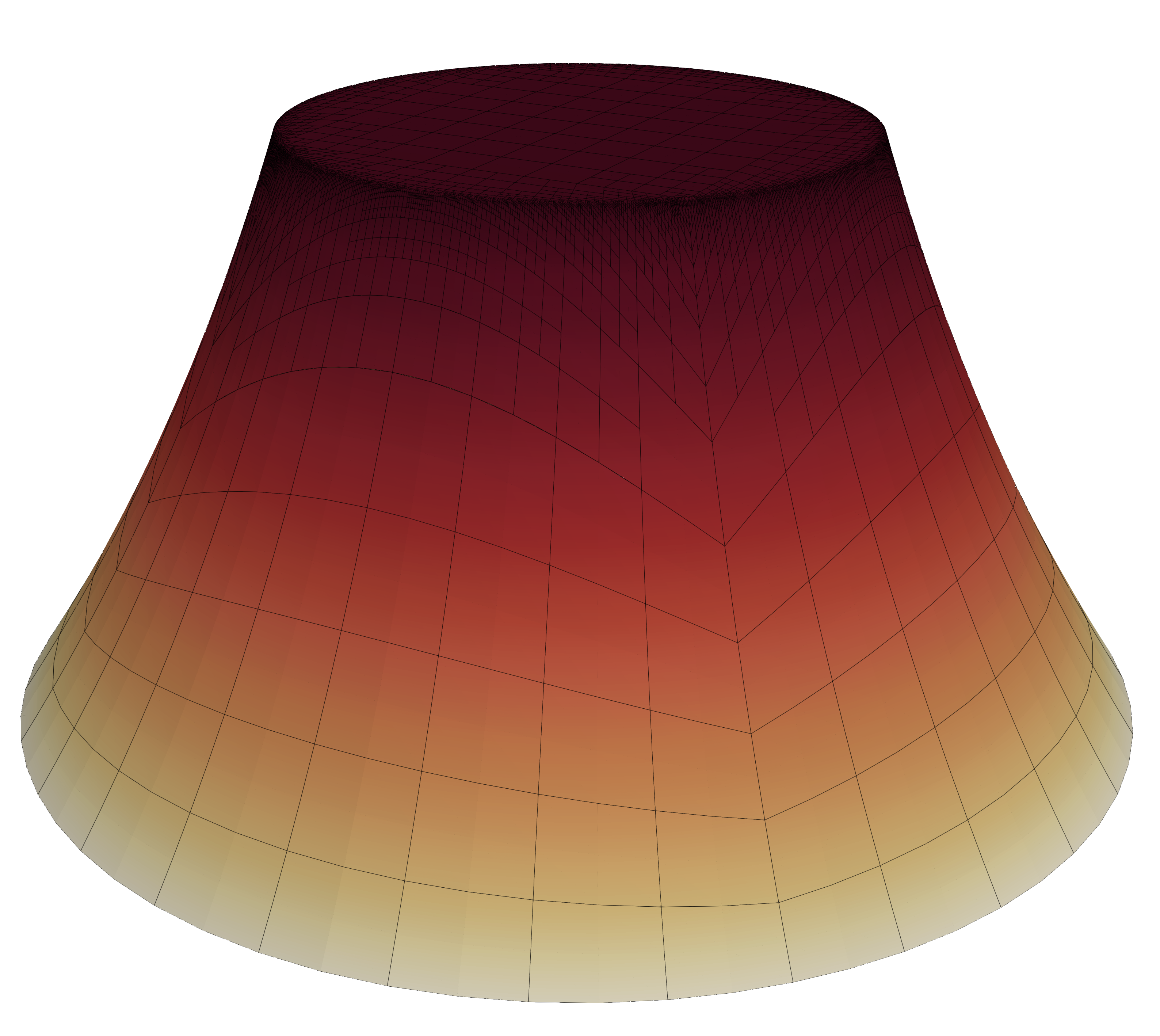}
		\end{minipage}		\begin{minipage}[c]{0.15\textwidth}
			\centering
			\includegraphics[height=2.2cm]{colorbar3.pdf}
		\end{minipage}					\end{minipage}
	\caption{
	\textbf{Experiment 5: Discontinuous obstacle}.
	Lowest-order proximal Galerkin solution from an adaptively-refined mesh and a discontinuous obstacle.
	Left: Adaptively-refined mesh $\mathcal{T}_h$.
	Middle: Discontinuous obstacle $\phi \not\in \mathcal{O}$.
	Right: Proximal Galerkin solution $u_h$.
	\label{fig:DiscontinuousObstacle}}
\end{figure}

\section{Extensions I: More general bound constraints and variational inequalities with an application to enforcing discrete maximum principles} \label{sec:maximum_principles}

The purpose of this section is to move beyond the framework developed in \Cref{sec:the_obstacle_problem} for energy principles with pointwise lower bound constraints.
To this end, we aim to answer the following two necessary questions:
\smallskip
\begin{enumerate}[leftmargin=0.75cm,itemindent=0cm,itemsep=3pt]
	\item
		Can proximal Galerkin be used to \emph{simultaneously} enforce pointwise upper and lower bound constraints?
	\item
	Can proximal Galerkin be applied to variational inequalities that \emph{do not} arise from an energy minimization principle?
\end{enumerate}
\smallskip
The answer to both of these questions is \emph{yes}.
\smallskip

We use our answers to these questions to construct a discrete maximum principle-preserving proximal Galerkin method for the advection-diffusion equation,
\begin{equation}
\label{eq:AdvectionDiffusionEquation}
	-\epsilon\Delta u + \beta\cdot\nabla u = f
	\quad \text{in~} \Omega,
	\qquad
	u = g \quad \text{on~} \partial\Omega
	\,,
\end{equation}
where $\epsilon > 0$ and $\beta \in \mathbb{R}^d$ are fixed, $f \in L^\infty(\Omega)$, and $g \in H^1(\Omega) \cap C(\overline{\Omega})$.
Along the way, we introduce the \emph{binary entropy} (\Cref{sub:binary_entropy}), a proximal algorithm for VIs with \emph{non-symmetric} coercive bilinear forms (\Cref{sub:non_symmetric_bilinear_forms}), and an alternative type of proximal Galerkin discretization employing a \emph{continuous} latent variable (\Cref{sub:StableElement_ContinuousLatentVariable}).
The section features an implementable algorithm and closes with a brief survey of numerical experiments.

\subsection{Binary entropy} \label{sub:binary_entropy}

In the previous section, enforcing a pointwise lower bound on the minimizer of the Dirichlet energy led us to consider a sequence of entropy-regularized energy minimization problems.
We now consider the situation of enforcing pointwise upper and lower bounds \emph{simultaneously}.
For simplicity, we illustrate the approach on the so-called double-obstacle problem written in~\cref{eq:DoubleObstacleProblem} below.

Let $\phi_1, \phi_2 \in H^1(\Omega)\cap L^\infty(\Omega)$ with $\esssup (\phi_2 - \phi_1)  > 0$ and $\esssup \gamma(\phi_1 - g) < 0 < \essinf \gamma(\phi_2 - g)$, and consider minimizing the Dirichlet energy under the pointwise bound constraints $\phi_1 \leq v \leq \phi_2$.
More specifically,
\begin{equation}
\label{eq:DoubleObstacleProblem}
	u^\ast = \argmin_{v\in K} E(v)
	\,,
\end{equation}
where $E(v) = \frac{1}{2} \int_\Omega |\nabla v|^2 \dd x - \int_\Omega v f \dd x$ and $K = \{ v \in H^1_g(\Omega) \mid \phi_1 \leq v \leq \phi_2 \}$.
A natural way to apply entropy regularization to~\cref{eq:DoubleObstacleProblem} is revealed if we rewrite the problem with two new variables $v_1 = v - \phi_1$ and $v_2 = \phi_2 - v$.
Doing so, we arrive at the equivalent equality-constrained optimization problem
\begin{equation}
	(u_1^\ast,u_2^\ast) = 
	\argmin_{(v_1,v_2) \in K_1\times K_2} E(v_1+\phi_1)
	~~\text{subject to~}
	v_1 + v_2 = \phi_2 - \phi_1
	\,,
\end{equation}
where $K_1 = \{ v_1 \in H^1_{g - \phi_1}(\Omega) \mid v_1 \geq 0 \}$ and $K_2 = \{ v_2 \in H^1_{\phi_2 - g}(\Omega) \mid v_2 \geq 0  \}$.
It can be readily verified that $u_1^\ast = u^\ast - \phi_1$ and $u_2^\ast = \phi_2 - u^\ast$.

Following our treatment of entropy regularization for pointwise non-negativity constraints in \Cref{sec:the_obstacle_problem}, it stands to consider the sequence $u^k = u_1^k - \phi_1 = \phi_1 - u_2^k \to u^\ast$ defined
\begin{subequations}
\begin{align}
\label{eq:DoubleObstacleEntropyRewriteA}
	(u_1^k,u_2^k) = 
	&\argmin_{(v_1,v_2) \in K_1\times K_2}
	\big\{
		E(v_1+\phi_1) + \alpha_k^{-1}\big(D(v_1, u_1^{k-1}) + D(v_2, u_2^{k-1})\big)
	\big\}
	\\
\label{eq:DoubleObstacleEntropyRewriteB}
	&\text{subject to~}
	v_1 + v_2 = \phi_2 - \phi_1
	\,.
\end{align}
\end{subequations}
We may now resubstitute $v_1 = v - \phi_1$ and $v_2 = \phi_2 - v$ into~\cref{eq:DoubleObstacleEntropyRewriteA}, which leads to
\begin{equation}
\label{eq:DoubleObstacleEntropy}
	u^k = \argmin_{v\in K} \big\{ E(v) + \alpha_k^{-1}D_B(v,u^{k-1})\big\}
	\,,
\end{equation}
where
\begin{equation}
\label{eq:RelativeBinaryEntropy}
	D_B(v,w)
	=
	\int_\Omega (v - \phi_1) \ln\Big| \frac{v - \phi_1}{w - \phi_1} \Big| + (\phi_2-v) \ln\Big| \frac{\phi_2-v}{\phi_2-w} \Big| \dd x
	,
\end{equation}
is the Bregman divergence of the (generalized) \emph{binary entropy}
\begin{equation}
\label{eq:NegativeBinaryEntropy}
	B(v)
	=
	\int_\Omega (v - \phi_1) \ln |v - \phi_1 | + (\phi_2-v) \ln |\phi_2-v| \dd x
		.
\end{equation}
A straight-forward computation shows that
\begin{equation}
	\nabla B(v) = \ln\frac{v-\phi_1}{\phi_2-v}
	\quad
	\text{and}
	\quad
	(\nabla B)^{-1}(\varphi) = \frac{\phi_1 + \phi_2\exp \varphi}{\exp \varphi + 1}
	\,.
\end{equation}

The cases $(\phi_1,\phi_2) = (0,1)$ and $(\phi_1,\phi_2) = (-1,1)$ are somewhat special for the binary entropy functional~\cref{eq:NegativeBinaryEntropy}.
In the first of these,~\cref{eq:NegativeBinaryEntropy} is usually referred to as the (negative) Fermi--Dirac or {electronic} entropy \cite{teboulle2018simplified}.
As these particular upper and lower bounds will appear prominently later on, we choose to adopt special notation for the corresponding entropy gradient and its inverse; namely,
\begin{equation}
\label{eq:linit_expit}
	\nabla B(v) = \lnit v := \ln\frac{v}{1-v}
	\quad
	\text{and}
	\quad
	(\nabla B)^{-1}(\varphi) = \sigmoid \varphi := \frac{\exp \varphi}{\exp \varphi + 1}
	\,.
\end{equation}
The second case, $(\phi_1,\phi_2) = (-1,1)$, provides a gradient that is an explicit diffeomorphism between the $L^\infty(\Omega)$-unit ball, denoted $\mathcal{B}^\infty(\Omega) = \{ v \in L^\infty(\Omega) \mid \|v\|_{L^\infty(\Omega)} < 1 \}$, and the entire Banach algebra $L^\infty(\Omega)$.
More explicitly, we write
\begin{equation}
\label{eq:arctanh_expit}
	\nabla B(v) = 2\atanh v
	\quad
	\text{and}
	\quad
	(\nabla B)^{-1}(\varphi) = \tanh (\varphi/2)
	\,,
\end{equation}
with the diffeomorphism illustrated visually in \Cref{fig:ExponentialMap2}.
For posterity, we use the latter case of the binary entropy functional to define a canonical \emph{binary-entropic Poisson equation},
\begin{equation}
	-\Delta u + \atanh u = f
	\,,
\end{equation}
which follows from writing the strong form of the first-order optimality condition for~\cref{eq:DoubleObstacleEntropy} with $\phi_1 = -1$, $\phi_2 = 1$, $\alpha_k = 1$, $u^{k-1} = 0$, and removing the factor of $2$ in~\cref{eq:arctanh_expit}.

\begin{figure}
\centering
	\includegraphics[width=0.6\linewidth]{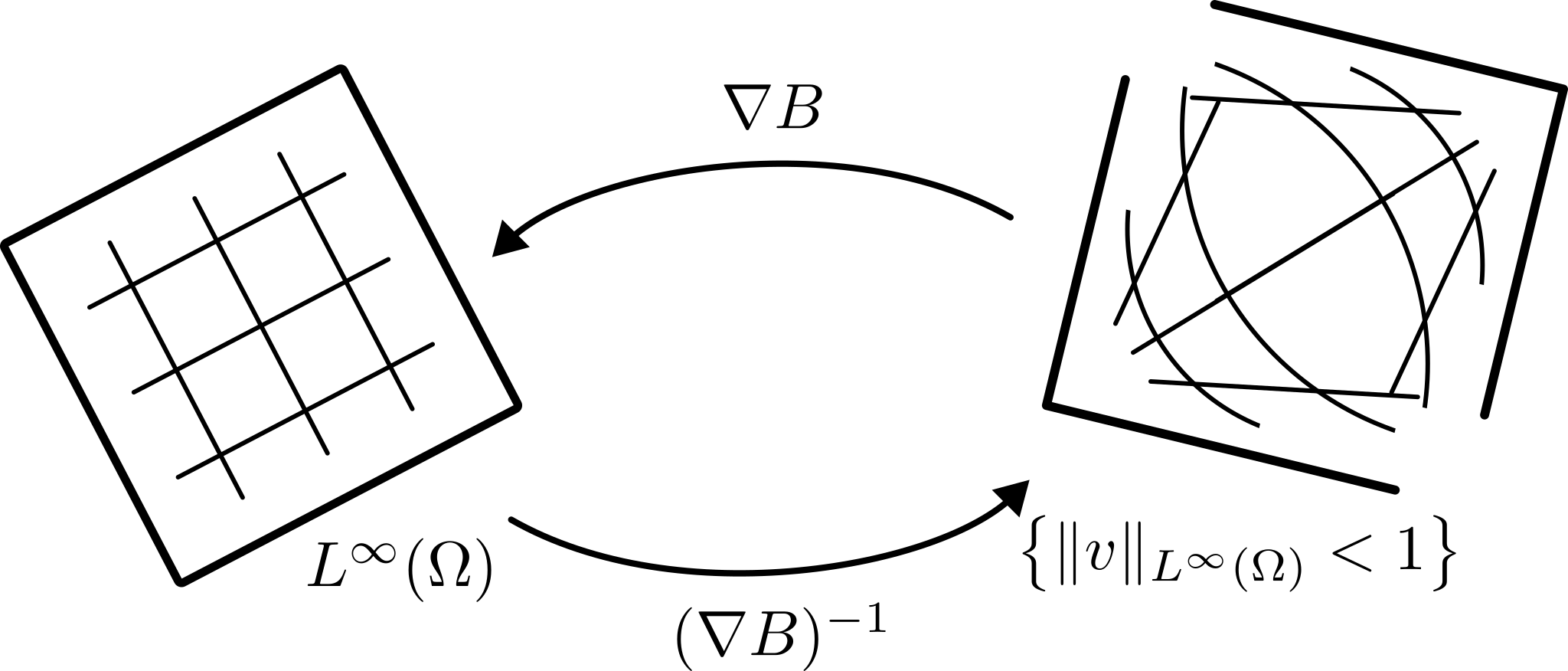}
	\caption{
	The sigmoid map $(\nabla B)^{-1}(\varphi) = \tanh (\varphi/2)$ is a diffeomorphism between the Banach algebra $L^\infty(\Omega)$ and the $L^\infty(\Omega)$-unit ball.
		\label{fig:ExponentialMap2}}
\end{figure}

\subsection{Variational inequalities with non-symmetric bilinear forms} \label{sub:non_symmetric_bilinear_forms}

In order to develop a proximal Galerkin method for the advection-diffusion equation~\cref{eq:AdvectionDiffusionEquation}, we first propose a continuous-level proximal algorithm for non-symmetric bilinear forms.
The approach is based on the proof technique for the classical theorem of Lions and Stampacchia on the existence of solutions to elliptic variational inequalities with non-symmetric bilinear forms, which can be understood as employing a ``linearized'' version of algorithm~\cref{eq:ProximalUpdate}.

Assume that $H \subset L^2(\Omega)$ is a real separable Hilbert space and $a\colon H\times H \to \mathbb{R}$ is bilinear, continuous, and coercive.
In particular, assume that there exist constants $C_a, c_a > 0$ such that
\begin{equation}
	a(w,v) \leq C_a\|w\|_H \|v\|_H
	\quad
	\text{and}
	\quad
	c_a \|v\|_H^2 \leq a(v,v)
	\,,
\end{equation}
for all $w,v \in H$. 
For any nonempty closed convex set $K \subset H$ and function $f \in L^2(\Omega)$,
the Lions--Stampacchia theorem \cite{lions1967variational} states that
the following variational inequality is well-posed:
\begin{equation}
\label{eq:GeneralVI}
	\left\{
	\begin{aligned}
		\,&\text{Find~}
		u^\ast \in K
		\text{~such that}
		\\
		& a(u^\ast, v-u^\ast) \geq ( f, v-u^\ast )
		~\fa v \in K
		\,.
	\end{aligned}
	\right.
\end{equation}
The proof works by arguing that for $\rho \in (0, 2c_{a}/C_a^2)$,
the mapping $Q_{\rho} : H \to K$, defined as the unique solution of the problem
\begin{subequations}
\begin{equation}
\label{eq:fixed_point_VI}
	\left\{
	\begin{aligned}
		\,&\text{Given $u \in H$, find}~
		w \in K
		\text{~such that}
		\\
		&(w,v-w)_{H} \ge (u,v-w)_{H} - \rho[a(u,v-w) - (f,v-w)]
		~\fa v \in K
		\,,
	\end{aligned}
	\right.
\end{equation}
is a contraction.
Note that we may equivalently write
\begin{equation}
\label{eq:fixed_point_VI_prox_form}
	Q_\rho(u)
	=
	\argmin_{v \in K}
	\Big\{
	a(u,v) - (f,v)
	+
	\frac{1}{2\rho} \|v-u\|_H^2
	\Big\}
	\,,
\end{equation}
which illustrates the following relationship to the proximal operator introduced in~\Cref{sub:proximal_point}, $Q_\rho(u) = \prox_{\rho [a(u,\cdot) - (f,\cdot)]}(u)$.
Clearly, if we find the fixed point $u = Q_{\rho}(u)$, then \cref{eq:fixed_point_VI} reduces to \cref{eq:GeneralVI} and we deduce that $u = u^\ast$.

This method of successive approximations,
\begin{equation}
	u^0 \in H,
	\quad
	u^{k+1} = Q_{\rho}(u^k)
	\,,
	\quad
	k = 0,1,\ldots
\end{equation}
\end{subequations}
is well-known and has been analyzed in, e.g., \cite{RTremolieres_JLLions_RGlowinski_2011}. 
However, it is not exactly amenable to computation because it requires a separate subproblem solver for each of the VIs \cref{eq:fixed_point_VI}.
Since $\rho$ is typically not known in practice, it is common to select diminishing step sizes $\rho_k$ to analyze the convergence behavior. 
In addition, different strategies may be employed using extrapolation steps, see related methods in, e.g., \cite{GLan_2020}. We postpone 
a deeper analysis for future work.
Given a set $K$ with appropriate structure, we can circumvent this issue using the proximal Galerkin methodology.

We begin by regularizing the continuous-level subproblems~\cref{eq:fixed_point_VI}.
A first approach would be to use the Bregman proximal point algorithm~\cref{eq:BregmanProximalUpdate} to solve the subproblems to an iteration-dependent tolerance $\mathtt{tol}_k > 0$.
This would result in a sequence of \emph{inexact} successive approximations $\| u^{k+1} - Q_\rho(u^k) \|_V \leq \mathtt{tol}_k$ that could converge to $u^\ast$ if the sequence of tolerances decays to zero as $k \to \infty$.
The potential drawback of this approach is that it creates an additional nested sequence of iterations.
In turn, generating each iterate $u^{k+1}$ may require numerous individual proximal point iterations~\cref{eq:BregmanProximalUpdate} for every inexact fixed point iteration $u^{k+1} \approx Q_\rho(u^k)$.

Instead of using the Bregman proximal point algorithm as a subproblem solver, we propose to modify the original fixed point map~\cref{eq:fixed_point_VI_prox_form} by adding an additional Bregman divergence term.
More specifically, we propose considering the alternative fixed-point iteration
\begin{subequations}
\label{eqs:Bregman_fixed_point}
\begin{equation}
	u^0 \in \dom G^\prime,
	\quad
	u^{k+1} = Q_{\rho}^{\alpha_{k+1}}(u^k)
	\,,
	\quad
	k = 0,1,\ldots
\end{equation}
where $G \colon \dom G \to \mathbb{R} \cup \{\infty\}$ is a strictly convex entropy functional associated to the feasible set $K \supset \interior \dom G$ and $Q_{\rho}^\alpha : \interior \dom G \to \interior \dom G$ is an operator formally defined for all $\rho,\alpha > 0$ as follows:
\begin{equation}
	Q_\rho^\alpha(u)
	=
	\argmin_{v \in K}
	\Big\{
	a(u,v) - (f,v)
	+
	\frac{1}{2\rho}
	\|v-u\|_H^2
	+
	\frac{1}{\alpha}
	D_G(v,u)
	\Big\}
	\,.
\end{equation}
\end{subequations}

Returning to the advection-diffusion problem~\cref{eq:AdvectionDiffusionEquation}, we now assume that $0 \leq f \leq 1$ a.e., $H = H^1(\Omega)$, and $K = \{ v \in H^1_g(\Omega) \mid 0 \leq v \leq  1 \}$, where
$g$ is such that and $\esssup \gamma(0 - g) < 0 < \essinf \gamma(1- g)$.
Instead of iterating the fixed point operator~$Q_{\rho} : H \to K$ for some $\rho > 0$, we propose to generate a sequence of iterates $\{u^k\}$ from~\cref{eqs:Bregman_fixed_point} with $G = B$ set to be the binary entropy considered in~\cref{eq:linit_expit}.
More explicitly, we choose an appropriate $\rho > 0$ and a sequence $\left\{\alpha_k\right\}$ of positive real numbers, and then solve the sequence of resulting subproblems
\begin{equation}
\label{eq:proxima_residual_VI}
	\left\{
	\begin{aligned}
		\,&\text{Given $u^{k-1}$, find}~
		u^k \in H^1_g(\Omega)\cap L^\infty(\Omega)
		\text{~such that}
		\\
		&\begin{aligned}
			\frac{\alpha_k}{\rho}(\nabla u^{k},\nabla v)_{L^2(\Omega)} +
			(\lnit(u^k),v) &=
			\frac{\alpha_k}{\rho}(\nabla u^{k-1}, \nabla v)_{L^2(\Omega)} + (\lnit(u^{k-1}),v)
			\\&\phantom{=} - \alpha_k[a(u^{k-1},v) - (f,v)]
			~~\fa v \in H^1_0(\Omega)
			\,.
		\end{aligned}
	\end{aligned}
	\right.
\end{equation}
Once discretized with a slack variable $\psi_h$, we arrive at the algorithm written below.

\begin{algorithm2e}[H]
\DontPrintSemicolon
	\caption{\label{alg:AdvectionDiffusion} 
	A Proximal Galerkin method for advection-diffusion.
	}
	\SetKwInOut{Input}{Input}
	\SetKwInOut{Output}{Output}
	\BlankLine
		\Input{Linear subspaces $V_h \subset H^1_0(\Omega)$ and $W_h \subset L^\infty(\Omega)$, initial solution guesses $u_h^0 \in V_h$, $\psi_h^0 \in W_h$, step sizes $\alpha_k>0$,
	$\rho > 0$.\;}
	\Output{Two approximate solutions, $u_{h}$ and $\widetilde{u}_h = \sigmoid\psi_h$.}
	\BlankLine
	Initialize $k = 0$.\;
	\Repeat{a convergence test is satisfied}
	{
		Solve the following (nonlinear) discrete saddle-point problem:
		\begin{gather}
		\label{eq:AdvectionDiffusionDiscreteNonlinearSaddlePoint}
			\left\{
			\begin{aligned}
				\,&\text{Find}~
				u_{h}\in g_h + V_{h} ~\text{and}~\psi_{h} \in W_{h}
				~\text{such that~}
				\\
				&\begin{alignedat}{4}
					\frac{\alpha_k}{\rho}(\nabla u_h, \nabla v) + (\psi_h, v) &= \alpha_k L(u_h^k,v) + (\psi_h^{k}, v)
					&&~\fa v \in V_h
					\,,
					\\
					(u_h, w) - (\sigmoid\psi_h, w) &= 0
					&&~\fa w \in W_h
					\,.
				\end{alignedat}
			\end{aligned}
			\right.
		\end{gather}
		where
		\begin{equation*}
			L(u,v)
			=
			(1/\rho-\epsilon)(\nabla u, \nabla v) - (\beta\cdot\nabla u - f,v)
			\,.
		\end{equation*}
		\;
		\vspace*{-\baselineskip}
		Assign $\psi^{k+1}_h \leftarrow \psi_{h}$ and $k \leftarrow k+1$.\;
	}
\end{algorithm2e}

\subsection{Stable elements II: Continuous latent variable} \label{sub:StableElement_ContinuousLatentVariable}

Constructing a stable finite element discretization for~\Cref{alg:AdvectionDiffusion} has the same challenges we witnessed in solving the obstacle problem with~\Cref{alg:ObstacleProblem}.
Namely, we must construct a \emph{stable} pair of finite element subspaces $V_h$ and $W_h$.
To this end, \Cref{sub:finite_element_subspaces} introduced a class of possible pairings based on the requirement that the latent variable $\psi_h$ be \emph{discontinuous}.
We could use the same finite elements here because the saddle-point problem~\cref{eq:AdvectionDiffusionDiscreteNonlinearSaddlePoint} has the same structure after linearization as~\cref{eq:ObstacleDiscreteNonlinearSaddlePoint}.
Instead, however, we use this subsection to introduce the following alternative class of equal-order finite element pairings where $\psi_h$ is \emph{continuous}.
\smallskip

For any integer $p \geq 1$, we define the following two pairs of spaces:
\begin{subequations}
\label{eqs:SubspacePairs_EqualOrder}
\smallskip

\noindent\textsl{Triangular elements.} We refer to the following as the $(\mathbb{P}_p,\mathbb{P}_{p})$ pairing:
\begin{equation}
\label{eq:SubspacePair1_EqualOrder}
	V_h = \mathbb{P}_{p}(\mathcal{T}_h)\cap H^1_0(\Omega)
	\,,\qquad
	W_h = V_h
	\,.
\end{equation}

\noindent\textsl{Quadrilateral elements.} We refer to the following as the $(\mathbb{Q}_p,\mathbb{Q}_p)$ pairing:
\begin{equation}
\label{eq:SubspacePair2_EqualOrder}
	V_h = \mathbb{Q}_{p}(\mathcal{T}_h)\cap H^1_0(\Omega)
	\,,\qquad
	W_h = V_h
	\,.
\end{equation}
\end{subequations}
\Cref{fig:P1P1} provides a visual representation of the lowest-order versions of these elements.

\begin{figure}
\centering
	\subcaptionbox*{$\mathbb{P}_1$}{
		\includegraphics[width=0.2\linewidth]{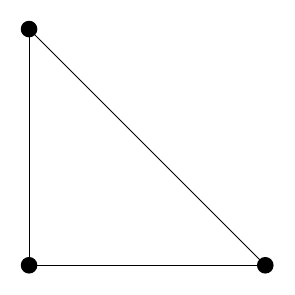}
	}
	\subcaptionbox*{$\mathbb{P}_1$}{
		\includegraphics[width=0.2\linewidth]{P1.pdf}
	}
	\qquad
	\subcaptionbox*{$\mathbb{Q}_1$}{
		\includegraphics[width=0.2\linewidth]{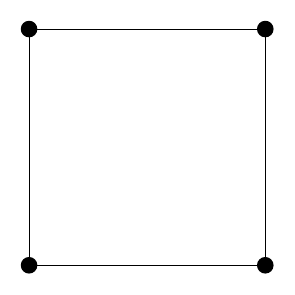}
	}
	\subcaptionbox*{$\mathbb{Q}_1$}{
		\includegraphics[width=0.2\linewidth]{Q1.pdf}
	}
	\caption{
	Conventional representation of the $(\mathbb{P}_1,\mathbb{P}_1)$ and $(\mathbb{Q}_1,\mathbb{Q}_1)$ subspace pairs in two dimensions.
		\label{fig:P1P1}}
\end{figure}

The equal-order finite element subspaces in~\cref{eqs:SubspacePairs_EqualOrder} are appealing because they can be formed from standard $C^0(\Omega)$-finite elements that are found in nearly all production codes.
However, due to limitations of the stability proof outlined in~\Cref{rem:Stability_EqualOrder}, we only advocate for using these elements when the mesh sequence $\{\mathcal{T}_h\}$ is \emph{quasi-uniform} \cite[Definition~22.20]{ern2021finite}.
\begin{definition}[Quasi-uniformity]
A shape-regular sequence of meshes is called quasi-uniform when there exists a mesh-independent constant $c > 0$ such that $h_T \geq c h$ for all $T \in \mathcal{T}_h$ and all $h> 0$ in the index set.
\end{definition}
A careful inspection of~\Cref{sec:the_entropic_finite_element_method} shows that we do not need to place such a restriction on the mesh when using the non-standard elements proposed in~\Cref{sub:finite_element_subspaces}.

In addition to the quasi-uniformity assumption above, the reader may notice that the number of degrees of freedom per element pair in~\cref{eqs:SubspacePairs_EqualOrder} is larger than that of the analogous order-$p$ pairs proposed in~\cref{eq:SubspacePairs}.
We pause to point out that neither of these factors preclude using~\cref{eqs:SubspacePairs_EqualOrder} in practical applications.
Indeed, many practical applications are solved on quasi-uniform meshes.
Moreover, it turns out that the additional computational cost can be mitigated via a mass lumping technique described in~\Cref{rem:MassLumping} below.

As  also explained in this remark, a particularly interesting consequence of mass lumping is that it induces a \emph{nodally} bound-preserving primal solution $u_h$.
Moreover, because nodal boundedness extends to pointwise boundedness when $p=1$, the primal solution $u_h$ will be \emph{pointwise} bound-preserving for any lowest-order $(\mathbb{P}_1,\mathbb{P}_1)$ or $(\mathbb{Q}_1,\mathbb{Q}_1)$ proximal Galerkin discretization with box constraints.
In contrast, the lowest-order elements in~\cref{eq:SubspacePairs} only induce a primal discretization with a bound-preserving cell average; cf.~\Cref{rem:PositiveCellAverage}.

\begin{remark}[Stability]
	\label{rem:Stability_EqualOrder}
	As described in~\Cref{sec:the_entropic_finite_element_method}, uniform stability of the proximal Galerkin discretization rests on satisfying the Ladyzhenskaya--Babu\v{s}ka--\linebreak{}Brezzi (LBB) condition
	\begin{equation}
	\label{eq:UniformStabilityConstant_Section5}
		\inf_{w \in W_h} \sup_{v \in V_h}
		\frac{(v,w)}{\|\nabla v\|_{L^2(\Omega)}\|w\|_{H^{-1}(\Omega)}}
		\geq
		\beta_0
		> 0
		\,,
	\end{equation}
	with $\beta_0$ independent of the mesh size $h>0$.
		Verifying this condition is often nontrivial.
	However, it reduces to a one-line argument given in~\cref{eq:LBB_EqualOrderProof} below if the global $L^2$-orthogonal projection $\mathcal{P}_h \colon H^1_0(\Omega) \to V_h$, defined
	\begin{subequations}
	\label{eqs:GlobalL2OrthogonalProjection}
	\begin{equation}
	\label{eq:GlobalL2OrthogonalProjection_Definition}
		(\mathcal{P}_h v , w) = (v,w)
		~~
		\fa w \in V_h
		\,,
	\end{equation}
	is stable in the (equivalent) $H^1(\Omega)$-norm; i.e., if there exists a constant $c>0$, independent of $h$, such that
	\begin{equation}
	\label{eq:GlobalL2OrthogonalProjection_H1Stability}
		\| \nabla (\mathcal{P}_h v) \|_{L^2(\Omega)}
		\leq
		c \| \nabla v \|_{L^2(\Omega)}
		\,,
	\end{equation}
	for all $v \in H^1_0(\Omega)$.
	\end{subequations}

	As shown in, e.g., \cite[Proposition~22.21]{ern2021finite}, quasi-uniformity of the mesh sequence implies~\cref{eqs:GlobalL2OrthogonalProjection} for $V_h = \mathbb{P}_{p}(\mathcal{T}_h)\cap H^1_0(\Omega)$ and $V_h = \mathbb{Q}_{p}(\mathcal{T}_h)\cap H^1_0(\Omega)$. 
	Therefore, if we assume $\{\mathcal{T}_h\}$ is quasi-uniform and we are using any of the equal-order pairs in~\cref{eqs:SubspacePairs_EqualOrder}, then $\mathcal{P}_h(H^1_0(\Omega)) \subset V_h$ and~\cref{eqs:GlobalL2OrthogonalProjection} imply that there exists $\beta_0 = 1/c > 0$ such that
	\begin{equation}
	\label{eq:LBB_EqualOrderProof}	
		\sup_{v \in V_h}
		\frac{(v,w)}{\|\nabla v\|_{L^2}}
		\geq
		\sup_{v \in H^1_0}
		\frac{(\mathcal{P}_h v,w)}{\|\nabla (\mathcal{P}_h v)\|_{L^2}}
		\geq
		\beta_0 \sup_{v \in H^1_0}
		\frac{(v,w)}{\|\nabla v\|_{L^2}}
		=
		\beta_0 \|w\|_{H^{-1}}
		\,,
	\end{equation}
	for all $w \in V_h = W_h$, as necessary.
\end{remark}

\begin{remark}[Nodal bound preservation]
\label{rem:MassLumping}
	We choose to focus this remark on the saddle-point problem in~\Cref{alg:AdvectionDiffusion}.
	Yet, similar conclusions could be drawn about~\Cref{alg:ObstacleProblem} and, potentially, other future proximal Galerkin algorithms for second-order elliptic VIs.

	Consider the equal-order finite element subspaces in~\cref{eqs:SubspacePairs_EqualOrder} and let $\{\varphi_i\}_{i=1}^N$ be a basis for $V_h = W_h$.
	It follows that there exist coefficients $\mathsf{c}_j,$ and $\mathsf{d}_j$, $j=1,2,\ldots,N$, such that $u_h(x) = \sum_{j=1}^N \mathsf{c}_j \varphi_j(x)$ and $\psi_h(x) = \sum_{j=1}^N \mathsf{d}_j \varphi_j(x)$.
	Substituting these expressions into the second variational equation in~\cref{eq:AdvectionDiffusionDiscreteNonlinearSaddlePoint}, setting $\varphi = \varphi_i$, and replacing the Lebesgue integral $\int_\Omega \varphi(x) \dd x$ with a global quadrature rule $\sum_{l=1}^M w_l \varphi(x_l)$, where $w_l \neq 0$ and $x_l \in \overline{\Omega}$ for $l=1,2,\ldots,M$, we find that
					\begin{equation}
	\label{eq:NodalFeasibilityQuadrature}
		\sum_{l=1}^M \sum_{j=1}^N
		w_l \mathsf{c}_j \varphi_j(x_l)\varphi_i(x_l)
		=
		\sum_{l=1}^M
		w_l \sigmoid\Biggl(\;\sum_{j=1}^N \mathsf{d}_j\varphi_j(x_l)\Bigg)\varphi_i(x_l)
			\end{equation}
	for each index $i = 1,\ldots, N$.
												
	We now employ the nodal-quadrature mass lumping technique \cite{fried1975finite} that is commonly used in, e.g., spectral element methods \cite{duczek2019mass}.
	Although this quadrature technique is applied to the linearized form of~\cref{eq:NodalFeasibilityQuadrature}, the effect on the solution is equivalent to modifying the nonlinear equation directly, as described below.
	Assume $M=N$ and that $\varphi_j$ are formed from a nodal basis with nodes corresponding to the quadrature points $x_j$ (e.g., a Lagrange basis with Gauss--Lobatto nodes \cite{pozrikidis2005introduction}).
	Thus, $\varphi_j(x_l) = \delta_{jl}$ for all $j,l = 1,\ldots, N$.
								Moreover, we find that~\cref{eq:NodalFeasibilityQuadrature} reduces to
	\begin{equation}
						\mathsf{c}_i
		=
		\sigmoid(\mathsf{d}_i)
						\,.
	\end{equation}
	Finally, notice that $\mathsf{c}_i = u_h(x_i)$ and $\mathsf{d}_i = \psi_h(x_i)$ since we have assumed the basis $\{\varphi_i\}_{i=1}^N$ is nodal.
	Thus, the primal variable $u_h$ is nodally bound preserving; i.e., $0 \leq u_h(x_i) \leq 1$ at all points $x_i$, $i = 1,\ldots, N$.

\end{remark}

\subsection{Numerical experiments} \label{sub:max_ppl_numerical_experiments}

In this set of experiments, we follow \cite[Section~4.1]{chan2014robust} and consider the exact solution of a model problem attributed to Eriksson and Johnson \cite{eriksson1993adaptive}.
In particular, we set $\Omega = (0,1)^2$, $f = 0$, and $\beta = (1,0)^\top$ in~\cref{eq:AdvectionDiffusionEquation} and, therefore, write
\begin{equation}
\label{eq:EJProblem}
	-\epsilon\bigg(\frac{\partial^2 u}{\partial x^2} + \frac{\partial^2 u}{\partial y^2}\bigg) + \frac{\partial u}{\partial x} = 0
	\quad \text{in~} \Omega,
	\qquad
	u = g \quad \text{on~} \partial\Omega
	\,.
\end{equation}
Exact solutions of this problem for arbitrary boundary data $g$ can be derived using the separation of variables technique.
We choose to isolate the solutions satisfying $u = 0$ where $x = 1$ and $\nabla u \cdot n = 0$ where $y = 0$ and $1$.
Doing so generates the following series expansion:
\begin{equation}
\label{eq:SeriesExpansion}
	u =
	\sum_{n=1}^\infty C_n
	\frac{\exp(r_2(x-1)) - \exp(r_1(x-1))}{r_1\exp(-r_2) - r_2\exp(-r_1)} \cos(n\pi y)
	\,,
\end{equation}
where $r_{1,2} = \frac{1\pm \sqrt{1+4\epsilon \lambda_n}}{2\epsilon}$ and $\lambda_n = n^2\pi^2\epsilon$.
Note that the constants $C_n$ can be determined from the prescribed values of $g$ on $\{(x,y) \in \overline{\Omega} \mid x = 0\} \subset \partial \Omega$.
Since we have not yet prescribed $g$ on this part of the boundary, we define $g$ there to be~\cref{eq:SeriesExpansion} with $C_1 = 1$ and $C_{n\neq 1} = 0$; i.e, we treat the first mode in this series as a manufactured solution for~\cref{eq:EJProblem}.

\Cref{fig:EJProblem} places the standard FEM solution of this problem for $\epsilon = 10^{-2}$ alongside the corresponding maximum principle-preserving proximal Galerkin solution $\widetilde{u}_h$.
Here, the proximal Galerkin solution can be found by running only two iterations of \Cref{alg:AdvectionDiffusion} with constant values for $\rho$ and $\alpha_k$, and using the standard first-order FEM solution to provide an initial guess for $u^0_h$ and $\psi^0_h$.
Note that the two discrete solutions are similar, except the proximal Galerkin solution preserves the maximum principle $0 \leq u \leq 1$.
We also experimented with using the mass lumping technique described in~\Cref{sub:StableElement_ContinuousLatentVariable}.
The results of this experiment are given in \Cref{fig:EJProblem2}.
Here, note that both the the primal solution $u_h$ and the latent variable solution $\tilde{u}_h$ are bound-preserving in this case.
For further details, or to reproduce the our experiments, the interested reader is directed to our open-source FEniCSx and MFEM implementations found at \cite{ZenodoCode}.

\begin{figure}
	\centering
	\begin{minipage}{0.3\textwidth}
	\small
		\centering
		\includegraphics[width=\linewidth]{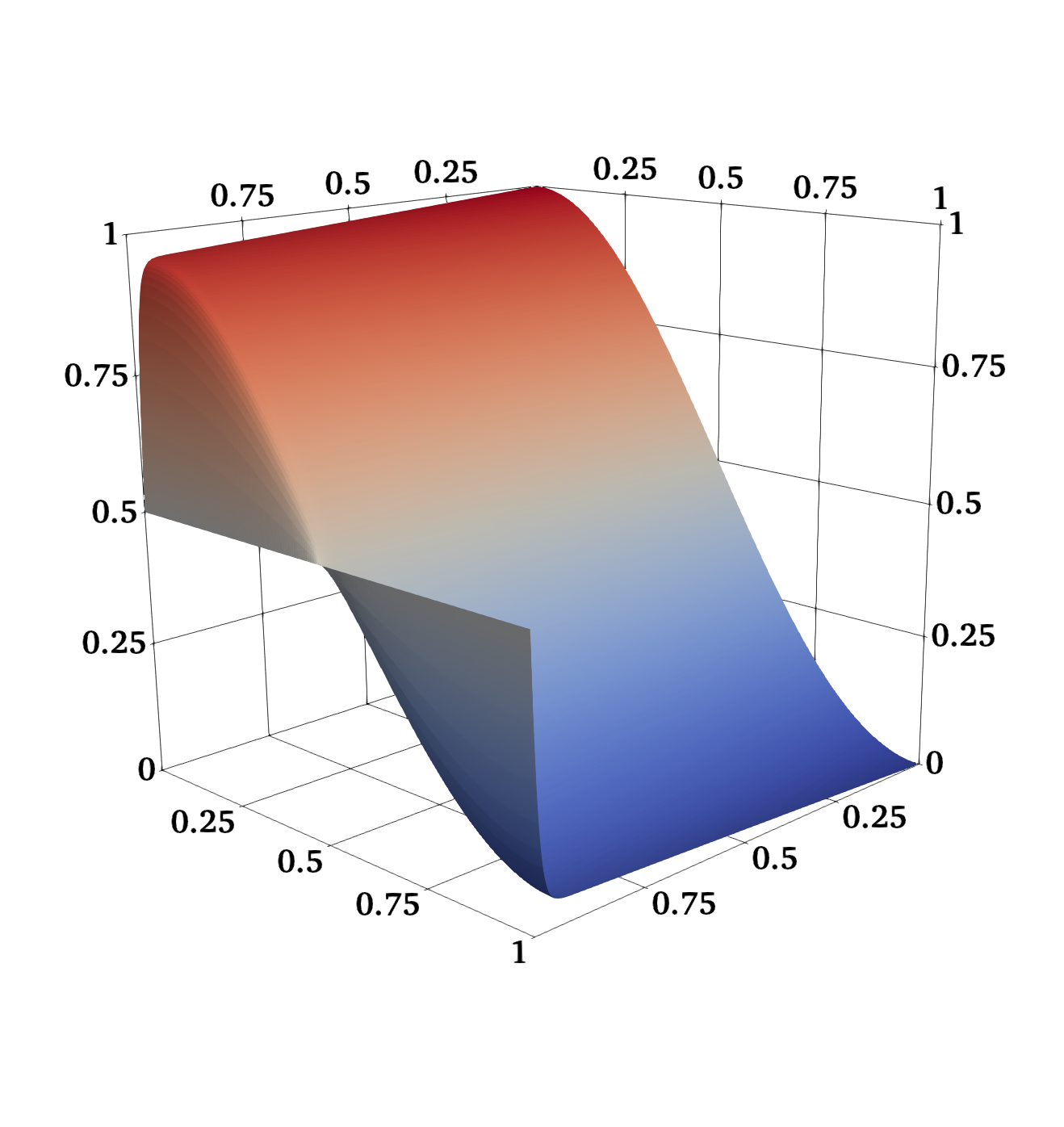}
		\\
		Exact solution $u$
	\end{minipage}
	\
	\begin{minipage}{0.03\textwidth}
	\small
		\centering
		\includegraphics[width=\linewidth]{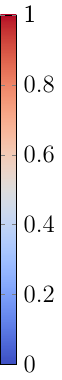}
		\\[1.5em]
		~
	\end{minipage}%
	~~~
	\begin{minipage}{0.3\textwidth}
	\small
		\centering
		\includegraphics[width=\linewidth]{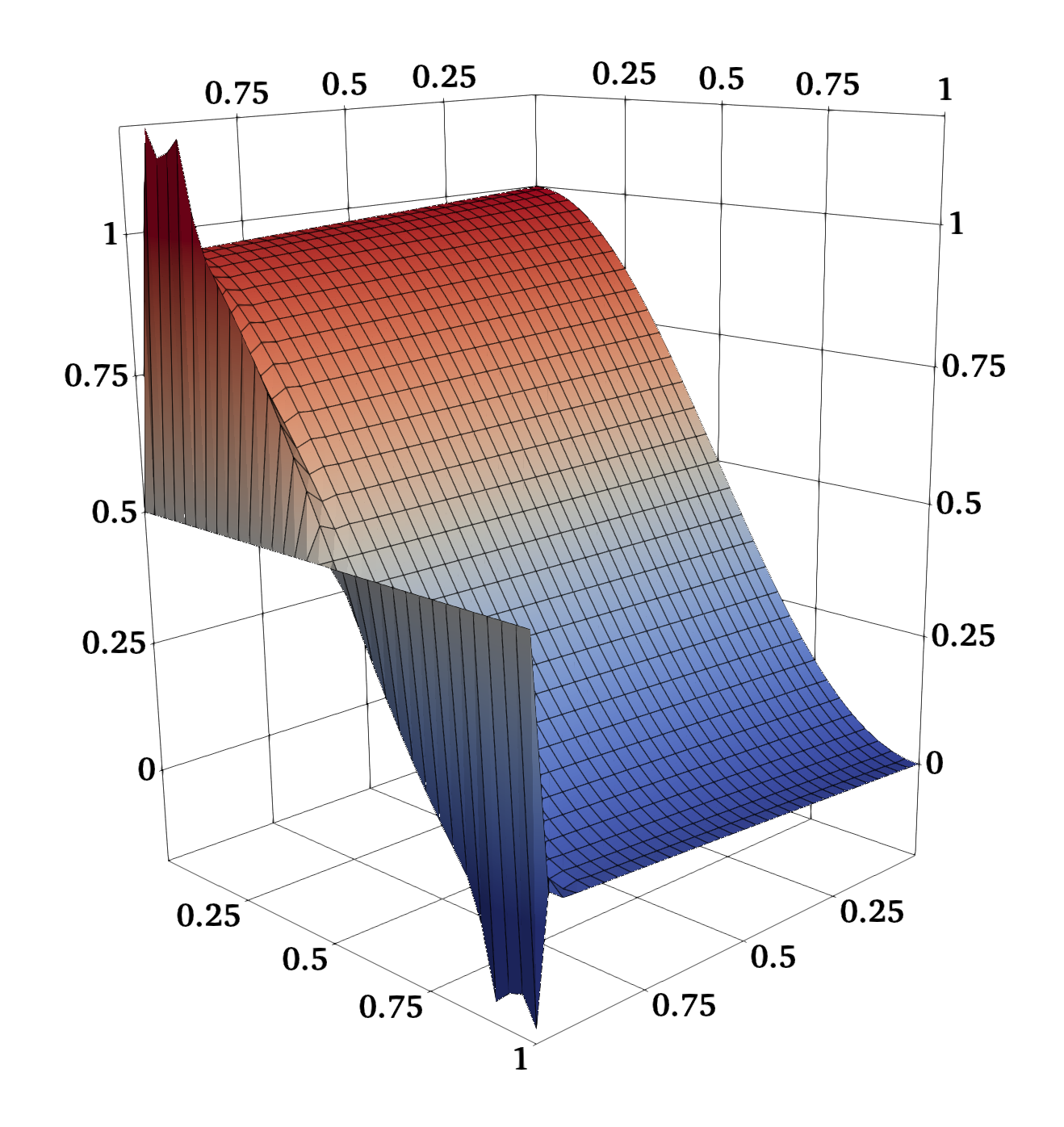}
		\\
		FEM solution
	\end{minipage}%
	~
	\begin{minipage}{0.3\textwidth}
	\small
		\centering
		\includegraphics[width=\linewidth]{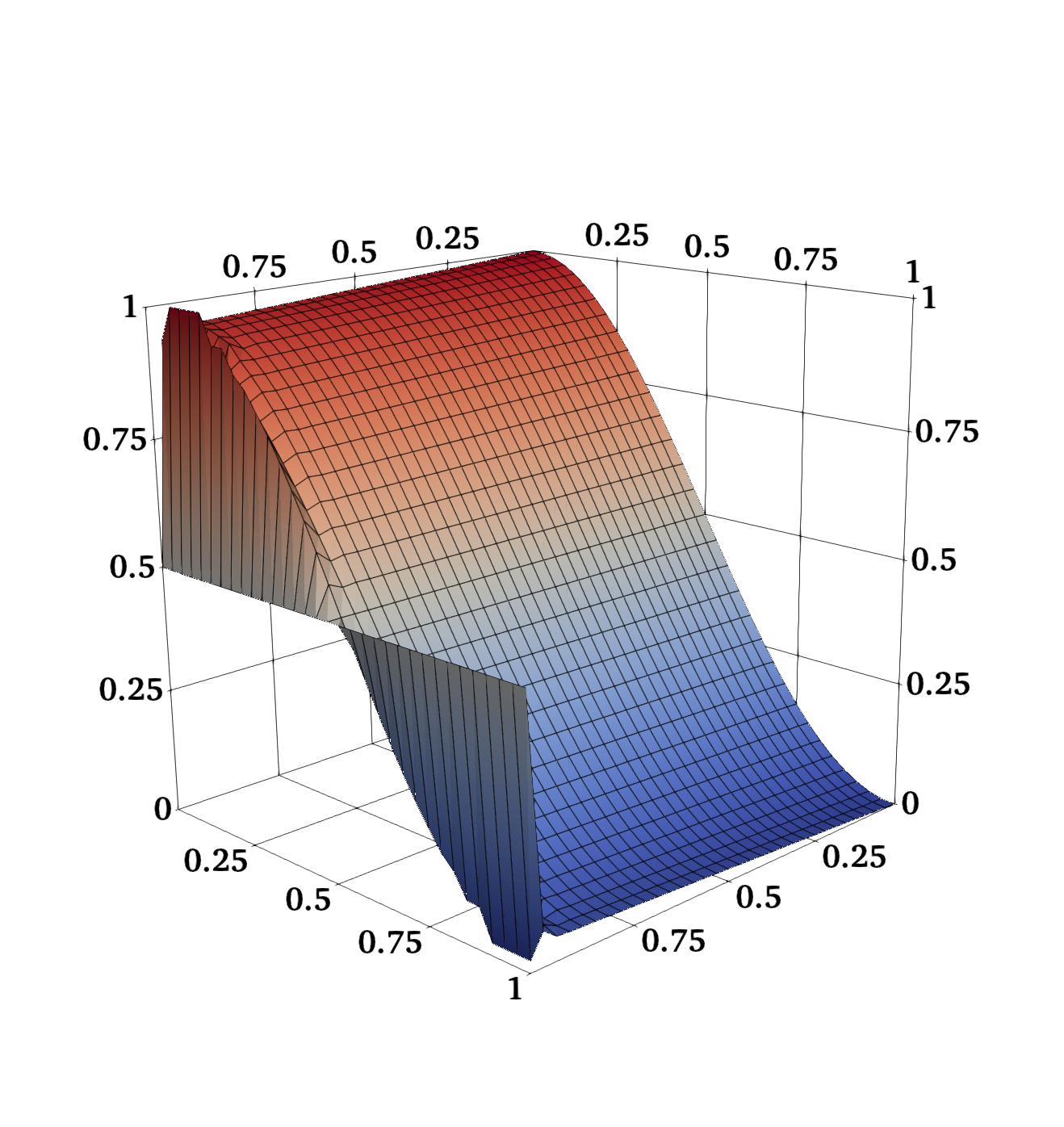}
		\\
		Proximal Galerkin solution $\tilde{u}_h$
	\end{minipage}	\caption{
	The Erikkson--Johnson problem~\cref{eq:EJProblem} for $\epsilon = 10^{-2}$.
	Left: The exact solution.
	Middle: A first-order Bubnov--Galerkin numerical solution that clearly violates the strong maximum principle $0 \leq u(x) \leq 1$.
	Right: The corresponding $(\mathbb{Q}_1,\mathbb{Q}_1)$-proximal Galerkin solution $\tilde{u}_h = \sigmoid(\psi_h)$ satisfies the strong maximum principle, by construction.
		These results can be reproduced by running the MFEM code \texttt{advection\_diffusion.cpp} available at \cite{ZenodoCode}.
		\label{fig:EJProblem}}
\end{figure}

\begin{figure}
	\centering
	\begin{minipage}{0.22\textwidth}
	\flushright
		Without lumping:
	\end{minipage}
	~
	\begin{minipage}{0.3\textwidth}
	\small
		\centering
		\includegraphics[clip, trim = 0 2cm 0 0, width=\linewidth]{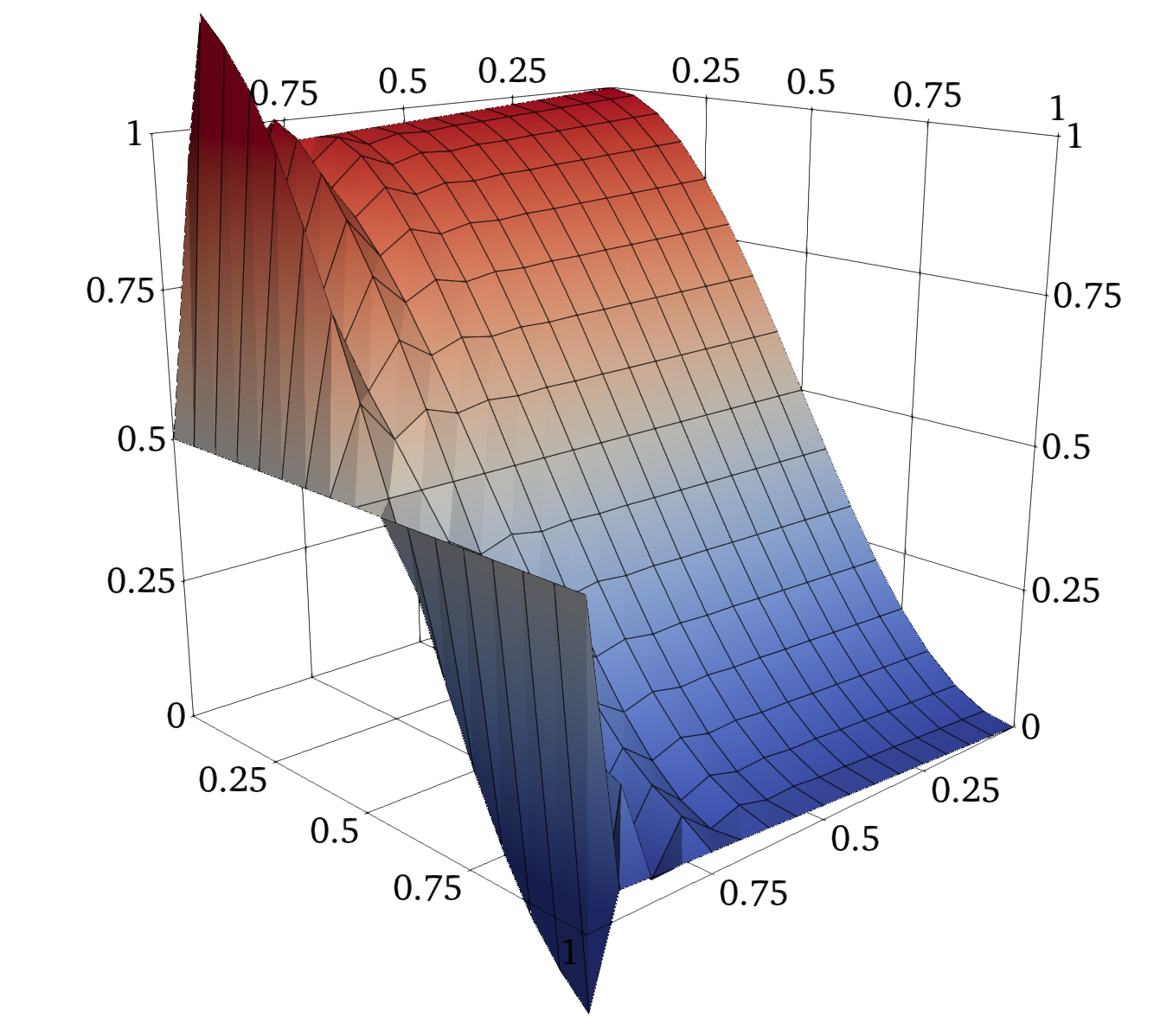}
		\\
		$u_h$
	\end{minipage}	~
	\begin{minipage}{0.3\textwidth}
	\small
		\centering
				\includegraphics[clip, trim = 0 2cm 0 0, width=\linewidth]{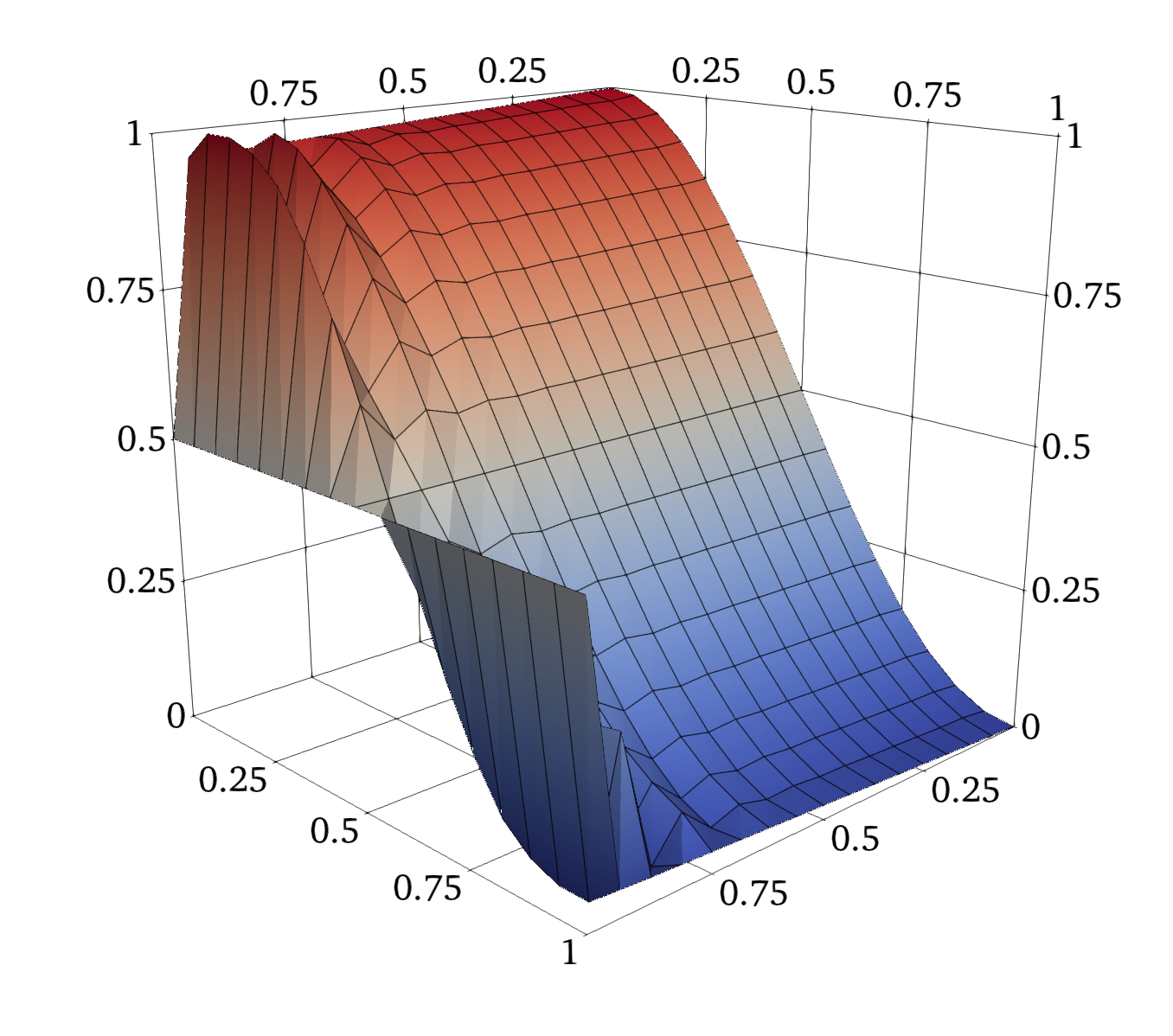}
		\\
		$\tilde{u}_h$
	\end{minipage}	\\[1em]
	\begin{minipage}{0.25\textwidth}
	\flushright
		With lumping:
	\end{minipage}
	~
	\begin{minipage}{0.3\textwidth}
	\small
		\centering
				\includegraphics[clip, trim = 0 3cm 0 0, width=\linewidth]{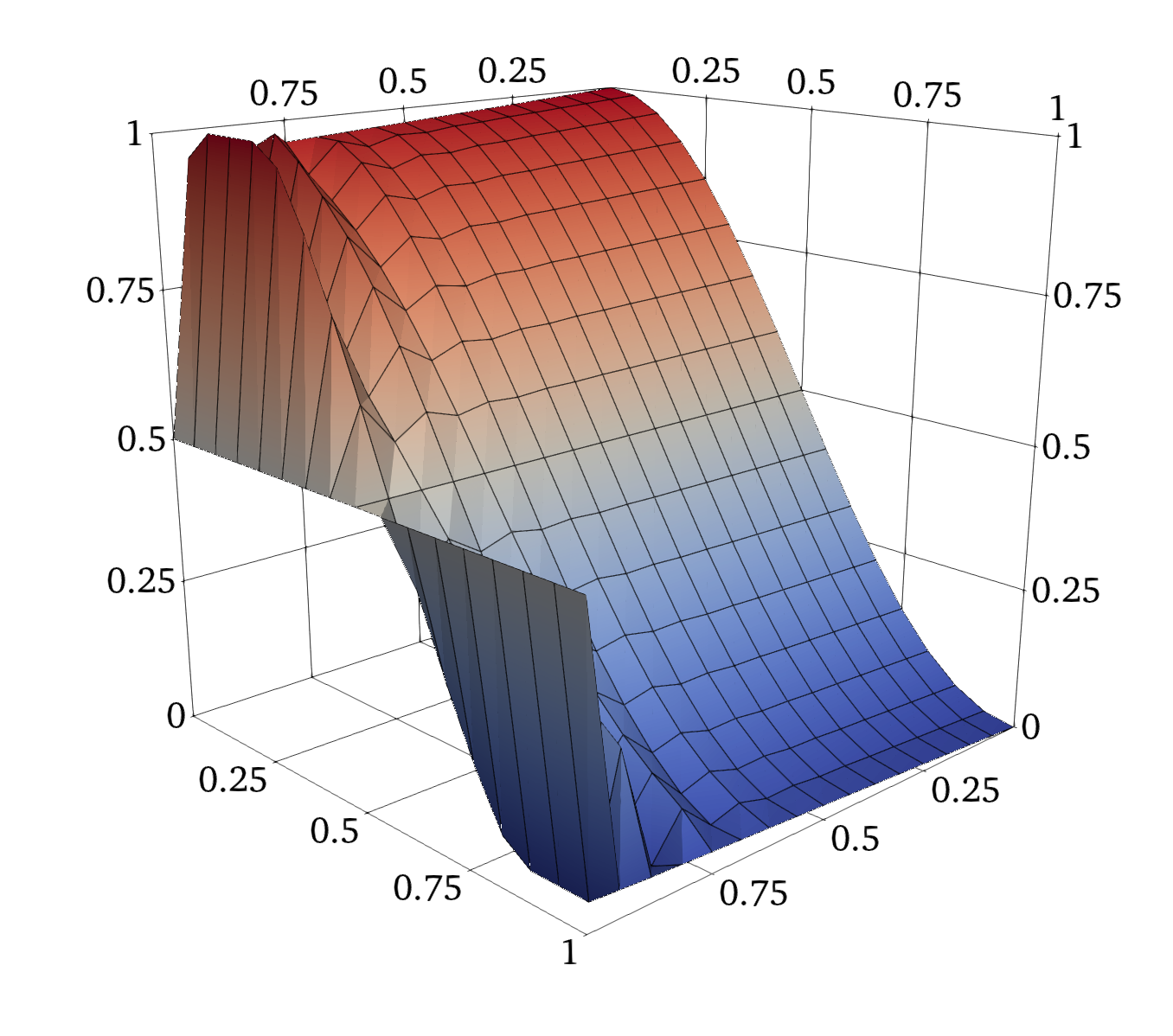}
		\\
		$u_h$
	\end{minipage}	~
	\begin{minipage}{0.3\textwidth}
	\small
		\centering
				\includegraphics[clip, trim = 0 3cm 0 0, width=\linewidth]{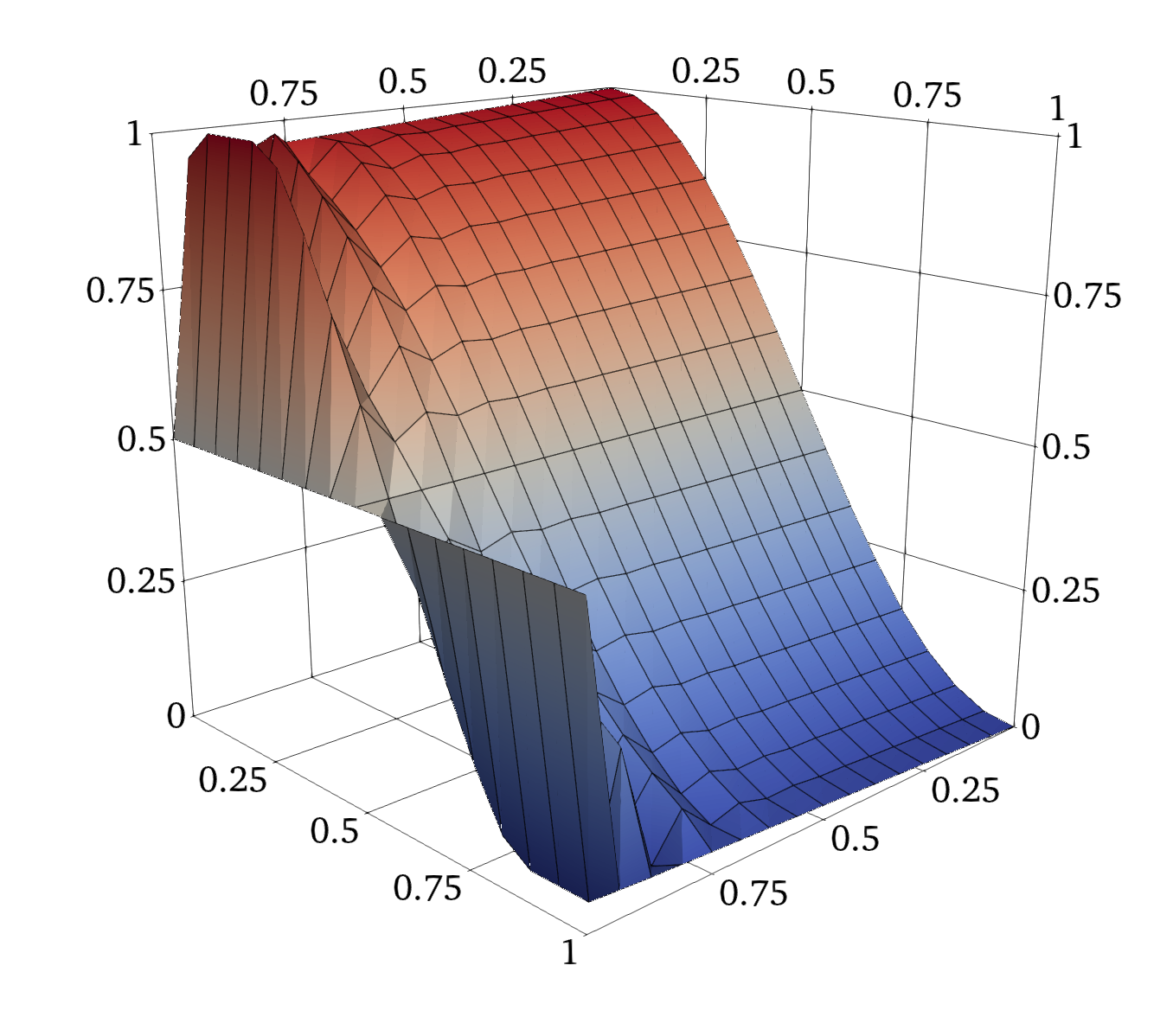}
		\\
		$\tilde{u}_h$
	\end{minipage}	\caption{
	Comparison of two $(\mathbb{Q}_1,\mathbb{Q}_1)$-proximal Galerkin discretizations of the Erikkson--Johnson problem~\cref{eq:EJProblem} also considered in~\cref{fig:EJProblem}.
	In the first row, we see a pair of solutions corresponding to a proximal Galerkin discretization where standard Gaussian quadrature is used to evaluate every integral.
	In the second row, we see a similar pair of solutions obtained from a discretization employing the nodal-quadrature mass lumping technique described in~\Cref{rem:MassLumping}.
	As argued above, the latter discretization delivers \emph{two} feasible discrete solutions.
	These results can be reproduced by running the FEniCSx code \texttt{advection\_diffusion.py} available at \cite{ZenodoCode}.
	\label{fig:EJProblem2}}
\end{figure}

Note that neither of the numerical approximations depicted in~\Cref{fig:EJProblem,fig:EJProblem2} were obtained with numerical stabilization techniques that are common for this class of singularly-perturbed problems and usually required to avoid spurious oscillations for smaller values of $\epsilon > 0$.
Just as conventional stabilized finite element methods do not necessarily preserve maximum principles \cite{evans2009enforcement}, we find that entropy regularization does not necessarily induce robustness with respect to the diffusion parameter $\epsilon$.
In turn, we observe spurious oscillations in the discrete solution for smaller values of $\epsilon$.
Our conclusion is, therefore, that future work is required to develop robust proximal Galerkin finite element methods for singularly-perturbed PDEs.

\section[Extensions II: Non-convex objective functions and a structure-preserving algorithm for topology optimization]{Extensions II: Non-convex objective functions and a structure-\linebreak{}preserving algorithm for topology optimization}
\label{sec:new_algorithms_for_topology_optimization}
The variational problems considered in the sections above share several common features. The most decisive feature is convexity. This raises the question as to whether entropy regularization can be as effective in a non-convex infinite-dimensional setting. We investigate this possibility here by providing a new proximal gradient (entropic mirror descent) framework for possibly non-convex, bounded-constrained optimization in infinite dimensions. Our benchmark problem for this setting is a well-known problem in topology optimization.  
As before, the section closes with an explicit algorithm and a brief account of numerical experiments. In the interest of completeness, we recall several details from abstract mirror descent methods. Although these methods are widely used in finite-dimensional convex optimization, and much of our treatment is inspired by the more recent works \cite{beck2003mirror,teboulle2018simplified}, it is important to note that Nemirovskij and Yudin did not restrict themselves to finite dimensions in their original works many decades ago \cite{nemirovski1979effective,nemirovskij1983problem}.

\subsection{Mirror descent} \label{sub:entropic_mirror_descent}

\Cref{sub:proximal_point} introduced a proximal framework that was applied to solve the obstacle problem.
\Cref{sub:non_symmetric_bilinear_forms} introduced a linearized proximal framework to solve variational inequalities with non-symmetric bilinear forms.
The purpose of the present subsection is to combine those two approaches into a general first-order framework for non-convex optimization problems,
\begin{equation}
\label{eq:NonConvexModelProblem}
	\min_{v \in V}~ F(v)
	~~
	\text{subject to~}
	v \in K \subset V
	\,,
\end{equation}
where $K$ is a nonempty, closed convex subset of a Banach space $V$ and $F\colon V \to \mathbb{R}$ is continuously Fr\'echet differentiable.
We closely follow \cite{beck2003mirror,teboulle2018simplified} below to provide intuition for the method. In several places, we are purposely vague. This is particularly the case for the assumption that a Bregman divergence $D_G$ induced by the derivative $G'$ is available or that $\interior \dom G$ is non-empty with respect to the topology on $V$.

We begin by introducing the Bregman gradient step operator,
\begin{equation}
\label{eq:BregmanGradientStep}
	P_\alpha(w)
		=
	\argmin_{v\in V} \big\{ \langle F^\prime(w), v \rangle + \alpha^{-1} D_G(v,w) \big\}
	\,,
	\quad
	w \in \interior \dom G
	\,,
\end{equation}
where $G\colon \dom G \to \mathbb{R}\cup\{\infty\}$ is strongly convex with derivative $G'(w) \in V^\prime$. 
When $V$ is a Hilbert space and $G(v) = \frac{1}{2}\|v\|_V^2 = \frac{1}{2}(v,v)_V$, the use of the 
gradient step operator leads to the standard gradient descent rule. This follows from
a straightforward computation of the first-order optimality criteria, which leads to
\begin{equation}
	P_\alpha(w)
		=
	w - \alpha \nabla F(w)
	\,,
\end{equation}
where $\nabla F\colon V \to V$ is the gradient of $F$ characterized by the variational equation
\begin{equation}
	(\nabla F(w),v)_V = \langle F^\prime(w), v \rangle
	~~
	\fa
	v \in V
	\,.
\end{equation}
More generally, assuming the minimizer exists and $G' : V \to V^\prime$ is invertible, \cref{eq:BregmanGradientStep} returns the formula
\begin{equation}\label{eq:bregman-prox-mirror}
	P_\alpha(w)
		=
	(G')^{-1} \big(G'(w) - \alpha F'(w)\big)
	\,.
\end{equation}
\begin{figure}
\centering
	\includegraphics[height=6cm]{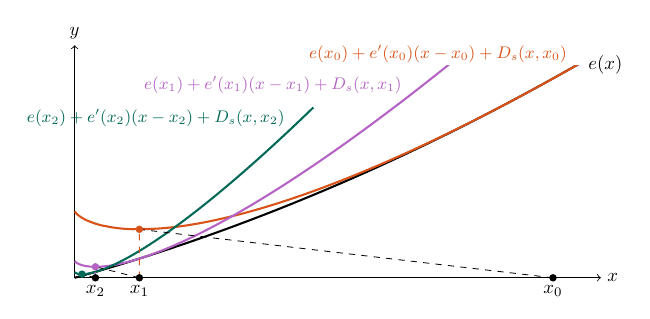}
	\caption{
	Illustration of convergence to the solution $x^\ast = 0$ for the constrained minimization problem $\min_{x\in[0,\infty)}\, e(x)$, where $e(x) = \frac{1}{2}x^2 + x$, by solving the sequence of minimization problems $x_{k+1} = \argmin_{x\in[0,\infty)}\, \{ e'(x_k)x + D_s(x,x_k) \}$ starting at $x_0 = 1$.
	\label{fig:MirrorPlot}}
\end{figure}

Recalling the classical steepest descent method, see, e.g., \cite{nocedal1999numerical}, it is not surprisingly that iterating~\cref{eq:BregmanGradientStep} can generate a convergent algorithm to solve~\cref{eq:NonConvexModelProblem} when $K = V$ and an appropriate step size rule for $\alpha$ is available.
Indeed, convergence of this algorithm is illustrated in~\cref{fig:MirrorPlot} for optimizing the  scalar objective function $e(x) = \frac{1}{2}x^2 + x$ with the Bregman divergence $D_s(x,x_k)$ from the scalar entropy function $s(x) = x\ln x - x$. This naturally leads to the so-called mirror descent method \cite{nemirovskij1983problem,beck2003mirror}, which, given a sequence of positive step sizes $\{\alpha_k\}$, generates a sequence of iterates $\{u^k\}$ according to the following scheme:
\[
u^0 \in \interior \dom G, \quad u^{k+1} = P_{\alpha_{k+1}}(u^k), \quad k=0,1,2\ldots
\]
Nemirovskij and Yudin point out that the motion of the iterates $\{u^k\}$, which takes place in the primal space $V$, is a ``shadow'' or ``image'', of the main motion: $G'(u^k) - \alpha_{k+1} F'(u^k)$, which by definition takes place in the dual space; whence the name ``method of mirror descent'' \cite[p. 88]{nemirovskij1983problem}.
This is easily witnessed by introducing a dual variable $\lambda := G'(w)$. Then under the assumptions that $G'$ is invertible, the new step in the dual space takes a somewhat more familiar form:
\[
\lambda^{k+1} = \lambda^k - \alpha_{k+1} (F'\circ (G')^{-1})(\lambda^{k}).
\]
This important distinction is often lost in finite dimensions and to some extent in the Hilbert space setting, where $G'$ and $F'$ are often identified with their Riesz representations in $V$; i.e., the gradients $\nabla G$ and $\nabla F$, respectively.

\subsection{Mirror descent with a linear equality constraint}

For constrained problems, it is essential that $G$ properly captures the geometry of the feasible set, as was done in the previous sections on the obstacle problem and advection-diffusion equations.  Many problems of interest have the following form:
\begin{equation}
\label{eq:convex-intersection-problem}
	\min_{v \in K_1 \cap K_2} F(v)
	\,,
\end{equation}
where $K_1$ and $K_2$ are nonempty, closed convex subsets of $V$ and $F$ is differentiable. For example, suppose that $K = K_1$ is a nonempty, closed convex set and $K_2 := \left\{v \in V \left|\; \ell(v) = c\right.\right\}$ for some linear functional $\ell \in V^\prime$ and constant $c \in \mathbb R$; i.e., $K_2 = \ell^{-1}(\left\{c\right\})$. Furthermore, suppose that $D_G$ is a Bregman divergence associated with a distance generating function $G$, which is a Legendre function whose critical domain is linked to the properties of $K$. In this setting, we fix $\alpha > 0$ and define the operator 
\[
T_{\alpha}(w) := \argmin_{v \in K \cap  \ell^{-1}(\left\{c\right\})} \{ F(w) + \langle F'(w),v - w\rangle + \alpha^{-1}D_{G}(v,w) \}
\,.
\]
We assume here that $D_{G}(\cdot,w)$ over $K \cap  \ell^{-1}(\left\{c\right\})$ has all the properties needed to ensure $T_{\alpha}$ is single-valued. Using standard optimality theory, e.g., \cite{ioffe2009theory}, we can argue that $u := T_{\alpha}(w)$ satisfies the inclusion
\begin{equation}\label{eq:fo-inclusion}
0 \in \alpha F'(w) + G'(u) - G'(w) + \mathcal{N}_{K \cap  \ell^{-1}(\left\{c\right\})}(u),
\end{equation}
where  $\mathcal{N}_{K \cap  \ell^{-1}(\left\{c\right\})}(u)$ is the normal cone from convex analysis \cite{ioffe2009theory}, defined by
\[
 \mathcal{N}_{K \cap  \ell^{-1}(\left\{c\right\})}(u) := \left\{
 \lambda \in V^\prime \left|\;
 \langle \lambda, v - u \rangle \le 0
 \quad \forall v \in K \cap  \ell^{-1}(\left\{c\right\})
 \right.
 \right\}
 .
\]
Note that if $w = T_{\alpha}(w)$, then \cref{eq:fo-inclusion} reduces to 
\[
0 \in \alpha F'(w) + \mathcal{N}_{K \cap  \ell^{-1}(\left\{c\right\})}(w),
\]
which indicates that $w$ is a first-order stationary point of \cref{eq:convex-intersection-problem}.

If we furthermore assume that $K$ contains a subset $\mathcal{B}$ such that $\ell(\mathcal{B}) \subset (c-\epsilon,c+\epsilon)$, for some $\epsilon > 0$, then $\{c\} - \ell(K)$ contains an open neighborhood of $0$. This constraint qualification \cite{ioffe2009theory} allows us to rewrite \cref{eq:fo-inclusion} as
\begin{equation}\label{eq:fo-inclusion-refine}
0 \in \alpha F'(w) + G'(u) - G'(w) + \mathcal{N}_{K}(u) +  \mathcal{N}_{\left\{c\right\}}(\ell(u)) \circ \ell,
\end{equation}
where $\mathcal{N}_{\left\{c\right\}}(\ell(u)) \circ \ell = \left\{ \mu \ell \in V' \left| \mu \in \mathbb R\right.\right\}$ provided $\ell(u) = c$.
Continuing on, we may assume for the sake of argument that the use of $D_{G}$ forces $\mathcal{N}_{K}(u) = \{0\}$ and $u \in K$. This happens, for example, if
$u$ remains away from the boundary of $K$. For pointwise bound constraints in $L^p$-spaces of the type $0 \le u \le 1$ considered below, we would also have $\mathcal{N}_{K}(u) = \left\{0\right\}$ when $0 < u < 1$ almost everywhere, even if the set $K$ has an empty interior. The remaining normal cone is trivial to compute and yields $\mathcal{N}_{\left\{c\right\}}(\ell(u)) = \mathbb R$.

These observations justify the following first-order optimality system that characterizes the map $w \mapsto u := T_{\alpha}(w)$: Find $(u,\mu) \in K \times \mathbb R$ such that 
\begin{equation}\label{eq:fo-order-mult-eq}
u = (G')^{-1}(G'(w) - \alpha F'(w) + \mu \ell) \text{ and } \ell(u) = c.
\end{equation}
In other words, given $w$ and $\alpha$, compute the increment $\widetilde{\lambda} := G'(w) - \alpha F'(w)$ and find $\mu \in \mathbb R$ by solving the equation 
\[
\ell( (G')^{-1}(\widetilde{\lambda} + \mu \ell)) = c.
\] 
Repeating the process
\[
u^0 \in \interior \dom G, \quad u^{k+1} = T_{\alpha_{k+1}}(u^k), \quad k=0,1,2\ldots
\]
generates a sequence of primal variables. Indeed, given a sequence of positive step sizes $\left\{\alpha_k\right\}$, we can generate $\left\{\lambda^k\right\}$ according to \Cref{alg:half-step-dual-update}.

\begin{algorithm2e}[H]
\DontPrintSemicolon
	\caption{\label{alg:half-step-dual-update} 
	Half-step mirror descent rule in Banach space
	}
	\SetKwInOut{Input}{Input}
	\SetKwInOut{Output}{Output}
	\BlankLine
	\Input{
	Initial dual variable $\lambda^0 \in V^\prime$ and sequence of step sizes $\alpha_k>0$.}
		\Output{Stationary dual variable $\overline{\lambda}$.}
	\BlankLine
	Initialize $k = 0$.\;
	\Repeat{a convergence test is satisfied}
	{
				\tcp*[l]{Dual space half step (gradient descent)}
		Assign  $\lambda^{k+1/2} \leftarrow  \lambda^{k} - \alpha_{k+1} (F' \circ (G')^{-1})(\lambda^k)$.\;
		\tcp*[l]{Compute Lagrange multiplier}
		Solve for $\mu^{k+1} \in \mathbb R$ such that $\ell( (G')^{-1}(\lambda^{k+1/2} + \mu^{k+1} \ell)) = c$.\;
		\tcp*[l]{Dual space feasibility correction}
		Assign  $\lambda^{k+1} \leftarrow  \lambda^{k+1/2} + \mu^{k+1}\ell$.\;
		Assign $k \leftarrow k+1$.\;
	}
\end{algorithm2e}

The pre-image of $G'$ is tacitly assumed to be contained in $K$. Therefore, the abstract scheme \Cref{alg:half-step-dual-update} theoretically provides a sequence of feasible primal iterates 
\[
u^{k+1} := (G')^{-1}(\lambda^{k+1/2} + \mu^{k+1} \ell). 
\]
Checking for optimality is rather difficult in general, as the evaluation of the residual of first-order optimality conditions may require the computation of a projection operator in non-trivial settings; recall the discussion in  \Cref{rem:ProximalMap} above. On the other hand, we demonstrated above that fixed points of $T_{\alpha}$ are stationary for the original problem. This motivates the simple stopping rule: $\| T_{\alpha_{k+1}}(u^k) - u^k \|_{V} < \mathtt{tol.}$, with $\mathtt{tol.} > 0$ sufficiently small, or some variant using absolute and relative tolerances. However, in order to remove the influence of $\alpha_k$, which will typically change with $k$, we advocate for rescaling the fixed point residual and also consider the relative quantities 
\[
\eta_{k} := \| T_{\alpha_{k+1}}(u^k) - u^k \|_{V}/\alpha_{k+1} =   \| u^{k+1}- u^k \|_{V}/\alpha_{k+1}.
\]
In the unconstrained Hilbert-space setting, we have $\eta_{k} = \| \nabla F(u^k) \|_{V}$. Therefore, if $\liminf_{k} \eta_{k} \to 0$, then $\liminf_{k} \| \nabla F(u_k) \|_{V} = 0$; i.e., we get a limiting stationarity condition. For example, 
if $\{u^{k+1} - u^k\}$ is a null sequence in $o(\alpha_k)$ for $\alpha_k \downarrow 0$, then clearly $\eta_k \downarrow 0$. We therefore also use $\eta_k$ as a heuristic stopping measure in our experiments below.
 
The abstract derivation above yields an iterative scheme in the dual space $V^\prime$. Implementing finite-dimensional approximations of negative-order Sobolev spaces can be challenging. However, the bound-constrained variational problem we have considered happens to have a substantial degree of useful structure, and entropy regularization of the associated bound constraints provides us with representations of $G'$, $G'^{-1}$ and $\ell$ that lead to a latent space reformulation of \Cref{alg:half-step-dual-update} that is readily treated with finite elements.

\subsection{An entropic mirror descent algorithm for topology optimization} \label{sub:top_opt_algorithm}
We consider the benchmark topology optimization problem of elastic compliance optimization of a cantilever beam; see, e.g., \cite{andreassen2011efficient}.
In particular, we use the two-field filtered density approach to topology optimization \cite[Section~3.1.2]{sigmund2013topology} to formulate the optimal cantilever beam problem.

\begin{subequations}
\label{eqs:elastic_compliance}
The purpose of the problem is to find a material density $0 \leq \rho \leq 1$, where zero indicates no material, and one indicates the complete presence of material, that induces a minimal elastic compliance, $\widehat{F}(\mathbf{u},\rho) = \int_{ \Omega} \mathbf{u}\cdot\mathbf{f} \dd \bm{x}$.
In this expression, the displacement $\mathbf{u} = \mathbf{u}(\rho)$ is determined by a variable material density $\rho$ and a fixed body force $\mathbf{f}$ through the classical linear elasticity equation \cite{marsden1994mathematical}, $-\mathrm{Div} \big( r(\tilde{\rho})\,\bm{\sigma} \big) = \mathbf{f}$.
In this equation, we are meant to understand that
\begin{equation}
\label{eq:elastic_compliance_cauchy_stress}
	\bm{\sigma} = \lambda \div(\mathbf{u})I + \mu(\nabla \mathbf{u} + (\nabla \mathbf{u})^\top)
	\,,
\end{equation}
with Lam\'e parameters $\lambda,\,\mu > 0$, is the Cauchy stress of a homogeneous, isotropic material, $\mathrm{Div}(\cdot)$ denotes the row-wise divergence operator, $\tilde{\rho}$ is a regularized (filtered) density function \cite{bruns2001topology,lazarov2011filters}, and $r(\tilde{\rho}) > 0$ is a local model for the Young's modulus.
For our work, we use the well-known (modified) solid isotropic material penalization (SIMP) model $r(\tilde{\rho}) = \underline{\rho} + \tilde{\rho}^3 (1-\underline{\rho})$, where $0<\underline{\rho} \ll 1$ is a nominal constant assigned to void regions in order to prevent the stiffness matrix from becoming singular \cite{andreassen2011efficient}.

The full problem formulation is written as follows:
\begin{equation}
\label{eq:elastic_compliance_objective}
    \min_{\rho \in L^1(\Omega)} \ \
    \bigg\{
    \,
    \widehat{F}(\mathbf{u},\rho)
    =
    \int_{ \Omega} \mathbf{u}\cdot\mathbf{f} \dd \bm{x}
    \,
    \bigg\}
    \, ,
\end{equation}
subject to the constraints
\begin{equation}
\label{eq:elastic_compliance_constraints}
\left\{\,\,
\begin{gathered}
    -\mathrm{Div} \big( r(\tilde{\rho})\,\bm{\sigma} \big) = \mathbf{f}
    ~~ \text{in }\Omega
    \quad\text{with}\quad
    \mathbf{u} = 0
    ~~\text{on }\Gamma_0
    \,,
    \quad
    \bm{\sigma} \mathbf{n} = 0
    ~~ \text{on }\partial\Omega \setminus \Gamma_0
    \,,
    \\
    -\epsilon^2\Delta \tilde{\rho} + \tilde{\rho}
	= \rho
	~~ \text{in }\Omega
	\quad\text{with}\quad
	\nabla \tilde{\rho}\cdot \mathbf{n} = 0
	~~ \text{on }\partial\Omega
    \,,
    \\
    \int_\Omega \rho(\bm{x}) \dd\bm{x}
    = \theta |\Omega|
    \,,\quad
    0 \leq \rho
    \leq 1
    \,,\quad
    r(\tilde{\rho})
    = \underline{\rho}+ \tilde{\rho}^3 (1-\underline{\rho})
    \,,
\end{gathered}
\right.
\end{equation}
\end{subequations}
where $\epsilon > 0$ is a \emph{length scale} and $0 < \theta < 1$ is the desired \emph{volume fraction}, which constrains the amount of the domain $\Omega$ occupied by the design.
The design domain $\Omega$ and associated boundary conditions are depicted in \Cref{fig:ElasticCompliance_mesh}.
\begin{figure}
\centering
	\includegraphics[width=7.5cm]{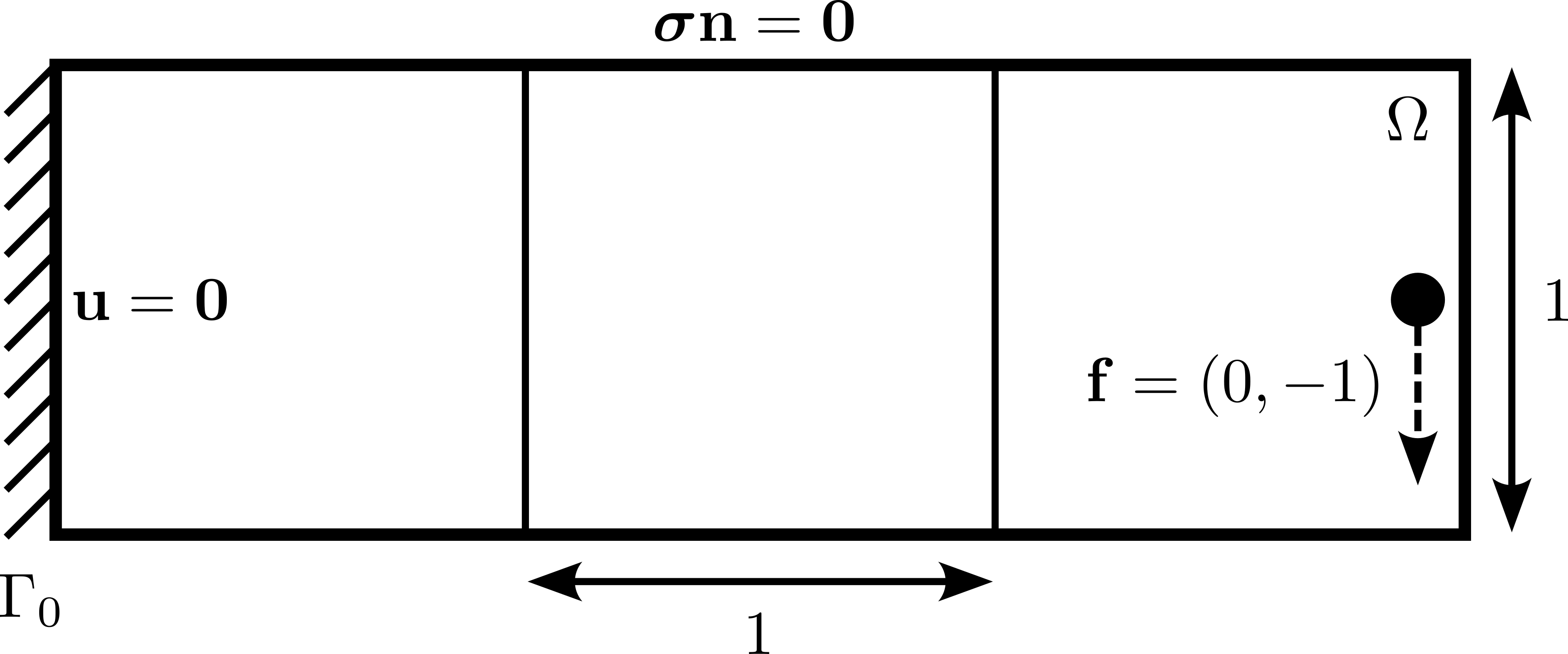}
	\caption{
	The design domain $\Omega$ for the cantilever beam problem~\cref{eqs:elastic_compliance} with corresponding boundary conditions,  and three-element initial mesh with length $h_0 = 1$.
	The circular load $\mathbf{f}$, is applied at the point $x_0 := (2.95, 0.5)$, and defined $\mathbf{f} = (0,-1)$ if $|x-x_0|\leq 0.05$ and $\mathbf{f} = (0,0)$ otherwise.
	\label{fig:ElasticCompliance_mesh}}
\end{figure}
We defer a rigorous mathematical discussion to the literature and simply note that it can be shown that $\mathbf{u}$ can be understood, via $\widetilde{\rho}$, as a differentiable mapping from $\rho$ into an 
appropriate regularity space; e.g., a subspace of $[H^1(\Omega)]^2$. Therefore, we replace the objective function in~\cref{eq:elastic_compliance_objective} by the reduced functional
\begin{subequations}
\begin{equation}
F(\rho) := \widehat{F}(\mathbf{u}(\rho),\rho)
\end{equation}
and arrive at the reduced space optimization problem
\begin{equation}\label{eq:reduced-to}
\aligned
	&\min_{\rho \in L^1(\Omega)} F(\rho)
	~~
			\text{subject to~} 0 \leq \rho \leq 1
	~\text{a.e.~and}~
	\int_\Omega \rho \dd x = \theta |\Omega|
	\,.
\endaligned
\end{equation}
\end{subequations}
We can now solve this problem with a custom version of \Cref{alg:half-step-dual-update} that employs the binary entropy-based Bregman divergence for the pointwise bound constraints found in~\cref{eq:reduced-to}; cf.~\Cref{sub:binary_entropy}.
In particular, the favorable structure of this problem 
lends itself nicely to a \textit{latent space} representation, given below, that makes use of the transformations
\[
\rho^{k} = \sigmoid(\psi^{k}) \quad \iff \quad \psi^k = \lnit(\rho^k),
\]
as well as the following variational characterization of the gradient $\nabla F(\rho^k)$:
\begin{equation}
\label{eq:TopOpt_VariationalCharacterizationOfTheGradient}
	\left\{
	\begin{aligned}
		\,&\text{Find}~
		\nabla F(\rho^k) := \tilde{w} \in H^1(\Omega)
		\text{~such that}
		\\
		&
		\epsilon^2(\nabla \tilde{w}, \nabla v) + (\tilde{w},v)
		=
		-(r^\prime(\tilde{\rho}^k)\, \bm{\sigma}(\mathbf{u}^k):\nabla \mathbf{u}^k , v)
				~\fa v \in H^1(\Omega)
		\,.
	\end{aligned}
	\right.
\end{equation}
A visual representation of a single iteration of \Cref{alg:TopologyOptimization} is given in~\Cref{fig:ProjectedMirrorDescent}.

{
\makeatletter
\makeatother
\begin{algorithm2e}[H]
\DontPrintSemicolon
	\caption{\label{alg:TopologyOptimization}
	Entropic mirror descent for topology optimization.
	}
	\SetKwInOut{Input}{Input}
	\SetKwInOut{Output}{Output}
	\BlankLine
	\Input{Initial latent variable $\psi^0 \in L^\infty(\Omega)$, sequence of step sizes $\alpha_k>0$, increment tolerance $\mathtt{itol.} > 0$, and normalized tolerance $\mathtt{ntol.} > 0$.}
	\Output{Optimized material density $\overline{\rho} = \sigmoid(\psi^k)$.}
	\BlankLine
	Initialize $k = 0$.\;
	\While{$\|\sigmoid(\psi^k) - \sigmoid(\psi^{k-1})\|_{L^1(\Omega)} > \min\{\alpha_k\, \mathtt{ntol.},\mathtt{itol.}\}$}
		{
		\tcp*[l]{Latent space gradient descent}
		Assign $\psi^{k+1/2} \leftarrow  \psi^{k} - \alpha_{k+1} \nabla F(\sigmoid(\psi^k))$.\;
		\tcp*[l]{Compute Lagrange multiplier}
		Solve for $c \in \mathbb R$ such that $\int_\Omega \sigmoid(\psi^{k+1/2} + c) \dd x = \theta |\Omega|$.\;
		\tcp*[l]{Latent space feasibility correction}
		Assign $\psi^{k+1} \leftarrow  \psi^{k+1/2} + c$.\;
		Assign $k \leftarrow k+1$.\;
	}
\end{algorithm2e}}

\begin{figure}
\centering
	\includegraphics[width=0.9\linewidth]{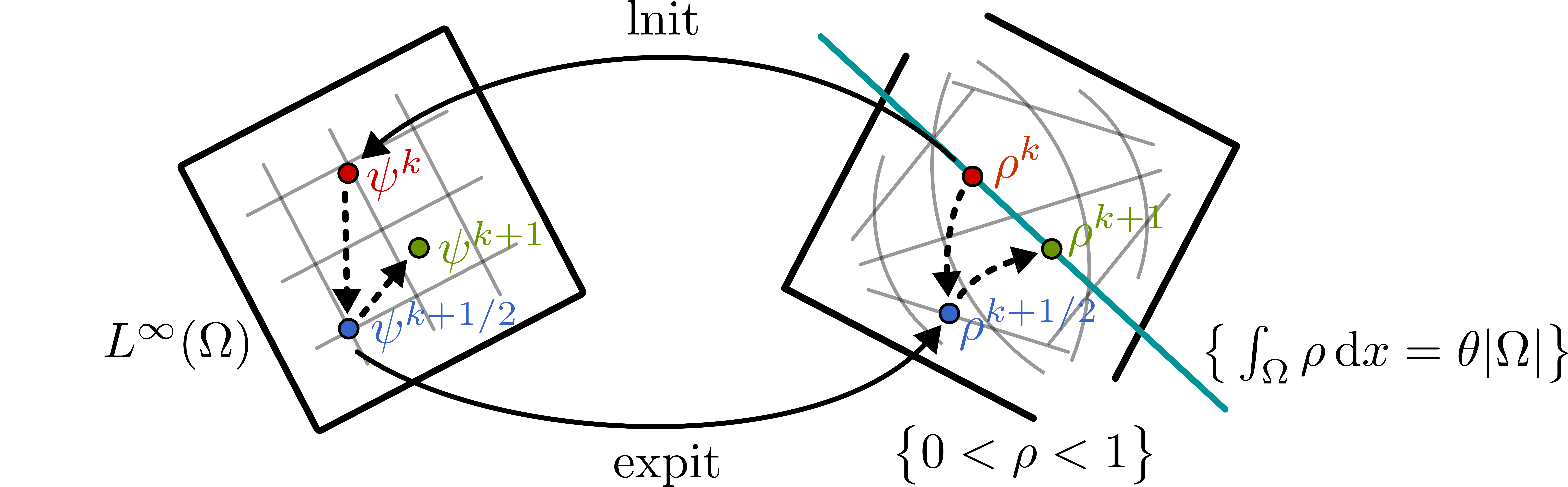}
	\caption{
	Illustration of motion of the primal and latent iterates in \Cref{alg:TopologyOptimization}.
	When viewed in the primal space, we find that both steps of the progression $\rho^{k} \mapsto \rho^{k+1/2} \mapsto \rho^{k+1}$ involve \emph{nonlinear} transformations of the primal variables.
					However, when viewed in the latent space, $L^\infty(\Omega)$, these transformations are simply just \emph{translations} of the latent variables; namely, $\psi^{k+1/2} = \psi^k - \alpha_{k+1} \nabla F(\rho^k)$ and $\psi^{k+1} = \psi^{k+1/2} + c$.
	\label{fig:ProjectedMirrorDescent}}
\end{figure}

\subsection{Numerical experiments} \label{sub:numerical_experiments_topopt}

In this set of experiments, we estimate the gradients $\nabla F(\rho^k)$ in~\Cref{alg:TopologyOptimization} by discretizing \cref{eq:TopOpt_VariationalCharacterizationOfTheGradient} with $C^0(\Omega)$-conforming, quadrilateral finite elements of degree $p\geq 1$.
Likewise, the discrete displacements $\mathbf{u}_h^k \approx \mathbf{u}^k$ and filtered densities, $\tilde{\rho}_h^k \approx \tilde{\rho}^k$, are also computed with conforming finite elements of degree $p$.
Finally, unlike the physical variables above, the latent variable $\psi^k$ is approximated by {discontinuous} piecewise polynomials $\psi^k_h$ of degree $p-1$.
Note that this induces a discontinuous primal variable $\rho^k_h := \sigmoid(\psi^k_h)$ satisfying $0 < \rho^k_h < 1$; see also \Cref{rem:TopOpt_DiscreteBoundConstraints}.
We then apply the resulting discretized version of \Cref{alg:TopologyOptimization} to solve~\cref{eqs:elastic_compliance} with $\underline{\rho} = 10^{-6}$, $\lambda=\mu = 1$, $\theta = 0.5$, and $\epsilon = 0.02$.
The boundary conditions and body force $\mathbf{f}$ are depicted in~\Cref{fig:ElasticCompliance_mesh}.
This experiment is an official part of MFEM 4.6.
We invite the reader to view MFEM Example~37 \cite{Keith2023TopOptCode} for full implementation details.
For sake of space, we have focused on presenting results with low-order discretizations (i.e., $p = 1,2$) of the above form and left the exploration of higher-order discretizations to future work.

\begin{figure}
\centering
	\centering
	\begin{minipage}[c]{0.96\textwidth}
		\begin{minipage}[c]{0.32\textwidth}
		\small
			\centering
			\includegraphics[width=\textwidth]{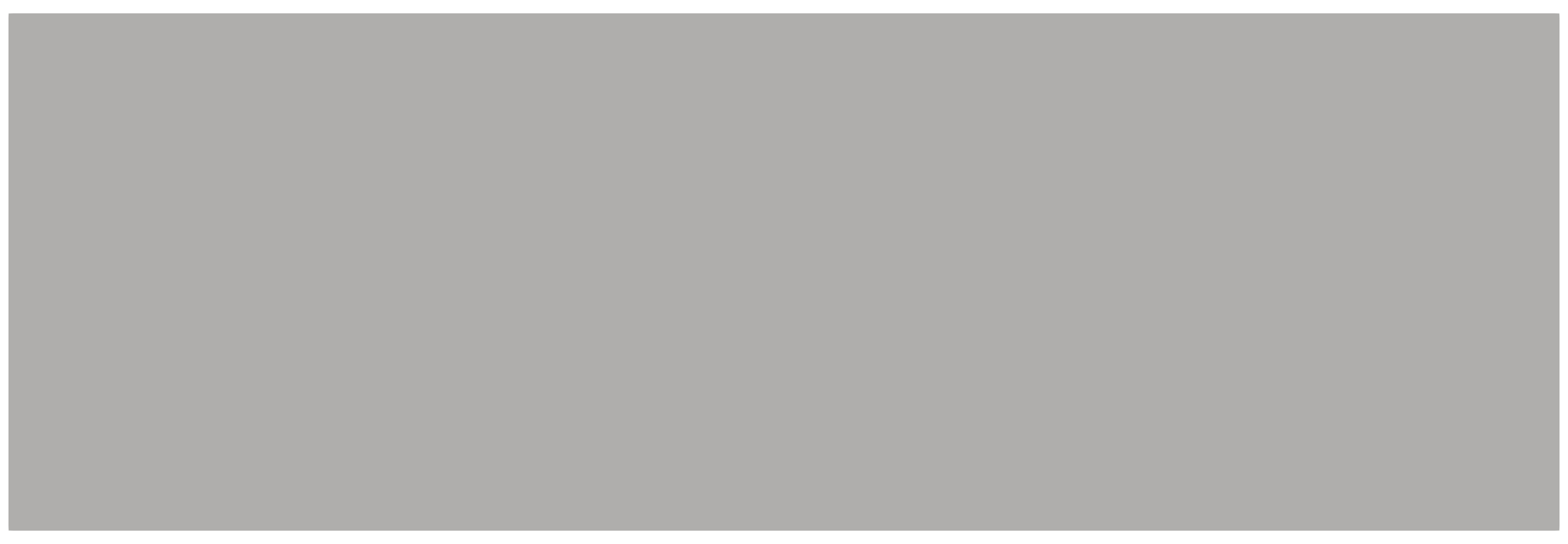}
			\\[3pt]
			$k = 0$
		\end{minipage}
		\begin{minipage}[c]{0.32\textwidth}
		\small
			\centering
			\includegraphics[width=\textwidth]{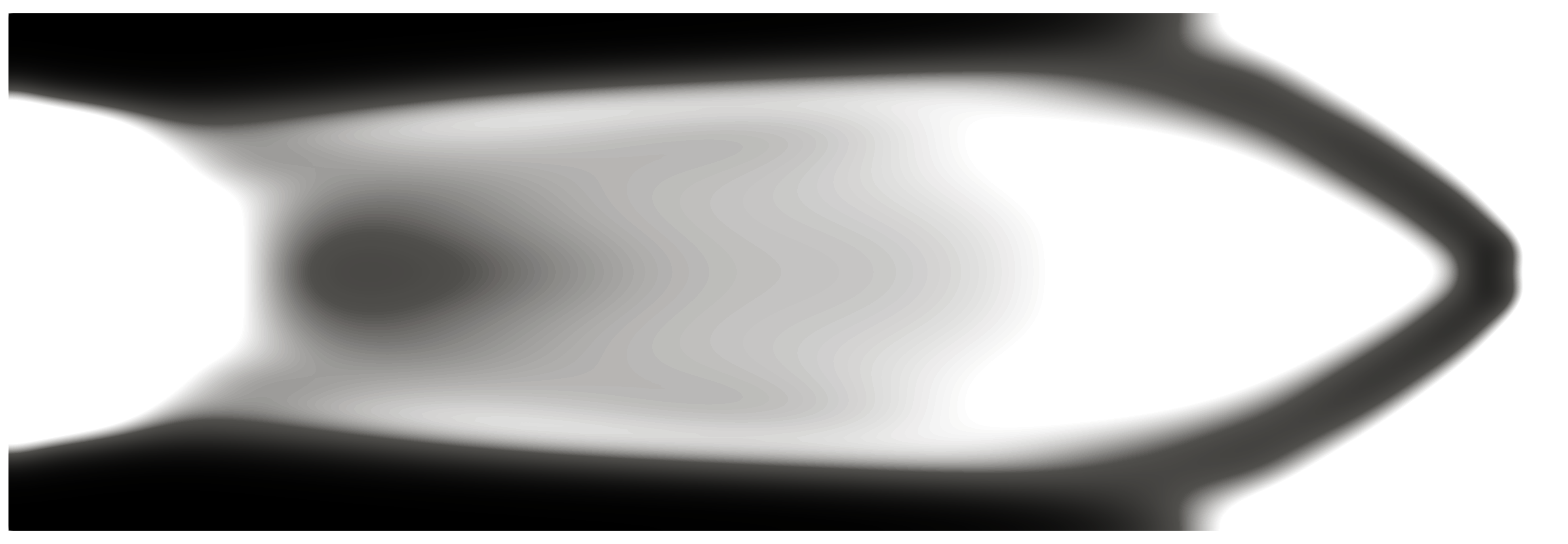}
			\\[3pt]
			$k = 6$
		\end{minipage}
		\begin{minipage}[c]{0.32\textwidth}
		\small
			\centering
			\includegraphics[width=\textwidth]{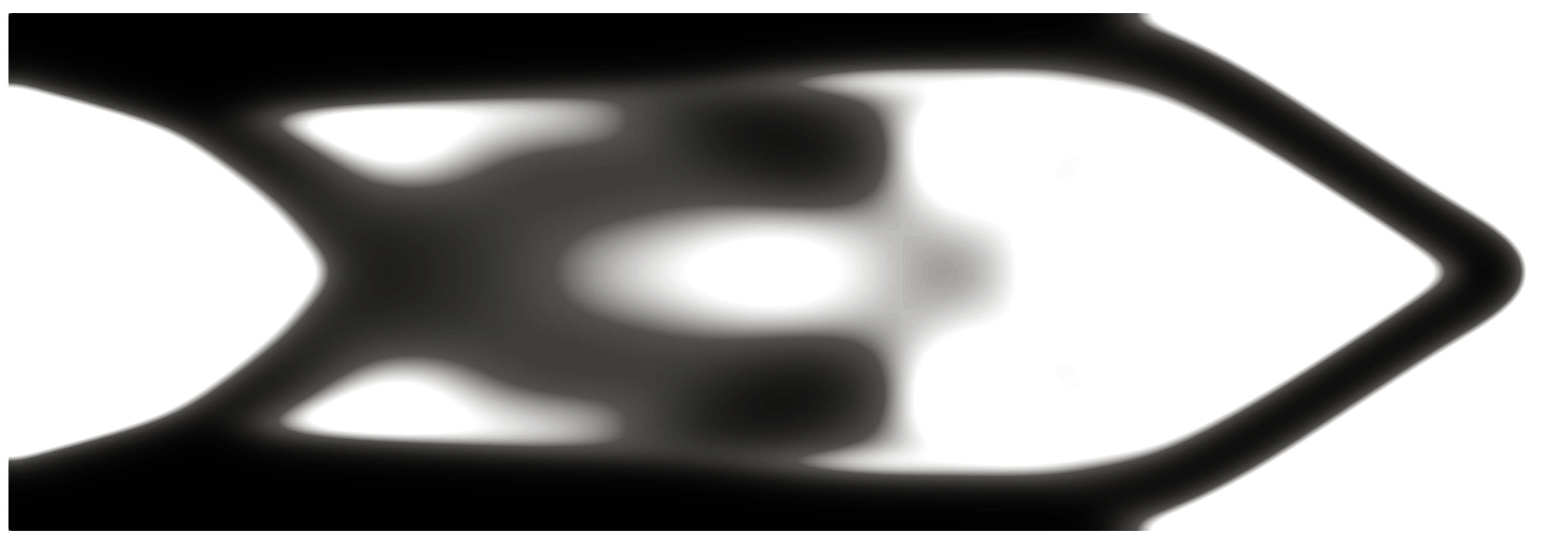}
			\\[3pt]
			$k = 12$
		\end{minipage}
		\\[7pt]
		\begin{minipage}[c]{0.32\textwidth}
		\small
			\centering
			\includegraphics[width=\textwidth]{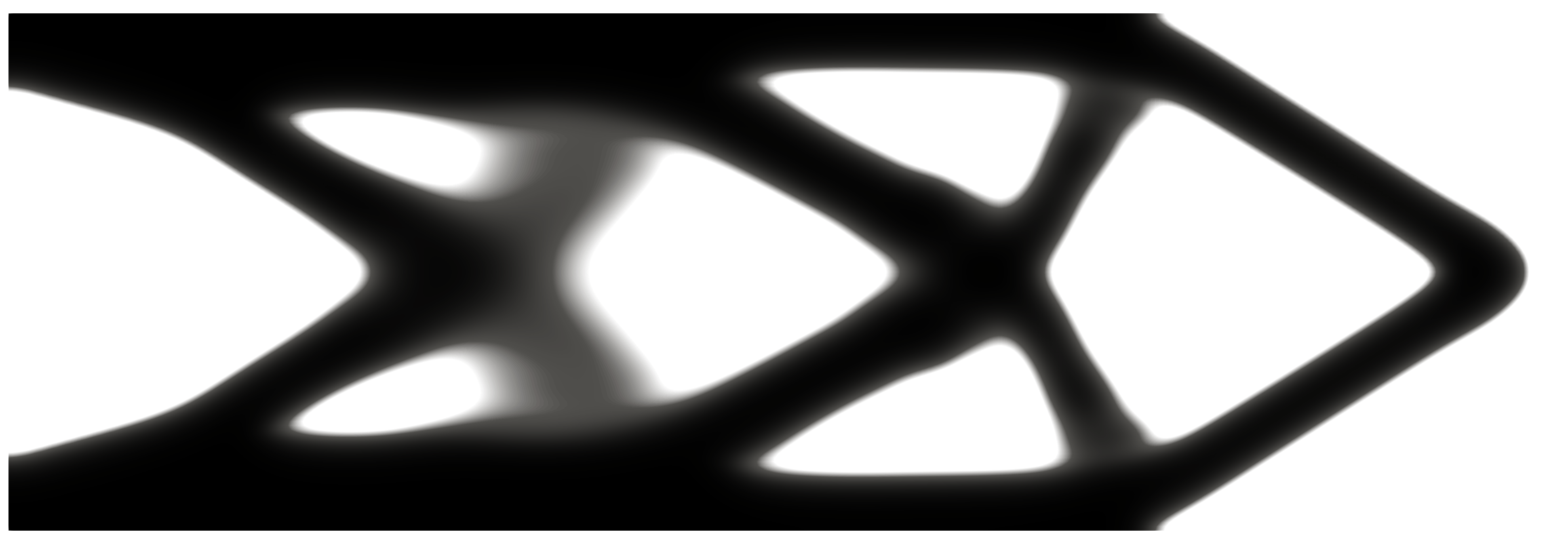}
			\\[3pt]
			$k = 18$
		\end{minipage}
		\begin{minipage}[c]{0.32\textwidth}
		\small
			\centering
			\includegraphics[width=\textwidth]{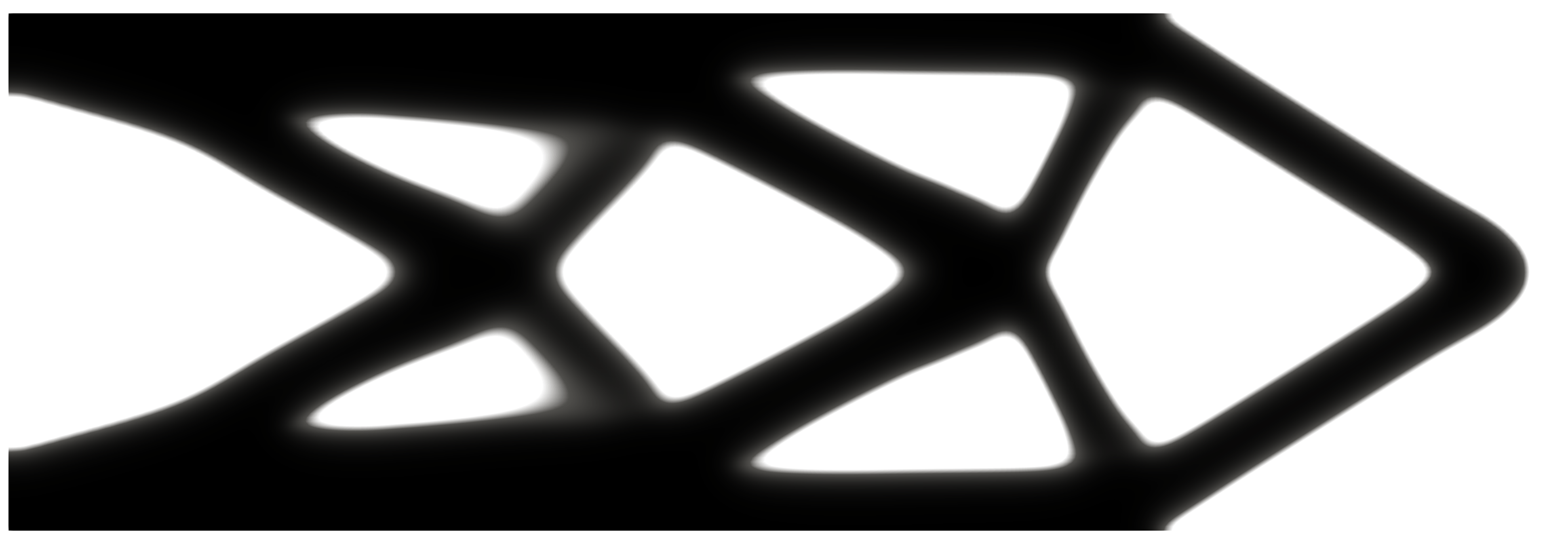}
			\\[3pt]
			$k = 24$
		\end{minipage}
		\begin{minipage}[c]{0.32\textwidth}
		\small
			\centering
			\includegraphics[width=\textwidth]{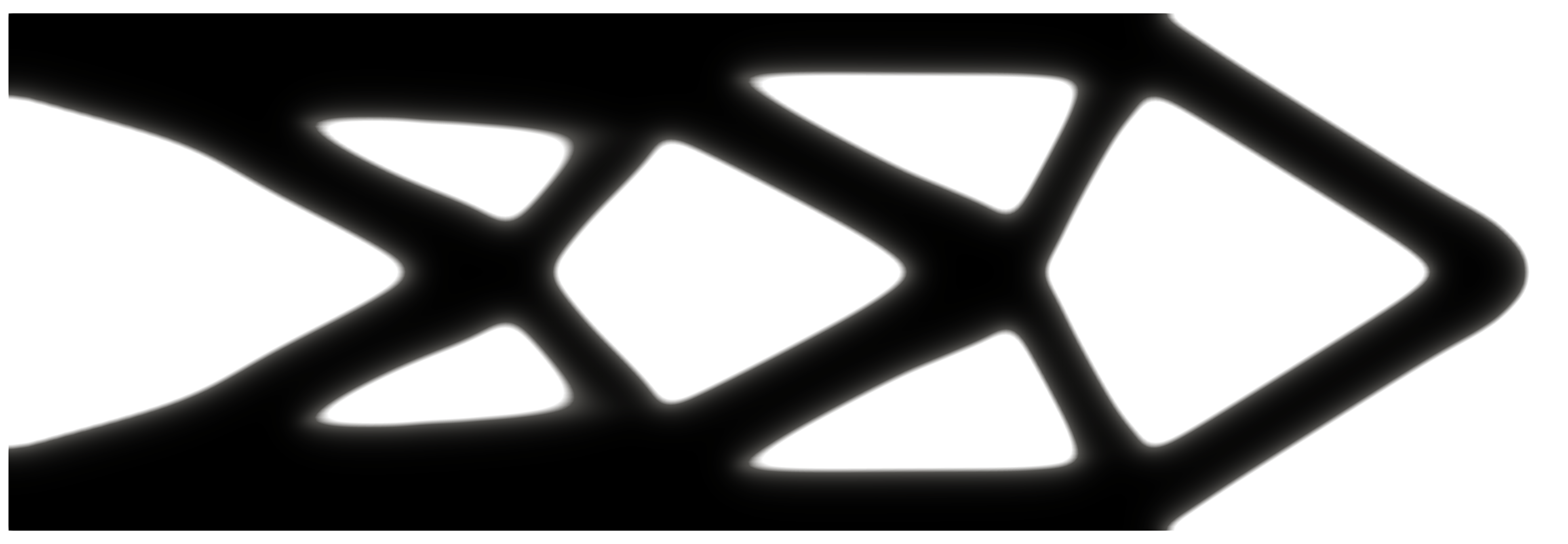}
			\\[3pt]
			$k = 29$ (final)
		\end{minipage}
	\end{minipage}
	\begin{minipage}[c]{0.032\textwidth}
		\centering
		\includegraphics[width=\textwidth]{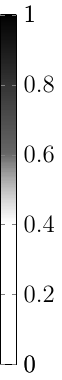}
		\\[4pt]~
	\end{minipage}
																																																			\caption{
		Subsequence of material densities $\tilde{\rho}_h^k$ from~\Cref{alg:TopologyOptimization} for selected iterations $k$.
	Results obtained with problem parameters $\epsilon = 2\cdot 10^{-2}$ and $\theta = 0.5$; algorithm parameters $\mathtt{itol.} = 10^{-2}$, $\mathtt{ntol.} = 10^{-5}$, and $\alpha_k = 25k$; and discretization parameters $h = h_0/128$ and $p = 1$.
	\label{fig:ElasticCompliance_sequence}}
\end{figure}

A sequence of iterates converging to a discrete solution with mesh size $h = h_0/128$ and polynomial degree $p = 1$ are depicted in \Cref{fig:ElasticCompliance_sequence}.
From this figure, we observe typical first-order convergence behavior to a standard truss-like structure.
To generate this figure, we used the heuristic step size sequence $\alpha_k = 25 k$ and tolerances $\mathtt{itol.} = 10^{-2}$ and $\mathtt{ntol.} = 10^{-5}$.
Although the conventional wisdom from finite-dimensional optimization theory would indicate that
$\alpha_k$ should tend to zero, or at the very least be less than the reciprocal of the Lipschitz constant of $\nabla F$, 
our experiments indicate that we can moderately increase the step sizes and still obtain satisfactory convergence
behavior. To be fair, ``satisfactory'' convergence is based on the heuristic stopping rule given in~\Cref{alg:TopologyOptimization}.

Future work is needed to develop an adaptive step size selection procedure. 
In turn, we include \Cref{fig:ElasticCompliance} to show the different effects the step size sequence can have on the final solution.
Here, we witness that different sequences --- e.g., $\alpha_k = 10k$, $\alpha_k = 25k$, and $\alpha_k = 50k$ --- can lead~\Cref{alg:TopologyOptimization} to converge to significantly different local optima.
This class of non-convex optimization problems is widely known to exhibit multiple local optima, though procedures are available to compute them \cite{papadopoulos2021computing}.
In particular, notice from the two top left images that different final designs are possible just by changing the step size rule.
The suspicious design on the bottom left is found because the $\alpha_k = 50k$ step size rule is too aggressive in the early iterations.
Thereafter, a ``design locking'' phenomenon that is common in topology optimization problems keeps the design close to its nearly-binary, early state.
To generate the results in \Cref{fig:ElasticCompliance}, we changed the length scale to $\epsilon = 10^{-2}$ because it invokes a higher parameter sensitivity.

\begin{figure}
	\centering
	\begin{minipage}[c]{0.38\textwidth}
	\small
		\centering
												\includegraphics[width=\textwidth]{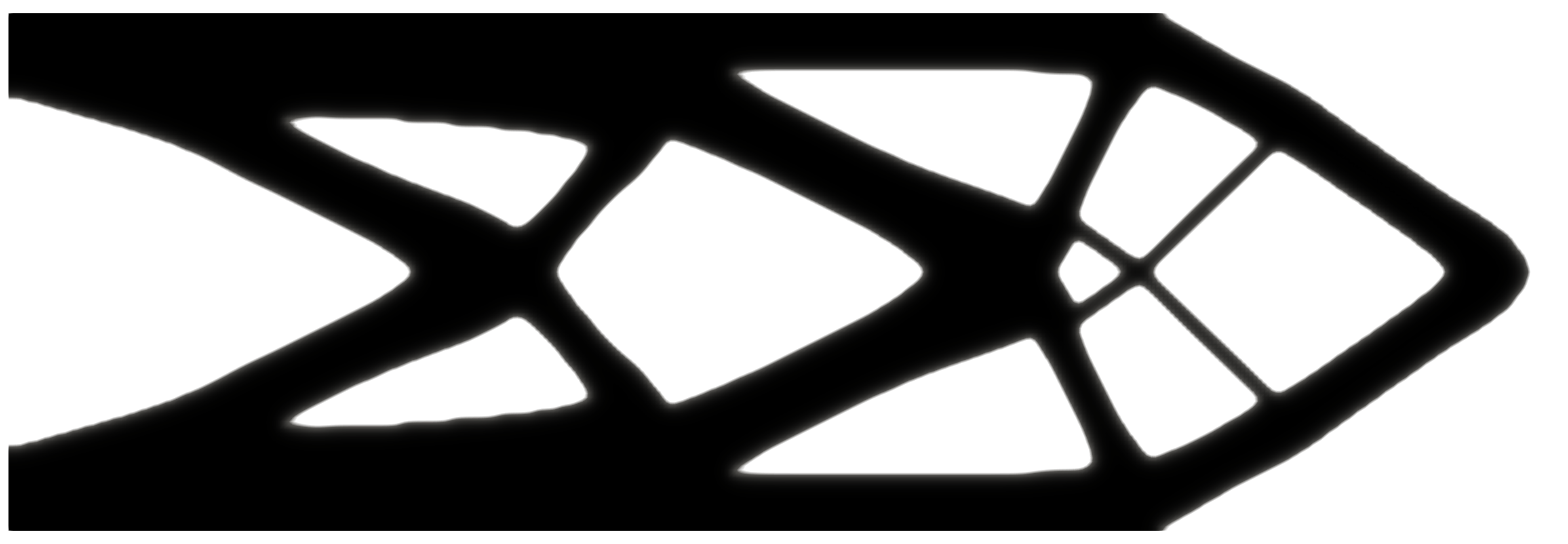}
		\\[2pt]
		$F(\overline{\rho}_h) = 3.75\cdot 10^{-3}$
		\\[6pt]
		\includegraphics[width=\textwidth]{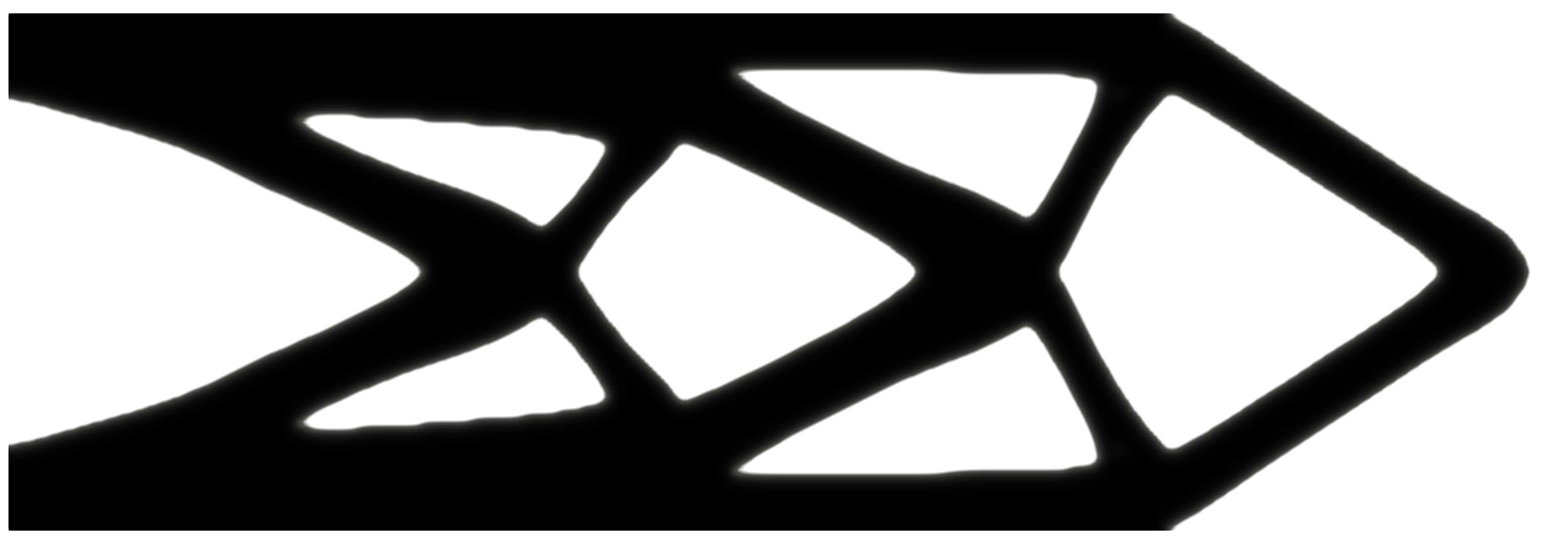}
		\\[2pt]
		$F(\overline{\rho}_h) = 3.72\cdot 10^{-3}$
		\\[6pt]
		\includegraphics[width=\textwidth]{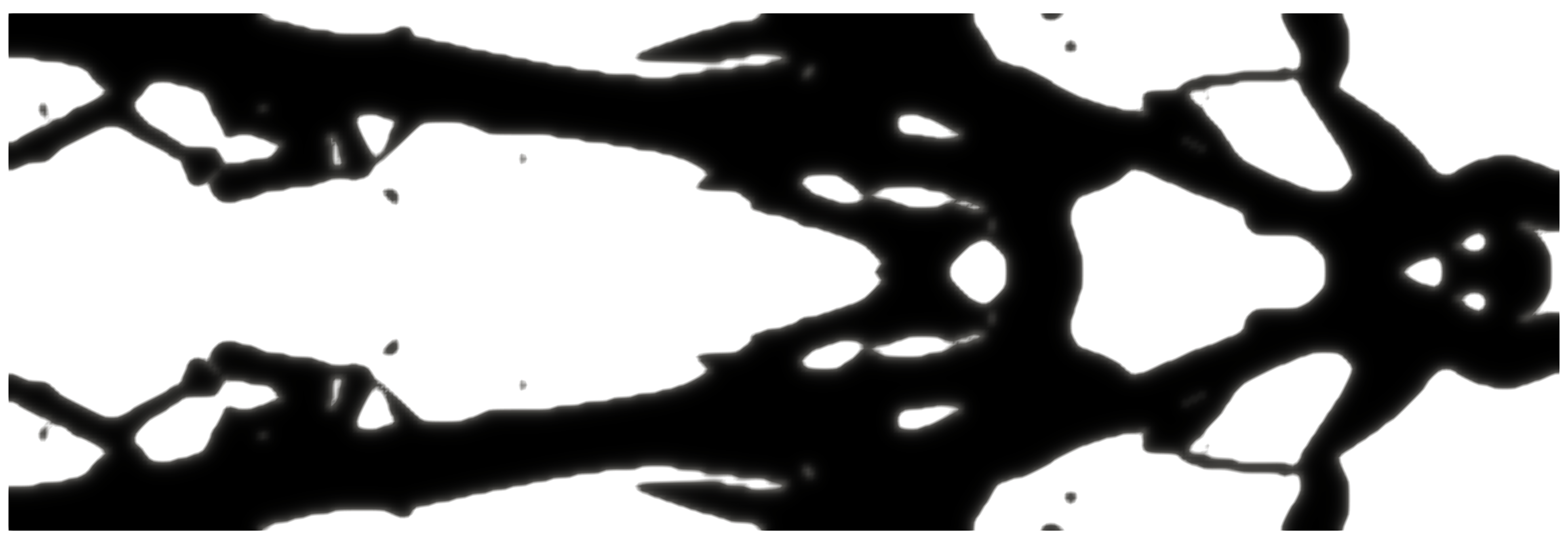}
		\\[2pt]
		$F(\overline{\rho}_h) \approx 7\cdot 10^{-3}$
											\end{minipage}
	\begin{minipage}[c]{0.6\textwidth}
	\small
		\centering
		\includegraphics[width=\linewidth]{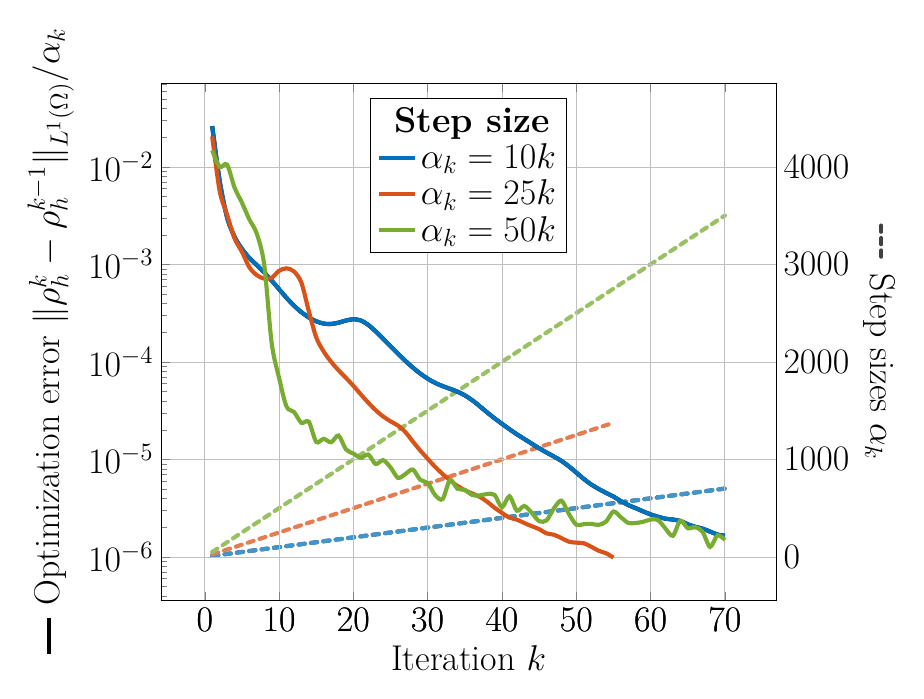}
					\end{minipage}
	\caption{
		An aggressive step size rule can lead to a better convergence rate.
	However, if the step size rule is too aggressive, the algorithm may convergence to a sub-optimal local minimum or even diverge.
	Left: The final densities $\tilde{\rho}_h$ and associated compliance values $F(\rho_h)$ for step sizes $\alpha_k = 10k$ (top), $\alpha_k = 25k$ (middle), and $\alpha_k = 50k$ (bottom).
	Right: The normalized error estimates for the various step size sequences $\alpha_k$, with $k = 1,2,\ldots$
	These results were obtained using the problem parameters $\epsilon = 10^{-2}$ and $\theta = 0.5$ and the discretization parameters $h = h_0/128$ and $p = 1$.
	\label{fig:ElasticCompliance}}
\end{figure}

Finally, we return to the case considered in \Cref{fig:ElasticCompliance_sequence} (i.e., we again set $\epsilon = 2\cdot 10^{-2}$ and $\alpha_k = 25k$) to record the sequence of error indicators $\eta_k = \|\rho_h^{k} - \rho_h^{k-1}\|_{L^1(\Omega)}/\alpha_k$ with different discretization parameters $h \in \{ h_0/64 , h_0/128 , h_0/256 \}$ and $p \in \{ 1 , 2 \}$.
The results are given in~\Cref{tab:TopOptMeshIndependence}.
From these results, we see that the number of iterations required to reach the tolerance $\|\rho_h^{k} - \rho_h^{k-1}\|_{L^1(\Omega)}/\alpha_k < 10^{-5}$ tends to a fixed value as the mesh is refined or the polynomial order is elevated.
Moreover, the individual values of $\eta_k$ appear to stabilize as $h \to 0$, for both $p = 1,2$.
Both of these properties suggest \emph{mesh-independence} of \Cref{alg:TopologyOptimization}.

\begin{table}
\centering
\footnotesize
\renewcommand{\arraystretch}{1.3}
\begin{tabular}{ |c|c|c|c|c|c|c| }
 \hhline{|>{\arrayrulecolor{white}}-->{\arrayrulecolor{black}}|-----|}
 \multicolumn{2}{c|}{} & \multicolumn{5}{c|}{\cellcolor{lightgray!15} \small\raisebox{5pt}{\vphantom{f}}Optimization error $\|\rho_h^{k} - \rho_h^{k-1}\|_{L^1(\Omega)}/\alpha_k$ for various $h$ and $p$}\\[3pt]
 \hhline{|>{\arrayrulecolor{white}}-->{\arrayrulecolor{black}}|-----|}
 \multicolumn{2}{c|}{}& \multicolumn{3}{c|}{\cellcolor{lightgray!05} Polynomial order $p = 1$} & \multicolumn{2}{c|}{\cellcolor{lightgray!10} Polynomial order $p = 2$}\\
 \hline
 \rowcolor{lightgray!15}
 $k$ & $\alpha_{k}$ & $h_0/64$ & $h_0/128$ & $h_0/256$ & $h_0/64$ & $h_0/128$ \\
 \hline
\cellcolor{lightgray!05}   1 & \cellcolor{lightgray!01} $25$  & $2.00\cdot 10^{-2}$  & $2.06\cdot 10^{-2}$  & $2.05\cdot 10^{-2}$  & $2.07\cdot 10^{-2}$  & $2.05\cdot 10^{-2}$ \\
\cellcolor{lightgray!05}   2 & \cellcolor{lightgray!01} $50$  & $5.42\cdot 10^{-3}$  & $5.80\cdot 10^{-3}$  & $5.76\cdot 10^{-3}$  & $5.88\cdot 10^{-3}$  & $5.74\cdot 10^{-3}$ \\
\cellcolor{lightgray!05}   3 & \cellcolor{lightgray!01} $75$  & $2.97\cdot 10^{-3}$  & $3.30\cdot 10^{-3}$  & $3.27\cdot 10^{-3}$  & $3.38\cdot 10^{-3}$  & $3.25\cdot 10^{-3}$ \\
\cellcolor{lightgray!05}   4 & \cellcolor{lightgray!01} $100$ & $1.61\cdot 10^{-3}$  & $1.87\cdot 10^{-3}$  & $1.85\cdot 10^{-3}$  & $1.94\cdot 10^{-3}$  & $1.83\cdot 10^{-3}$ \\
\cellcolor{lightgray!05}   5 & \cellcolor{lightgray!01} $125$ & $1.12\cdot 10^{-3}$  & $1.30\cdot 10^{-3}$  & $1.29\cdot 10^{-3}$  & $1.36\cdot 10^{-3}$  & $1.28\cdot 10^{-3}$ \\
\cellcolor{lightgray!05}   \vdots & \cellcolor{lightgray!01}   \vdots & \vdots  & \vdots & \vdots  & \vdots & \vdots \\
 \hline
 \rowcolor{lightgray!05}
 \multicolumn{2}{|c|}{\cellcolor{lightgray!15} Total iterations $k$} & $30$ & $29$ & $29$ & $29$ & $29$ \\
 \hline
 \rowcolor{lightgray!05}
 \multicolumn{2}{|c|}{\cellcolor{lightgray!15} Final compliance $F(\overline{\rho}_h)$} & $3.86\cdot 10^{-3}$ & $4.04\cdot 10^{-3}$ & $4.02\cdot 10^{-3}$ & $4.08\cdot 10^{-3}$ & $4.01\cdot 10^{-3}$ \\
 \hline
\end{tabular}
\caption{\label{tab:TopOptMeshIndependence}
Table of error estimates $\eta_k = \|\rho_h^{k} - \rho_h^{k-1}\|_{L^1(\Omega)}/\alpha_k$ for various mesh sizes $h$ and polynomial orders $p$.
The initial density was set to the constant function $\rho_h^0 = \theta$ (i.e., $\psi_h^0 = \lnit\theta$) at the beginning of each run and each run was stopped once $\eta_k < 10^{-5}$.
These results were obtained using the problem parameters $\epsilon = 2\cdot 10^{-2}$ and $\theta = 0.5$.
}
\end{table}

\begin{remark}[Preserving pointwise bound constraints at the discrete level]
\label{rem:TopOpt_DiscreteBoundConstraints}
No matter the polynomial degree $p \geq 1$, the discrete primal variable $\rho^k_h = \sigmoid(\psi^k_h)$ is guaranteed to satisfy the pointwise bound constraint $0 \leq \rho^k_h \leq 1$.
This is an immediate consequence of the sigmoid map $\sigmoid\colon \mathbb{R} \to (0,1)$ and the decision to discretize the \textit{latent variable} with finite elements.
Had we followed the literature and, instead, directly discretized the primal variable $\rho^k$ with finite elements, then the property $0 \leq \rho^k_h \leq 1$ would have to be enforced by introducing discrete-level pointwise bound constraints.
This is a common concern in standard topology optimization approaches since the number of discrete-level pointwise bound constraints must grow with the size of the finite element space; cf.~\Cref{sub:bound_constrained_finite_element_methods}.
\end{remark}

\section{Conclusion} \label{sec:conclusion}

We have introduced proximal Galerkin, a new nonlinear finite element method that hinges on a mathematical technique called entropy regularization and an infinite-dimensional optimization algorithm we were refer to as the latent variable proximal point method (LVPP).
The essential feature of the proximal Galerkin method is to provide robust, high-order, and \emph{pointwise} bound-preserving discretizations accompanied by a built-in, low iteration complexity, mesh-independent solution algorithm.
We have derived, analyzed, and implemented proximal Galerkin for the obstacle problem and used the advection-diffusion equation and topology optimization to motivate our wider vision for the method.
Each of our numerical experiments is accompanied by an open-source implementation to facilitate reproduction of our results and broader adoption of the proximal Galerkin method.

The upshot of this work is that computational techniques for variational inequalities, maximum principles, and bound constraints in optimal design can be unified with a rigorous mathematical framework rooted in nonlinear programming and nonlinear functional analysis.
We hope that the proximal Galerkin methods that arise from this framework will lead to new challenges and opportunities in optimization theory, analysis of PDEs, and numerical analysis, as well as provide promising alternatives to the more classical procedures used for industrial-scale problems.

\appendix

\section{Mathematical results I: Isomorphisms, regularity, characterizations, and convergence} \label{sec:preliminaries}
This appendix contains proofs and continuous-level structural results supporting the main sections of the paper.

\subsection[Structural results on the set]{Structural results on the set $\interior L^\infty_+(\Omega)$}\label{sub:GroupTheory}

The following concepts and results are not commonly used in the finite element literature.
Although they can be derived from diverse sources, such as \cite{conway2019course,borwein1994strong,glockner2002algebras}, we assemble them here for the reader’s convenience.

\begin{definition}[Group of units]
\label{def:units}
	Let $\mathcal{X}$ be a semiring equipped with two binary operations: addition $\oplus \colon \mathcal{X} \times \mathcal{X} \to \mathcal{X}$ and multiplication $\odot \colon \mathcal{X} \times \mathcal{X} \to \mathcal{X}$.
	An element $u$ of $\mathcal{X}$ is called a unit if there exists an inverse element in $\mathcal{X}$, denoted $\frac{1}{u}$, such that $u \odot \frac{1}{u} = \frac{1}{u} \odot u = 1$.
	The group of units of $\mathcal{X}$, denoted $\mathcal{X}^\times$, is the set of all units in $\mathcal{X}$.
\end{definition}

This work is largely centered around the group of units $(L^\infty_+(\Omega))^\times$.
We prove $(L^\infty_+(\Omega))^\times = \interior L^\infty_+(\Omega)$, along with several other algebraic/topological identities, at the end of this subsection; see~\Cref{prop:Equivalence}.

It is well-known that algebraic and topological structures are often entwined, as the following definition and result shows.
\begin{definition}[Banach algebra]
\label{def:BanachAlgebra}
			A Banach algebra is a complete normed vector space that is closed under multiplication.
	\end{definition}
\begin{proposition}[Topology of the group of units]
	For any Banach algebra $\mathcal{X}$, its group of units $\mathcal{X}^\times$ is open.
	Moreover, the inversion map $\mathcal{X}^\times \to \mathcal{X}^\times \colon u \mapsto \frac{1}{u}$ is continuous.
\end{proposition}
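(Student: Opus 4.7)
The plan is to exploit the Neumann series, which is the standard tool for controlling inversion in Banach algebras. The implicit assumption here is that $\mathcal{X}$ has a multiplicative identity $1$ (in our application, $\mathcal{X} = L^\infty(\Omega)$ with the constant function $1$ as unit); without an identity the statement needs to be interpreted with respect to the unitization. First I would record the well-known lemma: if $v \in \mathcal{X}$ satisfies $\|v\| < 1$, then $1 - v$ is a unit with inverse given by the absolutely convergent series $(1-v)^{-1} = \sum_{n=0}^\infty v^n$, and moreover $\|(1-v)^{-1}\| \leq (1 - \|v\|)^{-1}$. Absolute convergence in a Banach space follows from $\|v^n\| \leq \|v\|^n$ (submultiplicativity of the algebra norm) together with the geometric series, and a direct telescoping computation verifies that the partial sums satisfy $(1-v)\sum_{n=0}^N v^n = 1 - v^{N+1} \to 1$.

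Given this lemma, openness of $\mathcal{X}^\times$ is immediate. Fix $u \in \mathcal{X}^\times$ and any $w$ with $\|w - u\| < \|u^{-1}\|^{-1}$. Writing $w = u - (u - w) = u\bigl(1 - u^{-1}(u-w)\bigr)$ and noting $\|u^{-1}(u-w)\| \leq \|u^{-1}\| \|u-w\| < 1$, the lemma shows $1 - u^{-1}(u-w)$ is a unit; hence $w$ is a product of units and therefore a unit, with explicit inverse $w^{-1} = \bigl(1 - u^{-1}(u-w)\bigr)^{-1} u^{-1}$. This exhibits the open ball of radius $\|u^{-1}\|^{-1}$ around $u$ as a subset of $\mathcal{X}^\times$.

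For continuity of inversion, I would use the algebraic identity $w^{-1} - u^{-1} = w^{-1}(u - w)u^{-1}$, valid in any unital ring whenever both inverses exist. Taking norms and using submultiplicativity gives $\|w^{-1} - u^{-1}\| \leq \|w^{-1}\| \|u^{-1}\| \|u - w\|$, so it remains to bound $\|w^{-1}\|$ uniformly in a neighborhood of $u$. For $w$ in the smaller ball $\|w - u\| \leq \tfrac{1}{2}\|u^{-1}\|^{-1}$, the Neumann estimate yields $\|w^{-1}\| \leq \|u^{-1}\| \cdot (1 - \tfrac{1}{2})^{-1} = 2\|u^{-1}\|$, whence $\|w^{-1} - u^{-1}\| \leq 2\|u^{-1}\|^2 \|u - w\|$. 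This Lipschitz bound gives continuity (in fact local Lipschitz continuity, and via the same series expansion one obtains analyticity).

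The proof involves no genuine obstacle; the main care is purely bookkeeping: making sure the identity $1$ is available, verifying submultiplicativity of the norm is invoked at each step, and handling the distinction between left and right inverses (irrelevant once $\mathcal{X}$ is associative and one-sided invertibility of $1 - v$ via the Neumann series is seen to yield two-sided invertibility by the same computation). Everything else follows from the geometric series estimate.
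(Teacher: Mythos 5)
Your proof is correct and complete: the paper itself offers no argument but simply cites Conway (Theorem 2.2, p.~192), and the cited proof is exactly your Neumann-series argument together with the identity $w^{-1}-u^{-1}=w^{-1}(u-w)u^{-1}$, so you have in effect reconstructed the standard proof the paper relies on. Your remark about requiring a multiplicative identity is apt and harmless here, since the relevant algebra $L^\infty(\Omega)$ is unital.
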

\begin{proof}
	See \cite[Theorem 2.2, p.~192]{conway2019course}.
\end{proof}
Notably, this result also implies that $\interior L^\infty_+(\Omega)$ is a Banach--Lie group.
\begin{definition}[Banach--Lie group]
\label{def:BanachLieGroup}
	A Banach manifold is a topological space $\mathcal{M}$ where each point $u \in \mathcal{M}$ has an open neighborhood that is homeomorphic to an open set in a Banach space.
	A set $\mathcal{G}$ is a Banach--Lie group if it is a Banach manifold that is closed under continuous multiplication and inversion operations.
\end{definition}

An important property of Lie groups is the existence of a smooth exponential map, $\exp\colon \mathcal{X} \to \mathcal{G}$, where $\mathcal{X}$ is the associated Lie algebra; cf.~\cite{lee2012introduction}.
\begin{definition}[Banach--Lie algebra]
\label{def:BanachLieAlgebra}
	A Lie algebra $\mathcal{X}$ is a vector space endowed with an antisymmetric bilinear form called the Lie bracket $[\cdot,\cdot] \colon \mathcal{X}\times \mathcal{X} \to \mathcal{X}$ satisfying the Jacobi identity $[\psi,[\varphi,\omega]] + [\varphi,[\omega,\psi]] + [\omega,[\psi,\varphi]] = 0$ for all $\psi,\varphi,\omega \in \mathcal{X}$.
	A set $\mathcal{X}$ is Banach--Lie algebra if it is both a Lie algebra and a Banach space.
\end{definition}

Using well-known results on Nemytskii operators between Lebesgue spaces, we can argue that $L^{\infty}(\Omega)$ is the Banach--Lie algebra associated to the Banach--Lie group $\interior L^\infty_+(\Omega)$; cf.~\Cref{prop:Equivalence}.
In particular, as a result of \Cref{lem:ExponentialMap}, the Nemytskii operator generated by the standard exponential function on $\mathbb R$ provides the exponential map from $L^{\infty}(\Omega)$ to $\interior L^\infty_+(\Omega)$.
Moreover, this map is surjective and, thus, the inverse of the entropy gradient $(\nabla S)^{-1} = \exp \colon L^{\infty}(\Omega) \to \interior L^\infty_+(\Omega)$ is a group isomorphism.
Finally, in our setting, $\interior L^\infty_+(\Omega)$ is commutative and so the Lie bracket is trivial; i.e., $[\psi,\varphi] = \psi\varphi - \varphi\psi = 0$ for all $\psi,\varphi \in L^\infty(\Omega)$.

\begin{lemma}\label{lem:ExponentialMap}
	The Nemytskii operator $\psi \mapsto \exp\psi$ is infinitely continuously Fr\'echet differentiable on $L^\infty(\Omega)$.
	\end{lemma}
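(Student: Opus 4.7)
The plan is to exhibit the candidate Fr\'echet derivatives of all orders explicitly and then verify both their differentiability and continuity using the Banach algebra structure of $L^\infty(\Omega)$. First, for any $\psi \in L^\infty(\Omega)$, the pointwise bound $|\exp \psi(x)| \le \exp \|\psi\|_{L^\infty(\Omega)}$ implies $\exp\psi \in L^\infty(\Omega)$, so the Nemytskii operator does indeed map $L^\infty(\Omega)$ into itself. For the first derivative, the natural candidate at $\psi$ is the linear map $h \mapsto \exp(\psi)\, h$, where multiplication is pointwise; this is a bounded operator on $L^\infty(\Omega)$ because the ambient space is a Banach algebra and $\|\exp(\psi)\, h\|_{L^\infty(\Omega)} \le \exp\|\psi\|_{L^\infty(\Omega)} \|h\|_{L^\infty(\Omega)}$.

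The key identity for verifying Fr\'echet differentiability is the pointwise relation $\exp(\psi+h) - \exp(\psi) - \exp(\psi)\, h = \exp(\psi)\bigl(\exp h - 1 - h\bigr)$ combined with the elementary scalar inequality $|\exp t - 1 - t| \le \tfrac{1}{2} t^2 \exp|t|$ valid for all $t \in \mathbb{R}$. Evaluating pointwise and taking the essential supremum yields
\begin{equation*}
    \|\exp(\psi+h) - \exp(\psi) - \exp(\psi)\, h\|_{L^\infty(\Omega)}
    \le
    \tfrac{1}{2} \exp\|\psi\|_{L^\infty(\Omega)} \exp\|h\|_{L^\infty(\Omega)} \|h\|_{L^\infty(\Omega)}^2,
\end{equation*}
which is $O(\|h\|_{L^\infty(\Omega)}^2)$. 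This establishes Fr\'echet differentiability and identifies $\exp'(\psi) \colon h \mapsto \exp(\psi)\, h$ as the first derivative.

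For higher derivatives, I would prove by induction on $n$ that the $n$-th Fr\'echet derivative at $\psi$ is the symmetric bounded $n$-linear form $(h_1, \ldots, h_n) \mapsto \exp(\psi)\, h_1 \cdots h_n$. The inductive step uses the same Taylor-remainder technique applied to the residual of the $(n-1)$-st derivative, relying on the higher-order scalar bound $|\exp t - \sum_{k=0}^{n}\tfrac{t^k}{k!}| \le \tfrac{|t|^{n+1}}{(n+1)!}\exp|t|$ and the Banach algebra estimate $\|\exp(\psi)\, h_1 \cdots h_n\|_{L^\infty(\Omega)} \le \exp\|\psi\|_{L^\infty(\Omega)} \prod_i \|h_i\|_{L^\infty(\Omega)}$. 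Continuity of $\psi \mapsto \exp^{(n)}(\psi)$ in the operator norm reduces to continuity of $\psi \mapsto \exp\psi$ from $L^\infty(\Omega)$ into itself, which follows from the mean-value bound $|\exp a - \exp b| \le \exp(\max\{|a|,|b|\}) |a-b|$ applied pointwise. The only real obstacle is bookkeeping — ensuring the constants in the Taylor estimates depend only on $\|\psi\|_{L^\infty(\Omega)}$ and a local ball radius in $h$ — which is routine given the uniform pointwise bounds above.
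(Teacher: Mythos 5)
Your proof is correct, but it takes a genuinely different route from the paper. The paper disposes of this lemma by citation: it verifies the boundedness and uniform-continuity hypotheses of the abstract Nemytskii-operator theorems of Goldberg, Kampowsky, and Tr\"oltzsch (Theorems 1, 5, and 7 of \cite{goldberg1992nemytskij}) for $\exp$ and its derivatives on bounded intervals, and then lets those general results deliver smoothness of the superposition operator on $L^\infty(\Omega)$. You instead exploit the specific structure of the exponential --- the functional equation $\exp(\psi+h)=\exp(\psi)\exp(h)$ together with the Banach algebra property of $L^\infty(\Omega)$ --- to write down the derivatives of every order explicitly, $\exp^{(n)}(\psi)[h_1,\dots,h_n]=\exp(\psi)\,h_1\cdots h_n$, and to verify them with Lagrange-remainder bounds applied pointwise; the continuity of each derivative then reduces, as you note, to Lipschitz continuity of $\psi\mapsto\exp\psi$ on bounded sets, since the operator norm of the difference of $n$-th derivatives at $\psi_1,\psi_2$ is exactly $\|\exp\psi_1-\exp\psi_2\|_{L^\infty(\Omega)}$. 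What the paper's route buys is brevity and generality: the same citation pattern handles any $C^\infty$ nonlinearity with locally bounded derivatives, which is the spirit in which the paper also treats $\ln$ and the sigmoid later. What your route buys is a self-contained, quantitative argument with explicit derivative formulas and $O(\|h\|^2)$ remainder constants depending only on $\|\psi\|_{L^\infty(\Omega)}$ and a local ball radius; in fact your estimates essentially show convergence of the operator Taylor series and thus come close to the analyticity claimed in \Cref{rem:HigherRegularityDiffeomorphism}, which the paper only obtains by pointing to deeper results. One small remark: in the inductive step only the first-order scalar bound $|\exp t-1-t|\le \tfrac12 t^2\exp|t|$ is actually needed (the residual of the $(n-1)$-st derivative factors through the same first-order expansion), so the higher-order Taylor bound you invoke is harmless but not necessary.
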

\begin{proof}
The properties of the exponential function allow us to apply the required results in [79]; notably \cite[Theorem 1 (iv)]{goldberg1992nemytskij} and \cite[Theorem 5]{goldberg1992nemytskij} for continuity and \cite[Theorem 7]{goldberg1992nemytskij} for differentiability. Since $\exp' = \exp$, these results can be applied recursively indefinitely.
\end{proof}

\Cref{prop:Equivalence} summarizes various useful interpretations of the set $\interior L^\infty_+(\Omega)$.
We withhold the proof, which follows from elementary arguments, for sake of space.

\begin{proposition}
\label{prop:Equivalence}
	Nemytskii operator $\psi \mapsto \exp\psi$ is a diffeomorphism between $L^{\infty}(\Omega)$ and $\interior L^\infty_+(\Omega)$ for which the following definitions are equivalent:
	\begin{subequations}\label{eq:glequivs}
	\begin{enumerate}[label=(\alph*)]
		\item $\interior L^{\infty}_+(\Omega)$ is the set of all positive functions in $L^{\infty}(\Omega)$ whose reciprocals lie in $L^{\infty}(\Omega)$,
		\begin{equation}
			\interior L^{\infty}_+(\Omega) = \{ w \in L^{\infty}(\Omega) \mid 1/w \in L^{\infty}(\Omega) ~\text{and}~ w > 0 \}. \label{eq:glequivs1}
		\end{equation}
		In other words, $\interior L^{\infty}_+(\Omega) = (L^{\infty}_+(\Omega))^\times$ is the group of units in $L^{\infty}_+(\Omega)$.
		\item $\interior L^{\infty}_+(\Omega)$ is the set of all functions in $L^{\infty}(\Omega)$ whose logarithm is bounded in $L^{\infty}(\Omega)$,
		\begin{equation}
			\interior L^{\infty}_+(\Omega) = \ln^{-1}(L^{\infty}(\Omega)). \label{eq:glequivs2}
		\end{equation}
		\item $\interior L^{\infty}_+(\Omega)$ is the image of $L^\infty(\Omega)$ under the exponential map,
		\begin{equation}
			\interior L^{\infty}_+(\Omega) = \exp(L^\infty(\Omega)). \label{eq:glequivs3}
		\end{equation}
		\item $\interior L^{\infty}_+(\Omega)$ is the set of all positive functions in $L^{\infty}(\Omega)$ that are strictly bounded away from zero,
		\begin{equation}
			\interior L^{\infty}_+(\Omega) = \{ w \in L^{\infty}(\Omega) \mid \text{there exists~} \epsilon > 0 \text{~such that~} w > \epsilon \}. \label{eq:glequivs4}
		\end{equation}
															\end{enumerate}
	\end{subequations}
\end{proposition}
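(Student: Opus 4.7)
The plan is to establish the four set-equalities first, then leverage them together with \Cref{lem:ExponentialMap} and the inverse function theorem to obtain the $C^1$-diffeomorphism statement.

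I would begin with the most concrete equivalence, $(a) \Leftrightarrow (d)$. If $w > 0$ a.e.\ and $1/w \in L^\infty(\Omega)$ with essential supremum $M$, then $w \geq 1/M$ a.e., giving $(d)$ with $\epsilon = 1/(2M)$; conversely, $w \geq \epsilon$ a.e.\ yields $1/w \leq 1/\epsilon$ a.e., hence $1/w \in L^\infty(\Omega)$. For $(a) \Leftrightarrow (b)$, note that $w \in (L^\infty_+(\Omega))^\times$ precisely means $\ln w$ is essentially bounded both above (by $\ln\|w\|_{L^\infty}$) and below (by $-\ln\|1/w\|_{L^\infty}$); conversely, if $\ln w \in L^\infty(\Omega)$ then $w = \exp(\ln w) > 0$ and $1/w = \exp(-\ln w) \in L^\infty(\Omega)$ by closure of $L^\infty(\Omega)$ under the exponential Nemytskii operator. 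The equivalence $(b) \Leftrightarrow (c)$ is then immediate from the pointwise identities $\ln\circ\exp = \mathrm{id}$ and $\exp\circ\ln = \mathrm{id}$ together with the fact that $\ln$ maps elements of (d) into $L^\infty(\Omega)$.

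Next I would reconcile these characterizations with the \emph{topological} interior that the notation $\interior L^\infty_+(\Omega)$ literally refers to. If $w$ satisfies $(d)$, so $w \geq \epsilon$ a.e., then any $v \in L^\infty(\Omega)$ with $\|v - w\|_{L^\infty} < \epsilon/2$ satisfies $v \geq \epsilon/2 > 0$ a.e., so $B_{\epsilon/2}(w) \subset L^\infty_+(\Omega)$ and $w$ lies in the topological interior. Conversely, if $w$ is a topological interior point of $L^\infty_+(\Omega)$ with ball $B_r(w) \subset L^\infty_+(\Omega)$, then $w - r/2 \in B_r(w) \subset L^\infty_+(\Omega)$ forces $w \geq r/2$ a.e., placing $w$ in the set defined by $(d)$. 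This closes the loop among (a)--(d) and identifies each with the topological interior.

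For the diffeomorphism claim, \Cref{lem:ExponentialMap} already gives that $\exp\colon L^\infty(\Omega) \to L^\infty(\Omega)$ is of class $C^\infty$ with Fr\'echet derivative $D\exp(\psi)[\varphi] = \exp(\psi)\,\varphi$. By equivalence $(c)$, $\exp$ maps $L^\infty(\Omega)$ bijectively onto $\interior L^\infty_+(\Omega)$ with pointwise inverse $\ln$. At each $\psi$ the derivative is multiplication by $\exp(\psi)$, and since $\exp(\psi) \in \interior L^\infty_+(\Omega)$ satisfies $(a)$, it admits a multiplicative inverse $\exp(-\psi) \in L^\infty(\Omega)$; therefore $D\exp(\psi)$ is a Banach-space isomorphism with inverse given by multiplication by $\exp(-\psi)$. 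The Banach-space inverse function theorem then shows $\exp$ is a local $C^1$-diffeomorphism onto its image, and combined with the global bijectivity supplied by the equivalences (a)--(d), we conclude that $\exp\colon L^\infty(\Omega) \to \interior L^\infty_+(\Omega)$ is a global $C^1$-diffeomorphism whose inverse is the Nemytskii operator $\ln$.

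The main obstacle I anticipate is the cleanest way to handle the topological interior: one must be careful that ``interior of $L^\infty_+(\Omega)$'' is interpreted with respect to the $L^\infty$-norm topology (rather than, say, the $H^1$-topology warned about in \Cref{rem:H1InteriorNonnegativeFunctions}), and that the a.e.\ inequalities propagate correctly under $L^\infty$-perturbations. Everything else is bookkeeping based on \Cref{lem:ExponentialMap} and the inverse function theorem.
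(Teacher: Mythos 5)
Your argument is correct, and it reaches the same set-theoretic equivalences as the paper, but by a partly different route. On the set identities, you first prove (a) through (d) are mutually equivalent as pointwise characterizations and only then identify them with the norm-topological interior via (d); the paper instead starts from the topological statement (d) and derives the rest. Your converse for the interior step, namely that $B_r(w)\subset L^\infty_+(\Omega)$ forces $w-r/2\in L^\infty_+(\Omega)$ and hence $w\geq r/2$ a.e., is in fact a cleaner argument than the paper's, which argues by contradiction using the sets $\{0<u<\epsilon/2\}$ of positive measure and a perturbation supported there. The more substantive divergence is in the diffeomorphism claim: you invoke the Banach-space inverse function theorem, using that $D\exp(\psi)$ is multiplication by $\exp\psi$, which is invertible with bounded inverse (multiplication by $\exp(-\psi)$, admissible by (a)), and then upgrade the local statement to a global one via bijectivity from (c); the paper instead verifies differentiability of the inverse $\ln$ by hand, through an explicit integral-remainder estimate $\|\ln(u+h)-\ln u - h/u\|_{L^\infty}=o(\|h\|_{L^\infty})$ followed by continuity of $u\mapsto 1/u$ on $\interior L^\infty_+(\Omega)$. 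Your route is shorter and more structural, while the paper's direct computation is more self-contained and yields the explicit derivative $h\mapsto h/u$ of the logarithm, which is also what underlies \Cref{prop:logexpChainRule}. One small point of care: the statement of \Cref{lem:ExponentialMap} only asserts smoothness of $\exp$; the formula $D\exp(\psi)[\varphi]=\exp(\psi)\,\varphi$ that your inverse-function-theorem step needs is extracted from its proof (the Nemytskii differentiation theorem), so you should cite that explicitly rather than treat it as part of the lemma's statement.
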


It is well-known that $W^{1,p}(\Omega) \cap L^\infty(\Omega)$ is a Banach algebra for every $1\leq p \leq \infty$; see, e.g., \cite[Proposition~9.4]{brezis2011functional}.
The following proposition shows that the space is also isomorphic to the Banach--Lie group $W^{1,p}(\Omega) \cap \interior L^\infty_+(\Omega)$.

\begin{proposition}
\label{prop:logexpChainRule}
	Let $\Omega$ be an open subset of $\mathbb{R}^n$ and $1\leq p \leq \infty$.
	Then $$\ln \colon W^{1,p}(\Omega) \cap \interior L^\infty_+(\Omega) \to W^{1,p}(\Omega) \cap L^\infty(\Omega)$$ and $$\exp \colon W^{1,p}(\Omega) \cap L^\infty(\Omega) \to W^{1,p}(\Omega) \cap \interior L^\infty_+(\Omega)$$ are isomorphisms.
	Moreover,
	\begin{equation}
		\nabla \ln u = \frac{1}{u}\nabla u
		\qquad
		\text{and}
		\qquad
		\nabla \exp \psi = \exp \psi \nabla \psi
		\,,
	\end{equation}
	for all $u \in W^{1,p}(\Omega) \cap \interior L^\infty_+(\Omega)$ and $\psi \in W^{1,p}(\Omega) \cap L^\infty(\Omega)$.
																		\end{proposition}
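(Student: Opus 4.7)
The plan is to reduce the statement to three ingredients: (i) Proposition~\ref{prop:Equivalence} already gives us a bijection $\exp \colon L^\infty(\Omega) \to \interior L^\infty_+(\Omega)$ with inverse $\ln$; (ii) the standard chain rule for Sobolev functions under a Lipschitz nonlinearity; (iii) a simple truncation argument that turns the locally Lipschitz nonlinearities $\exp$ and $\ln$ into globally Lipschitz nonlinearities on the ranges of $\psi$ and $u$ respectively. Once (i)--(iii) are assembled, the formulas for $\nabla \ln u$ and $\nabla \exp\psi$ drop out, and continuity of the maps is a short additional computation.

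First I would fix $\psi \in W^{1,p}(\Omega)\cap L^\infty(\Omega)$ and let $M = \|\psi\|_{L^\infty(\Omega)}$. I would build an auxiliary function $F \in C^1(\mathbb{R})$ that agrees with $\exp$ on $[-M,M]$ and is extended smoothly outside this interval in such a way that $F$ and $F'$ are bounded on all of $\mathbb{R}$; a simple affine extension beyond $\pm M$ works. Then $F$ is globally Lipschitz and the standard Sobolev chain rule (e.g.\ \cite[Theorem~2.1.11]{adams2003sobolev}-style results, or the Marcus--Mizel chain rule) gives $F(\psi) \in W^{1,p}(\Omega)$ with $\nabla F(\psi) = F'(\psi)\nabla \psi$ a.e. Since $\psi(x) \in [-M,M]$ a.e., this identifies $F(\psi) = \exp\psi$ and $F'(\psi) = \exp\psi$ a.e.\ on $\Omega$. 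Thus $\exp\psi \in W^{1,p}(\Omega)$, $\nabla \exp\psi = \exp\psi\,\nabla\psi$, and because $\exp\psi \in L^\infty(\Omega)$ (Proposition~\ref{prop:Equivalence}) and $\nabla \psi \in L^p(\Omega)$, the product lies in $L^p(\Omega)$. Hence $\exp\psi \in W^{1,p}(\Omega) \cap \interior L^\infty_+(\Omega)$.

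For the reverse direction, I would fix $u \in W^{1,p}(\Omega) \cap \interior L^\infty_+(\Omega)$ and use Proposition~\ref{prop:Equivalence}\eqref{eq:glequivs4} to choose $0 < \epsilon \leq u \leq M < \infty$ almost everywhere. Now I construct $G \in C^1(\mathbb{R})$ agreeing with $\ln$ on $[\epsilon, M]$ and extended by linear pieces on $(-\infty,\epsilon)$ and $(M,\infty)$ so that $G'$ is bounded on $\mathbb{R}$; this makes $G$ globally Lipschitz. Applying the same Sobolev chain rule to $G$ and $u$ yields $\ln u = G(u) \in W^{1,p}(\Omega)$ with $\nabla \ln u = G'(u)\nabla u = u^{-1}\nabla u$. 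Since $1/u \in L^\infty(\Omega)$, this gradient is again in $L^p(\Omega)$. Combined with (i), we then have that $\exp$ and $\ln$ restrict to mutually inverse bijections between the two intersection spaces, and the two gradient formulas are established.

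The last piece is continuity. Continuity of both maps in the $L^\infty$ topology is already contained in Proposition~\ref{prop:Equivalence}, so I only need continuity of the gradients. If $\psi_k \to \psi$ in $W^{1,p}(\Omega)\cap L^\infty(\Omega)$, then $M_k := \|\psi_k\|_{L^\infty(\Omega)}$ is uniformly bounded, so $\|\exp\psi_k\|_{L^\infty(\Omega)}$ and $\|\exp\psi_k - \exp\psi\|_{L^\infty(\Omega)}$ are respectively uniformly bounded and tending to zero (by Nemytskii continuity). Writing
\begin{equation*}
\nabla \exp\psi_k - \nabla \exp\psi = (\exp\psi_k - \exp\psi)\nabla \psi_k + \exp\psi\,(\nabla\psi_k - \nabla\psi),
\end{equation*}
and taking $L^p$ norms, both terms tend to zero. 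The analogous argument for $\ln$ uses that on a convergent sequence $u_k \to u$ in $W^{1,p}(\Omega) \cap \interior L^\infty_+(\Omega)$, the reciprocals $1/u_k$ are uniformly bounded in $L^\infty(\Omega)$ and converge in $L^\infty(\Omega)$ to $1/u$ (cf.\ \eqref{eq:cont-frech}). The main obstacle is purely a matter of bookkeeping with the truncation: one must verify that the truncation constants $M$ and $\epsilon$ can be chosen \emph{locally uniformly} in a neighborhood of $\psi$ (respectively $u$) so the chain rule constants remain controlled; this is automatic since convergence in $L^\infty$ preserves the uniform upper and lower bounds up to an arbitrarily small loss.
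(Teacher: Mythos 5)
Your proof is correct, but it takes a genuinely different route from the paper. You reduce everything to the standard chain rule for compositions of Sobolev functions with $C^1$ nonlinearities having bounded derivative (Marcus--Mizel/Gilbarg--Trudinger type), made applicable by truncating $\exp$ and $\ln$ to globally Lipschitz functions on the essential range of $\psi$, respectively $u$; the formulas $\nabla\exp\psi=\exp\psi\,\nabla\psi$ and $\nabla\ln u=u^{-1}\nabla u$ then follow immediately because the truncation is never active. The paper instead argues from scratch in the style of its cited product rule (\cite[Proposition~9.4]{brezis2011functional}): it mollifies $u$, shows the mollified sequence inherits the upper bound globally and the lower bound on compactly contained subsets, integrates by parts against $C^1_c$ test functions, and passes to the limit via dominated convergence (using a converse-DCT subsequence trick for the gradients), proving only the $\ln$ direction in detail. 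Your approach is shorter and outsources the analytic core to a standard theorem, while the paper's is self-contained; moreover, you explicitly verify norm-continuity of both maps (and correctly note, via \cref{eq:cont-frech}, the uniform control of $1/u_k$), which the paper's proof of this proposition does not address, so your argument actually covers the topological content of the word ``isomorphism'' more completely. One small bookkeeping point shared with the paper: to conclude $\exp\psi\in L^p(\Omega)$ (resp. $\ln u\in L^p(\Omega)$) and not merely $W^{1,p}_{\mathrm{loc}}$-membership with $L^p$ gradient, you implicitly need $|\Omega|<\infty$ so that $L^\infty(\Omega)\subset L^p(\Omega)$; the paper makes the same restriction by treating only bounded $\Omega$ in its proof, so you should state it, but it is not a defect relative to the paper's own argument.
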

\begin{proof}
	We prove $\ln \colon W^{1,p}(\Omega) \cap \interior L^\infty_+(\Omega) \to W^{1,p}(\Omega) \cap L^\infty(\Omega)$ and $\nabla \ln u = 1/u{\nabla u}$ for the case that $\Omega$ is bounded.
	The corresponding statements for the exponential map are treated similarly. Similar to the proof of \cite[Proposition~9.4]{brezis2011functional}  and \cite[Corollary~8.10]{brezis2011functional},
	the case for $p = \infty$ needs to be considered separately. Hence, assume first that $1 \le p < \infty$.
	\smallskip

	\noindent\textsl{Step 0.}
	Let $u \in W^{1,p}(\Omega) \cap \interior L^\infty_+(\Omega)$.
	By~\cref{prop:Equivalence} we know that $\ln u \in L^\infty(\Omega)$ and, moreover, there exists $\epsilon > 0$ such that $\epsilon \leq u(x) \leq 1/\epsilon$ at a.e.~$x\in\Omega$.
	We now follow the proof technique used for \cite[Proposition~9.4]{brezis2011functional} to show that $\ln u \in W^{1,p}(\Omega)$.
	\smallskip

	\noindent\textsl{Step 1.}
	The first step involves constructing a sequence $u_k\in C^\infty_c(\Omega)$ such that
	\begin{subequations}
	\label{eq:logexpChainRule_Properties}
	\begin{alignat}{3}
	\label{eq:logexpChainRule_Property1}
		u_k &\to u \quad &&\text{in } L^p(\Omega) \text{~~and~pointwise~a.e.~in }\Omega
		\,,\\
	\label{eq:logexpChainRule_Property2}
		\nabla u_k &\to \nabla u \quad &&\text{in } [L^p(\omega)]^n ~\fa \omega \subset \subset \Omega
		\,.
	\end{alignat}
	Furthermore,
	\begin{equation}
	\label{eq:logexpChainRule_Property3}
		\|u_k\|_{L^\infty(\Omega)} \leq \|u\|_{L^\infty(\Omega)}
	\end{equation}
	and, for all $\omega \subset\subset \Omega$, it holds that
	\begin{equation}
	\label{eq:logexpChainRule_Property4}
		\|1/u_k\|_{L^\infty(\omega)} \leq \|1/u\|_{L^\infty(\Omega)}
		\,,
	\end{equation}
	\end{subequations}
	once $k$ is sufficiently large.
	For simplicity, we choose to focus on the case where $\Omega$ is bounded.
	This step may be modified by multiplying $u_k$ with a sequence of smooth cut-off functions to treat the case where $\Omega$ is unbounded; cf.~\cite[Proof of Theorem~9.2]{brezis2011functional}.

	Begin by defining
	\begin{equation}
	\overline{u}(x) =
		\begin{cases}
			u(x) & \text{if~} x \in \Omega\,,\\	
			0 & \text{if~} x \in \Omega\setminus \mathbb{R}^n	
			\,,
		\end{cases}
	\end{equation}
	and set $u_k = \rho_k \ast \overline{u}$, where $\rho_k \in C^\infty_c(\mathbb{R}^n)$ is a sequence of mollifier functions satisfying
	\begin{equation}
		\supp \rho_k \subset \overline{B(0,1/k)}
		\,,
		\quad
		\int_{\mathbb{R}^n} \rho_k = 1
		\,,
		\quad
		\rho_k \geq 0
		~~\text{a.e.~in } \mathbb{R}^n
		\,.
	\end{equation}
	Notice that $u_k(x) \leq 1/\epsilon$ at a.e.~$x\in\Omega$ since
	\begin{equation}
	 		 		 		 		 	u_k(x)
	 	=
	 	\int_{\mathbb{R}^n} \overline{u}(x-y) \rho_k(y) \dd y
	 	\leq
	 	\|u\|_{L^\infty(\Omega)} \int_{\mathbb{R}^n} \rho_k(y) \dd y
	 	\leq
	 	\frac{1}{\epsilon}
	 	\,.
	\end{equation}
	This proves~\cref{eq:logexpChainRule_Property3}.

			Now, take $\omega \subset \subset \Omega$ and let $\delta > 0$ be chosen small enough so that the open cover $\bigcup_{x\in \overline{\omega}} B(x,\delta)$ is contained in $\Omega$.
	Then, for all $k > 1/\delta$ and a.e.~$x\in \omega$, we have that
	\begin{equation}
	\label{eq:logexpChainRule_LowerBound}
		\epsilon
	 	=
	 	\int_{B(x,\delta)} \epsilon\,\rho_k(y) \dd y
	 	\leq
		\int_{B(x,\delta)} \overline{u}(x-y) \rho_k(y) \dd y
		=
		u_k(x)
		\,.
	\end{equation}
	We have thus shown~\cref{eq:logexpChainRule_Property4}.
	Properties~\cref{eq:logexpChainRule_Property1,eq:logexpChainRule_Property2} are proven for this sequence in \cite[Theorem~9.2]{brezis2011functional}.
	\smallskip

	\noindent\textsl{Step 2.}
	The next step is to consider a test function $\varphi \in C^1_c(\Omega)$.
	Observe that
	\begin{equation}
	\label{eq:logexpChainRule_integrationbyparts}
		\int_\Omega \ln(u_k) \nabla \varphi \dd x
		=
		-\int_\Omega (1/u_k \nabla u_k) \varphi \dd x
		\,.
	\end{equation}
													Let $\omega = \supp \varphi \subset\subset \Omega$ denote the support of $\varphi$.
	Clearly, $\ln u_k(x) \to \ln u(x)$ at a.e.~point $x\in \omega$.
	Moreover, it is a straightforward exercise to show that $|\ln u_k(x)| \leq \max\{ \ln \|u\|_{L^\infty(\Omega)}, \ln \|1/u\|_{L^\infty(\Omega)} \}$ at a.e.~$x\in\omega$.
	Therefore, by the dominated convergence theorem, we have that
	\begin{equation}
	\label{eq:logexpChainRule_limit1}
		\lim_{k\to\infty}
		\int_\Omega \ln(u_k) \nabla \varphi \dd x
		=
		\int_\Omega \ln(u) \nabla \varphi \dd x
		\,.
	\end{equation}

	To treat the right-hand side of~\cref{eq:logexpChainRule_integrationbyparts}, we apply a converse of the dominated convergence theorem to the sequence $\nabla u_k \to \nabla u$ given in~\cref{eq:logexpChainRule_Property2}.
	In particular, by \cite[Theorem~4.9]{brezis2011functional}, we know that there exists a subsequence $\{\nabla u_{k_l}\}_{l=1}^\infty$ and a function $h \in L^p(\omega)$ such that
	\begin{equation}
		| \nabla u_{k_l}(x) | \leq h(x)
		~~
		\fa l \geq 0
		\text{~and a.e.~} x \in \omega
		\,.
	\end{equation}
	Next, we use~\cref{eq:logexpChainRule_Property4} to conclude that $| 1/u_k \nabla u_{k_l}(x) | \leq \|1/u\|_{L^\infty(\Omega)} h(x) \in L^p(\omega)$.
	In turn, dominated convergence theorem implies that
	\begin{equation}
	\label{eq:logexpChainRule_limit2}
		\lim_{l\to\infty}
		\int_\Omega (1/u_{k_l} \nabla u_{k_l}) \varphi \dd x
		=
		\int_\Omega (1/u \nabla u) \varphi \dd x
	\end{equation}
	because $1/u_k \nabla u_{k_l}(x) \to 1/u \nabla u(x)$ as $l \to \infty$ at a.e.~point $x \in \omega$.
	The identity $\nabla \ln u = 1/u{\nabla u}$ immediately follows from~\cref{eq:logexpChainRule_integrationbyparts,eq:logexpChainRule_limit1,eq:logexpChainRule_limit2}.~

																																																																															For $p = \infty$, we can proceed analogously to the end of the proof of \cite[Corollary~8.10]{brezis2011functional}. The key observations are that 
	$\ln u, u^{-1}, \nabla u \in L^{\infty}(\Omega)$ holds and that the chain rule derived 
	above for $p < \infty$ works on compact subsets of $\Omega$.
\end{proof}

\subsection{Regularity of the entropy functional} \label{sub:regularity_of_H}

One of the important facts that arise from \cref{prop:Equivalence} is that 
$u \in \interior L^\infty_+(\Omega)$
implies $u \ge \| 1/u \|^{-1}_{L^\infty}$. Indeed, this property allows us to differentiate the negative entropy function
\begin{equation}
	S(u) =
	\begin{cases}
	\displaystyle
		\int_\Omega u \ln u - u \dd x & \text{if } u \in L^1_+(\Omega)\,,\\
		+\infty & \text{ otherwise, }
	\end{cases}
	\label{eq:NegEntropy}
\end{equation}
on the open set $\interior L^\infty_+(\Omega)$ with variations in $L^{\infty}(\Omega)$.
We proceed now with a proof of \Cref{lem:EntropyDifferentiability}, which makes use of the classical integral form of 
the mean value theorem for continuously differentiable functionals $g$ on a Banach space,
\[
g(u + v) - g(v) = \int_0^1 \frac{\dd}{\dd\sigma}[ g(u + \sigma v)] \dd\sigma
\,.
\]

\begin{proof}[Proof of \Cref{lem:EntropyDifferentiability}]

	\textsl{Case \ref{item:EntropyDifferentiability_Part_1}: $1\leq p \leq \infty$.} For $p =1$, the properties of strict convexity and lower semicontinuity 
	are easily verified from the properties of the scalar negative entropy function that generates $S$. For instance, strict convexity and the monotoncity of the 
	Lebesgue integral yield strict convexity of $S$ and lower semicontinuity follows from Fatou's lemma. Otherwise, we refer the reader to
	the seminal works  \cite{borwein1994strong,bauschke2001essential}, see e.g., \cite[Lemma 3.1]{borwein1994strong}. 
	Since $\Omega \subset \mathbb{R}^n$ is bounded, the continuous embedding of $L^p(\Omega)$ into $L^1(\Omega)$ imply these same properties for all $p \in (1,\infty]$.
		\smallskip
	
	\noindent\textsl{Case \ref{item:EntropyDifferentiability_Part_2}: $1< p \leq \infty$.}
	Our proof continues by considering the Nemytskii operator induced by the real-valued function
\[
\hat{s}(x) :=  x \ln |x| - x\,.
\]
	We will show that it is a continuous map from $L^p(\Omega)$ to $L^1(\Omega)$ when $p > 1$.
	In doing so, we first note that $\hat{s}$ is continuous when viewed as a real-valued function $x \mapsto x \ln |x| - x$ with $x\in\mathbb{R}$ and, moreover, for any $p > 1$, there exists a constant $C(p)$ such that
	\begin{equation}
	\label{eq:GrowthCondition}
		|\hat{s}(x)| \leq  C(p) + |x|^p
		\,.
	\end{equation}
	Note that $C(p)$ exists on the one hand since $|\hat{s}(x)| \le 1$ for $x \in [-e,e]$. Moreover, for $x \in (e,\infty)$ with $x \to +\infty$, we have $ |\hat{s}(x)|/x^p \to 0$ for all $p \in (1,\infty)$. By symmetry, the same argument holds for $x \in (-\infty,e)$ with $x \to -\infty$. Therefore, by Krasnosel'skii's theorem (see, e.g., \cite[Chapter~1, Theorem~2.2]{ambrosetti1995primer}), $\hat{s}\colon L^p(\Omega) \to L^1(\Omega)$ is continuous for $p \in (1,\infty)$. Clearly, if we restrict $\hat{s}$ to $L^p_+(\Omega)$, then we have a continuous mapping $\hat{s}|_{L^p_+(\Omega)}$ on $L^p_+(\Omega)$. This function coincides with 
	\begin{equation}
		s(x) = \left\{\begin{array}{cc}
				x \ln x - x, & x > 0,\\
				0, & x = 0,\\
				+\infty, & \text{ otherwise, }
				\end{array}
				\right.
	\end{equation}
	on $L^p_+(\Omega)$. Hence, continuity of $S$ on $L^p_+(\Omega)$ now follows from the continuity of the Lebesgue integral $u \mapsto \int_\Omega u \dd x$ and the fact that the composition of two continuous functions is also continuous. Finally, suppose $\left\{u_k\right\} \subset L^{\infty}_+(\Omega)$ converges to $u$ in $L^{\infty}(\Omega)$.
	Then $\left\{u_k\right\} \subset L^p_+(\Omega)$ for every for every $p \in (1,\infty)$ and, moreover, $u_k \to u$ in $L^p(\Omega)$ because $\Omega$ is a bounded domain.
		Consequently, $S(u_k) \to S(u)$ as $k \to +\infty$, as conjectured.
	\smallskip

	\noindent\textsl{Case \ref{item:EntropyDifferentiability_Part_3}: $p = \infty$.}
	In order to show that $S$ is Fr\'echet differentiable on $\interior L^{\infty}_+(\Omega)$ with respect to the $L^{\infty}(\Omega)$ topology, we will first prove that $S$ is G\^ateaux differentiable on $\interior L^{\infty}_+(\Omega)$ and, subsequently, that the G\^ateaux derivative $S^\prime_{\mathrm{G}}$ is continuous on $\interior L^{\infty}_+(\Omega)$.
	Fr\'echet differentiability of $S$ will then follow from \cite[Theorem~1.9]{ambrosetti1995primer}.

	To show that $S$ is G\^ateaux differentiable on $\interior L^{\infty}_+(\Omega)$, we must show that for any fixed $u \in \interior L^{\infty}_+(\Omega)$ and $v \in L^\infty(\Omega)$,
	\begin{equation}
		\lim_{\tau\to 0}
		\int_\Omega 
		\frac{s(u + \tau v) - s(u)}{\tau}
		\dd x
		=
		\int_\Omega 
		v\ln(u)
		\dd x
		\,.
	\end{equation}
	First observe that for almost every $x\in\Omega$, we have pointwise convergence of the associated integrands, namely,
	\begin{equation}
		\lim_{\tau\to 0}
		\frac{s(u(x) + \tau v(x)) - s(u(x))}{\tau}
		=
		v(x)\ln (u(x))
		\,.
	\label{eq:DCT1}
	\end{equation}
	Next, we know from the proof of \Cref{prop:Equivalence} that $u \ge \|\frac{1}{u}\|^{-1}_{L^{\infty}}$. This implies that for sufficiently small $\tau$, $u + \tau v >  \|\frac{1}{u}\|^{-1}_{L^{\infty}}/2$ holds a.e., and we have 
	\begin{equation}
		s(u + \tau v) - s(u)
		=
		\int_{u}^{u+\tau v} \ln \sigma  \dd \sigma
				=
		\tau v \int_0^1 \ln( u +\sigma\tau v ) \dd \sigma
		\,.
	\end{equation}
	The critical step is to see that for the $u$ and $v$ fixed above, we may find $w \in L^\infty(\Omega)$ where
	\begin{equation}
	\label{eq:EntropyDifferentiability_critical}
		v = u w
		\,.
	\end{equation}
	As such, for all sufficiently small $\tau$, we may rewrite
	\begin{equation}
		s(u + \tau v) - s(u)
		=
		\tau v\, \Big(
		\ln u + \int_0^1 \ln( 1 + \sigma\tau w ) \dd \sigma
		\Big)
		,
	\end{equation}
	and, consequently,
	\begin{equation}
		\bigg|
		\frac{s(u + \tau v) - s(u)}{\tau} - v\ln u
		\bigg|
		=
		\bigg|v\int_0^1 \ln( 1 + \sigma\tau w ) \dd \sigma \bigg|
		\leq
		|v| |\ln( 1 + \tau w )|
		.
	\label{eq:DCT2}
	\end{equation}
	To arrive at an upper bound that is independent of $\tau$, we use the following well-known inequality:
	\begin{equation}
		\frac{x}{x+1} \leq \ln(1+x)  \leq x
		.
	\end{equation}
	In turn, for all $|\tau| < (2\|w\|_{L^\infty})^{-1}$,
	\begin{equation}
		|\ln( 1 + \tau w )|
		\leq
		1
		\,.
	\label{eq:DCT3}
	\end{equation}
	The next argument owes to the function $|v|$ belonging to $L^1(\Omega)$ because $\Omega$ is bounded.
	Indeed, by \cref{eq:DCT1,eq:DCT2,eq:DCT3}, the dominated convergence theorem provides us with the following well-defined G\^ateaux derivative:
	\begin{equation}
		\langle S^\prime_{\mathrm{G}}(u), v\rangle
		=
		\lim_{\tau\to 0}
		\int_\Omega
		\frac{s(u + \tau v) - s(u)}{\tau}
		\dd x
		=
		\int_\Omega
		v\ln u
		\dd x
		.
	\end{equation}

	It remains to show that $S^\prime_{\mathrm{G}}\colon  \interior L^{\infty}_+(\Omega) \subset L^\infty(\Omega) \to [L^\infty(\Omega)]^\prime$ is continuous.
	To this end, let $u \in \interior L^{\infty}_+(\Omega)$ and consider any sequence $\{u_k\}$ in $\interior L^{\infty}_+(\Omega)$ where $u_k \to u$ in $L^\infty(\Omega)$.
	Consequently, we know there exists $C > 0$ such that $\|u_k\|_{L^\infty} \leq C$ and $u_k(x) \to u(x)$ for almost every $x\in\Omega$.
	Clearly,
	\begin{equation}
		|\langle S^\prime_{\mathrm{G}}(u) - S^\prime_{\mathrm{G}}(u_k), v\rangle|
		\leq
		\int_\Omega
		|v| |\ln |u/u_k||
		\dd x
		\leq
		\|v\|_{L^\infty}
		\int_\Omega
		|\ln |u/u_k||
		\dd x
		\,,
	\end{equation}
	where $|\ln |u/u_k|| \leq |\ln|u|| + |\ln C| \in L^1(\Omega)$.
	Therefore,
	\begin{equation}
		\|S^\prime_{\mathrm{G}}(u) - S^\prime_{\mathrm{G}}(u_k)\|_{[L^\infty]^\prime}
		\leq
		\int_\Omega
		|\ln |u/u_k||
		\dd x
	\end{equation}
	and, by the dominated convergence theorem,
	\begin{equation}
		\lim_{k\to\infty}
		\|S^\prime_{\mathrm{G}}(u) - S^\prime_{\mathrm{G}}(u_k)\|_{[L^\infty]^\prime}
		\leq
		\int_\Omega
		\lim_{k\to\infty}
		|\ln |u/u_k||
		\dd x
		=
		0
		,
	\end{equation}
	as necessary.
	\smallskip

	Finally, we show that $\nabla {S}(u)$ exists and is equal to $\ln u$.
	Let $u \in \interior L^{\infty}_+(\Omega)$. By \cref{eq:glequivs2}
	$\ln u \in L^\infty(\Omega)$.
	Next, we see that $\|S^\prime(u)\|_{[L^\infty]^\prime} \leq \|\ln u\|_{L^1}$, since
	\begin{equation}
		\langle S^\prime(u), v \rangle
		=
		\int_\Omega v \ln u \dd x \leq \|v\|_{L^\infty}\|\ln u\|_{L^1}
		.
	\end{equation}
	Moreover, since  the difference of characteristic functions associated with the sign of 
	$\ln u $, denoted by $\mathbbm{1}_{\{\ln u > 0\}} -  \mathbbm{1}_{\{\ln u < 0\}}$, are 
	in $L^{\infty}(\Omega)$ with norm one, we obtain the lower bound on the dual norm 
	of $S'(u)$:
	\begin{equation}
		\|S^\prime(u)\|_{[L^\infty]^\prime} 
		\geq
		\int \mathbbm{1}_{\{\ln u > 0\}} \ln u \dd x
		-
		\int \mathbbm{1}_{\{\ln u < 0\}} \ln u \dd x
		=
		\|\ln u\|_{L^1}
		,
	\end{equation}
	and so $\|S^\prime(u)\|_{[L^\infty]^\prime} = \|\ln u\|_{L^1}$.
	\end{proof}

We complete this subsection with a proof of the gradient representation theorem for the shifted entropy functional, $S_{\phi}(u) = S(u - \phi)$.

\begin{proof}[Proof of~\Cref{cor:shift-ent}]

By \Cref{lem:EntropyDifferentiability}, $S$ is continuous on $L^\infty_+(\Omega)$.  
If $\phi \in L^\infty(\Omega)$, then the shift operator $T_{\phi}u := u - \phi$ is continuous on $L^\infty(\Omega)$
for $u \in L^{\infty}(\Omega)$ with $u \ge \phi$, as well; 
continuity of the composition follows. 

As argued in \Cref{lem:EntropyDifferentiability}, 
$S$ is strictly convex on $L^{\infty}_+(\Omega)$.
Taking 
$w_i \in L^{\infty}_{\phi,+}(\Omega)$,
with $i =1,2$ and $w_1 \ne w_2$, we see that
$T_{\phi}w_i = w_i - \phi \ge 0$ a.e.\ for $i = 1,2$. Moreover, $T_{\phi} w_1 = T_{\phi} w_2$ iff $w_1 = w_2$ and for $\lambda \in (0,1)$ we  have
$T_{\phi}(\lambda w_1 + (1-\lambda) w_2) = 
 \lambda (w_1 -  \phi) + (1 - \lambda)(w_2 - \phi) = 
 \lambda T_{\phi} w_1 + (1-\lambda) T_{\phi} w_2$. Then since 
 $T_{\phi}w_i \in L^{\infty}_+(\Omega)$
    for $i=1,2$, the
 strict convexity of the composition follows. 
 
We proceed with the characterization of $\interior L^\infty_{\phi,+}(\Omega)$.   
  First, we can easily show the elementary properties 
 $
 L^\infty_{\phi,+}(\Omega) = \phi + L^{\infty}_+(\Omega)
 $
and
 \[
 \interior L^\infty_{\phi,+}(\Omega)  = \interior ( \phi + L^{\infty}_+(\Omega)) = \phi + \interior  L^{\infty}_+(\Omega).
 \]
 Therefore,  $w \in \interior L^\infty_{\phi,+}(\Omega)$ implies $w - \phi \in  \interior L^{\infty}_+(\Omega)$. By \Cref{prop:Equivalence},
  $w \ge \phi$ and $\essinf (w - \phi) > 0$. Conversely, if $w \in L^{\infty}(\Omega)$ such that $w \ge \phi$ and $\essinf (w - \phi) > 0$, then \Cref{prop:Equivalence} implies $w - \phi \in \interior L^{\infty}_{+}(\Omega)$. Hence, $w \in  \interior L^\infty_{\phi,+}(\Omega)$
 
Let 
$
 w_1  \in 
 \interior L^{\infty}_{\phi,+}(\Omega).
$
 Then $ w_1 - \phi \in \interior L^{\infty}_{+}(\Omega)$. It follows from 
 \Cref{lem:EntropyDifferentiability} that $S_{\phi}$ is Fr\'echet differentiable at $w_1$.
 
 The formula for the derivative of $S'_{\phi}$ can be viewed as an application of the chain 
 rule. Indeed, $S_{\phi} = S \circ T_{\phi}$, $S$ is differentiable with respect to the
 $L^{\infty}$-norm at $T_{\phi}w$ with $w
 \in \interior L^{\infty}_{\phi,+}(\Omega)$ and $T_{\phi}$ is differentiable with respect to the
 $L^{\infty}$-norm at (any) $w \in L^{\infty}$ with derivative $A'_{\phi}(w)$ given by
 the identity on $L^{\infty}$. Therefore, we have
 \cref{eq:EntropyDifferentiability_variations_inhom}.
Since $u - \phi \in \interior L^{\infty}_{+}(\Omega)$  the rest of the computations for the gradient
remain unchanged; in particular, we obtain \cref{eq:EntropyDifferentiability_gradient_inhom}
and \cref{eq:EntropyDifferentiability_primal_inho}.
\end{proof}

\subsection{Deriving the entropic Poisson equation} \label{sub:Characterization}

We devote this subsection to proofs of the characterization theorem and its corollary for non-zero obstacles (i.e., \Cref{cor:PrimalProblem_inhom}).
This section also includes a short remark about a weak maximum principle for the entropic Poisson equation that arises from the first of these proofs.

\begin{proof}[Proof of \Cref{thm:PrimalProblem}]
The proof proceeds in five steps. We suppress the $w$-argument in $A_{\alpha}$ as it plays no role here.
\medskip 

\noindent\textsl{Step 1.}
Show that there exists a unique solution.

The proof of existence is standard. We sketch the main points here; 
see, e.g., \cite[Chap. 3.2]{attouch2014variational} for details.
By \cite[Lem. 3.30]{ern2021finite}, we have that
\begin{equation}\label{eq:pw-ineq}
	\| v \|_{L^2} - \int_{\partial \Omega} g \dd\mathcal{H}_{n-1} \le c\| \nabla v \|_{L^2}
	\,,
	~
	\fa v \in H^1_g(\Omega)
	\,,
\end{equation}
for some constant $c$ that depends on $\Omega$.
Clearly, $A_{\alpha}$ is finite on $K$. This yields a minimizing sequence $\{u_k\}$.
The form of $A_{\alpha}$ consequently yields the boundedness of $\{\| \nabla u_k \|_{L^2}\}$. 
Combined with \cref{eq:pw-ineq} we deduce boundedness of $\{u_k\}$ in $H^1(\Omega)$.
We can readily show that $A_{\alpha}$ is weakly lower-semicontinuous and $K$ weakly sequentially closed
both in $H^1(\Omega)$.
This yields the existence of a minimizer $u \in K$.
The minimizer $u$ is unique because $A_{\alpha}$ is strictly convex on $K$.
\medskip

\noindent\textsl{Step 2.} Show that $u \leq \max\{\|g\|_{L^\infty(\partial\Omega)}, \exp(\|\ln w + \alpha f\|_{L^\infty(\Omega)})\}$.
	
	For all $N > 1$, define the set $R_N = \{ x \in \Omega \mid u(x) > N \}$.
	By way of contradiction, we assume that $|R_N| > 0$ for all $N > 1$.
	Now, consider the following function in $L^\infty$:
	\begin{equation}
		u_N(x)
		=
		\min\{ N, u(x) \} \ge 0.
	\end{equation}
	We claim that if $N > \esssup_{x\in\partial \Omega} g(x)$, then $u_N \in K$.
																											{
		Begin by choosing $\{u_m\} \subset C^{1}(\overline{\Omega})$ 
		such that $u_m \to u$ (strongly in $H^1(\Omega)$)
		and define $u^N_m := \min\{N,u_m\}$.  The existence of $u_m$ 
		follows from the assumption
		that $\partial \Omega$ is Lipschitz; see, e.g., \cite[3.22 Theorem]{adams2003sobolev}.
		Recall that $\gamma$ denotes the trace operator.
		Next let $v \in L^{\infty}(\partial \Omega)$ and consider that
		\[
		\int_{\partial \Omega} \gamma(u^N_m) v \dd\mathcal{H}_{n-1} = \int_{\{\gamma(u_m) \le N\}} \gamma(u_m) v \dd\mathcal{H}_{n-1} + N\int_{\{\gamma(u_m) > N\}} v \dd\mathcal{H}_{n-1}
		\,.
		\] 
		Along a subsequence, denoted still by $m$, $\gamma(u_m)$ converges pointwise almost everywhere to $\gamma(u) = g$. Then, by hypothesis,
		the sequence of characteristic functions $f_m := \chi_{\{\gamma(u_m) > N\}} \to 0$ pointwise almost everywhere. It follows from Lebesgue's
		dominated convergence theorem that 
		\[
		N\int_{\{\gamma(u_m) > N\}} v \dd\mathcal{H}_{n-1} \to 0 \text{~~as~} m \to +\infty.
		\]
		Continuing, we appeal to the proof of \cite[Thm A.1]{kinderlehrer2000introduction} and, e.g., \cite[Cor. 18.4]{leoni2009first}, to argue that
		$\min\{N,u_m\} \to \min\{N,u\}$ weakly in $H^1(\Omega)$ and 
		$\gamma(\min\{N,u_m\}) \to \gamma(\min\{N,u\})$ strongly in $L^2(\partial \Omega)$, which in turn yields
		\[
		\lim_{m\to+\infty}\int_{\{\gamma(u_m) \le N\}} \gamma(u_m) v \dd\mathcal{H}_{n-1}
		=\int_{\partial \Omega} \gamma(u_N) v \dd\mathcal{H}_{n-1}
		. 
		\]
		Since $v$ is essentially bounded, we have 
		\[
		|\gamma(u_m)v - \gamma(u)v| = |v| |\gamma(u_m) - \gamma(u)| \le \| v\|_{L^{\infty}}|\gamma(u_m) - \gamma(u)|
		\,.
		\]
		Hence, $\gamma(u_m)v$ converges strongly in $L^2(\partial \Omega)$ to $\gamma(u) v$. Similar to the above, we can argue that 
		$f'_m := \chi_{\{\gamma(u_m) \le N\}} \to \chi_{\partial \Omega} = 1$ in $L^2(\partial \Omega)$. It follows that
		\[
		\int_{\partial \Omega} \gamma(u_N) v \dd\mathcal{H}_{n-1} = 
		\int_{\partial \Omega} \gamma(u) v \dd\mathcal{H}_{n-1} = 
		\int_{\partial \Omega} g v \dd\mathcal{H}_{n-1}.
		\]
		By the density of $L^{\infty}(\partial \Omega)$ in $L^2(\partial \Omega)$ and the fundamental lemma of the calculus of variations \cite[Theorem~1.32]{ern2021finite}, 
		we deduce $\gamma(u_N) = g$ a.e.~on $\partial \Omega$. }Consequently, $u_N \in K$ and for sufficiently large $N$, it holds that
	\begin{equation}
		D(u,w) + \alpha E(u)
		<
		D(u_N,w) + \alpha E(u_N)
	\end{equation}	
	because $u$ is the unique global minimizer of $A_{\alpha}$ over $K$. 	
	
	Note, however, that
	\begin{align*}
		\alpha E(u)
		-
		\alpha E(u_N)
		&=
		\alpha\int_{R_N} 
		\frac{1}{2}|\nabla u|^2 - (u - N) f \dd x
	\end{align*}
	and
	\begin{align*}
		D(u,w)
		-
		D(u_N,w)
    		&=
    		\int_{R_N} u \ln u - N \ln N - (1 + \ln w)(u  - N) \dd x
    		\\
		&=
		\int_{R_N} (u - N) \bigg(
			\int_0^1 \ln (N + t(u - N)) \dd t
			- \ln w
		\bigg) \dd x
		\\
		&\geq
		\int_{R_N} (\ln N - \ln w) (u - N) \dd x
				\,.
	\end{align*}
	Combining these observations, we see that
	\begin{multline*}
	D(u,w) + \alpha E(u) - 	D(u_N,w) - \alpha E(u_N)
	\ge \\
	\frac{\alpha}{2} \| \nabla u\|^2_{L^2(R_N)} + (\ln N - \alpha f - \ln w,u- N)_{L^2(R_N)}.
	\end{multline*}
	Therefore, for any $N > \max\{\|g\|_{L^\infty(\partial\Omega)},\exp(\|\alpha f + \ln w\|_{L^\infty(\Omega)})\}$, we have
	\begin{equation}
		D(u,w) + \alpha E(u)
		>
		D(u_N,w) + \alpha  E(u_N)
		,
	\end{equation}
	which contradicts the optimality of $u$.
	Hence, there exists some $N_0 > 0$ such that $|R_N| = 0$ for all $N > N_0$, and, in turn, $u \in L^\infty$
	with $u \le \max\{\|g\|_{L^\infty(\partial\Omega)},\exp(\|\alpha f + \ln w\|_{L^\infty(\Omega)})\}$.
	\medskip
	
\noindent\textsl{Step 3.}
	Show that $u \geq \min\{\essinf_{x\in\partial \Omega} g(x), \exp(-\|\ln w + \alpha f\|_{L^\infty(\Omega)})\}$.
	Thus, $u \in H^1_g(\Omega) \cap \interior L^\infty_+(\Omega)$.

	For all $\epsilon > 0$, define the set $S_\epsilon = \{ x \in \Omega \mid u(x) < \epsilon \}$.
	By way of contradiction, we assume that $|S_\epsilon| > 0$ for all $\epsilon > 0$.
	Now, consider the following function in $H^1(\Omega)\cap \interior L^\infty_+(\Omega)$:
	\begin{equation}
		u_\epsilon(x)
		=
										\max\{ \epsilon, u(x) \}.
	\end{equation}
	The fact that $u_{\epsilon} \in \interior L^\infty_+$ follows from \Cref{prop:Equivalence}.
	
	Continuing, we can emulate the arguments of Step 2.~above to show that if 
	$\epsilon < \essinf_{x\in\partial \Omega} g(x)$, then $u_\epsilon \in H^1_g \cap \interior L^\infty_+$.
	The steps and justifications are almost identical and are therefore omitted.
	In turn, for sufficiently small $\epsilon > 0$, it holds that
	\begin{equation}
	\label{eq:GlobalMinStep}
		D(u,w) + \alpha E(u)
		\leq
		D(u_\epsilon,w) + \alpha E(u_\epsilon)
		.
	\end{equation}
		As above, we obtain the lower bound
	\begin{multline*}
	D(u,w) + \alpha E(u) - D(u_{\epsilon},w) - \alpha E(u_{\epsilon})
	\ge \\
	\alpha \| \nabla u\|^2_{L^2(S_{\epsilon})} + (\|\alpha f + \ln w\|_{L^{\infty}} + \ln \epsilon,u- \epsilon)_{L^2(S_{\epsilon})}
	\end{multline*}	
	As a result, once $\epsilon < \exp(-\| \alpha f + \ln w\|_{L^{\infty}})$, we again contradict the optimality of $u$. 
	Thus, there exists some $\epsilon_0 > 0$ such that $|S_\epsilon| = 0$ for all $\epsilon < \epsilon_0$ and $u \in H^1_g(\Omega)\cap \interior L^\infty_+(\Omega)$ by~\cref{prop:Equivalence}.
	\medskip	
	
\noindent\textsl{Step 4.} Derive the variational equation.

	Let $t > 0$ and $v \in K \cap L^{\infty}(\Omega)$. Then, by definition,
	\[
	\alpha E(u) + D(u,w) \le \alpha E(v) + D(v,w).
	\]
	Clearly, $u + t(v - u) \in K$,
	and consequently,
	\[
	0 \le \frac{ \alpha E(u + t (v - u)) - \alpha E(u)}{t} + \frac{ D(u + t (v - u),w) - D(u,w)}{t}
	\]
	Since, $u \in H^1(\Omega) \cap \interior L^{\infty}_+(\Omega)$ and $v - u \in H^1(\Omega) \cap L^{\infty}(\Omega)$, 
	\Cref{lem:EntropyDifferentiability} allows us to expand and pass to the limit as $t \downarrow 0$. This yields the 
	variational inequality:
	\begin{equation}
	\label{eq:Step4_VI}
	0 \le \alpha E'(u)(v - u) + S'(u)(v-u) - (\ln w,v-u)
	\end{equation}
	for all $v \in H^1_g(\Omega) \cap L^{\infty}_+(\Omega)$.
	As a result of Step 3, there exists a constant $\epsilon >0$ such that $u \ge \epsilon$ a.e.\ in $\Omega$.
	In particular, for all $y \in H^1_0(\Omega) \cap L^{\infty}(\Omega)$ there exists a sufficiently small $\delta > 0$ such that $u + ty \in H^1_g(\Omega) \cap L^{\infty}_+(\Omega)$ for all $t \in [-\delta,\delta]$.
	Setting $v = u \pm \delta y$ in the variational inequality~\cref{eq:Step4_VI} and rescaling the result by $1/\delta$, we arrive at the following
	variational \emph{equation} with test functions $y \in H^1_0(\Omega) \cap L^{\infty}(\Omega)$:
	 \[
	0 = \alpha E'(u)y+ S'(u)y- (\ln w,y).
	\]
										The first summand is equivalent to $(\alpha\nabla u,\nabla y) - (\alpha f,y)$ and
	the second and third summands together have the form $(\ln u - \ln w,y)$. 
	Since $u,w \in \interior L^{\infty}_+(\Omega)$, the map $y \mapsto (\ln u - \ln w,y)$ defines a 
	bounded linear functional on $H^1_0(\Omega)$. Finally, by virtue of the inclusion 
	$C^{\infty}_c(\Omega) \subset H^1_0(\Omega) \cap L^{\infty}(\Omega)$, we deduce
	\begin{equation}\label{eq:ent-poisson}
	(\alpha\nabla u,\nabla y)  + (\ln u,y) = (\alpha f,y) +  (\ln w,y)~\fa y \in H^1_0(\Omega),
	\end{equation}
	as was to be shown.
	\medskip	
	
\noindent\textsl{Step 5.} Prove that the entropic Poisson equation has a unique solution in $H^1_g(\Omega) \cap \interior L^{\infty}_+(\Omega)$.

	Conversely, suppose $u \in H^1_g(\Omega) \cap \interior L^{\infty}_+(\Omega)$ such that \cref{eq:ent-poisson} holds. 
	Then for any $y \in H^1_g(\Omega) \cap L^{\infty}(\Omega)$, setting 
	$v = y - u \in H^1_0(\Omega) \cap L^{\infty}(\Omega) \subset H^1_0(\Omega)$ in \cref{eq:ent-poisson} yields
	\[
	A'_{\alpha}(u)(y - u) = (\alpha\nabla u,\nabla [y-u])  + (\ln u,y-u) - (\alpha f,y-u) -  (\ln w,y-u) = 0.
	\]
	Since $A_{\alpha}$ is differentiable at $u$ with respect to variations in $H^1(\Omega) \cap L^{\infty}(\Omega)$
	and convex on $H^1_g(\Omega)$, we have
	\[
	A_{\alpha}(y) \ge A_{\alpha}(u) + A'_{\alpha}(u)(y -u ) = A_{\alpha}(u) ~\fa y \in H^1_g(\Omega)\cap L^{\infty}(\Omega).
	\]
	Taking the closure of $H^1_g(\Omega) \cap L^{\infty}(\Omega)$ with respect to the $H^1$-norm, it follows from the 
	continuity of $A_{\alpha}$ on $H^1(\Omega)$ that $u$ is a minimizer of $A_{\alpha}$ over $H^1_g(\Omega)$. Uniqueness
	follows from strict convexity of $A_{\alpha}$.
	\end{proof}

\begin{remark}[Maximum principle]
\label{rem:ExplicitBounds}
	Inspecting the proof above, we see that
	\begin{subequations}
	\label{eqs:ExplicitBounds}
	\begin{equation}
		\min\{g_{\min}, \exp(-\|\ln w + \alpha f\|_{L^\infty})\}
		\leq
		u
		\leq
		\max\{g_{\max}, \exp(\|\ln w + \alpha f\|_{L^\infty})\}
		,
	\end{equation}
	or, equivalently,
	\begin{equation}
		\min\{\ln g_{\min}, -\|\ln w + \alpha f\|_{L^\infty}\}
		\leq
		\ln u
		\leq
		\max\{\ln g_{\max}, \|\ln w + \alpha f\|_{L^\infty}\}
		,
	\end{equation}
	\end{subequations}
	where $g_{\min} = \essinf_{x\in\partial \Omega} g(x)$ and $g_{\max} = \esssup_{x\in\partial \Omega} g(x)$.
\end{remark}

\begin{proof}[Proof of \Cref{cor:PrimalProblem_inhom}]
The proof follows that of \Cref{thm:PrimalProblem}. Here, \Cref{cor:shift-ent} plays the same role as \Cref{lem:EntropyDifferentiability}.

Existence and uniqueness
follows the homogeneous case in light of 
the implications of \Cref{cor:shift-ent}.
We only need argue that $K_{\phi}$ is nonempty. Since $g, \phi \in 
H^1(\Omega) \cap C(\overline{\Omega})$ the function
$
v := \max\{g,\phi\} = g + \max\{0,\phi-g\}
$
is in $H^1(\Omega) \cap C(\overline{\Omega})$ and satisfies $v \ge \phi$. The trace of $w$ is merely the
evaluation on the boundary. Then since  $\essinf \gamma(g - \phi) > 0$ on $\partial \Omega$ by assumption, we have 
$\gamma(v) = \gamma(g)$ and consequently $v \in K_{\phi}$.

Setting $\tilde{w} = w - \phi$, we can now readily argue that $u = \tilde{u} + \phi$ where $\tilde{u}$ is the solution of
\begin{multline}\label{eq:subproblem_inhom_shift}
	\min
		\frac{1}{2} \| \nabla \tilde{v} \|^2_{L^2} - (f +  \Delta \phi,\tilde{v}) + \alpha^{-1} D(\tilde{v},\tilde{w})
	\\\text{ over }
	\tilde{v} \in H^1_{g - \phi}(\Omega)
	~~\text{subject to~}
	\tilde{v} \geq 0
	~\text{in~}\Omega
	\,.
\end{multline}
\Cref{thm:PrimalProblem} then guarantees that $\tilde{u}$ solves
	\begin{equation*}
		(\alpha\nabla \tilde{u}, \nabla v)
				+
		(\ln \tilde{u}, v)
		=
		(\alpha f + \alpha \Delta \phi, v) + (\ln \tilde{w}, v)
				\quad
		\text{for all~}
		v \in H^1_0(\Omega)
		.
	\end{equation*}
Substituting $u - \phi = \tilde{u}$ and $w - \phi = \tilde{w}$ yields \cref{eq:PrimalProblemVE_inhom}.
\end{proof}

\subsection{Convergence of the latent variable proximal point method}
\label{sub:proximal_methods}

In this subsection, we establish arbitrary convergence rates for the continuous-level proximal point algorithm~\cref{eq:ConvergenceContinuousLevel_VE} to solve the obstacle problem.
We begin by proving the following lemma.

\begin{lemma}
\label{lem:PrimalSaddlePointEquivalence}

Under the assumptions of~\Cref{thm:PrimalProblem}, the second-order problem
\begin{subequations}
\begin{equation}
\label{eq:Step1_ProofOfConvergenceContinuousLevel_PrimalProblem}
	\text{Find}~
	u\in  H^1_g(\Omega)\cap \interior L^\infty_+(\Omega)
	~\text{such that~}
	-\Delta u + \ln u = f ~~\text{in~}H^{-1}(\Omega)
	\,,
\end{equation}
is equivalent to the saddle-point problem
\begin{equation}
\label{eq:Step1_ProofOfConvergenceContinuousLevel_SaddlePointProblem}
	\text{Find}~
	\tilde{u}\in  H^1_g(\Omega) ~\text{and}~\tilde{\psi} \in L^\infty(\Omega)
	~\text{such that~}
	\left\{
	\begin{alignedat}{4}
		-\Delta \tilde{u} + \tilde{\psi} &= f~~ &&\text{in~}H^{-1}(\Omega)
		\,,
		\\
		\tilde{u} - \exp\tilde{\psi} &= 0 &&\text{in~}L^2(\Omega)
		\,.
	\end{alignedat}
	\right.
\end{equation}
\end{subequations}
More specifically, both problems admit unique solutions that coincide in the sense that $u = \tilde{u}$ and $\ln u = \tilde{\psi}$ a.e~in $\Omega$.
\end{lemma}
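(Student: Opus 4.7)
The proof is essentially a direct consequence of the isomorphism $\exp\colon L^\infty(\Omega) \to \interior L^\infty_+(\Omega)$ established in \Cref{prop:Equivalence} together with the characterization theorem \Cref{thm:PrimalProblem}. My plan is to proceed in three short steps.

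First, I would record that the primal problem \cref{eq:Step1_ProofOfConvergenceContinuousLevel_PrimalProblem} admits a unique solution by an application of \Cref{thm:PrimalProblem} with the step size $\alpha = 1$ and prior $w \equiv 1 \in \interior L^\infty_+(\Omega)$, so that $\ln w \equiv 0$ and the variational equation \cref{eq:PrimalProblemVE} reduces to the weak form of $-\Delta u + \ln u = f$. This pins down $u \in H^1_g(\Omega) \cap \interior L^\infty_+(\Omega)$ uniquely.

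Next, I would establish the two implications relating the problems. For the forward direction, given the solution $u$ of \cref{eq:Step1_ProofOfConvergenceContinuousLevel_PrimalProblem}, I set $\tilde{u} = u$ and $\tilde{\psi} = \ln u$; by \Cref{prop:Equivalence}(b), $u \in \interior L^\infty_+(\Omega)$ implies $\tilde{\psi} \in L^\infty(\Omega)$. The identity $\tilde{u} - \exp\tilde{\psi} = 0$ then holds pointwise a.e., and the first equation of \cref{eq:Step1_ProofOfConvergenceContinuousLevel_SaddlePointProblem} follows by substituting $\tilde{\psi} = \ln u$ into the PDE. For the converse, given any solution $(\tilde{u},\tilde{\psi})$ of \cref{eq:Step1_ProofOfConvergenceContinuousLevel_SaddlePointProblem}, the constraint $\tilde{u} = \exp\tilde{\psi}$ combined with $\tilde{\psi} \in L^\infty(\Omega)$ and \Cref{prop:Equivalence}(c) forces $\tilde{u} \in \interior L^\infty_+(\Omega)$, and hence $\tilde{u} \in H^1_g(\Omega) \cap \interior L^\infty_+(\Omega)$. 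Applying $\ln$ to both sides of $\tilde{u} = \exp\tilde{\psi}$ yields $\tilde{\psi} = \ln \tilde{u}$ a.e., which can then be substituted into the first equation of \cref{eq:Step1_ProofOfConvergenceContinuousLevel_SaddlePointProblem} to recover \cref{eq:Step1_ProofOfConvergenceContinuousLevel_PrimalProblem}.

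Finally, uniqueness for the saddle-point problem \cref{eq:Step1_ProofOfConvergenceContinuousLevel_SaddlePointProblem} follows immediately from the two implications: any solution pair $(\tilde{u},\tilde{\psi})$ satisfies $\tilde{u} = u$ (the unique primal solution) and $\tilde{\psi} = \ln \tilde{u} = \ln u$, so both components are uniquely determined. The only delicate point worth flagging is ensuring that the constraint $\tilde{u} - \exp\tilde{\psi} = 0$ in $L^2(\Omega)$ can be upgraded to a pointwise a.e.\ identification (so that $\ln \tilde{u} = \tilde{\psi}$ a.e.); this is automatic since $L^2(\Omega)$-equality means pointwise equality up to a null set. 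Aside from that, no estimates are needed and the entire argument is algebraic, riding on the isomorphism properties packaged in \Cref{prop:Equivalence,prop:logexpChainRule}.
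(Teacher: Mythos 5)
Your proposal is correct and follows essentially the same route as the paper: invoke \Cref{thm:PrimalProblem} for existence and uniqueness of the primal solution (your explicit choice $\alpha = 1$, $w \equiv 1$ just spells out what the paper leaves implicit), then use the isomorphism of \Cref{prop:Equivalence} to pass back and forth between $u$ and $\tilde{\psi} = \ln u$, with uniqueness of the saddle-point solution inherited from uniqueness of the primal one. No gaps.
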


\begin{proof}
	First of all, we know from~\Cref{thm:PrimalProblem} that there exists a unique solution to~\cref{eq:Step1_ProofOfConvergenceContinuousLevel_PrimalProblem}.
	Using~\cref{prop:Equivalence}, we know that $\exp \colon L^\infty(\Omega) \to \interior L^\infty_+ (\Omega)$ is an isomorphism.
	Thus, $\tilde{\psi} = \ln u$ and $\tilde{u} = u$ form a solution to~\cref{eq:Step1_ProofOfConvergenceContinuousLevel_SaddlePointProblem}.
	Now, assume that $\tilde{u}\in  H^1_g(\Omega)$ and $\tilde{\psi} \in L^\infty(\Omega)$ form an arbitrary solution to~\cref{eq:Step1_ProofOfConvergenceContinuousLevel_SaddlePointProblem}.
	By the second equation in~\cref{eq:Step1_ProofOfConvergenceContinuousLevel_SaddlePointProblem}, we know that
	\begin{equation}
	\label{eq:PrimalSaddlePointEquivalence_ae}
		\tilde{u} = \exp\tilde{\psi}
		~~
		\text{a.e.~in~}\Omega
		\,.
	\end{equation}
	Now, by~\cref{prop:Equivalence}, we know that $\exp\tilde{\psi} \in \interior L^\infty_+(\Omega)$.
	Thus, $\tilde{u} \in H^1_g(\Omega)\cap \interior L^\infty_+(\Omega)$.
	Moreover, by applying $\ln$ to both sides of~\cref{eq:PrimalSaddlePointEquivalence_ae}, we find that $\tilde{\psi} = \ln \tilde{u}$.
	Thus, $\tilde{u} \in H^1_g(\Omega)\cap \interior L^\infty_+(\Omega)$ solves~\cref{eq:Step1_ProofOfConvergenceContinuousLevel_PrimalProblem}.
	Since the solution of~\cref{eq:Step1_ProofOfConvergenceContinuousLevel_PrimalProblem} is unique, we find that $\tilde{u} = u$ and, in turn, $\tilde{\psi} = \ln u$.
\end{proof}

We now move on to proving the convergence theorem.

\begin{proof}[Proof of~\Cref{thm:ConvergenceContinuousLevel}]
The proof  has three main steps, the first two of which build off of~\Cref{lem:PrimalSaddlePointEquivalence}.
Without loss of generality, we focus on the case where $\phi = 0$.
The statement for general obstacles $\phi \neq 0$ can be recovered by making the change of variables $u - \phi = \tilde{u}$ and $w - \phi = \tilde{w}$ used in the proof of \Cref{cor:PrimalProblem_inhom}.
\smallskip

\noindent\textsl{Step 0.}
By~\Cref{lem:PrimalSaddlePointEquivalence}, the sequence of iterates $u^k$ coming from~\cref{eq:ConvergenceContinuousLevel_VE} and
\begin{equation}
\label{eq:ConvergenceContinuousLevel_VE_primal}
	(\alpha_{k+1}\nabla u^{k+1}, \nabla v)
	+
	(\ln u^{k+1}, v)
	=
	(\alpha_{k+1} f + \ln u^k, v)
	\quad
	\text{for all~}
	v \in H^1_0(\Omega)
	\,
\end{equation}
are equal a.e.~in $\Omega$.
We take advantage of this fact throughout the proof below.
In particular, given $u^0 = \exp \psi^0$ as in the hypotheses, we suppose that sequence $\{u^k\}$ is generated by the proximal point method, where each $u^k$ solves \cref{eq:ConvergenceContinuousLevel_VE_primal}, $k = 1,2,\dots$ By \Cref{thm:PrimalProblem}, $u^k \in H^1_g(\Omega)\cap \interior L^\infty_{+}(\Omega)$ for all $k = 0,1,2,\dots$.
\smallskip

\noindent\textsl{Step 1.}
Inequality~\cref{eq:ConvergenceContinuousLevel_Monotonicity} is proved by exploiting the fact that $D(u,w) \geq 0$ with equality if and only if $u = w$.
In particular,
\begin{equation}
	\begin{aligned}
		E(u^{k+1})
		&\leq
		E(u^{k+1}) + D(u^{k+1},u^k)/\alpha_{k+1}
		\\
		&\leq
		E(u^k) + D(u^k,u^k)/\alpha_{k+1}
		=
		E(u^k)
		\,,
	\end{aligned}
\end{equation}
where the second inequality follows from the optimality of $u_{k+1}$.

\noindent\textsl{Step 2.}
Since $u^j \in H^1_g(\Omega)\cap \interior L^\infty_{+}(\Omega)$ for all $j=1,2,\ldots,k$, the definition of $D$ as a true Bregman distance according to \cref{eq:bregman-trad} is justified and consequently, the three-points identity \cref{eq:CosineIdentity} is as well. This leads to 
	\begin{equation}
	\label{eq:CosineIdentity_proof}
		D(w,u^j) - D(w,u^{j-1}) + D(u^j,u^{j-1})
		=
		\langle S^\prime(u^j) - S^\prime(u^{j-1}), u^j - w \rangle
		\,,
	\end{equation}
	where $w \in H^1_g(\Omega)$ such that $w \ge 0$ a.e.
	Next, notice that~\cref{eq:ConvergenceContinuousLevel_VE_primal} is equivalent to
	\begin{equation}
	\label{eq:RelativeEntropyIdentity}
		\langle S^\prime(u^j),v\rangle  - \langle S^\prime(u^{j-1}),v\rangle
		=
		-\alpha_{j} E^\prime(u^j)v ~\fa v \in H^1_0(\Omega).
	\end{equation}
	Clearly, we have $u^j - w \in H^1_0(\Omega)$. Therefore, \cref{eq:RelativeEntropyIdentity} and the subgradient inequality for $E$ at $u^j$ imply 
	\begin{equation}
	\label{eq:ConvergenceContinuousLevel_SubgradientIdentity}
		\langle S^\prime(u^j) - S^\prime(u^{j-1}), u^j - w \rangle
		=
		\langle \alpha_{j} E^\prime(u^j), w - u^j \rangle
		\leq
		\alpha_{j} E(w) - \alpha_{j} E(u^j)
		\,.
	\end{equation}
	Together,~\cref{eq:CosineIdentity_proof,eq:ConvergenceContinuousLevel_SubgradientIdentity} imply that
	\begin{equation}
	\label{eq:ConvergenceContinuousLevel_Step3}
		D(w,u^k)
		+
		\sum_{j=1}^{k}
			D(u^{j},u^{j-1}) 
			+ \sum_{j=1}^{k}\alpha_{j}[E(u^j) -  E(w)]
		\leq
		D(w,u^0)
		\,.
	\end{equation}
	Now, given $D(w,u^k) \geq 0$, $D(u^{k+1},u^{k}) \geq 0$ for all $k$ and $E(u^k) \leq E(u^j)$ for all $j\leq k$, by~\cref{eq:ConvergenceContinuousLevel_Monotonicity}, along with ~\cref{eq:ConvergenceContinuousLevel_Step3}, we deduce the
	bound
	\begin{equation}
		E(u^k)
		\leq
		E(w) + \frac{D(w,u^0)}{\sum_{j=1}^k \alpha_{j}}
		\,,
	\end{equation}
	for all $w \in H^1_g(\Omega)$ satisfying $w \geq 0$.
	This step is completed by setting $w = u^\ast$ in the inequality above and using strong convexity of $E\colon H^1_0(\Omega) \to \mathbb{R}$.
	In particular, observe that
	\begin{equation}
	\label{eq:ConvergenceContinuousLevel_StrongConvexity}
		\begin{aligned}
			\frac{D(u^\ast,u^0)}{\sum_{j=1}^k \alpha_{j}}
			\geq
			E(u^k)
			-
			E(u^\ast)
			&\geq
			\langle E^\prime(u^\ast), u^k - u^\ast \rangle
			+
			\frac{1}{2}\|\nabla u^\ast - \nabla u^k\|_{L^2}^2
			\\
			&\geq
			\frac{1}{2}\|\nabla u^\ast - \nabla u^k\|_{L^2}^2
			\,,
		\end{aligned}
	\end{equation}
	where, we have used the first-order optimality condition $\langle E^\prime(u^\ast), v - u^\ast \rangle \geq 0$ for all $v \in K$ in the final inequality.
	\smallskip

	\noindent\textsl{Step 3.}
	Finally, we prove the first equality in~\cref{eq:ConvergenceContinuousLevel_Rate}.
	To this end, consider the two equations
	\begin{subequations}
	\begin{equation}
		(\nabla u^k, \nabla v) - (f,v)
		=
		(\lambda^k,v)
		\fa v \in H^1_0(\Omega)
		\,,
	\end{equation}
	and
	\begin{equation}
		(\nabla u^\ast, \nabla v) - (f,v)
		=
		(\lambda^\ast,v)
		\fa v \in H^1_0(\Omega)
		\,.
	\end{equation}
	\end{subequations}
	Combining these two equations, we find that
	\begin{equation}
		\|\lambda^\ast - \lambda^{k}\|_{H^{-1}(\Omega)}
		=
		\sup_{v \in H^1_0(\Omega)}
		\frac{(\lambda^\ast - \lambda^k,v)}{\|\nabla v\|_{L^2(\Omega)}}
		=
		\sup_{v \in H^1_0(\Omega)}
		\frac{(\nabla u^\ast - \nabla u^k,\nabla v)}{\|\nabla v\|_{L^2(\Omega)}}
										\,.
	\end{equation}
	We now find $\|\lambda^\ast - \lambda^{k}\|_{H^{-1}(\Omega)} \leq \|\nabla u^\ast - \nabla u^k\|_{L^2(\Omega)}$ by applying the triangle inequality to the numerator of the third expression above.
	Likewise, we find $\|\nabla u^\ast - \nabla u^k\|_{L^2(\Omega)} \leq \|\lambda^\ast - \lambda^{k}\|_{H^{-1}(\Omega)}$ by considering the candidate function $v = u^\ast - u^k$.~
\end{proof}

We now turn to studying the iteration complexity of LVPP for various step size sequences.
To this end, we first recall the standard definitions of $\mathrm{Q}$- and $\mathrm{R}$-convergence.

\begin{definition}[Convergence orders and rates]
	Let $\mathcal{X}$ be a Banach space with norm $\|\cdot\|_{\mathcal{X}}$.
	We say that a sequence $\{x_k\}_{k=0}^\infty \subset \mathcal{X}$ converges to $x^\ast \in \mathcal{X}$ with order $q \geq 1$ and rate $r \geq 0$ if
	\begin{equation}
		\lim_{k\to\infty}
		\frac{\|x_{k+1} - x^\ast\|_{\mathcal{X}}}{\|x_k - x^\ast\|_{\mathcal{X}}^q}
		=
		r
		\,.
	\end{equation}
	\\\indent
	If $q = 1$ and $r = 1$, then we say $x_k$ converges $\mathrm{Q}$-sublinearly to $x^\ast$.
	If $q = 1$ and $r \in (0,1)$, then we say $x_k$ converges $\mathrm{Q}$-linearly to $x^\ast$.
	If $q>1$ or $q = 1$ and $r = 0$, then we say $x_k$ converges $\mathrm{Q}$-superlinearly to $x^\ast$.
	\\\indent
	If $\|x_k - x^\ast\|_{\mathcal{X}} \leq \epsilon_k$ for all $k$, where $\epsilon_k$ converges $\mathrm{Q}$-sublinearly (linearly, superlinearly) to zero, then we say that $x_k$ converges $\mathrm{R}$-sublinearly (linearly, superlinearly) to $x^\ast$.
\end{definition}

The following corollary establishes convergence orders associated to various step size sequences.

\begin{corollary}[Prescribed convergence orders]
\label{cor:ConvergenceRates}
Fix $C > 0$.
Under the assumptions of \Cref{thm:ConvergenceContinuousLevel}, consider the following candidate sequences of step sizes:
\smallskip
\begin{subequations}
\begin{enumerate}[itemindent=.7cm]
	\item[\textsl{Case 1:}]
	Fix $m \in \mathbb{N}$ and set
	\begin{equation}
	\label{eq:ArithmeticSeries}
		\alpha_{k} = C k(k+1)\cdots(k+m) ~ \fa k = 1,2,\ldots
	\end{equation}
	\item[\textsl{Case 2:}]
	Fix $\mu > 1$ and set
	\begin{equation}
	\label{eq:GeometricSeries}
		\alpha_{k} = C \mu^{k-1} ~ \fa k = 1,2,\ldots
	\end{equation}
	\item[\textsl{Case 3:}]
	Set $\alpha_1 = C$ and
	\begin{equation}
	\label{eq:FactorialSeries}
		\alpha_{k+1} = C k k! ~ \fa k = 1,2,\ldots
	\end{equation}
	\item[\textsl{Case 4:}]
	Fix $\mu, q, r > 1$ and set $\alpha_1 = r^{1/(q-1)} \mu$ and
	\begin{equation}
	\label{eq:DoubleExponential}
		\alpha_{k+1} = r^{1/(q-1)} \mu^{q^k} - \alpha_{k} ~ \fa k = 2,3,\ldots
	\end{equation}
\end{enumerate}
\end{subequations}
Then sequence \cref{eq:ArithmeticSeries} delivers a sublinear $\mathrm{R}$-convergence; sequence \cref{eq:GeometricSeries} delivers $\mathrm{R}$-linear convergence with rate $1/\mu$; sequence \cref{eq:FactorialSeries} delivers $\mathrm{R}$-superlinear convergence with order $1$ and rate $0$; and sequence \cref{eq:DoubleExponential} delivers $\mathrm{R}$-superlinear convergence with order $q$ and rate $r$. 
\end{corollary}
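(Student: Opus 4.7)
The plan is to reduce everything to the master inequality \cref{eq:ConvergenceContinuousLevel_Rate} from \Cref{thm:ConvergenceContinuousLevel}, which guarantees
\begin{equation*}
	\tfrac12\|\nabla u^\ast - \nabla u^k\|_{L^2(\Omega)}^2 \leq \frac{D(u^\ast-\phi,u^0-\phi)}{\sum_{j=1}^k \alpha_j},
\end{equation*}
so that controlling the iteration complexity of LVPP in each case amounts to deriving a sharp closed-form expression for the partial sum $\Sigma_k := \sum_{j=1}^k \alpha_j$ and then defining the bounding sequence
\begin{equation*}
	\epsilon_k = \sqrt{2 D(u^\ast-\phi,u^0-\phi)/\Sigma_k},
\end{equation*}
whose Q-order and Q-rate we must then compute. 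The final step is to invoke the definition of $\mathrm{R}$-convergence to transfer the convergence of $\epsilon_k$ to convergence of $\|u^\ast-u^k\|_{H^1(\Omega)}$.

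The key computations I will carry out case by case are telescoping identities. For Case~1, I will use the falling-factorial identity $j(j+1)\cdots(j+m) = \tfrac{1}{m+2}\big[j(j+1)\cdots(j+m+1) - (j-1)j\cdots(j+m)\big]$ to conclude $\Sigma_k = \tfrac{C}{m+2}\, k(k+1)\cdots(k+m+1)$, so that $\epsilon_k = O(k^{-(m+2)/2}) \to 0$ with $\epsilon_{k+1}/\epsilon_k \to 1$, giving $\mathrm{R}$-sublinear convergence. For Case~2, the geometric sum gives $\Sigma_k = C(\mu^k-1)/(\mu-1)$, and $\epsilon_{k+1}/\epsilon_k \to 1/\sqrt{\mu}$, yielding $\mathrm{R}$-linear convergence (the rate is reported in the squared-norm form $1/\mu$). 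For Case~3, the elementary identity $j\, j! = (j+1)! - j!$ telescopes to $\Sigma_k = C\, k!$, from which $\epsilon_{k+1}/\epsilon_k = 1/\sqrt{k+1}\to 0$ establishes $\mathrm{R}$-superlinear convergence with order $1$ and rate~$0$.

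The principal obstacle, and the only case that requires a slightly non-obvious argument, is Case~4. Here I will prove by induction on $k$ that the recurrence $\alpha_{k+1} = r^{1/(q-1)}\mu^{q^k} - \alpha_k$ with $\alpha_1 = r^{1/(q-1)}\mu$ has the closed-form partial sum
\begin{equation*}
	\Sigma_k = r^{1/(q-1)} \mu^{q^{k-1}}.
\end{equation*}
The base case $k=1$ is immediate; for the inductive step, $\Sigma_{k+1} = \Sigma_k + \alpha_{k+1}$ combined with $\alpha_{k+1} = r^{1/(q-1)}\mu^{q^k} - \alpha_k$ gives a cancellation $\Sigma_{k+1} = r^{1/(q-1)}\mu^{q^k} + (\Sigma_k - \alpha_k)$, and the inductive hypothesis together with $\Sigma_{k-1} = \Sigma_k - \alpha_k$ closes the recursion. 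This identity yields $\epsilon_k = c\, \mu^{-q^{k-1}/2}$ for an explicit constant $c$, and a direct computation of the ratio $\epsilon_{k+1}/\epsilon_k^q$ produces a constant involving $r$; choosing the scaling $r^{1/(q-1)}$ in the definition of $\alpha_1$ is precisely calibrated so that this constant matches the prescribed rate $r$. The $\mathrm{R}$-superlinear conclusion with order $q$ and rate $r$ then follows.

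Throughout, I will emphasize that none of the arguments require inspecting the proximal point subproblems themselves: once \Cref{thm:ConvergenceContinuousLevel} is in hand, the corollary is a purely analytical statement about step-size sequences, and the entire proof reduces to (i) the four telescoping sum identities above and (ii) unpacking the definitions of Q- and R-convergence orders.
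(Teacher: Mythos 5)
Your overall route is the same as the paper's: everything is funneled through the rate bound of \Cref{thm:ConvergenceContinuousLevel}, the partial sums $\Sigma_k=\sum_{j=1}^k\alpha_j$ are evaluated by telescoping (hockey-stick identity in Case~1, geometric sum in Case~2, $j\,j!=(j+1)!-j!$ in Case~3, pairing of consecutive terms in Case~4), and the conclusion is read off from the Q-ratios of the resulting bounding sequence. Two points need repair, however.

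First, your Case~4 induction does not close. From $\alpha_{k+1}=r^{1/(q-1)}\mu^{q^k}-\alpha_k$ you get $\Sigma_{k+1}=r^{1/(q-1)}\mu^{q^k}+(\Sigma_k-\alpha_k)=r^{1/(q-1)}\mu^{q^k}+\Sigma_{k-1}$, and the inductive hypothesis gives $\Sigma_{k-1}=r^{1/(q-1)}\mu^{q^{k-2}}$, not zero; already $\Sigma_3=r^{1/(q-1)}(\mu^{q^2}+\mu)$. So the claimed exact identity $\Sigma_k=r^{1/(q-1)}\mu^{q^{k-1}}$ is false for the recurrence as stated (it would be exact if one subtracted the whole partial sum $\Sigma_k$ rather than $\alpha_k$). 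What is true, and what you should prove instead, is the asymptotic statement $\Sigma_k=r^{1/(q-1)}\mu^{q^{k-1}}\bigl(1+o(1)\bigr)$: the even- and odd-indexed terms telescope separately, and the leftover terms are of relative size $\mu^{q^{k-3}-q^{k-1}}\to 0$. That suffices for the order-$q$ conclusion. (The paper's own proof is equally informal here, invoking ``telescoping by design,'' so this is a shared looseness rather than a divergence, but your induction as written would fail.)

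Second, the rate bookkeeping. You define $\epsilon_k=\sqrt{2D/\Sigma_k}$, a bound on the error itself, whereas the paper computes the Q-ratios of the squared-error bound $\epsilon_k=D(u^\ast,u^0)/\Sigma_k$ coming directly from \cref{eq:ConvergenceContinuousLevel_Rate}. With your square-root sequence, Case~2 yields the ratio $1/\sqrt{\mu}$ (which you flag) and Case~4 yields $\epsilon_{k+1}/\epsilon_k^q \to (2D)^{(1-q)/2}\sqrt{r}$, not $r$: the scaling $r^{1/(q-1)}$ does \emph{not} calibrate the rate to $r$ for that sequence, contrary to your claim. To recover the rates $1/\mu$ and $r$ as stated in the corollary you must, as the paper does, measure the Q-behavior of $D/\Sigma_k$ itself (and, as the paper tacitly does, absorb the $D$-dependent constant in Case~4); alternatively, restate the rates for the error as $1/\sqrt{\mu}$ and $\sqrt{r}$ with the orders unchanged. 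The qualitative conclusions (sublinear, linear, superlinear of order $1$ and of order $q$) are unaffected either way; only the quoted rate constants are at stake.
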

\begin{proof}
	Throughout the proof, we use the definition $\epsilon_k = D(u^\ast,u^0)/{\sum_{j=1}^k \alpha_{j}}$.
	\smallskip

	\noindent\textsl{Case 1:}
	For this case, we can use Chu's theorem \cite[Theorem~1.5.2]{merris2003combinatorics} to show that
	\begin{equation}
		(m+2)
		\sum_{j=1}^k
		j(j+1)\cdots(j+m)
		=
		k(k+1)\cdots(k+m+1)
		\,.
	\end{equation}
	Thus, if $\alpha_k = C k(k+1)\cdots(k+m)$ for all $k$, then
							\begin{equation}
		\frac{\epsilon_{k+1}}{\epsilon_k}
		=
		\frac{k(k+1)\cdots(k+m)}{(k+1)(k+2)\cdots(k+m+1)}
		=
		\frac{k}{k+m+1}
		\to 1
		~~
		\text{as~} k \to \infty
		\,.
	\end{equation}
	\smallskip
	
	\noindent\textsl{Case 2:}
	For this case, we use the well-known finite geometric series identity
	\begin{equation}
		\sum_{j=1}^{k} \mu^{j-1}
		=
		\frac{\mu^{k} - 1}{\mu-1}
		\,.
	\end{equation}
							Thus, if $\alpha_k = C \mu^{k-1}$ for all $k$, then
	\begin{equation}
		\frac{\epsilon_{k+1}}{\epsilon_k}
		=
		\frac{\mu^{k} - 1}{\mu^{k+1} - 1}
		\to \frac{1}{\mu}
		~~
		\text{as~} k \to \infty
		\,.
	\end{equation}
	\smallskip
	
	\noindent\textsl{Case 3:}
	For this case, we use the identity
	\begin{equation}
		\sum_{j=1}^k j j!
		=
		(k+1)! - 1
		\,,
	\end{equation}
	which is readily verified by mathematical induction.
							Thus, if $\alpha_k = C (k-1) (k-1)!$ for all $k \geq 2$ and $\alpha_1 = C$, then
	\begin{equation}
		\frac{\epsilon_{k+1}}{\epsilon_k}
		=
		\frac{k!}{(k+1)!}
		=
		\frac{1}{k+1}
		\to 0
		~~
		\text{as~} k \to \infty
		\,.
	\end{equation}
	\smallskip
	
	\noindent\textsl{Case 4:}
	In this case, we use the fact that $\sum_{j=1}^k \alpha_{j-1} = \alpha_{k-1}$ is a telescoping sum by design.
							Thus, if $\alpha_{k+1} = r^{1/(q-1)} \mu^{q^k} - \alpha_{k}$ for all $k\geq 1$ and $\alpha_1 = r^{1/(q-1)} \mu$, then
	\begin{equation}
		\frac{\epsilon_{k+1}}{\epsilon_k^q}
		=
		\frac{r^{q/(q-1)}(\mu^{q^{k-1}})^q}{r^{1/(q-1)}\mu^{q^{k}}}
		=
		r
		~~
		\fa k\geq 1
		\,.
	\end{equation}
\end{proof}

\subsection{The entropic Poisson equation in the zero-temperature limit} \label{sub:convergence_of_the_penalty_method}

We close this section by showing that the one-parameter family of solutions to the entropic Poisson equation with ``temperature'' $\theta = \alpha^{-1}$ converge strongly (in $H^1(\Omega)$) to the solution of the obstacle problem as $\theta \to 0$.

\begin{theorem}
\label{cor:EntropicPoissonConvergence}
	Assume $\Omega \subset \mathbb{R}^n$ is an open, bounded Lipschitz domain, $n \ge 1$, and let $g \in H^1(\Omega) \cap C(\overline{\Omega})$ such that $\min g_{|\partial \Omega} > 0$.
	Let $u_\theta \in H^1_g(\Omega)\cap \interior L^\infty_+(\Omega)$ denote the solution of the entropic Poisson equation,
	\begin{equation}
	\label{eq:EntropicPoissonConvergence_EPE}
		(\nabla u_\theta, \nabla w)
		+
		\theta (\ln u_\theta, w)
		=
		(f, v)
		\quad
		\text{for all~}
		w \in H^1_0(\Omega)
		,
	\end{equation}
	and let
	\begin{equation}
		u^\ast
		=
		\argmin_{u\in H^1(\Omega)}
		~
		E(u)
		~~\text{subject to~}
		u \geq 0
		~\text{in~}\Omega
		~\text{and~}
		u = g
		~\text{on~}\partial\Omega
		.
	\end{equation}
	Then $u_\theta \to u^\ast$ in $H^1(\Omega)$ linearly with respect to $\theta$.
	In particular,
	\begin{equation}
		\frac12\|\nabla u^\ast - \nabla u_\theta\|_{L^2(\Omega)}^2
		\leq
		\theta (S(u^\ast) + |\Omega|)
		.
	\end{equation}
\end{theorem}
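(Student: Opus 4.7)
The plan is to recognize that the entropic Poisson equation~\cref{eq:EntropicPoissonConvergence_EPE} is precisely the first-order optimality condition for the Dirichlet free energy minimization problem with trivial prior. More precisely, invoking \Cref{thm:PrimalProblem} with $\alpha = \theta^{-1}$ and $w \equiv 1 \in \interior L^\infty_+(\Omega)$, the relative entropy reduces to $D(v,1) = S(v) + |\Omega|$, and the characterization theorem identifies $u_\theta$ as the unique minimizer
\begin{equation*}
    u_\theta = \argmin_{v \in K} \{ E(v) + \theta S(v)\}
    \,,
\end{equation*}
over $K = H^1_g(\Omega) \cap H^1_+(\Omega)$. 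Since $u^\ast \in K$ as well, comparing the values of $E + \theta S$ at $u_\theta$ and $u^\ast$ yields the inequality
\begin{equation*}
    E(u_\theta) - E(u^\ast) \leq \theta (S(u^\ast) - S(u_\theta)).
\end{equation*}

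The second ingredient is the $1$-strong convexity of $E$ with respect to the $H^1_0$-seminorm $\|\nabla \cdot\|_{L^2(\Omega)}$, combined with the fact that $u^\ast$ minimizes $E$ over the convex set $K$ and $u_\theta \in K$. Strong convexity gives
\begin{equation*}
    E(u_\theta) \geq E(u^\ast) + \langle E'(u^\ast), u_\theta - u^\ast \rangle + \tfrac{1}{2} \|\nabla u_\theta - \nabla u^\ast\|_{L^2(\Omega)}^2
    \,,
\end{equation*}
and the variational inequality characterizing $u^\ast$ forces $\langle E'(u^\ast), u_\theta - u^\ast \rangle \geq 0$. Combining with the previous bound produces
\begin{equation*}
    \tfrac{1}{2} \|\nabla u_\theta - \nabla u^\ast\|_{L^2(\Omega)}^2 \leq E(u_\theta) - E(u^\ast) \leq \theta (S(u^\ast) - S(u_\theta)).
\end{equation*}

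Finally, the elementary pointwise inequality $x \ln x - x \geq -1$ for all $x \geq 0$ (minimum attained at $x = 1$) gives $S(u_\theta) \geq -|\Omega|$, hence $S(u^\ast) - S(u_\theta) \leq S(u^\ast) + |\Omega|$, and the quantitative bound
\begin{equation*}
    \tfrac{1}{2} \|\nabla u^\ast - \nabla u_\theta\|_{L^2(\Omega)}^2 \leq \theta (S(u^\ast) + |\Omega|)
\end{equation*}
follows. Strong $H^1(\Omega)$-convergence then follows from the Poincar\'e inequality applied to $u_\theta - u^\ast \in H^1_0(\Omega)$.

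There is no serious obstacle to overcome; the main subtlety is simply verifying that~\Cref{thm:PrimalProblem} applies with the trivial prior $w \equiv 1$, which is legitimate since $1 \in \interior L^\infty_+(\Omega)$ and the hypothesis $\essinf g_{|\partial\Omega} > 0$ of~\Cref{thm:PrimalProblem} is inherited from the present assumption $\min g_{|\partial\Omega} > 0$. Everything else is a direct application of strong convexity and the pointwise lower bound on $x \ln x - x$.
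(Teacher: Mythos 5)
Your argument is correct, and it reaches the paper's exact bound by a somewhat more elementary route. The paper's proof works at the level of first-order conditions: it rewrites the entropic Poisson equation as $S'(u_\theta)=-\theta^{-1}E'(u_\theta)$ (via \Cref{thm:PrimalProblem} and the gradient representation of \Cref{lem:EntropyDifferentiability}), then applies the three-points identity \cref{eq:CosineIdentity} with $v=u_\theta$, $w=1$ and the gradient inequality for $E$ at $u_\theta$, obtaining the slightly sharper intermediate estimate $E(u_\theta)-E(u)\leq\theta\bigl(D(u,1)-D(u,u_\theta)-D(u_\theta,1)\bigr)$ for any comparison point $u\in H^1_g(\Omega)\cap L^\infty_+(\Omega)$; this mirrors the machinery reused in \Cref{thm:ConvergenceContinuousLevel}. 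You instead work at the level of objective values: using the converse (``uniquely characterized'') direction of \Cref{thm:PrimalProblem} with $\alpha=\theta^{-1}$ and prior $w\equiv 1$ — which is indeed available, since Step~5 of that proof shows any solution of the weak equation in $H^1_g(\Omega)\cap\interior L^\infty_+(\Omega)$ is the minimizer, and $D(v,1)=S(v)+|\Omega|$ — you compare $E+\theta S$ at $u_\theta$ and $u^\ast$ and then discard $-S(u_\theta)\leq|\Omega|$, whereas the paper discards $D(u^\ast,u_\theta),\,D(u_\theta,1)\geq 0$; both relaxations land on $\theta\,D(u^\ast,1)=\theta\,(S(u^\ast)+|\Omega|)$. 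The closing step (exact quadratic expansion of $E$, the variational inequality $\langle E'(u^\ast),u_\theta-u^\ast\rangle\geq 0$, and Poincar\'e for the full $H^1$ statement) is the same as the paper's \cref{eq:ConvergenceContinuousLevel_StrongConvexity}. What your route buys is brevity and no Bregman calculus; what the paper's buys is an estimate valid for arbitrary comparison points and with the extra nonpositive Bregman terms retained, plus consistency with the proximal-point convergence proof. One small point worth making explicit in your write-up: $S(u^\ast)$ is finite (so the comparison of objective values is meaningful) because $u^\ast\in L^2_+(\Omega)$ and $\Omega$ is bounded, and $S(u_\theta)$ is finite since $u_\theta\in\interior L^\infty_+(\Omega)$.
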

\begin{proof}
	For all functions $u \in L^{\infty}_+(\Omega)$, $v,w \in \interior L^\infty_+(\Omega)$, the representation theorem, \Cref{lem:EntropyDifferentiability}, and the three-points identity~\cref{eq:CosineIdentity}, together, give us
	\begin{equation}
	\label{eq:EntropicPoissonConvergence_TPI}
		D(u,v) - D(u,w) + D(v,w)
		=
		(\nabla S(v) - \nabla S(w), v - u )
		\,.
	\end{equation}
	Moreover, the characterization theorem, \Cref{thm:PrimalProblem}, tell us that~\cref{eq:EntropicPoissonConvergence_EPE} is equivalent to
	\begin{equation}
		S^\prime(u_\theta) = -\frac{1}{\theta} E^\prime(u_\theta)
		\,.
	\end{equation}
	Next, notice that $\nabla S(1) = \ln 1 = 0$ and, provided $u \in H^1(\Omega)$,
	\begin{equation}
		\langle E^\prime(u_\theta), u - u_\theta \rangle
		\leq
		E(u) - E(u_\theta)
		\,,
	\end{equation}
	by convexity.
	Thus, taking $u \in H^1_g(\Omega) \cap L^{\infty}_+(\Omega)$ and setting $v = u_\theta$ and $w = 1$ in~\cref{eq:EntropicPoissonConvergence_TPI} leads to
	\begin{align*}
		D(u,u_\theta) - D(u,1) + D(u_\theta,1)
		&=
		( \nabla S(u_\theta) - \nabla S(1), u_\theta - u )
		\\
		&=
		\langle S^\prime(u_\theta), u_\theta - u \rangle
		\\
		&=
		\frac{1}{\theta} \langle E^\prime(u_\theta), u - u_\theta \rangle
		\\
		&\leq
		\frac{1}{\theta} ( E(u) - E(u_\theta) )
		\,.
	\end{align*}
		Rerranging the inequality above and invoking~\Cref{prop:BregmanDivergenceProperties}, we find that
	\begin{equation}
		E(u_\theta) - E(u)
		\leq
		\theta (D(u,1) - D(u,u_\theta) - D(u_\theta,1))
		\leq
		\theta D(u,1)
		\,,
	\end{equation}
	where the second inequality arises because both $D(u,u_\theta)$ and $D(u_\theta,1)$ are non-negative.
	The proof is completed by setting $u = u^\ast$ and exploiting the strong convexity of $E$, as done in~\cref{eq:ConvergenceContinuousLevel_StrongConvexity}, and noting that $D(u,1) = S(u) + |\Omega|$.
\end{proof}

\section{Mathematical results II: Elementary finite element error analysis} \label{sec:the_entropic_finite_element_method}
The purpose of this appendix is to establish certain minor \textit{a priori} error analysis results related to the stability of the subproblems encountered upon linearizing~\cref{eq:ObstacleDiscreteNonlinearSaddlePoint}.
These results are necessary to motivate the finite elements proposed in~\cref{eq:SubspacePair1,eq:SubspacePair2}.
In short, the main outcome of this appendix is that the finite elements are stable and, therefore, we expect optimal high-order convergence rates for the solutions of certain \emph{linearized} subproblems.
We intentionally stop short of providing a full \textit{a priori} error analysis of the \emph{nonlinear} subproblems or the complete proximal Galerkin method.
Such analysis is planned for a forthcoming paper in which we additionally aim to prove that the Proximal Galerkin method is \emph{mesh-independent}.

\subsection{Stability of the linearized subproblems} \label{sub:linear_stability_theory}

The first result of this section is that linearizations of the subproblems in~\Cref{alg:ObstacleProblem} are stable at the continuous level.
We begin with analyzing linearizations of the continuous-level algorithm given in~\cref{eq:ConvergenceContinuousLevel_VE}.
The proof uses standard Hilbert space arguments for singularly-perturbed saddle-point problems; cf.~\cite[Chapter~4.3.2]{boffi2013mixed}.

\begin{theorem}
\label{thm:ContinuousStability}
	Let $\psi \in L^\infty(\Omega)$.
	Then, for every $f \in H^{-1}(\Omega)$, $g \in L^2(\Omega)$, and $\phi \in H^1_0(\Omega)$, the saddle-point problem
	\begin{equation}
	\label{eq:ContinuousStability}
		\left\{
		\begin{aligned}
															&\begin{alignedat}{4}
			( \nabla u, \nabla v) + (\delta, v) &= \langle f, v\rangle
			&&~\fa v \in H_0^1(\Omega),
			\\
			(u, w) - (\delta \exp\psi, w) &= (\phi, w) + (g, w)
			&&~\fa w \in L^2(\Omega),
			\end{alignedat}
		\end{aligned}
		\right.
	\end{equation}
	has a unique solution $u \in H_0^1(\Omega)$, $\delta \in L^2(\Omega)$ that satisfies the stability bound
	\begin{multline}
	\label{eq:StabilityBounds}
		\|\nabla u\|_{L^2(\Omega)}
		+
		\|\delta\|_{H^{-1}(\Omega)}
		+
		\|\exp(\psi/2)\delta\|_{L^2(\Omega)}
						\\\leq
		C\Big(\|f\|_{H^{-1}(\Omega)} + \|\nabla \phi\|_{L^2(\Omega)} + \|\exp(-\psi/2)g\|_{L^2(\Omega)} \Big)
		\,,
	\end{multline}
	for a constant $C>0$ that is independent of $\psi$ and $\Omega$.
																		\end{theorem}

\begin{remark}[Choice of norms]
	We are generally interested in the setting where $g = \tilde{g}\exp\psi$, for some $\tilde{g} \in L^2(\Omega)$, as this is the setting that appears in Newton methods, as well as in the \textit{a priori} error analysis carried out in \Cref{sub:convergence_of_the_linearized_subproblems} below.
	In this case, notice that the upper bounds on $\|\nabla u\|_{L^2(\Omega)}$ and $\|\delta\|_{H^{-1}(\Omega)}$ are uniform in the limit $\essinf \psi \to -\infty$.
	On the other hand, the stability bound on the (unweighted) $L^2(\Omega)$-norm of $\delta$, arising from~\cref{eq:StabilityBounds} and the inequality
	\begin{equation}
	\label{eq:UnweightedL2Bound}
		\|\delta\|_{L^2(\Omega)}^2 \leq \|\exp(-\psi)\|_{L^\infty(\Omega)}\|\exp(\psi/2)\delta\|_{L^2(\Omega)}^2
				\,,
	\end{equation}
	degenerates as $\essinf \psi \to -\infty$, even if $g = 0$.
	\end{remark}

\begin{proof}[Proof of~\Cref{thm:ContinuousStability}]
	The proof proceeds in two steps.
	\smallskip

	\noindent\textsl{Step 1.}
	Existence and uniqueness of $u \in H_0^1(\Omega)$, $\delta \in L^2(\Omega)$ follows readily from the Lax--Milgram theorem.
	Indeed, notice that~\cref{eq:ContinuousStability} may be rewritten as
	\begin{equation}
	\label{eq:LinearizedSaddleRewrite}
		B((u,\delta),(v,w)) = \langle f, v\rangle - (\phi, w) - (g, w)
		~
		\fa v \in H_0^1(\Omega)
		\text{ and }
		w \in L^2(\Omega)
		\,,
	\end{equation}
	where $B((u,\delta),(v,w)) = ( \nabla u, \nabla v) + (\delta, v) - (u, w) + (\delta \exp\psi, w)$ is a bounded bilinear map $B \colon (H_0^1(\Omega) \times L^2(\Omega)) \times (H_0^1(\Omega) \times L^2(\Omega)) \to \mathbb{R}$.
	Setting $v = u$ and $w = \delta$ in~\cref{eq:LinearizedSaddleRewrite}, we find that
	\begin{equation}
	\label{eq:WeightedL2LowerBound}
		\|\nabla u\|^2_{L^2(\Omega)} + \|\exp(\psi/2)\delta\|_{L^2(\Omega)}^2
		=
		\langle f, u\rangle - ( \phi, \delta) - (g, \delta)
		\,.
	\end{equation}
	Applying~\cref{eq:UnweightedL2Bound}, we find that
	\begin{equation}
	\label{eq:LinearizedSaddleCoercivity}
		\|\nabla u\|^2_{L^2(\Omega)} + \|\exp(-\psi)\|_{L^\infty(\Omega)}^{-1}\|\delta\|_{L^2(\Omega)}^2
		\leq
		B((u,\delta),(u,\delta))
		\,,
	\end{equation}
	for all $(u,\delta) \in H_0^1(\Omega) \times L^2(\Omega)$, which establishes the coercivity condition necessary to apply the theorem.
	\smallskip
	
	\noindent\textsl{Step 2.}
	The remainder of the proof centers on~\cref{eq:WeightedL2LowerBound} assuming $u \in H_0^1(\Omega)$, $\delta \in L^2(\Omega)$ are the unique solutions of~\cref{eq:ContinuousStability}.
	From the first equation in~\cref{eq:ContinuousStability}, we observe that
	\begin{equation}
	\label{eq:incrementHminus1normidentity}
	\begin{aligned}
		\|\delta\|_{H^{-1}(\Omega)}
		=
		\sup_{v \in H^1_0(\Omega)} \frac{(\delta, v)}{\|\nabla v\|_{L^2(\Omega)}}
		&\leq
		\sup_{v \in H^1_0(\Omega)} \frac{(\nabla u, \nabla v)}{\|\nabla v\|_{L^2(\Omega)}}
		+
		\sup_{v \in H^1_0(\Omega)} \frac{\langle f, v\rangle}{\|\nabla v\|_{L^2(\Omega)}}
		\\
		&=
		\|\nabla u\|_{L^2(\Omega)} + \|f\|_{H^{-1}(\Omega)}
		\,.
	\end{aligned}
	\end{equation}
	Thus, we deduce from submultiplicativity and Young's inequality that
	\begin{align*}
		(\phi,\delta)
		\leq
		\|\nabla \phi\|_{L^2(\Omega)}\|\delta\|_{H^{-1}(\Omega)}
		&\leq
		\|\nabla \phi\|_{L^2(\Omega)}\|\nabla u\|_{L^2(\Omega)} + \|\nabla \phi\|_{L^2(\Omega)}\|f\|_{H^{-1}(\Omega)}
				\\
		&\leq
		\gamma\|\nabla \phi\|_{L^2(\Omega)}^2
		+
		\frac{1}{2\gamma}
		\|\nabla u\|_{L^2(\Omega)}^2
		+
		\frac{1}{2\gamma}
		\|f\|_{H^{-1}(\Omega)}^2
		\,,
	\end{align*}
	for every $\gamma > 0$.
	By similar arguments, we notice that
	\[
		\langle f, u\rangle
		\leq
		\|f\|_{H^{-1}(\Omega)}\|\nabla u\|_{L^2(\Omega)}
		\leq
		\frac{1}{2}\|f\|_{H^{-1}(\Omega)}^2 + \frac{1}{2}\|\nabla u\|_{L^2(\Omega)}^2
	\]
	and
	\begin{multline*}
		(g, \delta)
		\leq
		\|\exp(-\psi/2)g\|_{L^2(\Omega)}\|\exp(\psi/2)\delta\|_{L^2(\Omega)}
		\\
		\leq
		\frac{1}{2}\|\exp(-\psi/2)g\|_{L^2(\Omega)}^2 + \frac{1}{2}\|\exp(\psi/2)\delta\|_{L^2(\Omega)}^2
		\,.
	\end{multline*}
	Together with~\cref{eq:WeightedL2LowerBound}, these inequalities imply that
	\begin{multline*}
		\|\nabla u\|_{L^2(\Omega)}^2
		+
		\frac{1}{2}\|\exp(\psi/2)\delta\|_{L^2(\Omega)}^2
		\leq
		\gamma\|\nabla \phi\|_{L^2(\Omega)}^2
		+
		\frac{1}{2}\bigg(1 + \frac{1}{\gamma}\bigg)\|f\|_{H^{-1}(\Omega)}^2
		\\
		+
		\frac{1}{2}\bigg(1 + \frac{1}{\gamma}\bigg)\|\nabla u\|_{L^2(\Omega)}^2
		+
		\frac{1}{2}\|\exp(-\psi/2)g\|_{L^2(\Omega)}^2
		\,.
	\end{multline*}
	Setting $\gamma > 1$, we find that
	\begin{multline*}
		\|\nabla u\|_{L^2(\Omega)}
		+
		\|\exp(\psi/2)\delta\|_{L^2(\Omega)}
		\\
		\leq
		C \big(
			\|\nabla \phi\|_{L^2(\Omega)} + \|f\|_{H^{-1}(\Omega)} + \|\exp(-\psi/2)g\|_{L^2(\Omega)}
		\big)
		\,,
	\end{multline*}
	for some $C > 0$.
	The required result follows from combining this inequality\linebreak with~\cref{eq:incrementHminus1normidentity}.
\end{proof}

Our next result is that the finite elements~\cref{eq:SubspacePairs} are uniformly stable in $H^1(\Omega) \times H^{-1}(\Omega)$; i.e., they satisfy the Ladyzhenskaya--Babu\v{s}ka--Brezzi (LBB) stability condition
\begin{equation}
\label{eq:LBB}
	\beta_h :=
	\inf_{w \in W_h} \sup_{v \in V_h}
	\frac{(v,w)}{\|\nabla v\|_{L^2(\Omega)}\|w\|_{H^{-1}(\Omega)}}
	> 0
	\,,
\end{equation}
and, furthermore, $\beta_h$ is strictly bounded away from zero for all mesh sizes $h>0$ and (clearly) independent of $\psi$.
Note that here and throughout, we typically treat the symbol $C>0$ as a generic mesh-independent constant.

\begin{lemma}
\label{lem:Fortin}
	Assume that $\mathcal{T}_h$ is a shape-regular sequence of affine meshes covering $\overline{\Omega} = \bigcup_{T \in \mathcal{T}_h} T$.
	Let $V_h$ and $W_h$ be the finite element spaces defined in~\cref{eq:SubspacePairs}.
	Then there is a constant $\beta_0$ such that for all $h>0$,
	\begin{equation}
	\label{eq:UniformStabilityConstant}
		\inf_{w \in W_h} \sup_{v \in V_h}
		\frac{(v,w)}{\|\nabla v\|_{L^2(\Omega)}\|w\|_{H^{-1}(\Omega)}}
		\geq
		\beta_0
		> 0
		\,.
	\end{equation}
\end{lemma}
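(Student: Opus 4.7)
My plan is to verify the classical Fortin criterion: it suffices to construct a linear operator $\Pi_h \colon H^1_0(\Omega) \to V_h$ satisfying the element-wise orthogonality
\begin{equation*}
  (v - \Pi_h v, \varphi) = 0 \quad \fa \varphi \in W_h,
\end{equation*}
together with a uniform $H^1$-stability bound $\|\nabla \Pi_h v\|_{L^2(\Omega)} \leq C_F \|\nabla v\|_{L^2(\Omega)}$ with $C_F$ independent of $h$. Given such an operator, the trivial continuous inf-sup identity $\|\varphi\|_{H^{-1}(\Omega)} = \sup_{v \in H^1_0(\Omega)} (\varphi,v)/\|\nabla v\|_{L^2(\Omega)}$ transfers to the discrete setting via
\begin{equation*}
  \sup_{v_h \in V_h} \frac{(\varphi, v_h)}{\|\nabla v_h\|_{L^2(\Omega)}}
  \,\geq\, \sup_{v \in H^1_0(\Omega)} \frac{(\varphi, \Pi_h v)}{\|\nabla \Pi_h v\|_{L^2(\Omega)}}
  \,=\, \sup_{v \in H^1_0(\Omega)} \frac{(\varphi, v)}{\|\nabla \Pi_h v\|_{L^2(\Omega)}}
  \,\geq\, C_F^{-1} \|\varphi\|_{H^{-1}(\Omega)},
\end{equation*}
which is precisely \cref{eq:UniformStabilityConstant} with $\beta_0 = 1/C_F$.

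I will build $\Pi_h$ as a sum $\Pi_h = \Pi_h^{\mathrm{SZ}} + B_h$, where $\Pi_h^{\mathrm{SZ}}$ is a Scott--Zhang quasi-interpolant mapping into the $C^0$-conforming subspace $\mathbb{P}_p(\mathcal{T}_h) \cap H^1_0(\Omega) \subset V_h$ (analogously, into $\mathbb{Q}_p(\mathcal{T}_h) \cap H^1_0(\Omega)$ in the quadrilateral case), and $B_h$ is an element-local bubble corrector. The operator $\Pi_h^{\mathrm{SZ}}$ is $H^1$-stable and satisfies the standard local $L^2$-approximation estimate $\|v - \Pi_h^{\mathrm{SZ}} v\|_{L^2(T)} \leq C h_T \|\nabla v\|_{L^2(\omega_T)}$ on the usual element patch $\omega_T$; cf.~\cite[Thm.~22.6]{ern2021finite}. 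To define $B_h v|_T \in \mathring{\mathbb{P}}^{p+2}(T)$ (resp. $\mathring{\mathbb{Q}}^{p+1}(T)$) so as to restore orthogonality against $W_h$, I rely on the elementary algebraic identities $\mathring{\mathbb{P}}^{p+2}(T) = b_T \cdot \mathbb{P}_{p-1}(T)$ and $\mathring{\mathbb{Q}}^{p+1}(T) = b_T \cdot \mathbb{Q}_{p-1}(T)$, where $b_T > 0$ in $\interior T$ is the canonical interior bubble (the product of the barycentric coordinates on a triangle, and $x(1-x)y(1-y)$ pulled back from the reference square on a quadrilateral). Writing $B_h v|_T = b_T w_T$, the orthogonality requirement reduces to the symmetric linear system
\begin{equation*}
  \int_T b_T\, w_T\, q \dd x = \int_T (v - \Pi_h^{\mathrm{SZ}} v)\, q \dd x \quad \fa q \in \mathbb{P}_{p-1}(T),
\end{equation*}
whose coefficient bilinear form is positive definite because $b_T > 0$ in $\interior T$; hence $w_T$ exists and is unique. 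Since each $B_h v|_T$ vanishes on $\partial T$, we also have $B_h v \in H^1_0(\Omega)$ and therefore $\Pi_h v \in V_h$.

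The crux of the proof — and the main obstacle I expect — is deriving a uniform-in-$h$ bound on $\|\nabla B_h v\|_{L^2(\Omega)}$. My strategy is to scale to a reference element $\hat T$, on which the local map $\hat v \mapsto \hat w_{\hat T}$ is bounded with a constant depending only on $p$ and on the spectrum of the weighted mass matrix associated to $(w,q) \mapsto \int_{\hat T} b_{\hat T} w q \dd \hat x$ on $\mathbb{P}_{p-1}(\hat T)$ (resp. $\mathbb{Q}_{p-1}(\hat T)$). Pulling back via the shape-regular affine (or bilinear) element map and invoking the standard inverse estimate on polynomials of bounded degree yields
\begin{equation*}
  \|\nabla(b_T w_T)\|_{L^2(T)}
  \,\leq\, C h_T^{-1} \|b_T w_T\|_{L^2(T)}
  \,\leq\, C h_T^{-1} \|v - \Pi_h^{\mathrm{SZ}} v\|_{L^2(T)}
  \,\leq\, C \|\nabla v\|_{L^2(\omega_T)},
\end{equation*}
with $C$ depending on $p$ and shape-regularity but not on $h$. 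Squaring, summing over $T \in \mathcal{T}_h$, and invoking the bounded overlap of the patches $\{\omega_T\}$ produces the uniform estimate $\|\nabla B_h v\|_{L^2(\Omega)} \leq C \|\nabla v\|_{L^2(\Omega)}$. Combined with the $H^1$-stability of $\Pi_h^{\mathrm{SZ}}$, this yields the desired bound on $\Pi_h$ and completes the verification of the Fortin criterion.
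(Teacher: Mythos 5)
Your proof is correct and follows essentially the same route as the paper: a Fortin operator built from an $H^1$-stable quasi-interpolant into $\mathbb{P}_p(\mathcal{T}_h)\cap H^1_0(\Omega)\subset V_h$ plus an element-local bubble correction of the residual, with a local inverse estimate and shape-regular scaling giving an $h$-uniform bound, and the standard Fortin criterion transferring the continuous inf-sup condition with $\beta_0 = 1/C_F$. The only notable refinement is your parametrization $\mathring{\mathbb{P}}^{p+2}(T)=b_T\,\mathbb{P}_{p-1}(T)$ (resp.\ $\mathring{\mathbb{Q}}^{p+1}(T)=b_T\,\mathbb{Q}_{p-1}(T)$), which turns the local correction system into a symmetric positive-definite weighted mass matrix and thus makes the local solvability, which the paper infers from a dimension count, immediate.
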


\begin{remark}[Idea of the proof]
\label{rem:FortinProof}
	The proof proceeds independently for each finite element pairing by constructing a so-called Fortin operator $\Pi_h \colon H^1_0(\Omega) \to V_h$ satisfying
	\begin{subequations}
	\begin{equation}
	\label{eq:FortinBoundednessCondition}
		\|\Pi_h v\|_{H^1(\Omega)}
		\leq
		C
		\|v\|_{H^1(\Omega)}
		\,,
	\end{equation}
	for some $h$-independent constant $C>0$, and 
	\begin{equation}
	\label{eq:FortinProjectionCondition}
		( \Pi_h v, w) = (v, w)
				~
		\fa v \in H^1_0(\Omega)
		\text{~and~} w\in W_h
		\,.
	\end{equation}
	It is well-known that the existence of such an operator  on a fixed mesh $\mathcal{T}_h$ implies the LBB stability condition~\cref{eq:LBB}; see, e.g., \cite{fortin1977analysis} and \cite[Proposition~5.4.3]{boffi2013mixed}. See also \cite[Theorem~1]{ern2016converse} for the converse.
	Likewise, $h$-independence of the constant $C$ in~\cref{eq:FortinBoundednessCondition} implies the existence of the uniform discrete stability constant $\beta_0$ in \cref{eq:UniformStabilityConstant}.

	We employ a standard technique to construct our Fortin operators that involves splitting the operator into two terms; cf.~\cite[Section~5.4.4]{boffi2013mixed}.
	In particular, for each pair of subspaces $(V_h,W_h)$, we define $\Pi_h = \tilde{\mathcal{I}}_h + \tilde{\Pi}_{h}(I - \tilde{\mathcal{I}}_h)$, where $\tilde{\mathcal{I}}_h \colon H^1_0(\Omega) \to V_h$ is a quasi-interpolation operator (see, e.g., \cite[Section~22.4]{ern2021finite}) and $\tilde{\Pi}_{h} \colon L^2(\Omega) \to V_h$ is a linear operator satisfying
	\begin{equation}
	\label{eq:FortinProjectionCondition2}
		( \tilde{\Pi}_{h} v, w) = (v, w)
				~
		\fa v \in L^2(\Omega)
		\text{~and~} w\in W_h
		\,,
	\end{equation}
	\end{subequations}
	and $\|\tilde{\Pi}_{h}(I-\tilde{\mathcal{I}}_h) v\|_{H^1(\Omega)} \leq C \|v\|_{H^1(\Omega)}$ for all $v \in H^1_0(\Omega)$.
	It is easy to check that such an operator satisfies~\cref{eq:FortinBoundednessCondition,eq:FortinProjectionCondition}.
	Indeed,
	\begin{equation}
		\|\Pi_h v\|_{H^1(\Omega)} \leq \|\tilde{\mathcal{I}}_h v\|_{H^1(\Omega)} + \|\tilde{\Pi}_{h}(I-\tilde{\mathcal{I}}_h) v\|_{H^1(\Omega)} \leq C \|v\|_{H^1(\Omega)}
		\,,
	\end{equation}
	and, moreover,
	\begin{equation}
		(\Pi_h v, w)
		=
		(\tilde{\mathcal{I}}_h v, w) + (\tilde{\Pi}_{h}(I-\tilde{\mathcal{I}}_h) v, w)
		=
		(\tilde{\mathcal{I}}_h v, w) + ((I-\tilde{\mathcal{I}}_h) v, w)
		=
		(v,w)
		\,,
	\end{equation}
	where the second equality follows from~\cref{eq:FortinProjectionCondition2}.

																	\end{remark}

\begin{proof}[Proof of~\Cref{lem:Fortin}]
	For simplicity, we consider only the two-dimensional setting $n = 2$.

	\textit{Case 1.} We first consider the $(\mathbb{P}_{p}\text{-bubble},\mathbb{P}_{p-1}\text{-broken})$ finite elements defined in~\cref{eq:SubspacePair1}.
	In this setting, we define $\tilde{\Pi}_{h}$ satisfying~\cref{eq:FortinProjectionCondition2} element-wise by solving the following local variational problem at each element $T \in \mathcal{T}_h$:
	\begin{equation}
	\label{eq:FortinSubproblemSubspace1}
		\left\{
			\begin{alignedat}{3}
				&\text{Find}~(\tilde{\Pi}_{h} v)_{|T} := v_T \in \mathring{\mathbb{P}}_{p+2}(T)
								~\text{such that}
				\\
				&
				(v_T, \varphi)_T = (v, \varphi)_T
				~
				\fa \varphi \in \mathbb{P}_{p-1}(T)
				\,.
			\end{alignedat}
		\right.
	\end{equation}
							
	Recall that every function in $\mathring{\mathbb{P}}_{p+2}(T)$ is a polynomial vanishing on $\partial T$; cf.~\cite[Section 5.1.3]{fuentes2015orientation}.
	Thus, every function in this set is divisible by linear functions vanishing on some edge of $T$. 
	For instance, consider the three linear vertex functions, which vanish at two vertices and evaluate to one at the third vertex.
	Define $b_T$ to be the product of these three functions and notice that it is positive everywhere, except on $\partial T$, and that every function in $\mathring{\mathbb{P}}_{p+2}(T)$ is divisible by it.
	We conclude that $\varphi / b_T \in \mathbb{P}_{p-1}(T)$ for all $\varphi \in \mathring{\mathbb{P}}_{p+2}(T)$ and, further, $| \mathring{\mathbb{P}}_{p+2}(T) | = p(p+1)/2 = | \mathbb{P}_{p-1}(T) |$, implying
	\[
		\mathring{\mathbb{P}}_{p+2}(T)
		=
		\{
			b_T \varphi \mid \varphi \in \mathbb{P}_{p-1}(T)
		\}
		\,.
	\]

	We now consider $\tilde{\Pi}_{h}$ defined in~\cref{eq:FortinSubproblemSubspace1}.
	Let $\{\varphi_i\}$ be a basis for $\mathbb{P}_{p-1}(T)$ and $\{b_T\varphi_i\}$ be the corresponding basis for $\mathring{\mathbb{P}}_{p+2}(T)$.
	Upon writing $v_T = b_T\sum_{j=1}^{p(p+1)/2} \mathsf{c}_j \varphi_j$, we see that the variational problem~\cref{eq:FortinSubproblemSubspace1} is equivalent to the invertible linear system
	\begin{equation}
		\mathsf{M}_{ij}\mathsf{c}_j = \mathsf{d}_i
		\,,
		\qquad
		i = 1,2,\ldots,p(p+1)/2,
	\end{equation}
	where $\mathsf{M}_{ij} = (b_T\varphi_j,\varphi_i)_T$, and $\mathsf{d}_i = (v,\varphi_i)_T$.

	Norm equivalence on $\mathbb{R}^{p(p+1)/2}$ and standard scaling arguments (see, e.g., the proof of \cite[Proposition~28.5]{ern2021finiteII}) can be used to show that $\|\mathsf{M}^{-1}\|_{\ell^2} \leq C | T |^{-1}$.
	Meanwhile, H\"older's inequality can be used to show that
	\begin{equation}
		\mathsf{d}_i = \int_T v \varphi_i \dd x \leq \max_i \|\varphi_i\|_{L^\infty(T)} |T|^{1/2} \|v\|_{L^2(T)}
		\,,
	\end{equation}
	for each $i = 1,2,\ldots,p(p+1)/2$.
	Thus, we conclude that
	\begin{equation}
	\label{eq:lem_Fortin_scalingbound1}
		\|\mathsf{c}\|_{\ell^2}
		\leq
		\|\mathsf{M}^{-1}\|_{\ell^2}
		\|\mathsf{d}\|_{\ell^2}
		\leq
		C |T|^{-1/2} \|v\|_{L^2(T)}
		\,.
	\end{equation}
	Shape-regularity and a similar scaling argument (see, e.g., \cite[Lemmas~11.1 and 11.7]{ern2021finite}) implies that
	\begin{equation}
	\label{eq:lem_Fortin_scalingbound2}
		| b_T\varphi_i |_{H^1(T)}
		\leq
		C h_T^{-1}|T|^{1/2}
		\,.
	\end{equation}
	Combining \cref{eq:lem_Fortin_scalingbound1,eq:lem_Fortin_scalingbound2}, we find that
	\begin{equation}
		| \tilde{\Pi}_{h} v |_{H^1(T)}
		=
		| v_T |_{H^1(T)}
		\leq C h_T^{-1} \| v\|_{L^2(T)}
		\,.
	\end{equation}

	The next step is to specify the quasi-interpolation operator $\tilde{\mathcal{I}}_h \colon H^1_0(\Omega) \to V_h$.
	We choose to use the operator defined in~\cite[Equation~6.10]{ern2017finite},
		which, by \cite[Theorem~6.4]{ern2017finite}, has the property that
	\begin{equation}
	 	\|(I - \tilde{\mathcal{I}}_h) v \|_{L^2(T)}
	 	\leq
	 	C h_T\|\nabla v\|_{L^2(\Omega_T)}
	 	~
		\fa v \in H^1_0(\Omega)
		\,,
	\end{equation}
	where $\Omega_T \subset \Omega$ is the union of mesh cells neighboring $T$.
	We now find that
	\begin{equation}
		|\tilde{\Pi}_{h}(I-\tilde{\mathcal{I}}_h) v|_{H^1(T)}
		\leq
		C h_T^{-1} \|(I-\tilde{\mathcal{I}}_h) v\|_{L^2(T)}
		\leq
		C\|\nabla v\|_{L^2(\Omega_T)}
		\,.
	\end{equation}
	Note that the maximum number of elements in $\Omega_T$ is bounded uniformly in $h$ owing to the regularity of the mesh sequence.
	Likewise, we find that
	\begin{equation}
		|\tilde{\Pi}_{h}(I-\tilde{\mathcal{I}}_h) v|_{H^1(\Omega)}^2
		\leq
		C
		\sum_{T\in\mathcal{T}}
		\|\nabla v\|_{L^2(\Omega_T)}^2
		\leq
		C
		\|\nabla v\|_{L^2(\Omega)}^2
		\,.
	\end{equation}
	We have succeeded in checking the conditions outlined in~\cref{rem:FortinProof} and the proof is complete.

	\textit{Case 2.} We now consider the $(\mathbb{Q}_{p}\text{-bubble},\mathbb{Q}_{p-1}\text{-broken})$ finite elements defined in~\cref{eq:SubspacePair2}.
	In this case, we define $\tilde{\Pi}_h v := v_h \in V_h$ element-wise by solving the local variational problems
	\begin{equation}
	\label{eq:FortinSubproblemSubspace2}
		\left\{
			\begin{alignedat}{3}
																												&\text{Find}~{v_h}_{|T} \in \mathring{\mathbb{Q}}_{p+1}(T)
				~\text{such that}
				\\
				&
				(v_h, \varphi)_T = (v, \varphi)_T
				~
				\fa \varphi \in \mathbb{Q}_{p-1}(T)
												\,.
			\end{alignedat}
		\right.
	\end{equation}
	Here, we notice that $| \mathring{\mathbb{Q}}_{p+1}(T) |=p^2=| \mathbb{Q}_{p-1}(T) |$ and arrive at the conclusion that
	\[
		\mathring{\mathbb{Q}}_{p+1}(T)
		=
		\{
			d_T \varphi \mid \varphi \in \mathbb{Q}_{p-1}(T)
		\}
		\,,
	\]
	where $d_T$ a non-zero bubble function in $\mathbb{Q}_{2}(T)\cap H^1_0(T)$.
	Thus, for every element $T\in \mathcal{T}_h$, the variational problem~\cref{eq:FortinSubproblemSubspace2} is equivalent to an invertible $p^2 \times p^2$ linear system, and so $\tilde{\Pi}_h v := v_h \in V_h$ is well-posed.
	The remainder of the proof proceeds as done in \textit{Case 1}.
\end{proof}

\begin{remark}[Alternative subspaces]
\label{rem:AlternativeSubspaces2}
	Notice that \Cref{lem:Fortin} implies that the pairing $(\tilde{V}_h, W_h)$ is uniformly stable for any subspace $\tilde{V}_h\subset H^1_0(\Omega)$ containing $V_h$.
	Indeed, observe that
	\begin{equation}
		\sup_{v \in \tilde{V}_h}
		\frac{(v,w)}{\|\nabla v\|_{L^2(\Omega)}}
		\geq
		\sup_{v \in V_h}
		\frac{(v,w)}{\|\nabla v\|_{L^2(\Omega)}}
		\geq
		\beta_0\|w\|_{H^{-1}(\Omega)}
		\,,
	\end{equation}
	for all $w \in W_h$.
	Thus, other elements such as the $(\mathbb{Q}_{p+1},\mathbb{Q}_{p-1}\text{-broken})$ pair proposed in \Cref{rem:AlternativeSubspaces1}, are also stable owing to the embedding $\mathbb{Q}^{p+1}_p(T) = \hat{\mathbb{Q}}_{p}(T) \oplus \mathring{\mathbb{Q}}_{p+1}(T) \subset \mathbb{Q}_{p+1}(T)$.
\end{remark}

\subsection{Convergence of the linearized subproblems} \label{sub:convergence_of_the_linearized_subproblems}

This subsection is devoted to a proof that the $(\mathbb{P}_{p}\text{-bubble},\mathbb{P}_{p-1}\text{-broken})$ and $(\mathbb{Q}_{p}\text{-bubble},\mathbb{Q}_{p-1}\text{-broken})$ elements defined in~\cref{eq:SubspacePairs} converge at optimal rates toward the solutions of the linearized subproblems~\cref{eq:ContinuousStability}.
We note that the existence and uniqueness of $u_h$ and $\delta_h$ in the theorem below follow from a Lax--Milgram argument similar to Step 1 of the proof of~\cref{thm:ContinuousStability}.

\begin{theorem}
\label{thm:APrioriLinearized}
	Let $u_{h} \in V_h$ and $\delta_{h} \in W_h$ to be the discrete solutions of the saddle-point problem
	\begin{equation}
	\label{eq:APrioriLinearized_discrete}
		\left\{
		\begin{aligned}
			\,&\text{Find}~
			u_{h}\in V_{h} ~\text{and}~\delta_{h} \in W_{h}
			~\text{such that~}
			\\
			&\begin{alignedat}{4}
			( \nabla u_{h}, \nabla v) + (\delta_{h}, v) &= \langle f, v\rangle
			&&~\fa v \in V_{h}\,,
			\\
			(u_{h}, w) - (\delta_{h} \exp\psi, w) &= (\phi, w) + (g, w)
			&&~\fa w \in W_{h}\,,
			\end{alignedat}
		\end{aligned}
		\right.
	\end{equation}
	where $V_h$ and $W_h$ are the $(\mathbb{P}_{p}\text{-bubble},\mathbb{P}_{p-1}\text{-broken})$ and $(\mathbb{Q}_{p}\text{-bubble},\mathbb{Q}_{p-1}\text{-broken})$ finite element spaces defined in~\cref{eq:SubspacePairs}.
	Likewise, let $r \geq 1$ be an integer and assume that the unique solutions of the continuous-level variational problem
	\begin{equation}
		\left\{
		\begin{aligned}
			\,&\text{Find}~
			u\in H_0^1(\Omega) ~\text{and}~\delta \in L^2(\Omega) 
			~\text{such that~}
			\\
			&\begin{alignedat}{4}
			( \nabla u, \nabla v) + (\delta, v) &= \langle f, v\rangle
			&&~\fa v \in H_0^1(\Omega)\,,
			\\
			(u, w) - (\delta \exp\psi, w) &= (\phi, w) + (g, w)
			&&~\fa w \in L^2(\Omega)\,,
			\end{alignedat}
		\end{aligned}
		\right.
	\end{equation}
	are sufficiently regular that $u \in H^{r+1}(\Omega)$ and $\delta \in H^{r}(\Omega)$.
	Then, if $\mathcal{T}_h$ is a shape-regular sequence of affine meshes, it holds that
	\begin{equation}
	\label{eq:RobustConvergence_LinearizedSubproblem}
		\|u - u_{h}\|_{H^1(\Omega)} + \|\delta - \delta_{h}\|_{H^{-1}(\Omega)}
		\leq
		C_1 h^s \big(|u|_{H^{s+1}(\Omega)} + |\delta|_{H^{s}(\Omega)}\big)
		\,,
	\end{equation}
	for all $1 \leq s \leq \min\{r,p\}$, where $C_1$ is a mesh-independent constant that remains bounded as $\essinf \psi \to -\infty$.
		Furthermore, there exists a mesh-independent constant $C_2$ such that
	\begin{equation}
	\label{eq:L2Convergence_LinearizedSubproblem}
		\|\delta - \delta_{h}\|_{L^2(\Omega)}
		\leq
		C_2 h^s \big(|u|_{H^{s+1}(\Omega)} + |\delta|_{H^{s}(\Omega)}\big)
				\,.
	\end{equation}
	However, $C_2 \to \infty$ as $\essinf \psi \to -\infty$.
	\end{theorem}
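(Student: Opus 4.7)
The plan is to recognize~\cref{eq:APrioriLinearized_discrete} as a \emph{perturbed} mixed saddle-point problem in the sense of Brezzi and to apply the inf-sup condition established in~\Cref{lem:Fortin}. To this end, I would work with the bilinear form
\begin{equation*}
	B((u,\delta),(v,\varphi)) = (\nabla u,\nabla v) + (\delta,v) - (u,\varphi) + (\delta\exp\psi,\varphi)
	\,,
\end{equation*}
viewed on $(H^1_0(\Omega) \times L^2(\Omega))^2$ endowed with the product norm $\|\nabla\,\cdot\,\|_{L^2(\Omega)} + \|\cdot\|_{H^{-1}(\Omega)}$. The top-left block is the coercive Dirichlet form; the off-diagonal pairing $(v,\delta)$ enjoys the discrete inf-sup bound~\cref{eq:UniformStabilityConstant} with constant $\beta_0$ that is independent of both $h$ and $\psi$; and the bottom-right term $(\delta\exp\psi,\varphi)$ is positive semi-definite for every $\psi \in L^\infty(\Omega)$. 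Since a positive semi-definite perturbation can only strengthen the discrete stability and since the chosen product norm does not see $\exp\psi$, the standard Brezzi framework for perturbed mixed methods (cf.~\cite[Section~5.5]{boffi2013mixed}) delivers a Strang-type best-approximation estimate
\begin{equation*}
	\|u - u_h\|_{H^1(\Omega)} + \|\delta - \delta_h\|_{H^{-1}(\Omega)} \leq C\!\!\inf_{(v_h,\varphi_h) \in V_h \times W_h}\!\!\big( \|u - v_h\|_{H^1(\Omega)} + \|\delta - \varphi_h\|_{H^{-1}(\Omega)} \big)\,,
\end{equation*}
with constant $C$ independent of $h$ and $\psi$.

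Next, I would insert the appropriate approximants on the right. Since $V_h$ contains the conforming $\mathbb{P}_p$ (respectively $\mathbb{Q}_p$) subspace of $H^1_0(\Omega)$, standard interpolation theory~\cite[Theorem~22.6]{ern2021finite} gives $\inf_{v_h\in V_h}\|u-v_h\|_{H^1(\Omega)} \leq Ch^s|u|_{H^{s+1}(\Omega)}$ for $1\leq s\leq p$. For the latent variable I would take $\varphi_h := P_h\delta \in W_h$, the element-wise $L^2$-projection, with the standard local bound $\|\delta - P_h\delta\|_{L^2(\Omega)} \leq Ch^{s-1}|\delta|_{H^{s-1}(\Omega)}$. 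To recover the extra power of $h$ in the negative-order norm, I would exploit that $W_h$ contains piecewise constants: for any $v\in H^1_0(\Omega)$ with cell-wise average $\bar v_h \in W_h$, the $L^2$-orthogonality of $\delta - P_h\delta$ to $W_h$ yields
\begin{equation*}
	(\delta - P_h\delta,v)
	= (\delta - P_h\delta, v-\bar v_h)
	\leq \|\delta - P_h\delta\|_{L^2(\Omega)} \|v-\bar v_h\|_{L^2(\Omega)}
	\leq Ch^s|\delta|_{H^{s-1}(\Omega)}\|\nabla v\|_{L^2(\Omega)}\,,
\end{equation*}
via a local Poincar\'e inequality, whence $\inf_{\varphi_h \in W_h}\|\delta-\varphi_h\|_{H^{-1}(\Omega)} \leq Ch^s|\delta|_{H^{s-1}(\Omega)}$. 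This establishes~\cref{eq:RobustConvergence_LinearizedSubproblem}.

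For the sharper $L^2$-estimate~\cref{eq:L2Convergence_LinearizedSubproblem}, I would exploit Galerkin orthogonality in the second line of~\cref{eq:APrioriLinearized_discrete}, namely $((\delta-\delta_h)\exp\psi,\varphi) = (u-u_h,\varphi)$ for all $\varphi \in W_h$. Testing with $\varphi = P_h\delta-\delta_h \in W_h$, splitting $\delta-\delta_h = (\delta-P_h\delta) + (P_h\delta-\delta_h)$, and invoking the pointwise lower bound $\exp\psi \geq e^{\essinf\psi} =: m > 0$ produces
\begin{equation*}
	m\|P_h\delta-\delta_h\|_{L^2(\Omega)} \leq \|\exp\psi\|_{L^\infty(\Omega)}\|\delta - P_h\delta\|_{L^2(\Omega)} + \|u-u_h\|_{L^2(\Omega)}\,.
\end{equation*}
A triangle inequality, combined with the Poincar\'e estimate $\|u-u_h\|_{L^2(\Omega)} \leq C\|u-u_h\|_{H^1(\Omega)}$, the bound~\cref{eq:RobustConvergence_LinearizedSubproblem}, and the standard $L^2$-projection estimate $\|\delta - P_h\delta\|_{L^2(\Omega)} \leq Ch^s|\delta|_{H^s(\Omega)}$, then yields~\cref{eq:L2Convergence_LinearizedSubproblem} with $C_2 = O(m^{-1}) = O(e^{-\essinf\psi})$, as claimed. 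The main obstacle is precisely the uniformity (in $\psi$) of the Brezzi constants in the first estimate: it will be important to point out that the deliberate choice of the $H^{-1}$ topology on the latent variable, together with the sign of the perturbation, is what decouples the stability in~\cref{eq:RobustConvergence_LinearizedSubproblem} from $\essinf\psi$, while the $L^2$-topology on $\delta-\delta_h$ necessarily couples to the degeneracy of $\exp\psi$.
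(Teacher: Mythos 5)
Your argument for \cref{eq:RobustConvergence_LinearizedSubproblem} is essentially the paper's: both invoke the perturbed Brezzi framework of \cite[Section~5.5]{boffi2013mixed} together with the uniform inf-sup bound of \Cref{lem:Fortin}, approximate $u$ through the conforming $\mathbb{P}_p$ (resp.\ $\mathbb{Q}_p$) subspace of $V_h$, and recover the extra power of $h$ for $\|\delta-\varphi_h\|_{H^{-1}}$ by the same duality trick (the paper projects the test function onto $W_h$ and uses $\|\varphi-\mathcal{P}_h\varphi\|_{L^2}\le Ch\|\nabla\varphi\|_{L^2}$; your piecewise-constant average plus local Poincar\'e is an equivalent variant). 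One caveat: your claim that the Strang constant is independent of $\psi$ because ``a positive semi-definite perturbation can only strengthen stability'' is too strong and not the actual mechanism. In the standard perturbed saddle-point theory the constant depends on the continuity norm of the perturbation, $\|c\|=\|\exp\psi\|_{L^\infty(\Omega)}$, i.e.\ on $\esssup\psi$; what the theorem asserts (and what the paper proves by listing the quantities the constant depends on and noting it stays bounded as $\|c\|\to 0$) is only boundedness as $\essinf\psi\to-\infty$. You should cite the stability result for perturbed mixed problems rather than the semidefiniteness heuristic.

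Where you genuinely diverge is \cref{eq:L2Convergence_LinearizedSubproblem}: the paper applies Cea's lemma to the full form $B$ of \cref{eq:LinearizedSaddleRewrite}, whose coercivity constant \cref{eq:LinearizedSaddleCoercivity} degenerates as $\essinf\psi\to-\infty$, whereas you test the error equation of the second row with $\mathcal{P}_h\delta-\delta_h$ and use the pointwise lower bound $\exp\psi\ge e^{\essinf\psi}$. Your route makes the source and size of the blow-up ($C_2=O(e^{-\essinf\psi})$) more transparent, but as written it is slightly weaker than the stated estimate: bounding $\|u-u_h\|_{L^2}$ by \cref{eq:RobustConvergence_LinearizedSubproblem} drags in the seminorm $|\delta|_{H^{s-1}(\Omega)}$, which is not controlled by $|u|_{H^{s+1}(\Omega)}+|\delta|_{H^{s}(\Omega)}$ (consider $s=1$ and $\delta$ constant). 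Either state the result with full norms, or follow the paper and obtain the quasi-best-approximation bound \cref{eq:BAE_coercivity} directly from the coercivity of $B$, which yields exactly the right-hand side of \cref{eq:L2Convergence_LinearizedSubproblem}.
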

\begin{proof}
	The proof consists of two steps.
	\smallskip

	\noindent\textsl{Step 1.}
	Deriving discrete stability estimates.

	Let $u_I \in V_h$ and $\delta_I \in W_h$ and define $e_u = u_h - u_I$ and $e_\delta = \delta_h - \delta_I$.
	Observe that $e_u \in V_h$ and $e_\delta \in W_h$ are the unique solutions to
	\begin{alignat*}{4}
		(\nabla e_u, \nabla v) + (e_\delta, v) &= (\nabla (u - u_I), \nabla v)+ (\delta - \delta_I, v)
		&&~\fa v \in V_{h}\,,
		\\
		(e_u, w) - (e_\delta \exp\psi, w) &= (u - u_I, w) + ((\delta - \delta_I)\exp\psi, w)
		&&~\fa w \in W_{h}\,.
	\end{alignat*}
	Setting $v = e_u$ and $w = -e_\delta$ and summing the two equations, we find that
	\[
		\|\nabla e_u\|_{L^2(\Omega)}^2
		+
		\|\exp(\psi/2)e_\delta\|_{L^2(\Omega)}^2
		\leq
		\langle R_1, e_u \rangle - (R_2, e_\delta) - (R_3, e_\delta)
		\,,
	\]
	where $\langle R_1, v \rangle = (\nabla (u - u_I), \nabla v) + (\delta - \delta_I, v)$, $R_2 = u - u_I$, and $R_3 = (\delta - \delta_I)\exp\psi$.
	From~\Cref{lem:Fortin}, we find that there exists a constant $\beta_0 > 0$ such that
	\begin{align*}
		\beta_0\|e_\delta\|_{H^{-1}(\Omega)}
		\leq
		\sup_{v \in V_h} \frac{(e_\delta,w)}{\|\nabla v\|_{L^2(\Omega)}}
		&\leq
		\sup_{v \in V_h} \frac{(\nabla e_u, \nabla v)}{\|\nabla v\|_{L^2(\Omega)}}
		+
		\sup_{v \in V_h} \frac{\langle R_1, v\rangle}{\|\nabla v\|_{L^2(\Omega)}}
		\\
		&\leq
		\|\nabla e_u\|_{L^2(\Omega)}
		+
		\|R_1\|_{H^{-1}(\Omega)}
								\,.
	\end{align*}
	We may now proceed to bound $(R_2, e_\delta)$, $\langle R_1, e_u \rangle$, and $(R_3, e_\delta)$ as was done for $(\phi,\delta)$, $\langle f, u\rangle$, and $(g, \delta)$, respectively, in the proof of~\cref{thm:ContinuousStability}.
	By these means, we arrive at the following upper bound:
	\begin{align*}
		\|\nabla e_u &\|_{L^2(\Omega)}
		+
		\|e_\delta\|_{H^{-1}(\Omega)}
		+
		\|\exp(\psi/2)e_\delta\|_{L^2(\Omega)}
		\\
		&\leq
		C\Big(\|R_1\|_{H^{-1}(\Omega)} + \|\nabla R_2\|_{L^2(\Omega)} + \|\exp(-\psi/2)R_3\|_{L^2(\Omega)} \Big)
		\\
		&\leq
		C\Big(\|\nabla(u - u_I)\|_{L^2(\Omega)} + \|\delta - \delta_I\|_{H^{-1}(\Omega)} + \|\exp(\psi/2)(\delta - \delta_I)\|_{L^2(\Omega)}\Big)
		\,,
	\end{align*}
	where $C > 0$ is a generic constant depending on $\beta_0$ but not on $h$ or $\psi$.
	Finally, using the triangle inequality and the fact that $u_I$ and $\delta_I$ were arbitrary, we arrive at 
	\begin{align}
\nonumber
		\|&u - u_{h}\|_{H^1(\Omega)}
		+
		\|\delta - \delta_{h}\|_{H^{-1}(\Omega)}
		+
		\|\exp(\psi/2)(\delta - \delta_{h})\|_{L^2(\Omega)}
		\\
\nonumber
		&\leq
		C
		\left(
			\inf_{v \in V_h} \|u - v \|_{H^1(\Omega)}
			+
			\inf_{w \in W_h} \Big(\|\delta - w \|_{H^{-1}(\Omega)} + \|\exp(\psi/2)(\delta - w)\|_{L^2(\Omega)}\Big)
		\right)
		\\
	\label{eq:BAE}
		&\leq
		C
		\left(
			\inf_{v \in V_h} \|u - v \|_{H^1(\Omega)}
			+
			\big(1+\|\exp(\psi/2)\|_{L^\infty(\Omega)}\big)
			\inf_{w \in W_h} \|\delta - w\|_{L^2(\Omega)}
		\right)
		\,.
	\end{align}

	\noindent\textsl{Step 2.}
	We now bound the two terms on the right-hand side of~\cref{eq:BAE}.

	The remaining part of the proof is standard, so we only consider the setting of the $(\mathbb{P}_{p}\text{-bubble}, \mathbb{P}_{p-1}\text{-broken})$ triangular finite elements defined in~\cref{eq:SubspacePair1}.
	Given that $\mathbb{P}_p(\mathcal{T}_h)\cap H^1_0(\Omega) \subset V_h$, we employ the order-$p$ nodal interpolation operator $\mathcal{I}_h \colon H^{r+1}(\Omega) \cap H^1_0(\Omega) \to \mathbb{P}_p(\mathcal{T}_h)\cap H^1_0(\Omega)$; see, e.g., \cite[Section~19.3]{ern2021finite}.
	By shape-regularity of the mesh sequence and \cite[Corollary~19.8]{ern2021finite}, we have that
	\begin{equation}
		\|v - \mathcal{I}_h v\|_{H^1(\Omega)}
		\leq
		C h^s |v|_{H^{s+1}(\Omega)}
		\,,
		\qquad
		1 \leq s \leq \min\{r,p\}
		\,.
	\end{equation}
	Thus, we find that
	\begin{equation}
	\label{eq:Approximability_primalvariable}
		\inf_{v \in V_h} \|u - v \|_{H^1(\Omega)}
		\leq
		\|u - \mathcal{I}_h u \|_{H^1(\Omega)}
		\leq
		C h^s|u|_{H^{s+1}}
		\,.
	\end{equation}
	The second term on the right-hand side of~\cref{eq:BAE} is likewise treated with the $L^2(\Omega)$-orthogonal projection operator $\mathcal{P}_h \colon L^2(\Omega) \to W_h = \mathbb{P}_{p-1}(\mathcal{T}_h)$, which has the following property \cite[Theorem~18.16]{ern2021finite}:
	\begin{equation}
	\label{eq:ProjectionConvergenceRates}
		\| w - \mathcal{P}_h w \|_{L^2(\Omega)}
		\leq
		C h^{s}|w |_{H^{s}(\Omega)}
		\,,
		\qquad
		0\leq s \leq \min\{r,p\}
		\,.
	\end{equation}
			Error estimates~\cref{eq:RobustConvergence_LinearizedSubproblem,eq:L2Convergence_LinearizedSubproblem} now follow by collecting the above bounds and applying~\cref{eq:UnweightedL2Bound}.
	\end{proof}

\subsection{Approximability result} \label{sub:nonlinear_approximability}

We finish this appendix with a proof of \Cref{lem:UtildeBound}.

\begin{proof}[Proof of~\Cref{lem:UtildeBound}]
	Recall that $u = \exp \psi$ and, therefore,
	\begin{align}
		\tilde{u}_h - u 
		&=
		\exp\psi_h - \exp\psi
		=
		\int_{\psi}^{\psi_h} \exp s \dd s
		\\
		&=
		(\psi_h - \psi)\int_0^1 \exp (\psi + s(\psi_h - \psi)) \dd s
		\\
		&=
		(\psi_h - \psi)\exp \psi \int_0^1 (\exp(\psi_h - \psi))^s \dd s
		.
	\end{align}
	As such, it holds that
	\begin{align}
		\|u - \tilde{u}_h\|_{L^\infty}
		&\leq
		\|\psi - \psi_h\|_{L^\infty}
		\|\exp\psi\|_{L^\infty}
		\int_0^1
		(\exp\|\psi - \psi_h\|_{L^\infty})^s
		\dd s
		\\
		&=
		\|\exp\psi\|_{L^\infty}(\exp\|\psi - \psi_h\|_{L^\infty} - 1)
		,
	\end{align}
	where the last line follows from the identity $\int_0^1 a^s \dd s = (a - 1)/\ln a$.
\end{proof}

{
\section*{Extended dedication from B.~Keith} \label{sec:dedication}
\it
Feynman once said that calculus is ``the language God talks'' \cite{wouk2010language}.
Expanding on this mystical statement, Strogatz has suggested that all physical laws are ``sentences'' in this ``language of the universe'' \cite{strogatz2019infinite}.
From my perspective, if the above is true, it must be the case that God has written his sentences in variational form.

The present work deals centrally with variational methods and a seemingly divine entropy functional that has never failed to surprise me since the day I began this project with Thomas.
In turn, I have frequently been reminded of von Neumann's famous quote to Shannon: ``No one knows what entropy really is'' \cite{tribus1971energy}.
Reflecting back on Feynman and Strogatz's perspectives, it is helpful to think that at least God knows, even if we do not.
As such, and on the occasion of his 70th birthday, it feels only fitting that I dedicate this work to Leszek Demkowicz; the kind and deeply religious man who not so long ago taught me a variational perspective on the universe.
}

\subsection*{Acknowledgements} \label{sub:acknowledgements}

The authors gratefully thank Jesse Chan, J\'er\^{o}me Darbon, Tarik Dzanic, Alexandre Ern, Patrick Farrell, Caroline Geiersbach, Dohyun Kim, Boyan Lazarov, Michael Hinterm\"uller, Thomas J.R.~Hughes, Karl-Hermann Neeb, Chi-Wang Shu, and N.~Sukumar for their time and helpful discussions during the development of this work.
We also thank Dohyun Kim, Tzanio Kolev, Boyan Lazarov, Socratis Petrides, and Jingyi Wang for peer-reviewing our MFEM implementations in order to merge them into the source code as official MFEM examples.
Next, we thank J{\o}rgen Dokken for generously responding to our many inquiries about FEniCSx, as well as his help refactoring and preparing our FEniCSx implementations for public release.
We also thank Rami Masri for his meticulous reading of the manuscript, pointing out several statements that were unclear and a mistake we corrected before publication.
Finally, we thank Markus Bachmayr and Endre S\"uli for carefully and patiently handling the manuscript and the anonymous reviewers for their helpful and detailed comments.

\phantomsection\bibliographystyle{siamplain}
\bibliography{main.bib}

\end{document}